\documentclass[11pt]{article}
\usepackage[dvips, letterpaper, margin = 1.0in]{geometry}
\usepackage{sty}

\date{}
\title{Low-degree estimation thresholds in planted hypergraphs \\
and tensor PCA}

\usepackage{authblk}
\author[1]{Daniel Fu \thanks{Email: \textit{daniel\textunderscore fu1@brown.edu}.}}
\author[1]{Youngtak Sohn \thanks{Email: \textit{youngtak\textunderscore sohn@brown.edu}.}}
\affil[1]{Division of Applied Mathematics, Brown University}

\begin{document}
\maketitle
\begin{abstract}
A central question in high-dimensional statistics is to understand \textit{statistical--computational gaps}: regimes in which recovering a hidden signal is information-theoretically possible but conjectured to be computationally intractable. The low-degree framework offers a concrete way to study this gap by restricting attention to estimators that are polynomials of degree at most $D$ in the observed data. In this paper, we study low-degree estimation in planted dense subhypergraph, sparse tensor PCA, and tensor PCA with a general prior.

For the planted dense subhypergraph model on $n$ vertices, we identify two regimes depending on whether the planted set is larger or smaller than $\sqrt{n}$. Above this scale, we identify a sharp threshold for low-degree estimation. Below this scale, we establish hardness in the regimes predicted by prior work, thereby resolving a question of Schramm and Wein (2022) and Sohn and Wein (2025). For sparse tensor PCA, we identify an analogous sharp phase transition. For tensor PCA with a general prior, we prove a low-degree estimation lower bound at the critical signal scale, matching the degree--signal tradeoff suggested by prior work.

Our lower bounds apply to degree $D=n^{\delta}$, where $n$ is the dimension and $\delta>0$ is a constant, and we complement them with corresponding low-degree upper bounds. In addition, for planted dense subhypergraph and sparse tensor PCA above the $\sqrt{n}$ scale, we convert our upper bounds into polynomial-time algorithms that achieve almost exact recovery above the sharp threshold, yielding polynomial-time algorithms succeeding up to this threshold. Our proofs extend the framework of Sohn and Wein (2025) through a conditional variant that yields the correct signal-to-noise ratio in settings where the unconditional approach is insufficient.

\end{abstract}

\section{Introduction}\label{sec:intro}
A fundamental task in high-dimensional statistics is to recover a hidden signal buried in a large, noisy dataset. Alongside the \emph{information-theoretic} question of determining the weakest signal strength at which recovery is possible by \emph{any} estimator, the high dimensionality of these problems introduces a \emph{computational} challenge: whether recovery can be achieved by an efficient algorithm of practical runtime. When there is a gap between the signal strength needed for information-theoretic recovery and that needed for computationally efficient recovery, we say there is a \emph{statistical--computational gap}. Understanding the nature of such gaps is one of the central goals of high-dimensional statistics and average-case complexity.

In this work, we study three closely-related models in which a signal is buried in a random tensor or random hypergraph, each exhibiting a statistical--computational gap: 

\begin{itemize}
     \item \textbf{Planted dense subhypergraph}: For a sparsity parameter $\rho\in [0,1]$ and edge probabilities $0\leq q_0\leq q_1\leq 1$, we observe an $r$-uniform random hypergraph whose adjacency tensor $Y$ is generated as follows. Draw $\theta\in \{0,1\}^n$ with i.i.d.\ $\Ber(\rho)$ entries. Conditional on $\theta$, independently for each hyperedge $e=\{i_1,\ldots,i_r\}$ of distinct vertices, draw 
     \[
     Y_{e}\sim \Ber\bigg(q_0+(q_1-q_0)\prod_{j=1}^{r}\theta_{i_j}\bigg).
     \]
     Thus, hyperedges whose vertices all lie in the planted set $\{1\leq i\leq n:\theta_i=1\}$ appear with probability $q_1$, while all other hyperedges appear with background probability $q_0$.
     \item \textbf{Sparse tensor PCA}: For a sparsity parameter $\rho\in [0,1]$ and signal-to-noise parameter $\la\geq 0$, we observe the $r$-th order random tensor
    \[
      Y= \lambda\theta^{\otimes r}+W\,,
    \]
    where the planted signal $\theta\in \{0,1\}^n$ has i.i.d.\ $\Ber(\rho)$ entries and $W\in (\R^n)^{\otimes r}$ is a Gaussian noise tensor with i.i.d.\ entries $W_{i_1,\ldots,i_r}\sim \cN(0,1)$.
     \item \textbf{Tensor PCA with general prior}: This is the standard model of a low-rank tensor corrupted by Gaussian noise. We again observe $Y= \lambda\theta^{\otimes r}+W$, but now the planted signal $\theta$ has i.i.d.\ entries drawn from a general prior $\pi$ with mean~$0$ and variance~$1$.
\end{itemize}

In each model, we consider the asymptotic regime $n\to\infty$. The parameters $\rho$, $\la$, $q_0$, and $q_1$ may scale with $n$, while the tensor order $r$ and the prior $\pi$ are held fixed. All model parameters are assumed known to the statistician.

All three models have been studied extensively in the literature. In particular, statistical--computational gaps in \emph{hypothesis testing} (a.k.a. \emph{detection}) task---testing whether observation $Y$ is drawn from the planted distribution as defined above or from a pure noise distribution---has been studied for all three models (we refer to~\cite{hopkins-thesis, ld-notes, ld-survey} for extensive literature on detection) and is by now relatively well-understood.

The present work instead focuses on the \emph{estimation} task: given the observed data $Y$, recover the latent signal $\theta$.\footnote{For tensor PCA with general prior, when $r$ is even and $\pi$ is symmetric, the model is invariant under $\theta \leftrightarrow -\theta$, so direct estimation of $\theta$ is impossible. In that case, one instead estimates $\theta\theta^{\top}$ or $\theta^{\otimes r}$. Our lower bound in Theorem~\ref{thm:general:PCA} rules out both forms of recovery.} Estimation has also received significant attention---we defer a thorough literature review to the discussion of each model below. In particular, for the matrix case $r=2$,~\cite{arxiv-version} pinned down sharp computational thresholds for the corresponding matrix versions of all three models. However, for tensor/hypergraph models $r\geq 3$, the picture is much less complete, and clarifying it motivates the present work. A central question we address is
\begin{quote}
   \emph{What are the limits of computationally efficient estimation in these tensor and hypergraph models, and can the exact boundary be pinned down?}
\end{quote}
For tensor PCA with a fixed prior, it is widely believed that there is a smooth tradeoff between signal and runtime~\cite{sos-detect, ld-notes, kunisky24tensor}. For sparse tensor PCA and planted dense subhypergraph, by contrast, the form of the threshold has been unclear. To study this question, we employ the \emph{low-degree polynomial framework} described in the next subsection. Perhaps surprisingly, we show that a \emph{sharp} phase transition occurs in these two models for low-degree polynomial estimation when $\rho \gg n^{-1/2}$, with a critical signal-to-noise ratio that we determine explicitly. In the complementary regime $\rho\ll n^{-1/2}$ of planted dense subhypergraph, we establish the estimation hardness predicted by previous work, resolving an open question of~\cite{SW-22, arxiv-version}; this result is new even in the graph case $r=2$.

\subsection{Low-degree estimation framework}
\label{subsec:intro:low-degree}
Each of the three models described above is an \emph{average-case} problem: the goal is to find an estimator that runs in polynomial time and recovers the signal with high probability when the input $Y$ is sampled from the model, rather than an estimator that succeeds uniformly over all inputs. Proving unconditional computational lower bounds for such problems---for instance, showing that no polynomial-time algorithm can estimate $\theta$ with probability $1-o(1)$ as $n\to\infty$---appears beyond the reach of current complexity theory, so one seeks evidence for hardness through a variety of restricted computational frameworks and structural barriers. Prominent examples include the sum-of-squares hierarchy~\cite{sos-survey}, the statistical query model~\cite{sq-ld}, reductions from conjecturally hard problems~\cite{bresler-2020}, and geometric obstructions based on the overlap gap property~\cite{ogp-survey}. Among these, the \emph{low-degree polynomial framework} has been successful, with a strong track record of correctly predicting the computational thresholds in average-case inference problems.
 
The low-degree framework was first developed for detection~\cite{HS-bayesian,sos-detect}, where the central object is the low-degree likelihood ratio~\cite{ld-notes}. Its estimation analog was introduced by Schramm and Wein~\cite{SW-22}, who defined the \emph{low-degree MMSE} as the smallest mean-squared error achievable by a polynomial estimator of degree at most $D$. Suppose that the goal is to estimate a scalar $x$ from the observed data $Y$: in planted dense subhypergraph and sparse tensor PCA, $x$ will be the first coordinate of the signal $\theta_1$. The degree-$D$ minimum-mean-squared-error (MMSE) is defined by
\begin{equation}\label{eq:def:low:MMSE}
  \MMSE_{\leq D} := \inf_{\substack{f \in \RR[Y] \\ \deg(f) \le D}} \EE\!\big[(f(Y)-x)^2\big]\,,
\end{equation}
where $\RR[Y]$ denotes the space of multivariate polynomials in the entries of $Y$, i.e. $(Y_{i_1,\ldots, i_r})_{i_1,\ldots, i_r\in [n]}$.\footnote{There is also an equivalent vector analog, in which one minimizes $\EE[\|f(Y)-\theta\|^2]$ over vector-valued polynomial estimators $f$ of degree at most $D$ (see e.g.~\cite[Section 1]{SW-22} and~\cite[Section 2.2]{arxiv-version}).}
A useful benchmark is the degree-$0$ case, where $\MMSE_{\leq 0}$ is the error of the trivial estimator $f(Y)=\E[x]$, so that $\MMSE_{\leq 0}=\Var(x)$. Accordingly, $\MMSE_{\leq D}\geq (1-o(1))\MMSE_{\leq 0}$ says that degree-$D$ polynomials have, asymptotically, no better performance than trivial estimation. This is the form of hardness that we establish, with degrees as large as $D=n^{\delta}$ for some constant $\delta>0$.

For the types of average-case problems considered here, degree-$O(\log n)$ polynomials are often already rich enough to match the performance of the best currently known polynomial-time algorithms. Consequently, when one can show that polynomials of super-logarithmic degree fail, this is typically viewed as evidence that no polynomial-time algorithm should succeed. In this sense, the term ``low-degree'' usually refers to degree-$O(\log n)$. More generally, the guiding heuristic is that degree-$D$ polynomials correspond to algorithms with runtime $n^{\widetilde{O}(D)}$ where $\widetilde{O}(\cdot)$ suppresses poly-logarithmic factors in $n$. This heuristic was formalized in work of Hopkins~\cite[Hypothesis 2.1.5, Conjecture 2.2.4]{hopkins-thesis} for detection problems, but it is now known not to hold in full generality~\cite{buhai25false}. Determining the correct general formulation remains an active open problem; see the recent survey~\cite[Section 6]{ld-survey} for a broader discussion of how low-degree lower bounds should be interpreted. Nonetheless, the framework's predictions have consistently matched the conjectured thresholds for natural high-dimensional models such as planted clique~\cite{SW-22} and community detection~\cite{arxiv-version}, and we accordingly view our lower bounds as strong evidence for computational hardness.

\subsection{Our contributions}
\subsubsection{Planted dense subhypergraphs}
We briefly describe our main results for planted dense subhypergraph; the formal statements appear in Section~\ref{sec:results}. Consider the $r$-uniform planted dense subhypergraph model described above, with sparsity $\rho$, background edge probability $q_0$, and planted edge probability $q_1$. The behavior of low-degree estimation differs markedly depending on whether the planted set is \emph{large}, of size $\gg\sqrt{n}$, or \emph{small}, of size $\ll\sqrt{n}$, and we address these two regimes separately. For ease of exposition, we restrict to the polynomial scaling 
\[
\rho=n^{\xi-1}\,,\qquad q_1=n^{-a}\,,\qquad q_0=n^{-b}
\]
with $0<a\leq b$ and $\xi,a,b$ fixed. Our first main result identifies a sharp phase transition when $\xi>1/2$, in terms of the signal-to-noise ratio
    \[
    \SNR:=\frac{e }{(r - 2)!}\frac{(n\rho^2)^{r-1}(q_{1} - q_{0})^{2}}{q_{0}(1 - q_{0})}\,.
    \]
\begin{theorem}[Large planted set, informal; see Theorem~\ref{thm:planted:hypergraph}]\label{thm:intro:large-hypergraph}
Fix $\xi \in (1/2,1)$ and $\eps>0$. There exists a constant $C\equiv C(\eps, r)>0$ such that the following holds for all $n$ sufficiently large.
\begin{enumerate}
        \item[(a)] If $\eps\in (0, 1)$ and $\SNR \leq 1-\eps$ and $D \leq n^{2\xi-1}/C$, then degree-$D$ polynomials cannot improve on trivial estimation: 
\[ \MMSE_{\le D} := \inf_{\substack{f \in \RR[Y] \\ \deg(f) \le D}} \EE[(f(Y)-\theta_1)^2] \ge \rho - C \rho^2\,. \]
   \item[(b)] If $\SNR \geq 1+\eps$ and $b<r-1$, then $\MMSE_{\leq C\log n}=o(\rho)$.
\end{enumerate}
\end{theorem}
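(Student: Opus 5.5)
For part (a) I would use the correlation formulation of the low-degree MMSE: $\MMSE_{\le D}=\E[x^2]-\mathrm{Corr}_{\le D}^2$ with $\mathrm{Corr}_{\le D}=\sup_{\deg f\le D}\E[f(Y)x]/\sqrt{\E f(Y)^2}$ and $x=\theta_1$. Following Schramm--Wein and the cumulant framework of Sohn--Wein, I would expand $f$ in an orthogonal basis: the $q_0$-biased Fourier characters $\chi_\alpha(Y)=\prod_{e\in\alpha}(Y_e-q_0)/\sqrt{q_0(1-q_0)}$, indexed by hyperedge multisets $\alpha$ with $|\alpha|\le D$. Since these are orthonormal only under the null, I would not use them directly. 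Instead I would invoke the bound $\mathrm{Corr}_{\le D}^2\le\sum_{\alpha}\kappa_\alpha^2$, where $\kappa_\alpha$ is the joint cumulant of $x$ and the entries $\{(Y_e-q_0)\}_{e\in\alpha}$, suitably normalized. The unconditional version is insufficient here, since the trivial $\rho^2$ offset forces an $O(\rho^2)$ slack. So I would use the conditional variant announced in the abstract: condition on $\theta_1=1$ and compare with the law where $\theta_1$ is resampled. This yields a bound of the form $\mathrm{Corr}^2\le\rho^2+\rho\sum_{\alpha\ne\emptyset}\tilde\kappa_\alpha^2$, with $\tilde\kappa$ the cumulants under the conditional measure.

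The key step is then combinatorial. Only connected hypergraphs $\alpha$ containing vertex $1$ have nonzero cumulant. For such an $\alpha$ with $m$ hyperedges and vertex set $V(\alpha)$, I would bound the cumulant by a sum over partitions. The dominant term should be $\rho^{|V(\alpha)|-1}(q_1-q_0)^m$, with lower-order corrections controlled by a factor $C^{m}$ and by excess-cycle counting. After normalization, $\tilde\kappa_\alpha^2\lesssim \rho^{2(|V|-1)}(q_1-q_0)^{2m}/(q_0(1-q_0))^m$.

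Summing over $\alpha$, the count of labeled hypergraphs is $n^{|V|-1}$. Hypertrees, where each hyperedge adds $r-1$ new vertices, dominate. I would organize them as rooted trees in which each hyperedge attached at a vertex brings $r-1$ new vertices, giving a factor $(n\rho^2)^{r-1}/(r-2)!$ per hyperedge once the $r-1$ orderings are accounted for. The number of such rooted hypertree shapes with $m$ edges grows like $e^{m}$ up to polynomial factors, by Cayley-type counting. This produces the $e$ in $\SNR$, so the tree sum is at most $\sum_m \mathrm{poly}(m)\,\SNR^m=O(1)$ when $\SNR\le1-\eps$. Hence $\mathrm{Corr}^2\le\rho^2+C\rho\cdot\rho$, which is the stated bound.

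Non-tree $\alpha$ have fewer vertices and each excess costs a factor of roughly $(n\rho^2)^{-1}$ times powers of $D$. The condition $D\le n^{2\xi-1}/C$, i.e. $D\ll n\rho^2$, is exactly what makes the cycle corrections summable. I expect this to be the main obstacle: getting sharp cumulant bounds under the conditional measure, with constants tight enough to keep the exact factor $e/(r-2)!$ while absorbing cycles and partition corrections.

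For part (b) I would exhibit an explicit estimator: a sum over self-avoiding rooted hypertrees of depth about $C\log n$ at vertex $1$, weighting each tree by its $\chi$-product. Its mean and variance computations mirror the above, and its signal-to-noise ratio grows like $\SNR^{m}$, which is large when $\SNR\ge 1+\eps$. The condition $b<r-1$ ensures the average degree $n^{r-1}q_0\to\infty$, so the variance is dominated by the tree terms. I would then threshold the result to get error $o(\rho)$.
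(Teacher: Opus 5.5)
Your route for part (a) breaks down at exactly the step you flag as the main obstacle, and more bookkeeping will not fix it. The Schramm--Wein cumulant bound, whether or not you condition on $\theta_1=1$, cannot reach $\SNR<1$ at degree $D=n^{2\xi-1}/C$ under a sparse Bernoulli prior, because the cumulants are not controlled by the moments. For a hypertree $\alpha$ the incidence graph is a tree, so in the Leonov--Shiryaev formula the only connecting partition keeps all occurrences of each $\theta_v$ in one block. This gives $\kappa_\alpha=(q_1-q_0)^{|\alpha|}\prod_{v}\kappa_{d_v}$, where $d_v$ is the degree of $v$ (the estimand counted at vertex $1$) and $\kappa_d$ is the $d$-th cumulant of $\Ber(\rho)$. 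Now $\kappa_d\approx\rho$ only while $d$ is small compared with $\log(1/\rho)$. The generating function $\log(1-\rho+\rho e^{t})$ has singularities at $t=\log\frac{1-\rho}{\rho}\pm i\pi$, so for $d\gtrsim\log^2(1/\rho)$ the value $|\kappa_d|$ is typically of order $(d-1)!\,\log(1/\rho)^{-d}$, vastly larger than $\rho$. Hence your claim that $\kappa_\alpha$ is the leading moment up to $C^{m}$ corrections fails for trees with a high-degree vertex, and such trees lie within degree $D$. For $r=2$, the trees formed by an edge $\{1,v\}$ and $d-1$ pendant edges at $v$ contribute about $\frac{(n\lambda^2\rho^2)^d}{(d-1)!}\kappa_d^2\approx(d-1)!\,\bigl(\SNR/(e\log^2(1/\rho))\bigr)^{d}$ to the cumulant sum, which blows up once $D\gg\log^2 n$. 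Conditioning on $\theta_1=1$ removes only the root's factor, not $v$'s. Even genuine $C^m$ corrections would move the threshold from $\SNR<1$ to $\SNR<C^{-2}$. Remark~\ref{rem:cumulant} explains the loss: the cumulant bound is the duality bound with an orthonormal family in the noise alone, which discards the part of $\E[f(Y)^2]$ generated by a sparse $\theta$. Two smaller points: your intermediate $\rho\sum\tilde\kappa_\alpha^2$ is off by a factor $\rho$ from your final $\rho^2+C\rho^2$. Also, the paper's conditioning device is different (conditioning on the absence of dense planted subgraphs) and is used only when $\rho\le n^{-1/2}$.

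The missing idea is the Sohn--Wein orthogonal expansion (Proposition~\ref{prop:duality}). Its orthonormal family $\psi_{\beta\gamma}$ also contains the centered signal characters $\bigl((\theta-\rho)/\sqrt{\rho(1-\rho)}\bigr)^{\gamma}$, with edge characters normalized by $q_1$ on $K_S$ and by $q_0$ off it. The certificate \eqref{eq:hypergraph:solution} uses raw moments $c_\beta=\rho^{|V(\beta)\cup\{1\}|}(q_1-q_0)^{|\beta|}$ weighted by $(-\sqrt{\rho/(1-\rho)})^{|\gamma|}$. By \eqref{eq:large:hypergraph:identity} the $\gamma$-sum collapses to $\mathbf{1}\{\theta_{V(\beta)\cup\{1\}}=0\}/(1-\rho)^{|V(\beta)\cup\{1\}|}$, which kills every proper connected $\beta\ni 1$; this is the exact cancellation the cumulant recursion cannot track. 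Lemma~\ref{lem:reduce} handles disconnected $\alpha$. Thus $\|u\|^2\le\hat\rho^{2}\bigl(1+\sum_{\alpha}\hat\rho^{2|V(\alpha)|-2}\lambda^{2|\alpha|}\bigr)$ with $\hat\rho=\rho/\sqrt{1-\rho}$, which involves only moments and no factorial growth. The constant $e/(r-2)!$ then comes from Lavault's hypertree count (Lemma~\ref{lem:rooted:forests}) together with the excess-edge bound of Lemma~\ref{lem:simple:to:tree}. That bound is needed because for $r\ge 3$ a connected hypergraph need not contain a spanning hypertree, which your sketch does not address.

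For part (b), your outline points the right way but is too generic to give $\Corr_{\le C\log n}=1-o(1)$. For an arbitrary family of rooted hypertrees, pairs that share only a short piece at the root, with a single branch point, contribute a constant multiple of $\Gamma$ to $\E[f(Y)^2]$. For $r=2$ and trees whose root is a leaf, pairs sharing only the root edge already contribute about $(e/\SNR)\,\Gamma$, capping the correlation below $1$. This is why the paper uses $\sT_\ell$: root of degree two, with two branches of $\ell=\lceil\frac{4}{\eps}\log(1/\rho)\rceil$ edges each. There Lemma~\ref{lem:branch:points} forces $a\ge\ell$ whenever $b=1$, which makes those terms $O(\ell^{6}\rho)$.
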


Observe that the trivial estimator $f(Y) = \E[\theta_1] = \rho$ has error $\MMSE_{\le 0} = \rho - \rho^2$. Part~(a) says that when $\SNR \le 1 - \eps$, estimating $\theta_1$ with polynomials in $Y$ of degree-$n^{2\xi - 1}$ is asymptotically no better than trivial estimation. Part~(b) says that when $\SNR \ge 1 + \eps$, a degree-$O(\log n)$ polynomial has error $o(\rho)$. Since $\E\theta_1=\rho$, an error of $o(\rho)$ means that the planted set is recovered almost perfectly. In Theorem~\ref{thm:algorithm} below, we turn this polynomial into a polynomial-time algorithm using the color coding trick~\cite{color-coding}. Thus, the sharp transition at $\SNR = 1$ from hard to easy may be viewed as a computational version of the all-or-nothing phenomenon~\cite{macris2020all, niles2020all, mossel2023sharp}. Our theorem holds for all $r \ge 2$ with critical constant $e/(r-2)!$, recovering the result of Sohn and Wein~\cite{arxiv-version} in the graph case $r = 2$. For all $r \geq 2$, the polynomial-time algorithm achieving this threshold appears to be new. See Section~\ref{subsec:res:planted:hypergraph} for further discussion and comparison with prior work.
 
Theorem~\ref{thm:intro:large-hypergraph}-(b) also holds for $\rho \le n^{-1/2}$ (see Theorem~\ref{thm:planted:hypergraph}), but in this regime the following result gives the correct computational scale.
 
\begin{theorem}[Small planted set, informal; see Theorem~\ref{thm:small:planted:hypergraph}]
\label{thm:intro:small-hypergraph}
Fix $\xi\in (0,1/2]$ and $0<a<b<r-1$. There exist constants $C,\delta>0$ depending only on $r,a,b,\xi$ such that the following holds for all $n$ sufficiently large.
\begin{enumerate}
    \item[(a)] If $a>b\xi$, then for $D=n^{\delta}$, $\MMSE_{\leq D}=(1-o(1))\rho$.
    \item[(b)] If $a<b\xi$, then $\MMSE_{\leq C}=o(\rho)$.
    \end{enumerate}
\end{theorem}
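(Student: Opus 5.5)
Part~(b) is an explicit low-degree upper bound; part~(a) is a low-degree lower bound via the cumulant/correlation framework of Schramm and Wein~\cite{SW-22} and Sohn and Wein~\cite{arxiv-version}. I treat (b) first since it is routine.

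\textbf{Part (b).} Here $a<b\xi$ and $\xi\le 1/2$, so the planted set has size $k=n\rho\approx n^{\xi}$. I would use a constant-size subhypergraph count rooted at vertex $1$. Fix a constant $m$ and let $H$ be a rooted $r$-uniform hypergraph on $m$ vertices with $h$ hyperedges and root $1$ (for instance a dense, nearly complete, hypergraph). Define
\[
f(Y)=\sum_{\text{copies }S\ni 1}\prod_{e\in S}(Y_e-q_0).
\]
Under $\theta_1=1$, its mean is about $k^{m-1}(q_1-q_0)^h\approx n^{\xi(m-1)-ah}$. Its noise standard deviation is about $\sqrt{n^{m-1}q_0^h}=n^{(m-1-bh)/2}$.

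\begin{itemize}
\item For the complete $r$-uniform hypergraph, $h\approx m^r/r!$, so $h$ grows faster than $m-1$ and the exponents are eventually dominated by $h$.
\item The ratio of mean to noise has exponent $\xi(m-1)-ah-(m-1)/2+bh/2$. For large $m$ this is at least $h(b/2-a)$ minus lower-order terms.
\end{itemize}

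That exponent bound is not quite the threshold $a<b\xi$. The right test is instead to compare the count on $\theta_1=1$ with the count on $\theta_1=0$, where partial overlaps with the planted set contribute. This is the standard dense-subgraph computation from~\cite{SW-22}. The plan is:
\begin{enumerate}
\item Apply a threshold $\mathbf 1[f(Y)\ge \tau]$, approximated by a constant-degree polynomial, or equivalently choose $f$ normalized so that $\E[(f-\theta_1)^2]=o(\rho)$ by direct variance computation.
\item Show that every partial overlap $S\cap\text{planted}$ contributes at most $o(\rho)$ to the second moment.
\item Check that the condition for step~2 reduces, as $m\to\infty$, to the edge density $h/(m-1)$ satisfying $a<b\xi$ together with $b<r-1$. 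The extra condition $b<r-1$ ensures the hypergraph is dense enough that such $H$ exist with count variance dominated by the full-overlap term.
\end{enumerate}

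\textbf{Part (a).} I would follow the cumulant bound of~\cite{SW-22}:
\[
\mathrm{Corr}_{\le D}^2\le \sum_{\alpha,\,|\alpha|\le D}\kappa_\alpha^2/\alpha!,
\]
where $\alpha$ ranges over multisets of hyperedges and $\kappa_\alpha=\kappa(\theta_1,(Y_e)_{e\in\alpha})$ is a joint cumulant in the centered/orthogonal (Bernoulli-normalized) basis. The steps are:
\begin{enumerate}
\item Show that $\kappa_\alpha=0$ unless $\alpha$ is connected and contains vertex $1$.
\item Bound $|\kappa_\alpha|\lesssim (D\text{-dependent combinatorial factor})\cdot\rho^{|V(\alpha)|}\lambda^{|\alpha|}$, with $\lambda=(q_1-q_0)/\sqrt{q_0(1-q_0)}$, using that cumulants of products of $\theta$'s are bounded by partition counts.
\item Sum over connected hypergraphs with $v$ vertices and $h$ edges containing vertex $1$. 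There are at most $n^{v-1}(\text{const}\cdot v)^{O(h)}$ of them, each with weight $\rho^{2v}\lambda^{2h}$.
\end{enumerate}
The term for a given shape scales like $\rho^2\cdot n^{(v-1)(\xi-\dots)}$. In exponents: $n^{v-1}\rho^{2v}\lambda^{2h}= \rho^2\, n^{(v-1)(2\xi-1)}n^{h(b-2a)}$. For $\xi\le1/2$ I need $(v-1)(1-2\xi)>h(b-2a)$ for every connected shape. Since $v-1\le (r-1)h$ this is not automatic. The condition $a>b\xi$ must instead come from the fact that $\kappa_\alpha$ actually carries the factor $\rho^{v}$ only once, not squared, against $(q_1/q_0)$-type weights. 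So I would instead use the conditional variant mentioned in the abstract: work in the planted basis, condition on $\theta_1=1$, and bound
\[
\E[\theta_1 g]\text{ over orthonormal bases of } Y.
\]
The resulting count is $n^{v-1}\rho^{v}\cdot(q_1/q_0)^h\cdot\ldots$, whose exponent $(v-1)\xi-(a-b)h\dots$ is negative for all dense shapes exactly when $a>b\xi$, with $b<r-1$ controlling sparse shapes.

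\textbf{Main obstacle.} The main obstacle is exactly this step: controlling the combinatorial factors (which grow like $D^{O(h)}$) uniformly up to $D=n^\delta$. I also need to show that the worst shapes are near-complete hypergraphs on $v$ vertices, giving the threshold $a=b\xi$ with a polynomial gap $n^{-c}$ per edge that absorbs the $D^{O(h)}$ losses. My first attempt would be a careful optimization of $(v-1)\xi-h(a-b)$, or the analogous exponent, over $v,h$ subject to connectivity and $h\le\binom{v}{r}$.
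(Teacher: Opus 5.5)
\textbf{Part (a).} You correctly see that the unconditional sum $\sum n^{v-1}\rho^{2v}\lambda^{2h}$ blows up on dense shapes when $b>2a$. When $b<2a$ it converges, and the paper simply invokes Theorem~\ref{thm:planted:hypergraph}-(a) there.

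Your proposed fix, conditioning on $\theta_1=1$, does not address the source of the blow-up. Since $x=\theta_1$ is binary, that conditioning is already built into the numerator. The inflation of $c_\alpha=\rho^{|V(\alpha)\cup\{1\}|}(q_1-q_0)^{|\alpha|}$ comes from the rare event that \emph{all} edges of a dense $\alpha$ are present inside the planted set.

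The missing idea is to truncate on the high-probability event $\mathcal{E}$ that every subgraph of the planted hypergraph with at most $D$ edges satisfies $|\alpha|\le(\xi/a+\delta)|V(\alpha)|$, in the spirit of \cite{DMW-23}. On $\mathcal{E}$, a dense $\alpha$ inside the planted set has at most $m_\alpha=\lfloor(\xi/a+\delta)|V(\alpha)|\rfloor$ edges equal to $1$. Hence its truncated moment is $\lesssim\rho^{v}D^{m_\alpha}q_0^{h}(2q_1/q_0)^{m_\alpha}$. The contributions then peak at density $\approx\xi/a$, not at near-complete shapes. The threshold comes from $n^{v-1}\rho^{2v}(q_1^2/q_0)^{(\xi/a)v}=n^{-1}n^{v(-1+b\xi/a)}$, which is summable exactly when $a>b\xi$. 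By contrast, the quantity $n^{v-1}\rho^{v}(q_1/q_0)^{h}$ you propose grows with $h$ because $b>a$, so it cannot be small on dense shapes.

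Conditioning on $\mathcal{E}$ also destroys independence between components. So your Step~1 (vanishing unless $\alpha$ is connected and contains $1$) and the cumulant formula both fail. The paper replaces them with four ingredients:
\begin{itemize}
\item the conditioned duality bound of Corollary~\ref{cor:orthogonal:expansion:conditioning};
\item the identity of Lemma~\ref{lem:small:hypergraph:key:identity}, by which the $\gamma$-sum weighted by $(-\sqrt{\rho/(1-\rho)})^{|\gamma|}$ localizes onto unions of connected components of $\alpha$;
\item a certificate defined by recursion over such unions;
\item the bound $2\,w!\,3^{w}$ on the resulting combinatorial factor for $\alpha$ with $w$ components.
\end{itemize}
None of these appear in your plan, and your stated ``main obstacle'' (the $D^{O(h)}$ factors) is not the real one.

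\textbf{Part (b).} A nearly complete test hypergraph cannot work. Once $h/(m-1)>\xi/a$, the expected number of copies through vertex $1$ inside the planted set is $n^{\xi(m-1)-ah}\to0$. The count is then typically zero even when $\theta_1=1$, so its correlation with $\theta_1$ tends to $0$ regardless of the mean-to-null-deviation ratio.

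The missing idea is the choice of test graph. Fix constants $k,\ell$ with $1/b<\ell/k<\ell/(k-1)<\xi/a$, which is possible iff $a<b\xi$. Take a \emph{strongly balanced} $r$-uniform hypergraph on $k$ vertices with $\ell$ edges; it exists by \cite{RV-88} for $k$ large, because $b<r-1$ forces $\ell/(k-1)>1/(r-1)$. Each condition plays a specific role:
\begin{itemize}
\item The upper density bound makes planted copies through $1$ abundant.
\item The lower bound gives $\rho^{-1}(q_0/q_1)^{\ell/k}\to0$. Together with balancedness, this shows that in $\E[Y^{\alpha+\beta}]$ every configuration where $\alpha\cup\beta$ is only partially planted is smaller by a factor $o(1)$.
\item Strong balancedness, $|\alpha\cap\beta|/(|V(\alpha)\cap V(\beta)|-1)\le\ell/(k-1)$, makes overlapping pairs contribute a relative factor $n^{-\epsilon_0 v}$ compared with disjoint ones.
\end{itemize}
Your sketch mentions partial overlaps and edge density but never identifies this density window or the balancedness condition. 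That is precisely where $a<b\xi$ and $b<r-1$ enter.
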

Reading parts~$(a)$ and~$(b)$ as before, Theorem~\ref{thm:intro:small-hypergraph} identifies the computational boundary for estimation as the curve $a=b\xi$, resolving a question (for $r=2$) raised by Schramm and Wein~\cite{SW-22} and reiterated by Sohn and Wein~\cite{arxiv-version}. For detection, the analogous low-degree hardness was established by Dhawan, Mao, and Wein~\cite{DMW-23}. The proof relies on a new conditioning argument tailored to low-degree estimation, which we expect to be of independent interest. See Section~\ref{subsec:res:planted:hypergraph} for further discussion and Section~\ref{subsec:proof:conditioning} for an overview of the proof.

\subsubsection{Other models}
For sparse tensor PCA, we prove an analogous sharp phase transition for low-degree estimation in the regime $\rho\gg n^{-1/2}$. The relevant signal-to-noise ratio is $\widetilde{\SNR}=\frac{e(n\rho^2)^{r-1}\la^2}{(r-2)!}$. In particular, when $n^{-1/2}\ll \rho\ll 1$, if $\widetilde{\SNR}<1$, then degree-$n^{\delta}$ polynomials cannot asymptotically improve upon the trivial estimator $f(Y)=\rho$. On the other hand, if $\widetilde{\SNR}>1$, then there exists a polynomial of degree-$O(\log n)$ that achieves near-perfect recovery. As in the planted dense subhypergraph model, this estimator can be converted into a polynomial-time algorithm. For $r=2$, the same low-degree phase transition was established by Sohn and Wein~\cite{arxiv-version}, and matching polynomial-time recovery at the same threshold was achieved by the AMP algorithm of Hajek, Wu, and Xu~\cite{submatrix-amp}. For $r \ge 3$, the threshold was predicted heuristically, via an AMP analysis, by Corinzia et al.~\cite{corinzia22statistical}; our contribution gives the first rigorous evidence for $r\geq 3$, establishing both low-degree hardness below the threshold and a polynomial-time recovery algorithm above it.

For tensor PCA with general prior ($r\geq 3$), the prediction is qualitatively different: it is expected that there is a smooth tradeoff between the degree $D$ and the signal strength for weak recovery, rather than a sharp threshold. Earlier work~\cite{sos-detect, hopkins-thesis, ld-notes, kunisky24tensor} predicts that detection requires $\lambda\asymp n^{-r/4}D^{-(r-2)/4}$. We rigorously establish the corresponding low-degree estimation lower bound for a broad class of priors $\pi$, which includes all sub-exponential distributions: if $\lambda\lesssim n^{-r/4}D^{-(r-2)/4}$, then degree-$n^{\delta}$ polynomials cannot asymptotically improve upon trivial estimation. For the Rademacher prior, the analogous detection lower bound was proved by Kunisky, Wein, and Bandeira~\cite{ld-notes}, and for the spherical prior, the corresponding estimation lower bound was proved by Kunisky, Moore, and Wein~\cite{kunisky24tensor} (among other results). Our work treats a general class of independent priors under mild moment assumptions, via a different proof method based on cumulant bound of Schramm and Wein~\cite{SW-22}.

\section{Main results}\label{sec:results}
In all the models we consider, the observation is an order-$r$ tensor $Y \in (\R^n)^{\otimes r}$ and the estimand is a scalar $x \in \R$; the specific choice of $x$ will be given for each model in the subsections that follow. Recall from~\eqref{eq:def:low:MMSE} that $\MMSE_{\leq D}$ is the smallest mean-squared error achievable by polynomials in $Y$ of degree at most $D$. It is convenient to work with the \emph{low-degree correlation}. Let $\R_D[Y]$ denote the space of all (multivariate) polynomials in $Y$ with degree at most $D$, i.e.  $f \in \R[Y]$ with $\deg(f) \leq D$, and define
\begin{equation}\label{eq:def:Corr}
	\Corr_{\leq D} := \sup_{f \in \R_D[Y]} \frac{\E[f(Y) \cdot x]}{\sqrt{\E[f(Y)^2]\E[x^2]}} \in [0, 1]\,.
\end{equation}
The two quantities are related by the following identity.

\begin{fact}[{\cite[Fact~1.1]{SW-22}}]\label{fact:corr:mmse}
$\MMSE_{\leq D} = (1 - \Corr_{\leq D}^2)\E[x^2]$.
\end{fact}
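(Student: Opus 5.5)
The identity is a projection computation in the finite-dimensional space $V := \R_D[Y]$, equipped with the inner product $\langle f,g\rangle = \E[f(Y)g(Y)]$. I will assume the moments of $x$ and of the entries of $Y$ up to degree $2D$ are finite. I will also quotient out polynomials with $\E[f^2]=0$, which changes neither side.

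Let $\hat f$ be the orthogonal projection of $x$ onto $V$ in $L^2$. Concretely, $\hat f$ is the unique element of $V$ with $\E[(x-\hat f)g]=0$ for all $g\in V$; it exists by finite-dimensionality. The Pythagorean theorem gives
\[
\E[(f-x)^2]=\E[(x-\hat f)^2]+\E[(\hat f-f)^2] \qquad \text{for } f\in V,
\]
so the infimum defining $\MMSE_{\le D}$ is attained at $\hat f$. Orthogonality also gives
\[
\E[(x-\hat f)^2]=\E[x^2]-\E[\hat f^2], \qquad \E[\hat f x]=\E[\hat f^2].
\]
Hence $\MMSE_{\le D}=\E[x^2]-\E[\hat f^2]$.

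Next I compute the correlation. For any $f\in V$ we have $\E[fx]=\E[f\hat f]$. By Cauchy--Schwarz,
\[
\E[fx]\le \sqrt{\E[f^2]}\,\sqrt{\E[\hat f^2]},
\]
with equality at $f=\hat f$. Therefore
\[
\Corr_{\le D}=\frac{\sqrt{\E[\hat f^2]}}{\sqrt{\E[x^2]}}.
\]
This is nonnegative because the supremum can be taken over $\pm f$, and the constant polynomial $f\equiv 1$ lies in $V$. Squaring gives $\E[\hat f^2]=\Corr_{\le D}^2\,\E[x^2]$. Substituting into the previous display yields $\MMSE_{\le D}=(1-\Corr_{\le D}^2)\E[x^2]$.

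The only point needing care is the degenerate case. If $\E[\hat f^2]=0$, the supremum is $0$ and the formula still holds. One also needs the convention that $0/0$ terms are excluded from the supremum; this is harmless since $f\equiv1$ is always available when $\E[x^2]>0$. The case $\E[x^2]=0$ is trivial, with both sides equal to $0$. Apart from this, there is no real obstacle.
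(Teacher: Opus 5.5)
Your proof is correct and follows the same route: the orthogonal projection of $x$ onto $\R_D[Y]$ attains both the infimum defining $\MMSE_{\leq D}$ and the supremum defining $\Corr_{\leq D}$. That is the standard argument behind \cite[Fact~1.1]{SW-22}, which the paper cites without proof and itself uses in the proof of Proposition~\ref{prop:duality}, where the supremum is attained at the projection $f_\star$; your treatment of the degenerate cases and the finite-moment assumption is also fine.
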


For our lower bounds, we rule out \emph{weak recovery}. We say that weak recovery is achievable by degree-$D$ polynomials if $\MMSE_{\leq D} \leq (1 - \Omega(1))\MMSE_{\leq 0}$. Since $\MMSE_{\leq 0} = \Var(x)$, this means that there exists a polynomial of degree at most $D$ that improves on the trivial estimator $f(Y) = \E[x]$ by a constant fraction. By Fact~\ref{fact:corr:mmse}, weak recovery implies $\Corr_{\leq D} = \Omega(1)$, so $\Corr_{\leq D} = o(1)$ rules out weak recovery. Our hardness results establish the latter for degrees as large as $D = n^{\delta}$ for some constant $\delta > 0$.

For our upper bounds, we establish the stronger property of \emph{strong recovery}: there exists a polynomial $f \in \R_D[Y]$ with $\E[(f(Y) - x)^2] = o(\Var(x))$, equivalently $\MMSE_{\leq D} = o(\MMSE_{\leq 0})$, or by Fact~\ref{fact:corr:mmse}, $\Corr_{\leq D} = 1 - o(1)$. We achieve this with polynomials of degree $D = O(\log n)$.
\subsection{Planted dense subhypergraph}\label{subsec:res:planted:hypergraph}
We briefly recall the planted dense subhypergraph model introduced in Section~\ref{sec:intro}. Given parameters $\rho\in (0,1)$ and $0<q_0< q_1\leq 1$\footnote{The assumption $q_0 <q_1$ is without loss of generality, since the complement hypergraph $Y' = \mathbf{1} - Y$ is planted dense subhypergraph  with $(q_0, q_1)$ replaced by $(1-q_0, 1-q_1)$.} the latent signal $\theta = (\theta_i)_{i \in [n]}$ has i.i.d. $\Ber(\rho)$ entries. Conditioned on $\theta$, independently for each hyperedge $e=\{i_1,\ldots,i_r\}$ of distinct vertices, we draw $  Y_{e}\sim \Ber(q_0+(q_1-q_0)\prod_{j=1}^{r}\theta_{i_j})$. The goal is to estimate $x = \theta_1$. Define
\[
	\lambda := \frac{q_1 - q_0}{\sqrt{q_0(1 - q_0)}}\,.
\]
\begin{theorem}\label{thm:planted:hypergraph}
Consider the $r$-uniform planted dense subhypergraph model. For any $\eps>0$ there is a constant $C = C(\eps, r) > 0$ such that the following holds.
\begin{itemize}
    \item[(a)] If $\eps \in (0, 1)$ and
    \[
        \frac{en^{r - 1} \rho^{2r - 2} \lambda^{2}}{(r - 2)!(1 - \rho)^{r - 1}} \leq 1 - \eps\,, \quad D^{r - 1} \leq \frac{1}{C\lambda^{2}}\,,
    \]
    then
    \[
        \Corr_{\leq D} \leq C\sqrt{\frac{\rho}{1-\rho}}\,.
    \]
    \item[(b)] If
    \[
        \rho = \omega\left(n^{-1} (\log n)^{6 + 3/(r-1)}\right), \quad \rho = o\left((\log{n})^{-6(r-1) - 3}\right), \quad q_{0} = \omega\left(n^{-(r-1)}(\log{n})^{12(r-1)+6}\right)\,,
    \]
    and
    \[
        \frac{en^{r - 1} \rho^{2r - 2} \lambda^{2}}{(r - 2)!} \geq 1 + \eps\,,
    \]
    then
    \[
        \Corr_{\leq C\log n} = 1 - o(1) \quad \text{as $n \rightarrow \infty$}.
    \]
\end{itemize}
\end{theorem}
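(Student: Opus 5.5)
For part (a) I will use the cumulant framework of Schramm and Wein~\cite{SW-22}, in the form adapted by Sohn and Wein~\cite{arxiv-version}. Work with the centered and normalized entries $\bar Y_e := (Y_e - q_0)/\sqrt{q_0(1-q_0)}$. Polynomials of degree at most $D$ in $Y$ are spanned by the multilinear monomials $\bar Y^\alpha=\prod_{e\in\alpha}\bar Y_e$, where $\alpha$ ranges over hyperedge multisets (effectively sets, since the entries are Boolean) with $|\alpha|\le D$. The Schramm--Wein bound then gives
\[
\Corr_{\le D}^2 \le \frac{1}{\E[x^2]}\sum_{|\alpha|\le D}\kappa_\alpha^2 ,
\]
where $\kappa_\alpha$ is the joint cumulant of $x=\theta_1$ together with the entries $\{\bar Y_e : e\in\alpha\}$. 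Conditionally on $\theta$ we have $\E[\bar Y_e\mid\theta]=\lambda\prod_{i\in e}\theta_i$, and the noise is independent across hyperedges. Hence $\kappa_\alpha$ is a joint cumulant of the variables $\theta_1$ and $\lambda\theta^{e}$, $e\in\alpha$. It vanishes unless the hypergraph $\alpha\cup\{1\}$ is connected. Using the standard cluster-expansion bound for cumulants of monomials in i.i.d.\ Bernoulli variables, I expect
\[
|\kappa_\alpha| \le \lambda^{|\alpha|}\rho^{|V(\alpha)\cup\{1\}|}\,(C|\alpha|)^{c(|\alpha|-\text{tree excess})}.
\]

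The core of the argument is the combinatorial sum. Group the $\alpha$'s by their isomorphism class of rooted connected hypergraphs on $v$ vertices with $m$ edges. A given class has about $n^{v-1}$ embeddings (the root is pinned at vertex $1$), so its contribution is of order $n^{v-1}\lambda^{2m}\rho^{2v}/\rho$. Among connected $r$-uniform hypergraphs, $v\le 1+m(r-1)$, with equality exactly for hypertrees.

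For hypertrees the contribution becomes $\rho\,[n^{r-1}\rho^{2r-2}\lambda^2]^m$ times the number of rooted labeled hypertree shapes with $m$ edges, normalized by automorphisms. The exponential generating count of rooted $r$-uniform hypertrees has radius of convergence $(r-2)!/e$ in the variable $n^{r-1}\rho^{2r-2}\lambda^2$. This is where the constant $e/(r-2)!$ comes from: it is a Cayley-type count combined with Stirling's formula. The $(1-\rho)^{r-1}$ factor comes from taking the exact Bernoulli cumulants rather than moments. So under the hypothesis $\le 1-\eps$ the hypertree sum is a convergent geometric series, bounded by $C\rho/(1-\rho)$ after dividing by $\E[x^2]=\rho$. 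Hypergraphs with positive excess lose a factor $1/(n\rho^2)$ per unit of excess but gain combinatorial factors of at most $D^{r-1}$ or so. The assumption $D^{r-1}\le 1/(C\lambda^2)$, combined with the SNR condition, is what makes these terms summable: each excess edge costs $\lambda^2$ without a compensating $n^{r-1}\rho^{2r-2}$. \textbf{The main obstacle} is getting the cumulant bound sharp enough that the hypertree term carries exactly the $e/(r-2)!$ constant with no loss. I would first try to show that for hypertrees the cumulant equals $\lambda^m\rho^{v}(1-\rho)^{v-1}$ up to $1+o(1)$, by direct Möbius inversion along the tree structure.

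For part (b) I will construct an explicit estimator $f(Y)=\sum_{T}\prod_{e\in T}\bar Y_e$, summed over self-avoiding rooted hypertrees $T$ of a fixed shape family (all hypertrees rooted at $1$ with $m=C\log n$ edges, weighted by their tree count), and normalize it. I then compute $\E[f\theta_1]$ and $\E[f^2]$. In the second moment, pairs of trees overlapping only at the root dominate. These give $(\E f\theta_1)^2/\rho$ times $1+o(1)$ exactly when the SNR exceeds $1+\eps$, because the signal grows like $(1+\eps)^m$ and beats the variance contributions from partially overlapping pairs. The polylog conditions on $\rho$ and $q_0$ control the overlap terms and cycles in $\theta$-space. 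With $m=C\log n$ large this yields $\Corr=1-o(1)$. The variance bookkeeping over overlap patterns is routine but long.
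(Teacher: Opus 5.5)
Both halves of your plan have a genuine gap.

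\textbf{Part (a).} The Schramm--Wein cumulant bound is intrinsically too lossy here, however carefully you estimate the cumulants. Joint cumulants of Bernoulli monomials blow up at high-degree vertices even when $\alpha$ is a hypertree. So your ``no loss at zero excess'' bound is false, and so is the guess $\kappa_\alpha\approx\lambda^m\rho^v(1-\rho)^{v-1}$.

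Take the star $\alpha$ with hyperedges $\{1\}\cup A_j$, $j\le m$, where the $A_j$ are disjoint $(r-1)$-sets. Conditioning on $\theta_1$ (law of total cumulance) gives exactly $\kappa_\alpha=(\lambda\rho^{r-1})^m\kappa_{m+1}(\Ber(\rho))$. Here $\kappa_{m+1}(\Ber(\rho))=\rho-(2^m-1)\rho^2+\cdots$, and $\limsup_j(|\kappa_j(\Ber(\rho))|/j!)^{1/j}=1/R$ with $R=|\log\frac{1-\rho}{\rho}+i\pi|\approx\log(1/\rho)$, the radius of convergence of $\log(1-\rho+\rho e^t)$. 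The stars alone therefore contribute roughly $\rho^{-1}\sum_{m\le D}m!\,(c_r\SNR/R^2)^m$ to $\E[x^2]^{-1}\sum_\alpha\kappa_\alpha^2$. This is unbounded once $D\gg\log^2(1/\rho)$, yet at $\SNR=1-\eps$ the theorem allows $D$ up to $(C\lambda^2)^{-1/(r-1)}\asymp n\rho^2$. Separately, the Gaussian bound does not transfer verbatim to Bernoulli data: on planted edges the conditional variance of $\bar Y_e$ is $q_1(1-q_1)/(q_0(1-q_0))$, which can be below $1$.

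The paper avoids cumulants altogether. It applies Proposition~\ref{prop:duality} with an enlarged family $\psi_{\beta\gamma}(Y,\theta)$ that contains signal factors $((\theta-\rho)/\sqrt{\rho(1-\rho)})^\gamma$ and normalizes planted edges by $q_1$. Its certificate $u_{\beta\gamma}=(-\sqrt{\rho/(1-\rho)})^{|\gamma|}(q_0(1-q_0))^{-|\beta|/2}c_\beta$ is built from raw moments. The alternating $\gamma$-sum equals $\mathbf{1}\{\theta_{V(\beta)\cup\{1\}}=0\}(1-\rho)^{-|V(\beta)\cup\{1\}|}$, which removes every proper connected $\beta\lneq\alpha$ and the $q_1$-normalization at once. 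This yields $\|u\|^2=\frac{\rho^2}{1-\rho}+\sum_{\alpha\ \mathrm{good}}\lambda^{2|\alpha|}\rho^{2|V(\alpha)|}(1-\rho)^{-|V(\alpha)|}$, and this is where $(1-\rho)^{r-1}$ comes from, not from exact Bernoulli cumulants. The remaining count also needs Lemma~\ref{lem:simple:to:tree}, because connected hypergraphs need not contain spanning hypertrees.

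\textbf{Part (b).} The choice of tree family is not cosmetic. If $f$ sums over all rooted hypertrees with $m$ edges, a constant fraction of them have a root-incident edge $e$ whose other $r-1$ vertices are leaves. Pairs sharing only such an $e$ have a mixed moment larger than root-disjoint pairs by a factor $\gtrsim(\lambda^2\rho^{2r-2})^{-1}$, and they form a $\Theta(n^{-(r-1)})$ fraction of all pairs. At $\SNR=1+\eps$ they therefore contribute $\Theta(\Gamma)$. Since all mixed moments are nonnegative, $\E[f^2]\ge(1+c)\Gamma$, and the correlation stays bounded away from $1$. The $(1+\eps)^{-a}$ gain you invoke only suppresses large overlaps, whereas here $a=1$.

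The paper uses the family $\sT_\ell$ of Definition~\ref{def:special:trees} precisely to obtain Lemma~\ref{lem:branch:points}. In $\sT_\ell$ the root has degree two, and each root edge feeds through a single vertex into its own $\ell$-edge subtree. An overlap with one branch point must then contain a whole $\ell$-edge subtree and costs $(1-\eps/4)^\ell\le\rho$. Overlaps with at least two branch points gain a factor $(C\ell^3(\rho+\zeta))^{1/(r-1)}=o(1)$.
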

The upper bound in Theorem~\ref{thm:planted:hypergraph}-(b) can be converted into a polynomial-time recovery algorithm via the color coding trick~\cite{color-coding, HS-bayesian, mao2024testing}; see Section~\ref{sec:polytime:algorithm} for the explicit algorithm and its recovery guarantee.

\begin{theorem}\label{thm:small:planted:hypergraph}
Consider the $r$-uniform planted dense subhypergraph model with sparsity $\rho=n^{\xi-1}$ for $0<\xi\leq 1/2$ and $q_1=n^{-a}$ and $q_0=n^{-b}$ where $0<a<b$. There exist constants $C, \delta_1,\delta_2 > 0$ that depend only on $r, a,b,\xi$ such that, as $n\to\infty$,
\begin{enumerate}
    \item[(a)] If $a>b\xi$, then for $D\leq n^{\delta_1}$ it holds that $\Corr_{\leq D} \leq C n^{-\delta_2}$. 
    \item[(b)] If $a<b\xi$ and $b<r-1$, then $\Corr_{\leq C}=1-o(1)$.
    \end{enumerate}
\end{theorem}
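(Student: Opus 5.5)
Part~(b) is proved with an explicit constant-degree estimator, and part~(a) with the cumulant bound of~\cite{SW-22}, applied conditionally.

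\textbf{Part (b).} Vertex $1$ lies in the planted set of size $k\approx n^{\xi}$, and the condition $a<b\xi$ is what makes a fixed-size hypertree rooted at $1$ informative. Take a connected $r$-uniform hypergraph $H$ with $v$ vertices and $m$ hyperedges containing vertex $1$. Inside the planted set it appears about $k^{v-1}q_1^{m}$ times; in the background it appears about $n^{v-1}q_0^{m}$ times. I will pick $H$ to be a dense, constant-size hypergraph whose edge/vertex ratio $m/(v-1)$ is tuned so that
\[
k^{v-1}q_1^m \gg \sqrt{n^{v-1}q_0^m}\quad\text{and}\quad k^{v-1}q_1^m\to\infty.
\]
In exponents this reads $(\xi)(v-1)-am > ((v-1)-bm)/2$. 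Choosing $m/(v-1)$ slightly above $1/b$, so that background copies through vertex $1$ are rare, the condition reduces to $a<b\xi$. The hypothesis $b<r-1$ guarantees that such hypergraphs exist.

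The estimator is the (centered) count of copies of $H$ through vertex $1$, followed by a constant-degree polynomial threshold. I then verify by first and second moment computations that the planted and null counts separate with variance negligible compared with the squared gap. Together with a standard truncation via Fact~\ref{fact:corr:mmse}, this gives $\Corr_{\le C}=1-o(1)$.

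\textbf{Part (a).} I bound $\Corr_{\le D}$ via the Schramm--Wein cumulant bound. After centering and normalizing edges,
\[
\Corr_{\le D}^2\le \sum_{|\alpha|\le D}\kappa_\alpha^2/\E[x^2],
\]
where $\kappa_\alpha$ is the joint cumulant of $x$ and $\{Y_e\}_{e\in\alpha}$. The unconditional cumulant bound fails here, because when $\rho\ll n^{-1/2}$ dense subgraphs have inflated moments, which is exactly the issue noted in the introduction. So I will use the conditional variant.

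First, I define a high-probability event $\mathcal{E}$ on $\theta$: the planted set has size $(1\pm o(1))k$. Next, and more importantly, I condition on a good event for $Y$ under which no small subhypergraph containing vertex $1$ is much denser than the threshold density $1/b$. Concretely, every set $S$ with $|S|\le D^{O(1)}$ spans at most $(|S|-1)/b'$ hyperedges inside the planted part, for some $b'$ between $a/\xi$ and $b$. The assumption $a>b\xi$ makes such an event hold with probability $1-n^{-\Omega(1)}$.

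The reduction to this conditioned model goes as follows. Polynomial correlation in the conditioned model controls the original model up to an additive error. This error is bounded by $\sqrt{\Pr(\mathcal{E}^c)}$ times a degree-$D$ hypercontractivity factor, which is fine for $D=n^{\delta_1}$. In the conditioned model, I bound the cumulants $\kappa_\alpha$ for connected $\alpha\ni 1$. The main factor is $(\rho q_1/q_0\cdot\text{stuff})$ per vertex and edge, giving a bound of the form $n^{-\eta(|V(\alpha)|-1)}$. The conditioning removes the contributions of overly dense $\alpha$.

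Summing over shapes $\alpha$ with at most $D$ edges, with $n^{|V|-1}$ labelings and $D^{O(|\alpha|)}$ shapes, the series is dominated by its first term. This yields $\Corr_{\le D}\le Cn^{-\delta_2}$.

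\textbf{Main obstacle.} The hardest step is the cumulant estimate under conditioning. Conditioning destroys independence across edges, so I would first try to bound cumulants through a cluster-expansion, or Möbius, formula over partitions of $\alpha$. The idea is to pay for each dependency with the density constraint, showing that the combinatorial factors remain only $D^{O(|\alpha|)}$.
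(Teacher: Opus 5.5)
\textbf{Part (a).} The central step fails as stated. The Schramm--Wein inequality $\Corr_{\le D}^2\le\sum_\alpha\kappa_\alpha^2/(\cdots)$ does not hold for arbitrary joint laws. It comes from Bessel's inequality for a product-form orthonormal family of a noise that is independent of the signal (cf.\ Remark~\ref{rem:cumulant}). Under $\P(\cdot\mid\mathcal{E})$, where $\mathcal{E}$ depends on the realized edges inside $K_S$, the edges of $K_S$ are no longer conditionally independent given $\theta$. So there is no cumulant inequality for your cluster expansion to feed into, and bounding conditioned cumulants does not fix this.

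The missing idea is to never condition the denominator. Split $\E[fx]$ according to $\mathcal{E}$. Control the bad part by Cauchy--Schwarz with the small indicator placed on $x$: $|\E[fx\mathbf{1}_{\mathcal{E}^c}]|\le\sqrt{\E[f^2]}\sqrt{\E[x^2\mathbf{1}_{\mathcal{E}^c}]}$. Lower bound $\E[f^2]\ge\E[\mathbf{1}_{\mathcal{E}}f^2]\ge\|\widetilde{M}\hat f\|^2$ by Bessel's inequality for the \emph{unconditioned} family $\psi_{\beta\gamma}$, applied to $\mathbf{1}_{\mathcal{E}}f$ (Proposition~\ref{prop:orthogonal:expansion:conditioning}). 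Then any $u$ with $\widetilde{M}^\top u=\widetilde{c}$ certifies the bound.

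Your hypercontractivity step is unnecessary and would in fact fail: for $q_0$-biased bits the degree-$D$ constant is $q_0^{-\Theta(D)}$, which is hopeless for $D=n^{\delta_1}$. What you actually need is $\P(\mathcal{E}^c\mid\theta_1=1)\to0$, uniformly under conditioning on small sets of coordinates of $\theta$ (Lemma~\ref{lem:event:bound}). Your choice of event is of the right type.

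A second gap: you sum only over connected $\alpha\ni 1$. Conditioning couples vertex-disjoint pieces, so disconnected $\alpha$ and $\alpha\not\ni 1$ no longer drop out, and Lemma~\ref{lem:reduce} is unavailable. The paper handles this with the ansatz $u_{\beta\gamma}=(-\sqrt{\rho/(1-\rho)})^{|\gamma|}(q_0(1-q_0))^{-|\beta|/2}\mathcal{F}(\beta)$:
\begin{itemize}
\item the $\gamma$-sum localizes $\widetilde{M}$ onto unions of connected components (Lemma~\ref{lem:small:hypergraph:key:identity});
\item this gives a recursion with $|\mathcal{F}(\alpha)|\le 2\,w(\alpha)!\,3^{w(\alpha)}\rho^{\mathbf{1}_{1\notin V(\alpha)}}s_\alpha$, with $d_\alpha$ in place of $s_\alpha$ for dense $\alpha$;
\item the count then runs over all hypergraphs, connected or not, because under the sparsity cap each vertex costs roughly $n^{-(1-b\xi/a)}$. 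This is in the regime $b\ge 2a$; the case $2a>b$ follows from Theorem~\ref{thm:planted:hypergraph}.
\end{itemize}

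\textbf{Part (b).} Your density window $1/b<m/(v-1)<\xi/a$ is the right one. However, a global density condition on $H$ is not enough: $H$ must be strongly balanced.

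For a counterexample, let $H$ be a large complete $r$-uniform hypergraph through vertex $1$ with a long hyperpath attached, tuned so the overall ratio falls in the window. Given $\theta_1=1$, the expected number of copies of the complete part through $1$ (planted or not) is $n^{-\Omega(1)}$. Hence the count $N$ of copies of $H$ vanishes w.h.p., even though $\E[N\mid\theta_1=1]\to\infty$. Consequently $\Corr(N,\theta_1)\le\sqrt{\P(N>0\mid\theta_1=1)}\to 0$, and thresholding cannot repair this (it is not a polynomial anyway).

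The paper picks $\alpha_\star$ strongly balanced via Ruci\'nski--Vince (Fact~\ref{fact:strongly:balanced:hypergraph}). This is where $b<r-1$ enters, through $\ell/(k-1)>1/b>1/(r-1)$. Balancedness is exactly what controls the overlap factor $\rho^{-|V(\alpha)\cap V(\beta)|+1}q_1^{-|\alpha\cap\beta|}$ and the partially planted configurations in Lemma~\ref{lem:small:hypergraph:upper:bound:estimate:B}. With that choice, the raw count $\sum_{\alpha\in\sB}Y^\alpha$ already has correlation $1-o(1)$.
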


\paragraph{Discussion}
The planted dense subhypergraph problem has extensive literature. For $r=2$, it reduces to planted dense subgraph model, which has been studied from both statistical and computational perspectives. Statistical limits, both the information-theoretic threshold and performance of the maximum likelihood estimator, are established in~\cite{AV-info, VA-info, CX-info, hajek2017recovery}. On the computational side, hardness has been studied through low-degree polynomial lower bounds for detection~\cite{DMW-23} and for estimation~\cite{SW-22, arxiv-version}, and through average-case reductions~\cite{bresler-2020, bresler-2023}. There are also positive algorithmic results~\cite{log-density,ames-convex,CX-info,one-community-sparse}. The special case $r=2, q_1=1$ is the planted clique problem~\cite{jerrum, alon-clique, clique-e, sq-clique, sos-clique, GZ-clique}, a canonical benchmark for average-case hardness and a base for establishing reduction-based hardness in many inference problems~\cite{BR-reduction, BBH-reduction, bresler-2020}. For $r \geq 3$, relatively less is known. The special case $q_1 = 1$ is the hypergraphic planted clique problem; we refer to~\cite{luo20a} for a discussion of open problems related to its average-case hardness. For general $q_1 < 1$, statistical limits for weak recovery were determined by Yuan and Shang~\cite{yuan-2021}.

On the computational side, Luo and Zhang~\cite{LZ-tensor} studied the regime $\rho\geq n^{-1/2}$ from both directions: on the
hardness side~\cite[Theorem~20]{LZ-tensor} rules out low-degree
estimation under the condition $\SNR \leq n^{-\delta}$ for a
fixed $\delta > 0$ as $n\to\infty$, and on the algorithmic side~\cite[Proposition~1]{LZ-tensor} proves that an aggregated-SVD recovers the planted set
$\{i \in [n] : \theta_i = 1\}$ \emph{exactly} once $\SNR \geq c$ for some
constant $c > 0$.\footnote{Condition~(23) of \cite[Proposition~1]{LZ-tensor}
reads $\limsup_{n\to\infty}\log_n \SNR \geq 0$ in our normalization, but should
be read as $\SNR \geq c$ for a constant $c > 0$ based on its proof.}
Theorem~\ref{thm:planted:hypergraph} improves their result by determining the threshold at constant-level:
estimation is low-degree hard for $\SNR < 1$, with a matching upper bound for
$\SNR > 1$, locating the critical point with constant
$e/(r-2)!$. While preparing this paper, we became aware that the same value was predicted by the heuristic
AMP analysis of Corinzia, Penna, Szpankowski, and
Buhmann~\cite{corinzia22statistical} (see the discussion after Theorem~\ref{thm:sparse:PCA} for sparse tensor PCA). Theorem~\ref{thm:algorithm} in Section~\ref{subsec:res:efficient:algorithm} lifts the upper bound in part~(b) to a
polynomial-time algorithm via a color-coding trick~\cite{color-coding,
HS-bayesian, mao2024testing}, achieving almost-exact recovery, which appears to be the first to attain this sharp threshold for all
$r \geq 2$. Whether its guarantee can be strengthened to exact recovery is
an interesting question.

We expect the degree condition $D^{r-1} \leq 1/(C\lambda^2)$ in Theorem~\ref{thm:planted:hypergraph}-(a) to be optimal. A brute-force search over $\ell$-element subsets of $[n]$ with $\ell =\widetilde{O}(\lambda^{-2/(r-1)})$ yields a recovery algorithm of $\theta_1$ with runtime $\exp(\widetilde{O}(\lambda^{-2/(r-1)}))$ (where $\widetilde{O}$ hides the polylogarithmic factors), which is an adaptation of the subexponential time algorithms for the spiked Wigner model~\cite{subexp-sparse} and sparse tensor PCA~\cite{choo2021complexity}. This matches our lower bound under the degree-runtime heuristic (Section~\ref{subsec:intro:low-degree}). See the discussion following Theorem~\ref{thm:sparse:PCA} for the analogous matching in sparse tensor PCA.

In the regime $\rho \leq n^{-1/2}$, the best known algorithm for the planted
dense subgraph ($r = 2$) is the caterpillar-counting algorithm of Bhaskara et
al.~\cite{log-density}, which succeeds for $a < b\xi$. The matching hardness,
however, was only partially understood: Schramm and Wein~\cite{SW-22}
established low-degree estimation hardness in the region $a > b/2$,
leaving open the gap $b/2>a >b \xi$. Whether estimation hardness extends
throughout $a > b\xi$, or whether a better algorithm could succeed up to
$a < b/2$, was raised as an open question in~\cite{SW-22} and reiterated by Sohn
and Wein~\cite{arxiv-version}. For the corresponding detection task, Dhawan,
Mao, and Wein~\cite{DMW-23} had established hardness throughout $a > b\xi$; this
does not settle the estimation question, however, as low-degree estimation
hardness does not formally follow from its detection counterpart and is
generally more delicate to establish. In fact, recent work by Tang, Han, and Zhang~\cite{tang2026detection} shows that in a certain tensor cumulant inference problem, there are regimes in which computationally efficient estimation is possible while the detection task is low-degree hard.

Theorem~\ref{thm:small:planted:hypergraph} resolves this question, identifying
the estimation boundary as $a = b\xi$: part~(a) shows low-degree estimation
hardness for $a > b\xi$, and part~(b) establishes the existence of a matching
low-degree estimator for $a < b\xi$. The proof of part~(a) relies on a
conditioning argument adapted to low-degree estimation. Conditioning arguments
were previously developed for detection~\cite{fp, grp-testing, DMW-23}; we develop the analogous technique for estimation hardness, which may be
useful in other models where rare events inflate the unconditional moments. We remark that our proof can track the precise dependence on $\rho, q_0, q_1$, giving a
quantitative hardness statement
(see Remark~\ref{rem:small:hypergraph:quantitative}).

We also note that the computational boundary $a = b\xi$ in Theorem~\ref{thm:small:planted:hypergraph} admits an interpretation in terms of the modified Kesten--Stigum threshold introduced by Chin, Mossel, Sohn, and Wein~\cite{CMSW25} for the sparse stochastic block model (SBM)~\cite{decelle}; see \cite{abbe-survey-sbm} for a survey of the model. While the planted dense subgraph model has a single planted community of size roughly $n\rho = n^\xi$, replacing this with $q = 1/\rho = n^{1-\xi}$ symmetric communities of the same size, where each within-community (resp. between-community) edge is drawn with probability $q_1$ (resp. $q_0$), yields the analogous sparse SBM. In the regime $q\to\infty$, the two relevant parameters are the average degree of the entire graph $d \equiv n q_0 = n^{1-b}$ and the average degree of each community $d\lambda_1 \equiv n^\xi q_1 = n^{\xi - a}$ (up to a $1+o(1)$ factor). The classical Kesten--Stigum threshold $d\lambda_1^2 = 1$~\cite{KestenStigum:66} governs efficient weak recovery in the SBM with a constant number of communities~\cite{AS-acyclic, HM-tree, opt-bot, arxiv-version}. In the sparse regime $q \gg \sqrt{n}$, \cite{CMSW25} showed that efficient weak recovery is possible above a modified threshold in which the exponent $2$ is replaced by $1/\chi$, where $\chi$ is defined by $q = n^\chi$, so $\chi = 1 - \xi$. Namely, this threshold is given by $d\lambda_1^{1/\chi}=1$ (hiding $\log d$ factors), or equivalently $ d\lambda_1 = d^\xi$. Substituting our parameters yields $n^{\xi - a} = n^{\xi(1 - b)}$, i.e. $a = b\xi$, recovering the threshold curve of Theorem~\ref{thm:small:planted:hypergraph}. The corresponding low-degree estimation lower bound for the SBM was established by Carpentier, Giancola, Giraud, and Verzelen using an almost-orthogonal basis approach~\cite{carpentier2025low} (see also~\cite{carpentier2025phase}), which is a different method from the conditioning argument used here. This correspondence was one of the motivations for our study of the $\rho\leq n^{-1/2}$ regime.

\subsection{Sparse tensor PCA}
We first introduce a symmetric version of the sparse tensor PCA model defined in Section~\ref{sec:intro}, in which the Gaussian noise tensor is replaced by its symmetrization.

\begin{definition}[Symmetric sparse tensor PCA]\label{def:symmetric:sparse:PCA}
For parameters $\lambda \geq 0$ and $\rho \in [0, 1]$ the symmetric sparse tensor PCA model is as follows. Draw $\theta \in \{0, 1\}^n$ with $\theta_{i} \stackrel{iid}{\sim} \Ber(\rho)$ and $W \in (\R^{n})^{\otimes r}$ with $W_{i_1, \hdots, i_r} \stackrel{iid}{\sim} \cN(0, 1)$. Set $X=\lambda\theta^{\otimes r}$ and observe the symmetric $r$-th order random tensor
    \[
       Y= X + \Wsy \,,\quad\textnormal{where $\Wsy$ has entries $(\Wsy)_{i_1,\hdots,i_r}=\frac{1}{\sqrt{r!}}\sum_{\pi\in S_r} W_{i_{\pi(1)},\hdots, i_{\pi(r)}}$}\,,
    \]
    and $S_r$ is the symmetric group on $[r]$. The estimand is $x=\theta_1$.
\end{definition}
The asymmetric model in Section~\ref{sec:intro} is equivalent to the symmetric model from the estimation perspective: Lemma~\ref{lem:equivalence:of:noise:models} in the appendix shows that the degree-$D$ MMSE under the asymmetric model of Section~\ref{sec:intro} matches that under Definition~\ref{def:symmetric:sparse:PCA} after the substitution $\lambda \mapsto \sqrt{r!}\lambda$. In particular, when $\rho = o(1)$, the sharp threshold of Theorem~\ref{thm:sparse:PCA} below corresponds to $er(r-1)n^{r-1}\rho^{2r-2}\lambda^2 = 1$ in the asymmetric model.

\begin{theorem}\label{thm:sparse:PCA}
Consider the symmetric sparse tensor PCA model. For any $\eps>0$ there is a constant $C = C(\eps, r) > 0$ such that the following holds.
\begin{itemize}
    \item[(a)] If $\eps \in (0, 1)$ and
    \[
        \frac{en^{r - 1} \rho^{2r - 2} \lambda^{2}}{(r - 2)!(1 - \rho)^{r - 1}} \leq 1 - \eps\,, \quad D^{r - 1} \leq \frac{1}{C\lambda^{2}}\,,
    \]
    then
    \[
        \Corr_{\leq D} \leq C\sqrt{\frac{\rho}{1-\rho}}\,.
    \]
    \item[(b)] If
    \[
        \frac{en^{r - 1} \rho^{2r - 2} \lambda^{2}}{(r - 2)!} \geq 1 + \eps, \quad \rho = \omega\left(n^{-1} (\log n)^{6 + 3/(r-1)}\right), \quad \rho = o\left((\log n)^{-6(r-1) - 3}\right)\,,
    \]
    then
    \[
        \Corr_{\leq C\log n} = 1 - o(1) \quad \text{as $n \rightarrow \infty$}.
    \]
\end{itemize}
\end{theorem}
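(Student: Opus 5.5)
Part~(a) will follow the cumulant framework of Schramm and Wein~\cite{SW-22}, in the form refined by Sohn and Wein~\cite{arxiv-version}. Since the noise is Gaussian, we expand candidate estimators in the Hermite basis $\{H_\alpha(Y)\}$, where $\alpha$ ranges over multisets of hyperedges (multi-hypergraphs) on $[n]$ with $|\alpha|\le D$. The standard bound then reads
\[
\Corr_{\le D}^2 \le \frac{1}{\E[x^2]}\sum_{|\alpha|\le D}\frac{\kappa_\alpha^2}{\alpha!}\lambda^{2|\alpha|}, \qquad \kappa_\alpha=\kappa\big(x,\theta^{\alpha_1},\dots\big),
\]
where $\kappa_\alpha$ is the joint cumulant of $x=\theta_1$ and the monomials $\prod_{i\in e}\theta_i$ over the hyperedges $e$ of $\alpha$. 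The cumulant vanishes unless $\alpha\cup\{1\}$ is connected and contains vertex $1$. With the centered variables $\theta_i-\rho$, the relevant cumulant bound is $|\kappa_\alpha|\le \rho^{|V(\alpha)|}(1-\rho)^{-\cdots}\cdot(\text{combinatorial factor})$. The factor $(1-\rho)^{r-1}$ in the hypothesis indicates that the sharp bound should be stated in terms of $\rho/(1-\rho)$ per vertex.

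First, we classify connected hypergraphs $\alpha$ by their excess $|V(\alpha)|-1-(r-1)|\alpha|$. Hypertrees, in which each edge adds exactly $r-1$ new vertices, have excess $0$. Such a tree with $k$ edges contributes roughly $n^{(r-1)k}\rho^{2(r-1)k}\lambda^{2k}$ times $\rho$ from the root, multiplied by the count of rooted hypertree shapes. The number of labeled rooted $r$-uniform hypertrees with $k$ edges, divided by the automorphism and ordering factors, grows like $\big(e/(r-2)!\big)^k$ up to polynomial factors. This can be seen via Cayley-type counting of $r$-uniform hypertrees, whose exponential growth rate is $e/(r-2)!$ per edge after the $n^{(r-1)k}$ normalization. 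Under the assumption $\SNR(1-\rho)^{-(r-1)}\le 1-\eps$, the tree sum is therefore a geometric series bounded by $C\rho/(1-\rho)$, which gives $\Corr^2\le C\rho/(1-\rho)$.

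Second, we handle the non-tree terms, which have positive excess. Each unit of excess loses a factor $n\rho$ in the vertex count, or saves $\rho$ in the cumulant bound but costs a $\lambda^2$. Repeated edges (multiplicities in $\alpha$) give factors $\lambda^2$ without new vertices. The condition $D^{r-1}\le 1/(C\lambda^2)$ is used exactly here: an extra edge that adds fewer than $r-1$ new vertices can be attached in at most $D^{r-1}$ (roughly $|V(\alpha)|^{\text{overlap}}$) ways, at a cost of $\lambda^2$. The product of these is at most $1/C$, so these terms form a convergent perturbation of the tree sum. The cumulant must be bounded via the Schramm--Wein partition bound $|\kappa_\alpha|\le(|\alpha|+1)^{|\alpha|}\rho^{|V|}$. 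That bound is too lossy for the sharp constant, so I would use the refined bound of~\cite{arxiv-version}, in which the tree cumulants are computed exactly, equal to $\rho^{|V|}(1-\rho)^{|V|-1}$ up to sign, and the non-tree cumulants are bounded with $|\alpha|^{O(\text{excess})}$ losses. Controlling these cumulants for hypergraphs uniformly in $D$ up to $n^{\delta}$ is the \textbf{main obstacle}. The case analysis generalizing~\cite{arxiv-version} from graphs to hypergraphs must absorb the overlap factor $D^{r-1}$ per cycle.

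For part~(b), I would use the tree-counting estimator
\[
f(Y)=\sum_{T}\prod_{e\in T}\frac{Y_e}{\lambda},
\]
summed over rooted hypertrees at vertex $1$ of depth about $\log n$, with appropriate normalization. One could also use the self-avoiding hypertree polynomial of degree $C\log n$ and apply a recursive/thresholding argument as in~\cite{arxiv-version}. The mean conditional on $\theta_1=1$ grows like $\SNR^{k}$, while the variance is dominated by the conditional mean squared when $\SNR\ge1+\eps$. This gives a separation between the cases $\theta_1=1$ and $\theta_1=0$. The polylogarithmic constraints on $\rho$ control the overlap terms in the second moment, which are lower order at depth $C\log n$. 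Concentration then yields correlation $1-o(1)$.
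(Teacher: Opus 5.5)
Your plan for part~(a) has a genuine gap. In this Bernoulli model the Schramm--Wein bound $\Corr_{\le D}^2\le\sum_{|\alpha|\le D}\kappa_\alpha^2/(\alpha!\,\E[x^2])$ is not merely hard to estimate; it is vacuous in the degree range of the theorem.

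\emph{Tree cumulants.} Your claim that they equal $\pm\rho^{|V|}(1-\rho)^{|V|-1}$ is false. By the Leonov--Shiryaev formula, the joint cumulant of a hypertree factorizes over vertices as $\lambda^{|\alpha|}\prod_v\kappa_{d_v}(\Ber(\rho))$, where $d_v$ is the degree of $v$ (plus one at vertex~$1$, for $x=\theta_1$). Already the two-edge star at vertex~$1$ with $r=2$ gives $\lambda^2\rho^3(1-\rho)(1-2\rho)$.

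\emph{Why the bound diverges.} The function $\log(1-\rho+\rho e^t)$ has a singularity at distance about $\log(1/\rho)$ from the origin, so $|\kappa_d(\Ber(\rho))|$ is of order $(d-1)!/\log(1/\rho)^d$ for many $d$. The stars of $m$ hyperedges at vertex~$1$ alone then contribute about $m!\,(c/\log^2(1/\rho))^m$, with $c$ of constant order near the threshold. This diverges once $m\gg\log^2(1/\rho)$, whereas $D^{r-1}\le 1/(C\lambda^2)$ allows $D$ of order $n\rho^2$.

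\emph{The missing idea.} Enlarge the orthonormal family in Proposition~\ref{prop:duality} to $\psi_{\beta\gamma}=H_\beta(Z)\,\bigl((\theta-\rho)/\sqrt{\rho(1-\rho)}\bigr)^{\gamma}$. Then $c_\alpha=\lambda^{|\alpha|}\rho^{|V(\alpha)\cup\{1\}|}/\sqrt{\alpha!}$ and $M$ are explicit. The certificate~\eqref{eq:sparse:PCA:solution} solves $M^\top u=c$ exactly, because the alternating sum over $\gamma$ kills every proper good subgraph (Lemma~\ref{lem:sparse:PCA:u}). No cumulant is ever bounded. One gets $\rho\,\Corr_{\le D}^2\le\hat\rho^2\bigl(1+\sum_{\alpha\ \mathrm{good}}\hat\rho^{2|V(\alpha)|-2}\lambda^{2|\alpha|}\bigr)$ with $\hat\rho=\rho/\sqrt{1-\rho}$, and this is where the factor $(1-\rho)^{r-1}$ in the hypothesis comes from.

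\emph{The remaining counting.} What is left is counting connected multi-hypergraphs, and your perturbative heuristic is not enough there either. Connected hypergraphs need not contain spanning hypertrees; the paper handles this through spanning trees of the incidence bipartite graph (Lemma~\ref{lem:simple:to:tree}). Also, the excess edges cost a factor $e^{\delta_0 k}$, not $O(1)$, and this is absorbed only by the $(1-\eps)^{(k-1)/(r-1)}$ decay of the tree sum.

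Part~(b) is also missing its key ingredient, namely the choice of tree family. Take a generic family of rooted hypertrees at vertex~$1$ (for example, all of a given size). A constant fraction of them have root degree one. Pairs of such trees that share the root edge and branch at a single vertex contribute a constant multiple of $\Gamma=\E[xf]^2/\E[x^2]$ to $\E[f^2]$. To see this, write the root-degree-one part as $\sum_{e\ni1}Y_eS_e$. Its second moment on $\{\theta_1=0\}$ is about $\sum_e\E[S_e^2]$, which near the threshold is only a constant factor smaller than $\rho\,\E[f\mid\theta_1=1]^2$. Since all mixed moments $\E[Y^{\alpha+\beta}]$ are nonnegative, this gives $\E[f^2]\ge(1+c)\Gamma$ and caps the correlation below~$1$. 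So these root-adjacent overlaps are not lower order.

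The paper avoids this with the family $\sT_\ell$ of Definition~\ref{def:special:trees}: the root has degree exactly two, and each root edge leads through one vertex into one of two disjoint $\ell$-edge hypertrees.
\begin{itemize}
\item For this family, Lemma~\ref{lem:branch:points} shows that an overlapping pair with a single branch point shares $a\ge\ell$ edges. The otherwise $O(1)$ sum $\sum_a(1-\eps/4)^a$ therefore becomes $(1-\eps/4)^\ell\le\rho$.
\item Every other overlap pattern gains $(\ell^3(\rho+\zeta))^{1/(r-1)}$ per additional branch point (with $\zeta=\rho^{r-1}\lambda^2$), or $1/(n\rho)$ per additional core component.
\item The choice $\ell=\lceil\frac4\eps\log(1/\rho)\rceil$ makes the identical-pair term $\rho^{-1}\ell^3(1-\eps/4)^{2\ell+2}$ vanish.
\end{itemize}
Finally, separation followed by thresholding does not prove the statement, which is about a degree-$C\log n$ polynomial. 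You need $\E[f^2]=(1+o(1))\Gamma$ for the polynomial itself, which the paper obtains from the identical/disjoint/overlapping split with the core and branch-point bookkeeping.
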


\paragraph{Discussion}
When specialized to $r = 2$, sparse tensor PCA is known as the \textit{planted submatrix model}, and estimation has been studied from both statistical~\cite{BI-info, kolar-info, BIS-info} and computational~\cite{submatrix-ogp, SW-22, arxiv-version} perspectives. In particular, Theorem~\ref{thm:sparse:PCA} specialized to $r=2$ recovers~\cite[Theorem 2.2]{arxiv-version}.

For $r \geq 3$, the statistical limits of estimation are well-understood~\cite{LZ-tensor, niles2020all, perry-2020}. In particular, Niles-Weed and Zadik~\cite{niles2020all} establish (with a slightly different prior where exactly $k=n\rho$ entries in $\theta$ are non-zero) an \textit{all-or-nothing phenomenon}: for $\rho=o(1)$, there exists a threshold $ \lambda_{\mathrm{stat}} = \sqrt{\frac{2\log(1/\rho)}{n^{r-1}\rho^{r-1}}}$ such that if $\lambda \leq (1-\eps)\lambda_{\mathrm{stat}}$ then the estimation of $\theta$ is information-theoretically impossible, whereas if $\lambda \geq (1+\eps)\lambda_{\mathrm{stat}}$, then the Bayes optimal estimator achieves asymptotically perfect estimation.

By contrast, much less is known about the limits of computationally efficient estimation for $r\geq 3$. To simplify the exposition, we specialize to the case where $\rho = n^{\xi - 1}$ for a fixed $\xi \in [1/2, 1)$. Luo and Zhang~\cite[Theorem~8]{LZ-tensor} showed that an efficient aggregated-SVD procedure succeeds for $\lambda > \lambda_{\mathrm{alg}} = \Theta(n^{(1/2-\xi)(r - 1)})$, and~\cite[Theorem~16]{LZ-tensor} gives a matching exponent-level lower bound conditional on the planted dense subhypergraph recovery conjecture. Together, these results pin down the threshold for computationally efficient estimation at the exponent level, but do not determine its sharp value at the level of constants. Nonetheless, these results reveal a substantial statistical--computational gap: $\lambda_{\mathrm{alg}}/\lambda_{\mathrm{stat}} = \widetilde{\Theta}(\rho^{-(r-1)/2})$.

A variety of algorithmic classes have been studied in connection with sparse tensor PCA. Low-temperature local reversible MCMC algorithms have been shown to fail to reach $\lambda_{\mathrm{alg}}$~\cite{chen-2024}, only succeeding above $ \lambda_{\mathrm{MCMC}} = \widetilde{\Theta}(n^{(1-3\xi/2)(r-1)})$. A broader class of local search algorithms has subsequently been shown to reach $\lambda_{\mathrm{alg}}$ up to polylogarithmic factors~\cite{lovig-2025}. A recent related work by Tsirkas, Wang, and Zadik~\cite{tsirkas-2026} considers a variant of sparse tensor PCA with a symmetric Rademacher prior on $\theta$, establishing low-degree hardness for the full-tensor estimand via a different approach based on the Franz--Parisi potential from statistical physics~\cite{FranzParisi95, FranzParisi97}. Since their result is more closely
related to tensor PCA with a general prior, we defer a detailed discussion of their
techniques to the paragraph following Theorem~\ref{thm:general:PCA}.

Prior to our result, the sharp threshold was conjectured in the approximate message passing (AMP) literature. Based on a heuristic argument analyzing the fixed points of the state evolution of the AMP algorithm, Corinzia, Penna, Szpankowski, and Buhmann~\cite[Claim 1]{corinzia22statistical} predicted that the estimation threshold is given by $\frac{en^{r - 1} \rho^{2r - 2} \lambda^{2}}{(r - 2)!}=1$. Their analysis is stated for a fixed-$k$ prior, with $k$ corresponding to our $n\rho$: in their normalized signal-to-noise ratio $\SNR := \lambda\sqrt{\binom{k}{r}/(2k\log n)}$, their conjectured threshold reads $\SNR_{\mathrm{AMP}} = \sqrt{\frac{1}{2e}\lPa\frac{n}{k}\rPa^{r-1}\frac{1}{r(r-1)\log n}}$, which converts back to the condition above up to a $1+o(1)$ factor.

In the regime $n^{-1/2} \ll \rho \ll 1$, Theorem~\ref{thm:sparse:PCA} establishes a computational all-or-nothing phenomenon~\cite{macris2020all}: if $\frac{en^{r-1}\rho^{2r-2}\lambda^2}{(r-2)!} \leq 1-\eps$, no polynomial of degree-$n^{C}$ (for some $C=C(r,\eps)>0$) achieves correlation $\omega(\sqrt{\rho})$ with $\theta_1$; once $\frac{en^{r-1}\rho^{2r-2}\lambda^2}{(r-2)!} \geq 1+\eps$, a polynomial of degree-$O_{r,\eps}(\log n)$ achieves near-perfect recovery of $\theta_1$. By contrast, in the highly sparse regime $\rho \ll n^{-1/2}$, we do not expect the same phenomenon: exact recovery is already achieved in polynomial time at $\lambda \gg 1$ (where $\gg$ hides a $\polylog(n)$ factor) by thresholding the $n\rho$ largest values among the diagonal entries $(Y_{i,i,\ldots,i})_{i \in [n]}$ (see~\cite[Remark~6]{choo2021complexity}), which is a tensor analog of diagonal thresholding for planted submatrix $r=2$~\cite{arash-2008}. More generally, Choo and d'Orsi~\cite[Theorem~21]{choo2021complexity} construct a family of estimators parameterized by an integer $t \geq 1$, achieving almost-exact recovery with runtime $O_r(n^{r+t})$ whenever $\lambda \geq \widetilde{\Omega}(t^{-(r-1)/2})$. Equivalently, this requires $t \geq \widetilde{\Omega}(\lambda^{-2/(r-1)})$, giving runtime $\exp\{\widetilde{O}(\lambda^{-2/(r-1)})\}$. Recalling the heuristic correspondence between degree and runtime discussed in the introduction, our condition $D^{r-1}\lambda^2 \lesssim 1$---equivalently, $D \lesssim \lambda^{-2/(r-1)}$---coincides with this runtime. Therefore, our lower bound suggests that below the sharp threshold, runtime $\exp\{\widetilde{\Omega}_{r}(\lambda^{-2/(r-1)})\}$ is required. This shows optimality of the algorithms of~\cite{choo2021complexity} by pinning down the precise degree-$\Omega_r(\lambda^{-2/(r-1)})$.

\subsection{Tensor PCA with general prior}
We now consider tensor PCA with a general prior $\pi$ on the entries of $\theta$. As in Definition~\ref{def:symmetric:sparse:PCA} for sparse tensor PCA, we work with the symmetric variant: the observation is $Y = \lambda\,\theta^{\otimes r} + \Wsy$ with $\theta \in \R^n$ having i.i.d.\ entries $\theta_i \sim \pi$ and $\Wsy$ the symmetrized standard Gaussian tensor. The asymmetric model defined in Section~\ref{sec:intro} is equivalent to this symmetric model from an estimation perspective. Indeed, the degree-$D$ MMSE under the asymmetric model matches that under the symmetric model after the substitution $\lambda \mapsto \sqrt{r!}\lambda$ (see Lemma~\ref{lem:equivalence:of:noise:models} in the appendix). The goal is to estimate $x = \prod_{i=1}^{m} \theta_i$ for a fixed $m \geq 2$.

\begin{theorem}\label{thm:general:PCA}
Consider the general tensor PCA model with estimand $x = \prod_{i = 1}^{m} \theta_{i}$. Assume that the prior satisfies $\E[\pi]=0, \E[\pi^2]=1$, and the moment condition $\E[|\pi|^{t}] \leq (Kt)^{\nu t}$ for all $t \geq 1$ for some constants $K, \nu> 0$. There is a constant $C = C(r, m,K,\nu)$ such that if
\begin{equation}
	\lambda n^{r/4} D^{(r-2)/4} \leq 1/C \qquad \textnormal{and} \qquad \lambda D^{(6\nu+3)r/2} \leq 1/C\,,
\label{eq:general:PCA:condition}
\end{equation}
then 
\[
\Corr_{\leq D}\equiv \sup_{f \in \R_D[Y]} \frac{\E[f(Y) \cdot \prod_{i=1}^m \theta_i]}{\sqrt{\E[f(Y)^2]}}  \leq Cn^{-m/4}\,.
\]
\end{theorem}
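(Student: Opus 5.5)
We would follow the cumulant approach of Schramm and Wein~\cite{SW-22}, specialized to additive Gaussian noise. Since $Y=\lambda\theta^{\otimes r}+\Wsy$ and we may work in the basis of multivariate Hermite polynomials indexed by multigraphs $\alpha$, which here are $r$-uniform hypergraphs on $[n]$ with edge multiplicities and $|\alpha|\le D$, the bound of~\cite{SW-22} gives
\[
\Corr_{\le D}^2 \le \sum_{\alpha:\,|\alpha|\le D} \frac{\kappa_\alpha^2}{\alpha!}.
\]
Here $\kappa_\alpha$ is the joint cumulant of $x$ together with the entries $X_e=\lambda\theta^{\otimes r}_e$, $e\in\alpha$, counted with multiplicity. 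Because $x=\prod_{i\le m}\theta_i$ and $X_e=\lambda\prod_{j\in e}\theta_j$, each $X_e$ contributes a factor $\lambda$. The first reduction is therefore
\[
\kappa_\alpha=\lambda^{|\alpha|}\,\kappa\Big(\textstyle\prod_{i\le m}\theta_i,\ (\theta^{e})_{e\in\alpha}\Big).
\]

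The next step is to show that the cumulant vanishes unless the hypergraph $\alpha\cup\{[m]\}$ is connected. This follows because $\theta$ has independent coordinates, so a disconnected family splits into independent blocks. We also need a parity/degree condition: since $\E\pi=0$, every vertex of $\alpha$ outside $[m]$ must have degree at least $2$, and every vertex in $[m]$ must be touched by $\alpha$ or else the cumulant vanishes. Then comes a quantitative bound $|\kappa(\cdot)|\le (C|\alpha|)^{c|\alpha|}$. We would obtain it from the moment-cumulant formula, summing over partitions. Each joint moment is a product over vertices of $\E|\pi|^{d_v}$, with $d_v\le r|\alpha|$, and the assumption $\E|\pi|^t\le (Kt)^{\nu t}$ bounds this by $(Kr|\alpha|)^{\nu r|\alpha|}$. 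The number of partitions is at most $|\alpha|^{|\alpha|}$. This growth is the source of the second condition $\lambda D^{(6\nu+3)r/2}\le 1/C$: it lets us absorb all factorial-type factors into $\lambda^{|\alpha|}$.

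The main work is the counting step. We must bound
\[
\sum_{\alpha}\lambda^{2|\alpha|}(C|\alpha|)^{2c|\alpha|}
\]
over connected hypergraphs $\alpha$ with $E=|\alpha|$ edges spanning $[m]$ plus $v$ new vertices, each of degree at least $2$. There are at most $n^v$ choices of labels and at most $(CE)^{rE}$ shapes. The constraint is $rE\ge m+2v$ (total degree). With only this constraint, the sum over $v$ is dominated by $v\approx (rE-m)/2$, giving $n^{(rE-m)/2}\lambda^{2E}$. Under $\lambda\le n^{-r/4}$ alone this already yields $n^{-m/2}$ per $E$. That is exactly the claimed $\Corr\le Cn^{-m/4}$ after the square root.

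The difficulty is the combinatorial factor. Naively it is $(CE)^{rE}$, which would force $\lambda\lesssim n^{-r/4}D^{-r/2}$ rather than the sharper $D^{-(r-2)/4}$. To obtain the $D^{(r-2)/4}$ tradeoff, we would refine the count using connectivity. The number of free new vertices obeys both $v\le (rE-m)/2$ and $v\le (r-1)E$ (each edge adds at most $r-1$ new vertices), and hypergraphs near the extremal $v=(rE-m)/2$ have many degree-$2$ vertices, so they are essentially "$2$-regular" structures. Counting these by their pairing of half-edges gives roughly $(rE)!!\approx (CE)^{rE/2}$ shapes divided by the $E!$ edge orderings, i.e. $\approx E^{(r-2)E/2}$. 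Thus each edge contributes $\lambda^2 n^{r/2}E^{(r-2)/2}\le (\lambda n^{r/4}D^{(r-2)/4})^2\le C^{-2}$, and the sum over $E\le D$ is geometric.

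The main obstacle is making this shape count rigorous for configurations with $v$ below the extremal value. Losing one vertex (a factor $n$) must buy at most a $\mathrm{poly}(E)$ factor in the number of shapes. We would handle this by encoding each $\alpha$ as an extremal pairing plus a bounded list of vertex identifications, each costing $\le (rE)^2\ll n$. Leftover cumulant-growth factors are absorbed via the second condition in~\eqref{eq:general:PCA:condition}.
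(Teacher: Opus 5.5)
\textbf{There is a genuine gap in your cumulant bound.} You bound $|\kappa_\alpha|\le\lambda^{|\alpha|}(C|\alpha|)^{c|\alpha|}$ by counting partitions and bounding every moment by $(Kr|\alpha|)^{\nu r|\alpha|}$. This factor is super-exponential in the number of edges, and it applies even to the dominant configurations with $v=(rE-m)/2$.

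Carried through honestly, your per-edge accounting gives $\lambda^2 n^{r/2}E^{(r-2)/2}(CE)^{2c}$ per edge, not $\lambda^2 n^{r/2}E^{(r-2)/2}$. The second condition in \eqref{eq:general:PCA:condition} cannot absorb the extra $(CE)^{2c}$, because all of $\lambda^{2E}$ is already needed to cancel the $n^{(rE-m)/2}$ label choices. Concretely, when $D$ is polylogarithmic the second condition is implied by the first and contributes nothing. Your argument would then only give hardness for $\lambda\lesssim n^{-r/4}D^{-(r-2)/4-c}$, so you misidentify the role of the second condition.

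\textbf{What is missing} is a cumulant bound whose super-exponential growth is charged only to the excess $p(\alpha)=r|\alpha|-2|V(\alpha)|+m\ge 0$. When $p(\alpha)=0$, every vertex of $\bar\alpha=\alpha+\mathbf{1}_{[m]}$ has degree exactly $2$. Any partition with two or more blocks then has a vertex appearing linearly in some block, so only the one-block partition survives and $\kappa_\alpha=\lambda^{|\alpha|}$ exactly.

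The paper makes this quantitative in three steps:
\begin{itemize}
\item Using the recursion \eqref{eq:cumulant:def} restricted to good subgraphs, it proves $|\kappa_\alpha|\le\lambda^{|\alpha|}M(\delta(\bar\alpha))\,\mathcal{H}(\alpha)$.
\item Here $\delta(\bar\alpha)$ sums degrees only over vertices of degree at least $3$, so $\delta(\bar\alpha)\le 3p(\alpha)$.
\item A peeling lemma shows that deleting an edge and passing to the maximal good subgraph lowers $p$ by at least one. This gives $\mathcal{H}(\alpha)\le(2|\alpha|)^{p(\alpha)}$.
\end{itemize}
After reparametrizing by $\Delta=r\ell-2k+m$, the factor $\lambda^{2\Delta/r}$ is not needed against any power of $n$. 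That spare factor is exactly what the second condition uses to absorb $(CD)^{(6\nu+2)\Delta}(Ck^{r-1})^{\Delta/r}$.

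Your shape-counting heuristic for near-extremal terms is consistent with the paper. The paper's count $\binom{n-m}{k-m}\binom{N_k+\ell-1}{\ell}\le (Cn/k)^{k-m}(Ck^{r-1})^{\ell}$ is simpler and handles all non-extremal vertex counts uniformly, without your encoding by vertex identifications.
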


\paragraph{Discussion}
For $r=2$, tensor PCA with general prior is called the \textit{spiked Wigner model} and is known to exhibit a sharp threshold at the BBP phase transition $\lambda=1$~\cite{BBP} for low-degree estimation~\cite{arxiv-version}. The spiked Wigner model has been extensively studied from both statistical~\cite{proof-replica, LM-wigner,fund-limits-wigner} and computational~\cite{BM-amp,FR-amp,MV-amp} perspectives; see~\cite{miolane-survey} for a survey.

For $r \geq 3$, the statistical thresholds for weak recovery and detection have been established for various priors~\cite{perry-2020, lesieur2017statistical, chen2019phase, jagannath-2020}. In particular, for odd $r$ and i.i.d. prior $\pi$ with bounded support, Lesieur, Miolane, Lelarge, Krzakala, and Zdeborov\'a~\cite{lesieur2017statistical} establish a sharp threshold for weak recovery at $\lambda_{\mathrm{stat}} = \Theta_r(n^{-(r-1)/2})$.

On the other hand, the best known algorithms require $\lambda \gtrsim n^{-r/4}$: tensor unfolding succeeds for $\lambda > (1+o(1)) n^{-r/4}$~\cite{RM-tensor-pca,feldman-2023}; degree-$4$ sum-of-squares for $\lambda \gg n^{-3/4}$ when $r=3$~\cite{tensor-pca-sos}; spectral methods based on Kikuchi Hessian~\cite{kikuchi} and Langevin dynamics~\cite{BGJ-tensor} at $\lambda \gg n^{-r/4}$. These match the leading order of our lower bound $\lambda \lesssim n^{-r/4} D^{-(r-2)/4}$ at constant $D$. Recently, for the Rademacher prior and for all sufficiently large $m$, Li~\cite[Theorem 2.11]{li-2025} constructed a family of low-degree polynomial estimators with degree $D=m\log n$ achieving weak recovery at $\lambda \gtrsim n^{-r/4}m^{-(r-2)/4}$. In particular, the threshold at which their estimators succeed exhibits the same dependence on $n$ and $D$ as our lower bound, up to polylogarithmic factors.

In the low-degree polynomial framework, the optimal degree--signal tradeoff $\lambda \asymp n^{-r/4} D^{-(r-2)/4}$ was established for detection (with Rademacher prior) by Kunisky, Wein, and Bandeira~\cite{ld-notes}. Prior to our work, the corresponding lower bound for weak recovery had been established in the spherical prior case, where $\theta$ is drawn uniformly from $\{\theta \in \R^n : \|\theta\| = \sqrt{n}\}$, by Kunisky, Moore, and Wein~\cite{kunisky24tensor}. They proved that, for odd $r \geq 3$, no polynomial of degree $D = O_r(n^{1/2})$ achieves weak recovery whenever $\lambda = O_r(n^{-r/4}D^{-(r-2)/4})$, exploiting the orthogonal invariance of the prior. To do so, \cite{kunisky24tensor} introduced a new notion of \emph{tensor cumulants}, expecting that the framework of~\cite{SW-22} alone does not yield the precise dependence of $\lambda$ on $D$. Somewhat surprisingly, Theorem~\ref{thm:general:PCA} obtains the conjecturally correct dependence $\lambda = O_r(n^{-r/4}D^{-(r-2)/4})$ directly via the cumulant approach of Schramm and Wein~\cite{SW-22} for the broad class of i.i.d. priors with bounded moments. Our result, however, holds only over a more restricted range of $D$ than $O_r(n^{1/2})$ due to the technical condition $\lambda D^{(6\nu+3)r/2} \leq 1/C$ in Theorem~\ref{thm:general:PCA}, which is likely a proof artifact. 

In recent and concurrent work, Tsirkas, Wang, and Zadik~\cite{tsirkas-2026} develop a rigorous connection between the Franz--Parisi (FP) potential from statistical physics~\cite{FranzParisi95, FranzParisi97} and low-degree MMSE for a broad class of Gaussian additive models. Their main result establishes that, in this class, the monotonicity of the annealed FP potential is equivalent to low-degree MMSE hardness; as one application, they obtain low-degree MMSE hardness for estimating the rank-one tensor $\theta^{\otimes r}$ in tensor PCA with i.i.d. Gaussian priors~\cite[Theorem~3.1]{tsirkas-2026}.\footnote{Although their bound is on the vectorized MMSE for estimating the tensor $\theta^{\otimes r}$, by symmetry of the i.i.d. prior this is equivalent up to a $1+o(1)$ factor to the scalar MMSE with estimand $x = \prod_{i=1}^r \theta_i$, since the dominant contribution in the vectorized MMSE comes from the entries of the tensor with all distinct indices.} For the Gaussian prior $\pi = \cN(0, 1)$, the moment condition in Theorem~\ref{thm:general:PCA} is satisfied with $\nu = 1/2$. Thus, our result implies hardness for all polynomials of degree $D = O(n^{r/(11r+2)})$ at $\lambda =c n^{-r/4} D^{-(r-2)/4}$, where $c=c_r>0$ is a small enough constant, while~\cite[Theorem~3.1]{tsirkas-2026} gives hardness for $D = \widetilde{o}(n^{1/2})$ at $\lambda = \widetilde{O}_r(n^{-r/4} D^{-r/4})$. Their bound covers a larger degree range but is loose by $\textnormal{polylog}(n)$ factors and a factor $D^{1/2}$ in $\lambda$. It would be interesting to determine the largest exponent $\delta$ for which our hardness bound extends to degree $D = n^{\delta}$.

\subsection{Efficient algorithm for estimation}\label{subsec:res:efficient:algorithm}
Although Theorem~\ref{thm:planted:hypergraph}-(b) and
Theorem~\ref{thm:sparse:PCA}-(b) establish the existence of
degree-$O_{r,\eps}(\log n)$ polynomials achieving near-perfect correlation
with the estimand $x=\theta_1$ above the sharp thresholds, a naive
evaluation of these polynomials would take $n^{O_{r,\eps}(\log n)}$ time. The polynomials in
our proofs are, however, tree-shaped: each is indexed by a hypertree
(see Definition~\ref{def:hypertree}) on $k = O_{r,\eps}(\log n)$ vertices.
For such tree-shaped polynomials, the color-coding trick of Alon, Yuster,
and Zwick~\cite{color-coding} gives a randomized polynomial-time
implementation. Namely, we randomly color the $n$ vertices using $k$ colors and restrict
the sum to colorful hypertrees, i.e., hypertrees whose $k$ vertices receive
distinct colors. For a fixed hypertree, this occurs with probability $q=k!/k^k=n^{-O(1)}$. After rescaling by $1/q$, the colorful count is an unbiased estimator
of the original tree-shaped polynomial. Moreover, the total colorful weighted count
can be computed in polynomial time by dynamic programming over the tree
structure. Averaging over $n^{\Omega(1)}$ independent colorings reduces the additional variance from the coloring
randomness, yielding a randomized polynomial-time estimator with the same
asymptotic correlation. These techniques are by now standard; see, e.g.,
\cite{color-coding,HS-bayesian,mao2024testing}, and similar color-coded
approximations were used by Li~\cite{li-2025} to obtain polynomial-time
algorithms for tensor PCA with the Rademacher prior.

Through this color coding trick, we prove the following theorem in Appendix~\ref{sec:polytime:algorithm} by turning our low-degree upper bound into an efficient algorithm.

\begin{theorem}\label{thm:algorithm}
Consider the planted dense subhypergraph model and assume that the assumptions of Theorem~\ref{thm:planted:hypergraph}-(b) hold for $\eps>0$. Let $\ell=\lceil \frac{4}{\eps} \log(1/\rho) \rceil$. Given the observation $Y$ and the parameters $\rho,q_0,q_1$, the (randomized) Algorithm~\ref{alg:almost:exact:recovery} has runtime $n^{r+o(1)}e^{O_r(\ell)}$ and outputs a set of vertices $\widehat{S}$ satisfying $|\widehat{S} \triangle S|=o(n\rho)$ with probability $1 - o(1)$ as $n\rightarrow\infty$. 

Similarly, for the symmetric sparse tensor PCA model, if the assumptions of Theorem~\ref{thm:sparse:PCA}-(b) hold for $\eps>0$, then given the observation $Y$ and parameters $\lambda, \rho, \eps$ there is a randomized algorithm with runtime $n^{r+o(1)}e^{O_r(\ell)}$ that outputs $\widehat{S}$ with $|\widehat{S} \triangle S| =o(n \rho)$ with probability $1 - o(1)$ as $n\rightarrow\infty$.
\end{theorem}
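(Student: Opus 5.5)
The plan is to take the degree-$O(\log n)$ tree-shaped estimator $f(Y)$ from the proof of Theorem~\ref{thm:planted:hypergraph}-(b), evaluate it in polynomial time by color coding, and then round it. Concretely, $f$ is a weighted sum over labelled hypertrees rooted at vertex $1$ with $\ell=\lceil \frac{4}{\eps}\log(1/\rho)\rceil$ hyperedges. Each summand is $\prod_{e}(Y_e-q_0)$ times a weight depending only on the tree shape, and we may take $f$ to be the normalized version that satisfies $\E[(f-\theta_1)^2]=o(\rho)$. Let $k=O_r(\ell)$ be the number of vertices. We color $[n]$ uniformly with $k$ colors and define $f_c$ as the sum restricted to colorful hypertrees, rescaled by $1/q$, where $q=k!/k^k=e^{-O(k)}$. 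Then $\E_c f_c=f$. For each rooted shape, $f_c$ at every vertex $i$ can be computed by the standard dynamic program. The program processes subtrees bottom-up, indexing partial counts by (vertex, subset of colors used), and combines children across each hyperedge containing $r-1$ new vertices. This costs $n^{r}2^{O(k)}$ per shape. The number of rooted hypertree shapes on $k$ vertices is $e^{O_r(k)}$, so a single coloring costs $n^{r}e^{O_r(\ell)}$ for all vertices simultaneously.

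Next we control the coloring variance. Averaging over $T=n^{o(1)}$ independent colorings gives $\bar f_i=\frac1T\sum_t f_{c_t}(i)$. We need $\E[(\bar f_i-f_i)^2]=o(\rho)$, which requires $\E[f_c^2]\le e^{O(k)}\,\E[f^2]$ in a usable form. Since $k=O(\log(1/\rho))$, we have $e^{O(k)}=\rho^{-O(1/\eps)}$, and this is not $n^{o(1)}$ in general. I would therefore bound the variance more carefully. Expand $\E[f_c^2]$ over pairs of hypertrees $(T_1,T_2)$. The coloring acts as a factor $\Pr[T_1,T_2 \text{ both colorful}]/q^2$, which equals $1$ for vertex-disjoint pairs and is at most $q^{-1}$ when they overlap. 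Overlapping pairs, however, are exactly the terms that the second-moment computation in Theorem~\ref{thm:planted:hypergraph}-(b) shows are suppressed by polylogarithmic factors (this is where the $(\log n)$ powers in the hypotheses on $\rho,q_0$ are used). So I expect the excess variance to be at most $q^{-1}\cdot(\text{overlap contribution})\le e^{O(\ell)}\cdot o(\rho)\cdot n^{-\Omega(1)}$ or similar. Choosing $T=n^{o(1)}e^{O(\ell)}$ then suffices, which is still within the stated runtime. This step, a variance bound for the color-coded estimator that reuses the second-moment estimates of the low-degree upper bound, is the main obstacle.

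Finally we round the estimates. Summing over $i$ with $\sum_i \E[(\bar f_i-\theta_i)^2]=o(n\rho)$ (by symmetry, since each term is $o(\rho)$), set $\widehat S=\{i:\bar f_i>1/2\}$. Each misclassified $i$ contributes at least $1/4$ to $\sum_i(\bar f_i-\theta_i)^2$. Markov's inequality therefore gives $|\widehat S\triangle S|=o(n\rho)$ with probability $1-o(1)$.

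The sparse tensor PCA case is identical. There the summands are $\prod_e Y_e$ with Hermite-type centering from the proof of Theorem~\ref{thm:sparse:PCA}-(b), and the same hypertree structure, dynamic program and rounding apply. The only input needed from the model is the same overlap variance bound.
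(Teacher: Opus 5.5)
\textbf{The gap: the variance step you flag as the main obstacle.} Your architecture matches the paper's. You color-code the tree polynomial from Theorem~\ref{thm:planted:hypergraph}-(b), evaluate each coloring by a dynamic program over the incidence tree in time $n^{r}e^{O_r(\ell)}$, average independent colorings, and round. The variance step is not actually an obstacle, and the sketch you give for it rests on claims that do not hold as stated. You worry that $e^{O(k)}$ colorings is not $n^{o(1)}$. But the target runtime $n^{r+o(1)}e^{O_r(\ell)}$ already pays for $e^{O(k)}$ colorings, so the crude bound suffices.

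The one fact you need, and never state, is that every mixed moment is nonnegative. In the hypergraph model the edges are conditionally independent given $\theta$, with $\E_\theta[\widetilde Y_e]=\lambda\theta^{V(e)}\ge 0$. In sparse PCA, $X=\lambda\theta^{\otimes r}\ge 0$. Hence, for a single coloring,
\[
\E[f_c^2]=q^{-2}\sum_{\alpha,\beta}\E[\widetilde Y^{\alpha+\beta}]\,\Pr(\alpha,\beta\text{ both colorful})\le q^{-1}\E[f^2].
\]
Averaging $T$ independent colorings then gives $\E[(\bar f-f)^2]=T^{-1}(\E[f_c^2]-\E[f^2])\le \frac{1-q}{Tq}\E[f^2]$. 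With $T=\lceil q^{-1}\log n\rceil\le e^{k}\log n+1=n^{o(1)}e^{O_r(\ell)}$, this is $o(\E[f^2])$. That is exactly the paper's argument.

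Your refined expansion can be repaired once nonnegativity is used, but two of its claims are wrong. No two trees are vertex-disjoint, since every tree contains the root; the coloring factor equals $1$ for pairs that meet only at the root. More importantly, the hoped-for $n^{-\Omega(1)}$ suppression of the remaining pairs is not available. Pairs sharing a single non-root vertex already contribute order $k^2\Gamma/(n\rho)$. Near the lower end of the allowed range of $\rho$, this is only polylogarithmically small.

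\textbf{Two smaller points.} First, the second-moment estimates behind Theorem~\ref{thm:planted:hypergraph}-(b) are proved only at $\lambda=\lambda_\star$. That is why Algorithm~\ref{alg:almost:exact:recovery} begins with the Preprocessing step. You run $f$ on the raw $Y$, so you need either that step or the following observation. Since $f$ is multilinear of fixed degree $2\ell+2$, we have $\E[f(Y_\star)\mid Y]=p^{2\ell+2}f(Y)$, so the correlation of $f(Y)$ with $\theta_1$ is at least that of $f(Y_\star)$.

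Second, thresholding at $1/2$ is a legitimate alternative to the paper's top-$\lfloor n\rho\rfloor$ rule, and it even avoids the Chernoff step for $|S|$. However, it needs an explicit normalization. Take $a=\rho/\E[f\theta_1]$, where $\E[f\theta_1]=|\sT|\,\rho^{k}\lambda^{2\ell+2}$ is explicit, with $k$ the vertex count of each tree. This works because $\E[f^2]=(1+o(1))\E[f\theta_1]^2/\rho$ gives $\E[(af-\theta_1)^2]=o(\rho)$. The paper's scale-free selection rule sidesteps this issue.
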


To describe the algorithm, we first introduce the notion of a \textit{hypertree}.

\begin{definition}\label{def:hypertree}
An $r$-uniform hypergraph $H$ on $[k]$ is connected if, for every pair of distinct vertices $u,v\in V(H)$, there exists a sequence of hyperedges $e_1,\ldots, e_m\in E(H)$ such that $u\in e_1$, $v\in e_m$, and $e_i\cap e_{i+1}\neq \emptyset$ for all $1\leq i\leq m-1$. An $r$-uniform hypergraph $T$ on $[k]$ is called a hypertree if there is a sequence of $r$-uniform hypergraphs $H_1,\hdots,H_{|E(T)|}$ such that $H_1=\{e_1\}$ is a single hyperedge, $H_{|E(T)|} = T$, and for each $2 \leq i \leq |E(T)|$, $H_i$ is successively obtained from $H_{i-1}$ by appending a hyperedge $e_i$ with $|V(e_i) \cap V(H_{i-1})|=1$. Equivalently, a connected $r$-uniform hypergraph $T$ with $k$ vertices is a hypertree if and only if $|E(T)|=(k-1)/(r-1)$, the minimum number of edges among all connected $r$-uniform hypergraphs on $[k]$.
See, e.g.~\cite[Definition 1]{BHP-19}.
\end{definition}

The polynomial used to establish the upper bounds in Theorem~\ref{thm:planted:hypergraph}-(b) and Theorem~\ref{thm:sparse:PCA}-(b) is an average of tree-shaped polynomials indexed by the following class of hypertrees.
\begin{definition}\label{def:special:trees}
For $\ell\geq 1$, let $\sT_{\ell}$ be the set of all rooted $r$-uniform hypertrees $\alpha$ with root vertex~$1$ satisfying the following: the degree of the root is two, and in each of the two root-incident edges, exactly one of the $r-1$ non-root vertices has degree $\geq 2$ in $\alpha$. Furthermore, deleting the root vertex~$1$ together with the $2(r-2)$ vertices contained in the two root-incident edges decomposes $\alpha$ into  two disjoint hypertrees each having $\ell$ edges. See Figure~\ref{fig:hypertree} for an example.

Let $\sH_\ell$ be a
choice of one representative from each root-preserving isomorphism class in
$\sT_\ell$. For $H\in \sH_\ell$, we write $r_H$ for its root and also write $\Aut(H)$ for the group of root-preserving automorphisms of
$H$; that is, bijections $\sigma:V(H)\to V(H)$ such that $\sigma(r_H)=r_H$ and $\sigma$ preserves the edge set of $H$.
\end{definition}
\begin{figure}
    \centering
    \includegraphics[width=0.55\linewidth, frame]{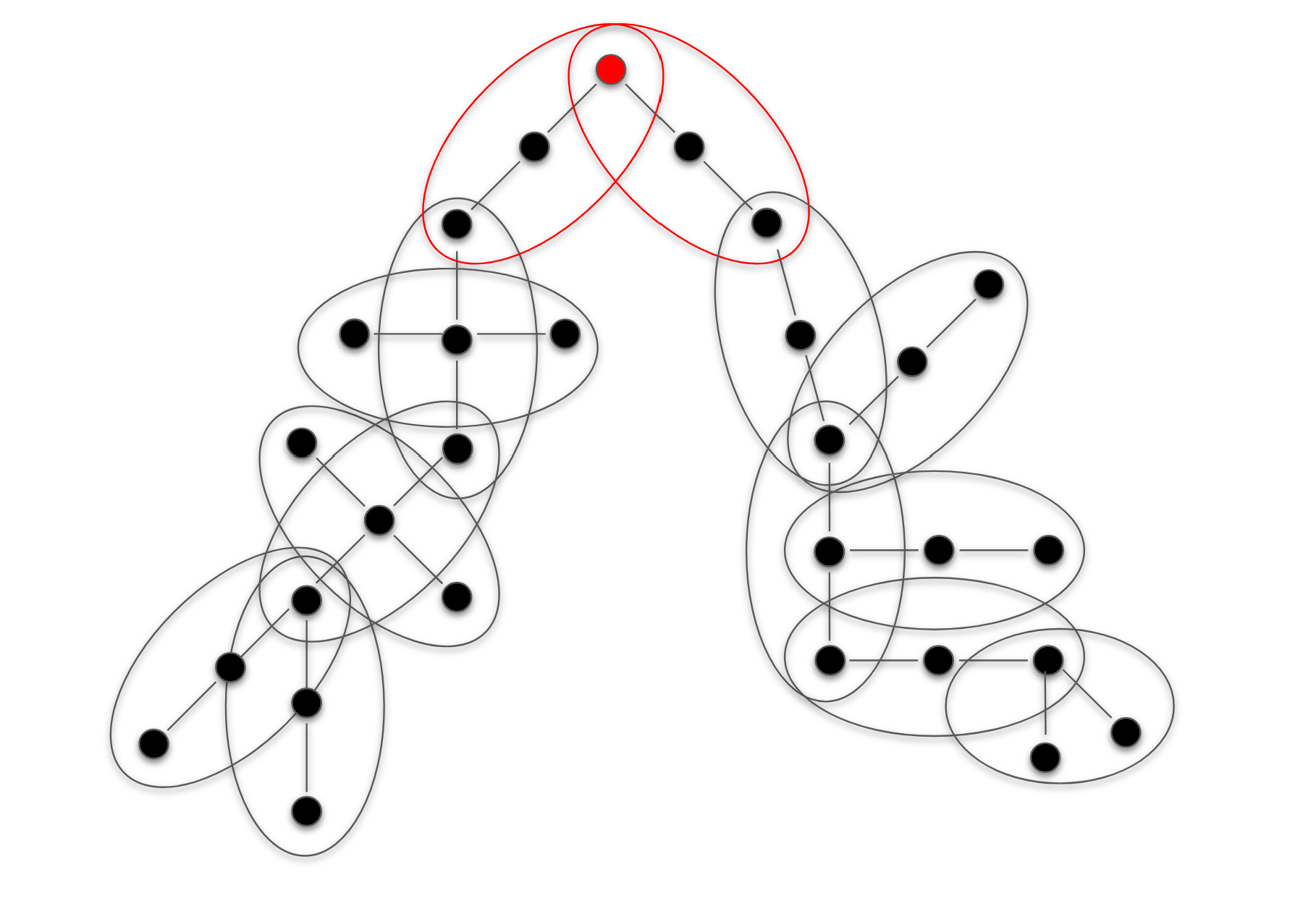}
    \caption{Schematic of a hypertree in $\sT_6$ for $r=3$ and $\ell=6$. Vertices are represented by filled dots and edges by enclosed circles. The root vertex and its root-incident edges are highlighted in red.}
    \label{fig:hypertree}
    \vspace{-10pt}
\end{figure}
Equivalently, $\alpha \in \sT_\ell$ is a rooted hypertree (with root vertex $1$) consisting of two disjoint rooted hypertrees of $\ell$ edges each, each attached to vertex $1$ by a root-incident edge.

\begin{algorithm}[H]
\caption{Almost-exact recovery via color-coding for planted dense subhypergraph.}
\label{alg:almost:exact:recovery}

\begin{algorithmic}[1]
\State \textbf{Input:} Adjacency tensor $Y$, parameters $\rho,q_0,q_1,\eps$, and degree parameter $\ell$.
\State \textbf{Preprocessing:} Set $\lambda = \frac{q_1-q_0}{\sqrt{q_0(1-q_0)}}$, $\lambda_\star = \sqrt{\frac{(r-2)!(1+\eps)}{en^{r-1}\rho^{2r-2}}}$, and $p=\frac{\lambda_\star}{\lambda}$. Independently, for every hyperedge $e = \{i_1,\hdots,i_r\}$, keep $Y_e$ with probability $p$ and, with probability $1-p$, resample $Y_e$ from $\Ber(q_0)$. Denote the resulting hypergraph by $Y_\star$ and define $\widetilde{Y} = \frac{Y_\star-q_0}{\sqrt{q_0(1-q_0)}}$.
\State Set $k = (r-1)(2\ell+2) + 1$ and $t = \lceil \frac{k^{k}}{k!}\log{n} \rceil$.
\For{$s=1,\hdots,t$}
    \State Sample a fresh independent uniformly random coloring $c_{s}: [n] \to [k]$.
    \For{$H\in\sH_{\ell}$}
        \State Compute $A_H(s)=(A_{r_H}(i,[k],c_{s}))_{i \in [n]}$ using Algorithm~\ref{alg:color:coded:score} below.
    \EndFor
\EndFor
\State Compute $z=\sum_{s=1}^{t} \sum_{H \in \sH_\ell} A_H(s)/|\Aut(H)|$.
\State Let $\widehat{S} \subset [n]$ be the set of $\lfloor n\rho \rfloor$ indices with the largest entries in $z=(z_1,\ldots,z_n)$, breaking ties by largest index.
\State \textbf{Output:} $\widehat{S}$.
\end{algorithmic}
\end{algorithm}
We remark that the \textbf{Preprocessing} step in Algorithm~\ref{alg:almost:exact:recovery} is only needed for our proof: it replaces each edge of $Y$ independently with probability $1-p$ by an independent $\Ber(q_0)$ draw, reducing the effective signal-to-noise ratio to $\lambda_\star = p\lambda$. As larger $\lambda$ only makes estimation easier, this is most likely a proof artifact, and is not needed. This step is included because the proof of Theorem~\ref{thm:planted:hypergraph}-(b) assumes $\lambda = \lambda_\star$, so Algorithm~\ref{alg:almost:exact:recovery} is modified to reflect this change. Since preprocessing all edges naively takes $\binom{n}{r} \leq n^r$ time, this step does not affect the final algorithmic runtime guarantee.

The score computation in Algorithm~\ref{alg:color:coded:score} below performs a dynamic program on each $H \in \sH_\ell$. For any rooted hypertree $H$ with root vertex $r_H$, the associated \emph{bipartite incidence tree} $I$ has node set $V(H) \sqcup E(H)$, with each vertex-node $v$ adjacent to every edge-node $e$ containing $v$, and is rooted at $r_H$; the hypertree property of $H$ ensures that $I$ is itself a rooted tree. For each vertex-node $v \in V(I)$, let $\ch(v)$ denote its child edge-nodes, and for each edge-node $e \in E(I)$, let $\ch(e) = (v_1, \ldots, v_{r-1})$ denote its child vertex-nodes in an arbitrary fixed ordering. For each node $a$ of $I$, let $H_a$ denote the rooted sub-incidence tree below $a$, and write $\vtx(H_a)$ for the set of vertex-nodes of $H_a$; when $a$ is an edge-node, this excludes the parent vertex-node of $a$.
\begin{algorithm}[H]
\caption{Computation of color-coded score.}
\label{alg:color:coded:score}

\begin{algorithmic}[1]
\State \textbf{Input:} Tensor $\widetilde{Y} \in (\R^n)^{\otimes r}$, coloring $c: [n] \to [k]$, and rooted hypertree representative $H \in \sH_{\ell}$ with root $r_{H}$.
\State View $H$ as a rooted bipartite incidence tree $I$ rooted at $r_{H}$. 

\State For each vertex-node $v$, every $i \in [n]$ and $Q \subseteq [k]$ with $|Q| = |\vtx(H_v)|$, compute recursively
\Statex
\[
    A_v(i,Q,c) = \begin{cases}
        \In_{Q = \{c(i)\}} & \text{if $v$ is a leaf},\\[0.8em]
        \In_{c(i) \in Q} \displaystyle\sum_{\substack{(Q_e)_{e \in \ch(v)} \\ |Q_e| = |\vtx(H_e)| \\ \bigsqcup_{e \in \ch(v)}Q_{e} = Q \setminus \{c(i)\}}} \prod_{e \in \ch(v)} B_{e}(i, Q_e, c) & \text{otherwise}.
    \end{cases}
\]
\State For each edge-node $e$, every $i \in [n]$ and $Q \subseteq [k]$ with $|Q| = |\vtx(H_e)|$, compute recursively
\Statex
\[
    B_e(i, Q, c) = \In_{c(i) \notin Q} \sum_{\substack{y_1,\hdots,y_{r-1} \in [n] \setminus \{i\}\\ \text{distinct}}} \widetilde{Y}_{\{i,y_1,\hdots,y_{r-1}\}} \sum_{\substack{(Q_j)_{j=1}^{r-1} \\ |Q_j| = |\vtx(H_{v_j})| \\ \bigsqcup_{j=1}^{r-1} Q_j = Q}} \prod_{j=1}^{r-1} A_{v_j}(y_j, Q_j, c)
\]
\State \textbf{Output:} $(A_{r_H}(i,[k],c))_{i \in [n]}$.
\end{algorithmic}
\end{algorithm}

\section{Proof techniques}
We now outline the techniques used to derive our low-degree lower bounds. For tensor PCA with a general prior, we use the cumulant expansion approach of~\cite{SW-22}. For planted dense subhypergraph and sparse tensor PCA with $\rho\geq n^{-1/2}$, we use the \textit{orthogonal expansion} approach of~\cite{arxiv-version}. For planted dense subhypergraph with $\rho\leq n^{-1/2}$, we introduce a conditional version of this method, namely \textit{orthogonal expansion with conditioning}. To explain it, we first revisit the approach of~\cite{arxiv-version}, and then describe the modifications needed to incorporate conditioning.

\subsection{Orthogonal expansion approach by~\cite{arxiv-version}}
\label{subsec:orthogonal:expansion}
 
Recall that our lower bounds aim to establish $\Corr_{\leq D}=o(1)$ for some $D=n^{\Omega(1)}$. The orthogonal expansion approach by~\cite{arxiv-version} provides a systematic way to upper bound $\Corr_{\leq D}$ via the following three steps.

\medskip
\noindent\textbf{Step 1: Choose a basis for $\R_D[Y]$.}
Select a collection $(\phi_\alpha(Y))_{\alpha\in \GGG}$ that spans $\R_D[Y]$, where $\GGG$ is a suitable index set---in our applications, this will be (multi)-hypergraphs with at most $D$ edges. This basis need not be orthogonal; it is used to expand the numerator $\E[f(Y)\cdot x]$. Writing $f(Y)=\sum_{\alpha\in \sG}\hat{f}_\alpha\,\phi_\alpha(Y)$, we have
\[
\E[f(Y)\cdot x]=\sum_{\alpha\in \GGG}c_{\alpha}\hat{f}_{\al}=\langle c\,, \hat{f}\rangle ,\quad\textnormal{where}\quad c=(c_{\al})_{\alpha\in \GGG}\equiv (\E[\phi_{\al}x])_{\al\in \GGG}\,.
\]
For instance, in the planted dense subhypergraph model, we identify the set of all $r$-uniform hyperedges on $n$ vertices with $[N]$, where $N:=\binom{n}{r}$, and take $\phi_\alpha(Y)=\prod_{e=1}^N(Y_e-q_0)^{\alpha_e}$ for $\al=(\al_e)_{e\in [N]}\in \{0,1\}^N$. Note that  this is the centered monomial basis under the ``null distribution'' where $q_1=q_0$.

\medskip
\noindent\textbf{Step 2: Introduce an orthonormal family to control $\E[f(Y)^2]$.}
The key difficulty in bounding $\Corr_{\leq D}$ is the denominator $\E[f(Y)^2]$. Since the entries of $Y$ are not independent under the planted distribution, there is no obvious orthogonal basis in the space of low-degree polynomials of $Y$ with respect to which one could expand $\E[f(Y)^2]$ using Parseval's identity.

The idea of~\cite{arxiv-version} is to instead work in a larger space: introduce an orthonormal family $(\psi_{\beta\gamma}(W,\theta))_{(\beta,\gamma)\in \PPP}$ that depends on both the ``noise'' variable $W$ and the signal $\theta$, where $W$ denotes the source of randomness in the observation (e.g., the independent edge variables in the hypergraph model, or the Gaussian noise tensor in tensor PCA). In our applications, since $W$ and $\theta$ are independent with independent entries, and it is straightforward to construct such $\psi_{\beta\gamma}$ by taking the separable form $\psi_{\beta\gamma}=\psi_{1,\beta}(W)\cdot \psi_{2,\gamma}(\theta)$. 

Assuming $Y$ is measurable with respect to $(W,\theta)$, the orthonormality condition $\E[\psi_{\beta\gamma}\,\psi_{\beta'\gamma'}]=\In\{(\beta,\gamma)=(\beta',\gamma')\}$ allows one to lower-bound $\E[f(Y)^2]$ via Bessel's inequality
\begin{equation}\label{eq:Bessel}
\begin{split}
    &\E[f(Y)^{2}]
    \geq \sum_{(\beta,\gamma) \in \PPP} \Big(\E\big[f(Y)\psi_{\beta\gamma}(W,\theta)\big]\Big)^{2} 
               = \big\|M \hat{f}\big\|^{2}\,,\quad\textnormal{where} \\
            &M=(M_{\be\ga,\al})_{(\be,\ga)\in \PPP, \al\in \GGG}\equiv (\E[\phi_{\al}(Y)\psi_{\be\ga}(W,\theta)])_{(\be,\ga)\in \PPP, \al\in \GGG}\,.
\end{split}
\end{equation}
Combining~\eqref{eq:Bessel} with the Cauchy Schwarz inequality yields the following dual bound.
\begin{proposition}[Proposition 1.3 of~\cite{arxiv-version}]\label{prop:duality}
Suppose that $Y$ is $(W,\theta)$-measurable. Fix a basis $(\phi_{\al}(Y))_{\al\in \GGG}$ in $\R_D[Y]$ and an orthonormal family $(\psi_{\be\ga}(W,\theta))_{(\be,\ga)\in \PPP}$. Letting $c$ and $M$ be defined as above accordingly. Then,
\begin{equation}\label{eq:corr:bound}
\Corr_{\leq D}\leq \frac{1}{\sqrt{\E[x^2]}}\;\inf_{u:M^{\top}u=c}\|u\|\,, 
\end{equation}
where we take the convention that the infimum is $\infty$ if none of such $u$ exists, and the infimum is taken w.r.t. $u=(u_{\be\ga})_{(\be,\ga)\in \PPP}$ such that $M^{\top}u=c$. That is,
\begin{equation}\label{eq:u:condition}
\sum_{(\be,\ga)\in \PPP}M_{\be\ga,\al}u_{\be\ga}=c_{\al}\,.
\end{equation}
Moreover, if $\R_D[Y]\subseteq \textnormal{span}(\psi_{\be\ga})$, then the above inequality is an equality.
\end{proposition}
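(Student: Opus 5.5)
Fix $f\in\R_D[Y]$ with expansion $f=\sum_{\al\in\GGG}\hat f_\al\phi_\al(Y)$; since $(\phi_\al)$ spans $\R_D[Y]$, every $f$ has such an expansion. The numerator is then linear in $\hat f$: $\E[f(Y)x]=\sum_\al \hat f_\al \E[\phi_\al x]=\langle c,\hat f\rangle$. For the denominator we use that $Y$ is $(W,\theta)$-measurable, so $f(Y)\in L^2$ of $(W,\theta)$. Bessel's inequality for the orthonormal family $(\psi_{\be\ga})$ then gives \eqref{eq:Bessel}:
\[
\E[f(Y)^2]\ \ge\ \sum_{(\be,\ga)\in\PPP}\Big(\sum_\al \hat f_\al M_{\be\ga,\al}\Big)^2=\|M\hat f\|^2 .
\]
If $\PPP$ or $\GGG$ is infinite, we first restrict to finite sums. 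Since $\deg f\le D$, only finitely many $\hat f_\al$ are nonzero, so $M\hat f$ is well defined coordinatewise, and Bessel holds for every finite subfamily.

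Next, let $u$ be any vector with $M^\top u=c$ and $\|u\|<\infty$; if no such $u$ exists, the bound is trivial. Then
\[
\E[f(Y)x]=\langle c,\hat f\rangle=\langle M^\top u,\hat f\rangle=\langle u,M\hat f\rangle\le \|u\|\,\|M\hat f\|\le \|u\|\sqrt{\E[f(Y)^2]}.
\]
The exchange $\langle M^\top u,\hat f\rangle=\langle u,M\hat f\rangle$ is justified because $\hat f$ has finite support and $M\hat f\in\ell^2$ by Bessel, so the double sum converges absolutely by Cauchy--Schwarz. Dividing by $\sqrt{\E[f^2]\E[x^2]}$ and taking the supremum over $f$ and the infimum over $u$ yields \eqref{eq:corr:bound}.

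For the equality claim, assume $\R_D[Y]\subseteq \overline{\mathrm{span}}(\psi_{\be\ga})$. Then Bessel becomes Parseval on $\R_D[Y]$: $\E[f^2]=\|M\hat f\|^2$. The map $\hat f\mapsto M\hat f$ is therefore an isometry from $\R_D[Y]$, with the $L^2$ norm, onto the finite-dimensional subspace $V=\{M\hat f\}\subseteq\ell^2(\PPP)$. In particular, the numerator $\langle c,\hat f\rangle$ depends only on $f$, and hence only on $M\hat f$: if $M\hat f=0$ then $f=0$ in $L^2$, so $\E[fx]=0$. Consequently $\hat f\mapsto\langle c,\hat f\rangle$ factors as $\ell(M\hat f)$ for a linear functional $\ell$ on $V$. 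By Riesz representation there is a unique $u^\ast\in V$ with $\ell(v)=\langle u^\ast,v\rangle$. Then $M^\top u^\ast=c$ holds coordinatewise, tested against the basis vectors $\hat f=e_\al$. Moreover $\sup_f \E[fx]/\sqrt{\E f^2}=\|u^\ast\|$, attained at $M\hat f=u^\ast$, which matches the infimum.

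The main obstacle is bookkeeping rather than ideas. Because the $\phi_\al$ need not be linearly independent, $\hat f$ is not unique, and one must check that the argument works for any choice of representation; it does, since only existence of an expansion is used. One must also handle possibly infinite index sets when swapping sums. The equality direction additionally requires showing that $c$ lies in the range of $M^\top$ restricted to $V$, which is exactly the kernel compatibility noted above.
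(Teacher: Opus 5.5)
Your proof is correct and follows the paper's route: the inequality is Bessel's inequality \eqref{eq:Bessel} combined with Cauchy--Schwarz against any $u$ with $M^{\top}u=c$. For the equality, your Riesz representer $u^\ast\in V$ is exactly the coefficient vector $u^\star_{\be\ga}=\E[f_\star\psi_{\be\ga}]$ of $f_\star=\mathsf{P}_D x$ that the paper uses, so the two arguments differ only in phrasing, and your extra care with infinite index sets and non-unique expansions is harmless.
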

\begin{proof}
We refer to~\cite[Proposition 1.3]{arxiv-version} for the proof of \eqref{eq:corr:bound}, which follows from~\eqref{eq:Bessel} and a Cauchy Schwarz inequality. To prove the final claim, assume that $\R_D[Y]\subseteq \textnormal{span}(\psi_{\be\ga})$. Let $\mathsf{P}_D$ denote the projection operator onto $\R_{D}[Y]$, and set $f_\star=\mathsf{P}_D x$.
Since $f_\star\in \R_D[Y]\subseteq \textnormal{span}(\psi_{\be\ga})$, we have $f_\star=\sum_{\be\ga}u^\star_{\be\ga}\psi_{\be\ga}$ for some $u^\star=(u^\star_{\be\ga})_{\be\ga\in \PPP}$. Since $x-f_\star $ must be orthogonal to $\phi_{\al}(Y)$ for any $\al\in \GGG$, we must have $M^\top u^\star=c$. Thus,
\[
   \inf_{M^\top u=c}\|u\|\leq \|u^\star\|=\sqrt{\E[f_\star^2]}=\sqrt{\E[x^2]}\cdot \Corr_{\leq D}\,,
\]
where the last equality holds since $x-f_\star$ is orthogonal to $\R_D[Y]$, so the supremum defining $\Corr_{\leq D}$ in~\eqref{eq:def:Corr} is attained at $f=f_\star$. Combining with~\eqref{eq:corr:bound} concludes the proof.
\end{proof}
\begin{remark}\label{rem:cumulant}
In the Gaussian additive model $Y=X+W$, where $W$ is a matrix with $\mathcal{N}(0,1)$ i.i.d. entries and $X$ is independent of $W$, the cumulant bound~\cite[Theorem 2.2]{SW-22} may be viewed as a special case of Proposition~\ref{prop:duality}. Indeed, let $H_{\al}$ be the normalized multivariate Hermite polynomial for $\al\in \GGG$, where $\GGG$ is the set of all multigraphs $\al$ with $|\al| \leq D$. Set $\sP = \GGG$. Taking $\phi_{\al}(Y)=H_{\al}(Y)$, $\theta=X$, and $\psi_{\beta\ga}(W,X)=H_{\be}(W)$, it is not difficult to see that there exists a unique $u$ such that $M^{\top}u=c$, and the bound in Proposition~\ref{prop:duality} reduces to the cumulant bound~\cite[Theorem 2.2]{SW-22}.
\end{remark}
The power of Proposition~\ref{prop:duality} is that \emph{any} $u$ satisfying $M^\top u =c$ may be viewed as a dual certificate for upper bounding $\Corr_{\leq D}$. By having a rich enough family $(\psi_{\be\ga})$, we have freedom to construct $u$. Such an approach is particularly effective for proving very accurate bounds on $\Corr_{\leq D}$ leading to a sharp phase transition for many models, as seen in Theorem~\ref{thm:planted:hypergraph} and~\ref{thm:sparse:PCA}.

\medskip
\noindent\textbf{Step 3:  Reduce to ``good'' graphs and construct $u$.} The final step is to find $u$ satisfying $M^\top u=c$ with small enough
norm by eliminating ``uninformative'' terms. To illustrate, consider the planted dense subhypergraph model with $(\phi_{\al})_{\al\in \GGG}$ and $(\psi_{\be\ga})_{(\be,\ga)\in \PPP}$ constructed above. For any $u$ such that $M^\top u =c$ with small enough norm, we expect that the function $\sum_{\be\ga}u_{\be\ga}\psi_{\be\ga}$ must be close to $f_\star$, the projection of $x=\theta_1$ onto $\R_D[Y]$. Thus, intuitively, for $\be$ which doesn't contain vertex $1$ or is not connected, the value $u_{\be\ga}$ must be small. The following lemma guarantees that such terms can actually be set to $0$ by exploiting linearity in the system $M^\top u =c$.

\begin{lemma}[Lemma 1.4 of~\cite{arxiv-version}]\label{lem:reduce}
Suppose that there exist subsets $\GGG_\star \subset \GGG$ and $\PPP_\star \subset \PPP$ such that for each $\al\in \GGG$, there exists $\alpha_\star \in \PPP_\star$ and $\mu \in \RR$ such that 
\begin{equation}\label{eq:reduce}
c_\alpha = \mu c_{\alpha_\star}\quad\textnormal{and}\quad M_{\beta\gamma,\alpha} = \mu M_{\beta\gamma,\alpha_\star}\quad\textnormal{for all}\quad (\beta,\gamma)\in \PPP_\star\,.
\end{equation}
Then, for any $(u_{\be\ga})_{\be\ga\in \PPP}$ such that $u_{\be\ga}=0$ for all $(\be,\ga)\notin \PPP_\star$, if~\eqref{eq:u:condition} holds for all $\alpha \in \GGG_\star$, then~\eqref{eq:u:condition} holds for all $\alpha \in \GGG$ as well.
\end{lemma}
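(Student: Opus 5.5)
The argument is pure linear algebra on the system \eqref{eq:u:condition}, so the plan is simply to verify each equation directly. Fix $u=(u_{\be\ga})_{(\be,\ga)\in\PPP}$ with $u_{\be\ga}=0$ for every $(\be,\ga)\notin\PPP_\star$, and assume \eqref{eq:u:condition} holds for every index in $\GGG_\star$. Because $u$ vanishes off $\PPP_\star$, for any $\al\in\GGG$ the left-hand side of \eqref{eq:u:condition} collapses to a sum over $\PPP_\star$ only:
\[
\sum_{(\be,\ga)\in\PPP}M_{\be\ga,\al}u_{\be\ga}=\sum_{(\be,\ga)\in\PPP_\star}M_{\be\ga,\al}u_{\be\ga}\,.
\]
This is why the hypothesis \eqref{eq:reduce} only needs to relate the columns of $M$ on the rows indexed by $\PPP_\star$.

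Now take an arbitrary $\al\in\GGG$, and let $\al_\star$ and $\mu$ be as supplied by the hypothesis. Reading the hypothesis as $\al_\star\in\GGG_\star$ (the statement writes $\PPP_\star$, evidently a typo, since $\al_\star$ must index a column of $M$ and an entry of $c$), the representative $\al_\star$ satisfies \eqref{eq:u:condition} by assumption. Substituting $M_{\be\ga,\al}=\mu M_{\be\ga,\al_\star}$ for $(\be,\ga)\in\PPP_\star$ gives
\[
\sum_{(\be,\ga)\in\PPP_\star}M_{\be\ga,\al}u_{\be\ga}=\mu\sum_{(\be,\ga)\in\PPP_\star}M_{\be\ga,\al_\star}u_{\be\ga}=\mu\,c_{\al_\star}=c_\al\,.
\]
Here the middle equality is \eqref{eq:u:condition} for $\al_\star$, rewritten via the collapsed form above, and the last equality is the hypothesis $c_\al=\mu c_{\al_\star}$. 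Since $\al$ was arbitrary, \eqref{eq:u:condition} holds for all of $\GGG$.

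There is no real obstacle. The only care needed is to use the vanishing of $u$ off $\PPP_\star$ before invoking the proportionality, because \eqref{eq:reduce} is assumed only on the rows in $\PPP_\star$. When $\al\in\GGG_\star$ itself, the claim is immediate, and it is also recovered by taking $\al_\star=\al$ and $\mu=1$.
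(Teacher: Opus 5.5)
Your proof is correct, and it is the same direct substitution argument as Lemma 1.4 of Sohn--Wein, which the paper cites rather than reproves: restrict the sum to $\PPP_\star$, replace column $\alpha$ by $\mu$ times column $\alpha_\star$, and use the equation for $\alpha_\star$. You are also right that ``$\alpha_\star\in\PPP_\star$'' is a typo for $\alpha_\star\in\GGG_\star$; this matches how the lemma is applied in Lemma~\ref{lem:good:reduction:sparse:PCA}.
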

For planted dense subhypergraphs, we will set $\GGG_\star$ to be the all connected hypergraphs $\al$ containing the vertex $1$ (cf. Definition~\ref{def:good:hypergraph}), and $\PPP_\star$ to be all collection $(\be,\ga)$ such that $\be\in \GGG_\star$ and $\ga$ such that its support is included in vertices of $\be$. Lemma~\ref{lem:good:reduction:hypergraph} verifies the condition \eqref{eq:reduce} holds.

Lemma~\ref{lem:reduce} is a crucial step in the orthogonal expansion approach by~\cite{arxiv-version} since in both the construction of $u$ such that $M^\top u =c$ and the bounding step of $\|u\|$ it allows us to consider only a much smaller set $\PPP_\star$. For example, in the planted dense subhypergraphs above, the size of $|\PPP_\star|$ is at least $n^{-1}$ factor smaller than $|\PPP|$ due to the condition that vertex $1$ must be included.

Finally, we remark that in applying this orthogonal expansion approach to the planted dense subhypergraph with $\rho\geq n^{-1/2}$ and sparse tensor PCA, the $r\geq 3$ case requires new combinatorial estimates when bounding $\|u\|$. For $r=2$, a connected graph always contains a spanning tree, which was one of the main facts in the $r=2$ case~\cite{arxiv-version}. However, the analog fails for $r\geq 3$: a connected hypergraph need not contain a hypertree (cf. Definition~\ref{def:hypertree}). A spanning hypertree on $k$ vertices exists only if $k\equiv 1\pmod{r-1}$, and even this is not
sufficient---whether one exists depends on how the edges overlap, not merely on
the vertex and edge counts. To pin down the sharp threshold in
Theorem~\ref{thm:planted:hypergraph}, we therefore derive a sufficiently tight upper bound on the number of connected hypergraphs with a given number of
vertices and hyperedges, which appears to be new.

\subsection{Orthogonal expansion approach with conditioning}
\label{subsec:proof:conditioning}

For the planted dense subhypergraph model with $\rho \leq n^{-1/2}$ (i.e. $\xi\leq 1/2$), the orthogonal expansion approach in Section~\ref{subsec:orthogonal:expansion} is insufficient to establish hardness throughout $a>b\xi$. The difficulty is that when $a<b/2$, there exists rare configurations of $Y$ that make $c_{\alpha} \equiv \E[\phi_{\al}(Y)\theta_1]$ substantially greater than its typical value whenever the hypergraph $\alpha$ is `dense'; that is, has many
hyperedges relative to its number of vertices. This in turn inflates the norm of the dual certificate $u$. A similar phenomenon was observed by Dhawan, Mao, and Wein~\cite{DMW-23} who considered detection of the planted dense subhypergraph; for this reason, they considered a conditioning approach to the low-degree likelihood ratio.

Similar to the approach of~\cite{DMW-23}, we introduce a method for incorporating conditioning into the orthogonal expansion. Let $\cE_{\star}$ be a ‘good’ event, chosen so that $\cE_{\star}$ occurs with high probability, but rules out the rare configurations which cause inflating $c_{\alpha}=\E[\phi_{\al}(Y)\theta_1]$ than the typical value of $\phi_{\al}(Y)\theta_1$. For a polynomial estimator $f \in \R_D[Y]$, we split the numerator in the low-degree correlation as
\[
    |\E[f(Y) \cdot x]| \leq |\E[f(Y) \cdot x \In_{\cE_{\star}}]| + |\E[f(Y) \cdot x \In_{\cE_{\star}^{c}}]|\,.
\]
We control the two terms on the RHS separately. Roughly speaking, the first term is handled by applying the orthogonal expansion method restricted to the event $\cE_{\star}$, and the second term is controlled using Cauchy Schwarz, leveraging the fact that $\cE_{\star}$ occurs with high probability. Define
\[
\begin{split}
    &\widetilde{c} = (\widetilde{c}_{\alpha})_{\alpha \in \sG} \equiv (\E[\In_{\cE_{\star}}\phi_{\alpha}(Y)x])_{\alpha \in \sG}\,, \quad\textnormal{and} \\
    &\widetilde{M} = (\widetilde{M}_{\beta\gamma, \alpha})_{(\beta, \gamma)\in \sP, \alpha \in \sG} \equiv (\E[\In_{\cE_{\star}}\phi_{\alpha}(Y)\psi_{\beta \gamma}(W,\theta)])_{(\beta, \gamma) \in \sP, \alpha \in \sG}\,.
\end{split}
\]
\begin{proposition}\label{prop:orthogonal:expansion:conditioning}
For a given event $\cE_{\star}$, let $\widetilde{c}$ and $\widetilde{M}$ be as above. Then, 
\[
    \Corr_{\leq D} \leq \frac{1}{\sqrt{\E[x^2]}} \lPa \inf_{u:\widetilde{M}^{\top}u=\widetilde{c}} \|u\| + \sqrt{\E[x^2 \In_{\cE_{\star}^{c}}]} \rPa\,.
\]
\end{proposition}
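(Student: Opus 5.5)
Fix $f\in\R_D[Y]$ with $\E[f(Y)^2]=1$ (normalization is harmless since $\Corr_{\le D}$ is scale invariant). We bound $\E[f(Y)x]$ by splitting on $\cE_\star$ as in the display preceding the statement,
\[
|\E[f(Y)x]|\le |\E[f(Y)x\In_{\cE_\star}]|+|\E[f(Y)x\In_{\cE_\star^c}]|.
\]
The second term is handled by Cauchy--Schwarz: $|\E[f(Y)\cdot x\In_{\cE_\star^c}]|\le \sqrt{\E[f(Y)^2]}\sqrt{\E[x^2\In_{\cE_\star^c}]}=\sqrt{\E[x^2\In_{\cE_\star^c}]}$.

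For the first term we mimic the proof of Proposition~\ref{prop:duality}. Expand $f=\sum_{\al\in\GGG}\hat f_\al\phi_\al$. By linearity of expectation,
\[
\E[f(Y)x\In_{\cE_\star}]=\sum_\al \hat f_\al\widetilde c_\al=\langle \widetilde c,\hat f\rangle .
\]
Take any $u$ with $\widetilde M^\top u=\widetilde c$; if none exists the infimum is $\infty$ and there is nothing to prove. Then
\[
\langle \widetilde c,\hat f\rangle=\langle u,\widetilde M\hat f\rangle\le \|u\|\,\|\widetilde M\hat f\|.
\]
The entries of $\widetilde M\hat f$ are $(\widetilde M\hat f)_{\be\ga}=\E[\In_{\cE_\star}f(Y)\psi_{\be\ga}(W,\theta)]$. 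Since the $\psi_{\be\ga}$ are orthonormal, Bessel's inequality applied to the function $g=\In_{\cE_\star}f(Y)$ gives
\[
\|\widetilde M\hat f\|^2=\sum_{(\be,\ga)}\E[g\psi_{\be\ga}]^2\le \E[g^2]=\E[\In_{\cE_\star}f(Y)^2]\le \E[f(Y)^2]=1.
\]
Here $g$ need not be a polynomial; Bessel holds for any $L^2$ function of $(W,\theta)$, and $g$ is $(W,\theta)$-measurable because $Y$ is and $\cE_\star$ is an event in this probability space. Hence $|\E[f(Y)x\In_{\cE_\star}]|\le\|u\|$ for every admissible $u$, and therefore it is at most $\inf_{u:\widetilde M^\top u=\widetilde c}\|u\|$.

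Adding the two bounds, dividing by $\sqrt{\E[x^2]}$, and taking the supremum over $f$ gives the claim. The only subtle point is justifying Bessel for the non-polynomial function $\In_{\cE_\star}f$. This needs only square integrability, which follows from $\E[\In_{\cE_\star}f^2]\le\E[f^2]<\infty$, together with $(W,\theta)$-measurability. So no real obstacle arises; the argument is a direct adaptation of Proposition~\ref{prop:duality}.
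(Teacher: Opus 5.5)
Your proposal is correct and follows essentially the same route as the paper's proof: Cauchy--Schwarz for the $\In_{\cE_\star^c}$ term, then $\langle \widetilde c,\hat f\rangle=\langle u,\widetilde M\hat f\rangle$ combined with Bessel's inequality applied to $\In_{\cE_\star}f(Y)$, which gives $\|\widetilde M\hat f\|^2\le \E[\In_{\cE_\star}f(Y)^2]\le\E[f(Y)^2]$. Your added remark that Bessel needs only square-integrability and $(W,\theta)$-measurability of $\In_{\cE_\star}f(Y)$, not polynomiality, is a fair clarification of a step the paper uses implicitly.
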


\begin{proof}
An application of Cauchy Schwarz gives
\[
    \sup_{f \in \R_D[Y]} \frac{|\E[f(Y)\cdot x\In_{\cE_{\star}^{c}}]|}{\sqrt{\E[f(Y)^2]}} \leq \sqrt{\E[x^2\In_{\cE_{\star}^{c}}]}\,.
\]
Now let $f \in \R_D[Y]$ and expand $f(Y) = \sum_{\alpha \in \sG} \hat{f}_{\alpha} \phi_{\alpha}(Y)$ so that $\E[f(Y)\cdot x\In_{\cE_{\star}}]=\langle \widetilde{c}, \hat{f}\rangle$. By Bessel's inequality, 
\[
    \E[f(Y)^2] \geq \E[f(Y)^2\In_{\cE_{\star}}] \geq \sum_{(\beta, \gamma) \in \sP} (\E[\In_{\cE_{\star}}f(Y)\psi_{\beta\gamma}(W,\theta)])^{2} = \big\|\widetilde{M} \hat{f}\big\|^{2}\,.
\]
Hence, for any $u$ with $\widetilde{M}^{\top}u=\widetilde{c}$,
\[
    \big|\E[f(Y)\cdot x\In_{\cE_{\star}}]\big|
    = \big|\langle \widetilde{c}, \hat{f}\rangle\big|
    = \big|\langle u, \widetilde{M}\hat{f}\rangle\big|
    \leq \|u\|\big\|\widetilde{M}\hat{f}\big\|
    \leq \|u\|\sqrt{\E[f(Y)^2]}\,,
\]
using Cauchy Schwarz and the previous display. 
Dividing by $\sqrt{\E[f(Y)^2]}$
and taking the supremum over $f$, then the infimum over such $u$, bounds
$\sup_{f \in \R_D[Y]}|\E[f(Y)\cdot x\In_{\cE_{\star}}]|/\sqrt{\E[f(Y)^2]}$ by
$\inf_{u:\widetilde{M}^{\top}u=\widetilde{c}}\|u\|$. Combined with the first
display, this gives the stated bound.
\end{proof}

\begin{corollary}\label{cor:orthogonal:expansion:conditioning}
If $x\in \{0,1\}$, then
\[
    \Corr_{\leq D} \leq \frac{1}{\sqrt{\E[x^2]}} \inf_{u:\widetilde{M}^{\top}u=\widetilde{c}} \|u\| + \sqrt{\Pb(\cE_{\star}^{c} \mid x = 1)}\,.
\]
\end{corollary}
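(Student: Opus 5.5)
This is immediate from Proposition~\ref{prop:orthogonal:expansion:conditioning}; the only work is rewriting its second term when $x\in\{0,1\}$. That proposition gives
\[
\Corr_{\leq D} \leq \frac{1}{\sqrt{\E[x^2]}}\inf_{u:\widetilde{M}^{\top}u=\widetilde{c}}\|u\| + \sqrt{\frac{\E[x^2\In_{\cE_{\star}^{c}}]}{\E[x^2]}}\,,
\]
so it suffices to show that the ratio under the last square root equals $\Pb(\cE_\star^c\mid x=1)$.

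Since $x\in\{0,1\}$, we have $x^2=x=\In_{\{x=1\}}$. Hence $\E[x^2]=\Pb(x=1)$ and $\E[x^2\In_{\cE_\star^c}]=\Pb(\{x=1\}\cap\cE_\star^c)$. Their ratio is exactly $\Pb(\cE_\star^c\mid x=1)$, by the definition of conditional probability.

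The conditional probability is well defined whenever $\Pb(x=1)>0$. If instead $\Pb(x=1)=0$, then $\E[x^2]=0$ and the correlation is degenerate, so that case is excluded by convention, as in the definition of $\Corr_{\leq D}$. In our applications $x=\theta_1\sim\Ber(\rho)$ with $\rho>0$, so this never arises.

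There is no real obstacle here. The only care needed is to keep the first term unchanged, with its normalization $1/\sqrt{\E[x^2]}$ (which equals $1/\sqrt{\Pb(x=1)}$), and to substitute the identity only in the second term.
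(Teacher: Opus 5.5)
Your proposal is correct and is exactly the intended argument. The paper states the corollary without a separate proof: it follows from Proposition~\ref{prop:orthogonal:expansion:conditioning} by pulling $1/\sqrt{\E[x^2]}$ into the second term, then using $x^2=\In_{\{x=1\}}$ to identify $\E[x^2\In_{\cE_\star^c}]/\E[x^2]$ with $\Pb(\cE_\star^c\mid x=1)$.
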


Proposition~\ref{prop:orthogonal:expansion:conditioning} reduces the problem of establishing a lower bound to two tasks. First, one must determine an event $\cE_{\star}$ for which $\E[x^{2}\In_{\cE_{\star}^{c}}]=o(\E[x^2])$. Second, one needs to construct a solution to the conditioned linear system with small enough norm.

The main technical difficulty lies in the second task. Although conditioning on $\cE_{\star}$ reduces $\widetilde{c}_{\alpha}$ to its typical scale, it introduces dependencies between disconnected hypergraphs that were otherwise absent in the unconditioned analysis. Crucially, this breaks \textbf{Step 3} in the orthogonal expansion. In particular, the reduction to good graphs (Lemma~\ref{lem:reduce}) no longer applies and we cannot simply set $u_{\beta \gamma} = 0$ for uninformative pairs $(\beta, \gamma) \not \in \sP_{\star}$. Nevertheless, we prove that the conditioned system retains enough structure to admit a tractable analysis. The key property is that the independence of the entries $(\theta_{i})_{i\leq n}$  under the prior allows us to choose $(\psi_{\beta \gamma})_{(\be,\ga)\in \mathscr{P}}$ in such a way that  we have for any fixed $\al,\be\in \GGG$ (see Eq.~\eqref{eq:small:hypergraph:identity}) 
\begin{equation}
    \sum_{\gamma: (\beta, \gamma) \in \sP} \left(-\sqrt{\frac{\rho}{1-\rho}}\right)^{|\ga|}\widetilde{M}_{\beta\gamma, \alpha} = 0 \qquad\textnormal{unless $\beta$ is a union of connected components of $\al$}\,.
\label{eq:key:identity:ansatz}
\end{equation}
This identity suggests the dual certificate of the form 
\[
    u_{\beta \gamma} = \left(-\sqrt{\frac{\rho}{1-\rho}}\right)^{|\ga|} \cK(\beta)
\]
where $\cK: \sG \to \R$ is determined by a recursion over hypergraphs.
Substituting this form into $\widetilde{M}^{\top}u=\widetilde{c}$ and applying
the identity, the system reduces to a recursion in which $\cK(\alpha)$ is
determined by $\widetilde{c}_{\alpha}$ together with the values $\cK(\beta)$ over
the proper unions of connected components $\beta$ of $\alpha$. Since each such
$\beta$ has fewer connected components than $\alpha$, the recursion is
well-defined and solves explicitly, and the resulting $\cK(\alpha)$ inherits the
scale of $\widetilde{c}_{\alpha}$, up to a combinatorial factor counting the
connected components of $\alpha$.

\medskip
\noindent\textbf{Notations} The symbols $\R$, $\N$ denote the set of all real numbers and non-negative integers, respectively. We include $0 \in \N$ by convention. We let $[n]:=\{1,\ldots, n\}$ for $n=1,2,\ldots$.

For an index set $\Lambda$, and vectors $x = (x_{i})_{i\in \Lambda}, y = (y_{i})_{i\in \Lambda}\in \R^{\Lambda}$, we write $x \leq y$ if $x_{i} \leq y_{i}$ for all $i \in \Lambda$. We will identify subsets of $[n]$ with binary vectors of length $n$ in the natural way. That is, if $A \subseteq [n]$, then we identify it with the binary vector $\gamma$ such that $\gamma_{i} = 1$ whenever $i \in A$ and $\gamma_{i} = 0$ otherwise. Therefore, the relation $\gamma \leq A$ means that $\gamma_{i} = 1$ only when $i \in A$.  We use the subscript notation $\gamma_{A}$ to denote the subvector of $\gamma$ whose indices are in $A$.

For a hyperedge $e=(i_1,\ldots, i_r)$ of a hypergraph, let $V(e)=\{i_1,\ldots, i_r\}$ denote the vertices that participate in the hyperedge $e$. We will use the multi-index notation with superscripts where, for $\theta \in \{0,1\}^n$ and $\gamma \subseteq [n]$, we write $\theta^{\gamma}=\prod_{i=1}^{n}\theta_i^{\gamma_i}$. For $Y\in \R^{N}$ and $\alpha \in \N^{N}$, we write $Y^{\alpha}:=\prod_{e=1}^{N}Y_e^{\alpha_e}$. We denote $\R_D[Y]$ as the set of all polynomials in the entries of $Y$ with degree at most $D$.

We identify the collection of all $r$-uniform simple hyperedges on $[n]$ with $\binom{[n]}{r} = \{e \subseteq [n]: |e| = r\}$, and the collection of all $r$-uniform multi-hyperedges on $[n]$ with $\mc=\{e=(i_1,\ldots i_r)\in [n]^{r}:i_1\leq i_2\leq \ldots \leq i_r\}$ (multi-choose notation). Note that $\alpha \in \{0,1\}^{\binom{[n]}{r}}$ can be viewed as an $r$-uniform hypergraph on the vertices $[n]$ where the hypergraph includes the hyperedge $e\in \binom{[n]}{r}$ if and only if $\alpha_{e} = 1$. Similarly any $\alpha \in \N^{\mc}$ can be viewed as an $r$-uniform multi-hypergraph on the vertices $[n]$ where the multigraph has $\alpha_{e}$ copies of the hyperedge $e$ for all $e\in \mc$. We let $V(\alpha) \subseteq [n]$ denote the set of non-isolated vertices of $\alpha$. We let $E(\al)$ denote the edge set of $\al$, i.e. the multi-set of $(i_1,\ldots, i_r)$ with $i_1\leq \ldots \leq i_r$ corresponding to the edges of the multi-hypergraph $\al$.

For a parameter $a$, we use the asymptotic notation $O_{a}(\cdot)$ and $o_{a}(\cdot)$, etc., to suppress constant multiplicative factors that depend only on $a$. We use $\widetilde{O}_{a}(\cdot)$ and $\widetilde{o}_{a}(\cdot)$, etc., to suppress multiplicative constants as well as polylogarithmic factors.

\subsection*{Acknowledgments}
We thank Alexander S. Wein for discussions that helped inspire the framework in Section~\ref{subsec:proof:conditioning}.

\subsection*{Organization}
The remainder of the paper is organized as follows. Section~\ref{sec:sparse:PCA} proves the estimation lower bound for sparse tensor PCA (Theorem~\ref{thm:sparse:PCA}-(a)). Section~\ref{sec:planted:hypergraph} treats the planted dense subhypergraph model: the large planted set regime (Theorem~\ref{thm:planted:hypergraph}-(a)) in Section~\ref{subsec:planted:hypergraph:lower:bound}, and the small planted set regime (Theorem~\ref{thm:small:planted:hypergraph}-(a)) via the conditioning argument in Section~\ref{subsec:small:hypergraph:lower:bound}. Section~\ref{sec:general:PCA} proves the estimation lower bound for tensor PCA with a general prior (Theorem~\ref{thm:general:PCA}). Section~\ref{sec:upper:bounds} establishes the matching upper bounds (Theorems~\ref{thm:sparse:PCA}-(b),~\ref{thm:planted:hypergraph}-(b), and~\ref{thm:small:planted:hypergraph}-(b)). We defer analysis of the polynomial-time recovery algorithms to the appendix.

\section{Sparse tensor PCA}\label{sec:sparse:PCA}
This section is devoted to the proof of Theorem~\ref{thm:sparse:PCA}-(a). We will follow the orthogonal expansion approach outlined in Section~\ref{subsec:orthogonal:expansion}. If $\rho=0$ then $\theta=0$ almost surely and the result is trivial, so we assume $\rho > 0$ without loss of generality. We index the basis and the orthonormal family by
\[
	\sG = \{\alpha \in \N^{\mc}: |\alpha| \leq D\}\,, \qquad \sP = \sG \times \{0, 1\}^{n}\,.
\]
That is, $\sG$ consists of all $r$-uniform multi-hypergraphs (with no isolated vertices) having at most $D$ edges, counting multiplicities.

Before proceeding to the proof, we make the following simplification. Observe that the entries of the symmetrized noise tensor $\Wsy$ in Definition~\ref{def:symmetric:sparse:PCA} with repeated indices have inflated variance relative to those entries having only distinct indices (see e.g. Proposition B.1 of the arXiv version of~\cite{kunisky24tensor}). It is convenient to instead consider a noise-reduced symmetric tensor $Z$ whose entries are i.i.d. $\cN(0, 1)$ random variables, up to symmetry. That is, $Z_{e} \stackrel{iid}{\sim} \cN(0, 1)$ for $e \in \mc$. Accordingly, we shall work with the noise-reduced observation
\[
    \Ynr = \lambda \theta^{\otimes r} + Z\,.
\]
Throughout, as we work exclusively with the noise-reduced model, we abuse notation and simply write $Y \equiv \Ynr$. It is sufficient to prove a lower bound for this noise-reduced model, since the noise-inflated tensor can be simulated by adding independent Gaussian noise, which in turn can be simulated by a polynomial; see~\cite[Appendix A, Claim A.3]{SW-22}.

\subsection{Construction of the dual certificate}
We begin by specifying the basis $(\phi_\alpha)_{\alpha\in\sG}$ and the orthonormal family $(\psi_{\beta\gamma})_{(\beta,\gamma)\in\sP}$. Write $N := \mc$ and let $\{h_k\}_{k\in\N}$ denote the univariate Hermite polynomials, normalized so that $\deg(h_k)=k$ and $\E[h_k(G)\,h_\ell(G)]=\In_{k=\ell}$ for $G\sim\cN(0,1)$ (see e.g.~\cite{sze-book}). The corresponding multivariate Hermite polynomials are $H_\alpha(y) = \prod_{e=1}^{N} h_{\alpha_e}(y_e)$, where $y\in (\R^n)^{\otimes r}$ and $\alpha \in \sG$. For $\al,\be\in \GGG$ and $\ga\in \{0,1\}^n$, we set
\begin{equation}
	\phi_{\alpha}(Y)= H_{\alpha}(Y)\,, \quad \psi_{\beta \gamma} = \psi_{\beta \gamma}(Z, \theta) = H_{\beta}(Z) \lPa \frac{\theta - \rho}{\sqrt{\rho(1 - \rho)}} \rPa^{\gamma}\,.
\end{equation}
Note that $(\phi_{\al})_{\al\in \GGG}$ forms a basis of $\R_D[Y]$. Moreover, as $Z$ and $\theta$ are independent, and both the Hermite polynomials and the centered Bernoulli monomials are orthonormal in their respective $L^2$ spaces, the family $(\psi_{\beta\gamma})$ is orthonormal.

\begin{lemma}\label{lem:sparse:PCA:c:M}
For all $\alpha \in \sG$ and pairs $(\beta, \gamma) \in \sP$, 
\begin{equation*}
    c_{\alpha} = \frac{\lambda^{|\alpha|} \rho^{|V(\alpha) \cup \{1\}|}}{\sqrt{\alpha!}}\,, \quad M_{\beta \gamma, \alpha} = \In_{\beta \leq \alpha} \In_{\gamma \subseteq V(\alpha - \beta)} \cdot \sqrt{\frac{\beta!}{\alpha!}} \binom{\alpha}{\beta} \lPa \sqrt{\frac{1-\rho}{\rho}} \rPa^{|\gamma|} \lambda^{|\alpha - \beta|} \rho^{|V(\alpha - \beta)|}\,.
\end{equation*}
\end{lemma}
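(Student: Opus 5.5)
The plan is to compute both quantities directly from the Gaussian structure of $Y=\lambda\theta^{\otimes r}+Z$. Conditional on $\theta$, the entries $Y_e$, $e\in\mc$, are independent with $Y_e\sim\cN(X_e,1)$, where $X_e=\lambda\theta^{e}$. Here, for a multi-edge $e=(i_1,\ldots,i_r)$, we write $\theta^{e}:=\prod_{i\in V(e)}\theta_i$; since $\theta\in\{0,1\}^n$, repeated indices do not matter.

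The key tool is the Hermite translation identity. For $G\sim\cN(0,1)$ and $\mu\in\R$,
\[
h_k(\mu+G)=\sum_{j\le k}\sqrt{\tfrac{j!}{k!}}\binom{k}{j}\mu^{k-j}h_j(G),
\]
which follows from the generating function $\sum_k h_k(y)t^k/\sqrt{k!}=e^{ty-t^2/2}$. Taking the product over $e\in\mc$ gives
\[
H_\alpha(Y)=\sum_{\beta\le\alpha}\sqrt{\tfrac{\beta!}{\alpha!}}\binom{\alpha}{\beta}X^{\alpha-\beta}H_\beta(Z),
\qquad
X^{\alpha-\beta}=\lambda^{|\alpha-\beta|}\theta^{V(\alpha-\beta)}.
\]
The last equality uses $\theta_i^k=\theta_i$ for $k\ge1$.

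\textbf{Computing $c_\alpha$.} Multiply the expansion by $\theta_1$ and take expectations. Since $Z$ is independent of $\theta$ and $\E[H_\beta(Z)]=\In_{\beta=0}$, only the $\beta=0$ term survives, and it equals $\E[\lambda^{|\alpha|}\theta^{V(\alpha)}\theta_1]/\sqrt{\alpha!}$. The product $\theta^{V(\alpha)}\theta_1$ equals $\theta^{V(\alpha)\cup\{1\}}$, and the i.i.d.\ $\Ber(\rho)$ entries give expectation $\rho^{|V(\alpha)\cup\{1\}|}$.

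\textbf{Computing $M_{\beta\gamma,\alpha}$.} Multiply the expansion by $H_{\beta}(Z)\big(\tfrac{\theta-\rho}{\sqrt{\rho(1-\rho)}}\big)^{\gamma}$. By independence of $Z$ and $\theta$ and orthonormality of the Hermite polynomials, only the summand whose index equals $\beta$ survives. This requires $\beta\le\alpha$ and yields
\[
\sqrt{\tfrac{\beta!}{\alpha!}}\binom{\alpha}{\beta}\lambda^{|\alpha-\beta|}\,\E\!\Big[\theta^{S}\Big(\tfrac{\theta-\rho}{\sqrt{\rho(1-\rho)}}\Big)^{\gamma}\Big],
\qquad S=V(\alpha-\beta).
\]
This expectation factorizes over coordinates:
\begin{itemize}
\item For $i\in\gamma\setminus S$, the factor is $\E[\theta_i-\rho]=0$, which gives the indicator $\In_{\gamma\subseteq S}$.
\item For $i\in\gamma\cap S$, the factor is $\E[\theta_i(\theta_i-\rho)]/\sqrt{\rho(1-\rho)}=\rho(1-\rho)/\sqrt{\rho(1-\rho)}=\rho\sqrt{(1-\rho)/\rho}$.
\item For $i\in S\setminus\gamma$, the factor is $\rho$.
\end{itemize}
Multiplying these gives $\rho^{|S|}\big(\sqrt{(1-\rho)/\rho}\big)^{|\gamma|}$, which is the claimed formula.

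The only step needing care is the convention for $\alpha!$ and $\binom{\alpha}{\beta}$ as products over $e\in\mc$, together with checking the one-dimensional translation identity. Nothing here is a real obstacle; the computation is routine.
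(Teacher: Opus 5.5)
Your proposal is correct and follows the paper's proof: you use the same Hermite translation expansion $H_\alpha(Y)=\sum_{\beta\le\alpha}\sqrt{\beta!/\alpha!}\binom{\alpha}{\beta}X^{\alpha-\beta}H_\beta(Z)$, apply independence of $Z$ and $\theta$ together with orthonormality of $H_\beta(Z)$ to isolate a single term, and evaluate the remaining Bernoulli expectation coordinatewise. The paper states the value of that last expectation without derivation; your coordinatewise computation supplies exactly that step.
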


\begin{proof}
Recall that $Y = \lambda\,\theta^{\otimes r} + Z$ and write $X = \lambda\,\theta^{\otimes r}$ for the signal tensor. The Hermite expansion (see e.g.~\cite[Proposition~3.1]{SW-22}) gives
\begin{equation}
\begin{aligned}
	H_{\alpha}(Y) = \prod_{e = 1}^{N} h_{\alpha_{e}}(X_{e} + Z_{e}) &= \prod_{e = 1}^{N} \sum_{j = 0}^{\alpha_{e}} \lPa \frac{j!}{\alpha_{e}!} \rPa^{\frac{1}{2}} \binom{\alpha_{e}}{j} \cdot X_{e}^{\alpha_{e} - j}h_{j}(Z_{e}) \\
																	&= \sum_{\beta \leq \alpha} \lPa \frac{\beta!}{\alpha!} \rPa^{\frac{1}{2}} \binom{\alpha}{\beta} \cdot X^{\alpha - \beta}H_{\beta}(Z)\,.
\end{aligned}
\end{equation}
Taking expectations, we compute using the orthonormality of Hermite polynomials, 
\[
	c_{\alpha} = \E[H_{\alpha}(Y)x] = \frac{\E[X^{\alpha} \theta_{1}]}{\sqrt{\alpha!}} = \frac{\lambda^{|\alpha|} \rho^{|V(\alpha) \cup \{1\}|}}{\sqrt{\alpha!}}\,.
\]
Similarly, we can compute, 
\[
\begin{aligned}
    M_{\beta \gamma, \alpha} &= \E \lBr H_{\alpha}(Y)H_{\beta}(Z) \lPa \frac{\theta - \rho}{\sqrt{\rho(1 - \rho)}} \rPa^{\gamma} \rBr = \In_{\beta \leq \alpha} \cdot\sqrt{\frac{\beta!}{\alpha!}} \binom{\alpha}{\beta} \lambda^{|\alpha - \beta|} \E \lBr \theta^{V(\alpha - \beta)} \lPa \frac{\theta - \rho}{\sqrt{\rho(1 - \rho)}} \rPa^{\gamma}\rBr,
\end{aligned}
\]
where we used $\theta_i^2=\theta_i$ in the last step. The final expectation equals $\In_{\gamma\subseteq V(\al-\be)} \lPa \sqrt{\frac{1-\rho}{\rho}} \rPa^{|\gamma|} \rho^{|V(\alpha - \beta)|}$, so this concludes the proof.
\end{proof}
As outlined in \textbf{Step 3} in Section~\ref{subsec:orthogonal:expansion}, we isolate the sets $\GGG_\star$ and $\PPP_\star$ in order to use Lemma~\ref{lem:reduce}. Recall that a multi-hypergraph is connected if every pair of vertices is connected by a path of hyperedges (see Definition~\ref{def:hypertree}).
\begin{definition}\label{def:good:sparse:PCA}
A non-empty multi-hypergraph $\alpha \in \sG$ is called \textit{good} if $1 \in V(\alpha)$ and $\alpha$ is connected. The empty graph $\alpha = 0$ is good by convention. A pair $(\beta, \gamma) \in \sP$ is \textit{good} whenever $\beta$ is good and $\gamma \subseteq V(\beta) \cup \{1\}$. The set of all good graphs resp. pairings is denoted $\GGG_\star$ resp. $\PPP_\star$.
\end{definition}

\begin{lemma}\label{lem:good:reduction:sparse:PCA}
If $\alpha \in \sG$ is not good, then there exists $\mu \in \R$ and a good graph $\alpha_{\star} \in \GGG_\star$ such that $c_{\alpha} = \mu c_{\alpha_{\star}}$ and $M_{\beta \gamma, \alpha} = \mu M_{\beta \gamma, \alpha_{\star}}$ for all good pairings $(\beta, \gamma) \in \PPP_\star$.
\end{lemma}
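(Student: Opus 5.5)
The natural choice is to take $\alpha_\star$ to be the connected component of $\alpha$ containing vertex $1$, written $\alpha_1$. If $1\notin V(\alpha)$, we set $\alpha_\star=0$, the empty graph, which is good by convention. Write $\alpha=\alpha_\star+\alpha'$, where $\alpha'$ is the union of the remaining components. By construction $V(\alpha')\cap(V(\alpha_\star)\cup\{1\})=\emptyset$, and the supports of $\alpha_\star$ and $\alpha'$ as multisets of hyperedges are disjoint, so $\alpha!=\alpha_\star!\,\alpha'!$. We then set
\[
\mu := \frac{\lambda^{|\alpha'|}\rho^{|V(\alpha')|}}{\sqrt{\alpha'!}}.
\]
The first identity is immediate from Lemma~\ref{lem:sparse:PCA:c:M}. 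Since $V(\alpha)\cup\{1\}=(V(\alpha_\star)\cup\{1\})\sqcup V(\alpha')$ and $|\alpha|=|\alpha_\star|+|\alpha'|$, the formula for $c_\alpha$ factors as $c_\alpha=\mu\,c_{\alpha_\star}$.

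For the second identity, fix a good pair $(\beta,\gamma)$, so $\beta$ is connected with $1\in V(\beta)$ (or $\beta=0$), and $\gamma\subseteq V(\beta)\cup\{1\}$. First we show the indicators agree. If $\beta\le\alpha$ and $\beta\neq0$, then $\beta$ is connected and contains vertex $1$, so all of its edges lie in the component $\alpha_1$; hence $\beta\le\alpha_\star$. Conversely, $\beta\le\alpha_\star$ trivially implies $\beta\le\alpha$. When $\beta=0$ both indicators equal $1$.

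Next, on this event $\alpha-\beta=(\alpha_\star-\beta)+\alpha'$, and the vertex sets of these two pieces are disjoint. Therefore
\[
V(\alpha-\beta)=V(\alpha_\star-\beta)\sqcup V(\alpha').
\]
Since $\gamma\subseteq V(\beta)\cup\{1\}$ is disjoint from $V(\alpha')$, we get $\gamma\subseteq V(\alpha-\beta)$ if and only if $\gamma\subseteq V(\alpha_\star-\beta)$.

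It remains to check the factorization of the remaining factors:
\[
\sqrt{\frac{\beta!}{\alpha!}}\binom{\alpha}{\beta}
=\sqrt{\frac{\beta!}{\alpha_\star!}}\binom{\alpha_\star}{\beta}\cdot\frac{1}{\sqrt{\alpha'!}},
\]
because $\beta$ is supported inside $\operatorname{supp}(\alpha_\star)$ and $\binom{\alpha'}{0}=1$. Likewise $\lambda^{|\alpha-\beta|}=\lambda^{|\alpha_\star-\beta|}\lambda^{|\alpha'|}$ and $\rho^{|V(\alpha-\beta)|}=\rho^{|V(\alpha_\star-\beta)|}\rho^{|V(\alpha')|}$, while the factor $\bigl(\sqrt{(1-\rho)/\rho}\bigr)^{|\gamma|}$ is common to both sides. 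Multiplying these gives $M_{\beta\gamma,\alpha}=\mu M_{\beta\gamma,\alpha_\star}$.

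The only subtle step is that a connected good $\beta\le\alpha$ must sit inside the root component, together with the disjointness of vertex sets. Multi-edges with repeated indices such as $(1,1,2)$ cause no issue, because connectivity and $V(\cdot)$ are defined through the underlying vertex sets of the edges.
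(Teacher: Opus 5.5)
Your proof is correct. You choose the same $\alpha_\star$ as the paper (the component of vertex $1$, or $0$ if $1\notin V(\alpha)$). Your $\mu=\lambda^{|\alpha'|}\rho^{|V(\alpha')|}/\sqrt{\alpha'!}$ also coincides with the paper's $\mu=\E[\phi_{\alpha'}]$ (your $\alpha'$ is the paper's $\alpha_0$), as the Hermite expansion shows.

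The difference is in how the identities are verified. The paper argues probabilistically. When $\beta$ shares no vertex with $\alpha'$, it factors $\E[\phi_{\alpha'}\phi_{\alpha_\star}\theta_1]$ and $\E[\phi_{\alpha'}\phi_{\alpha_\star}\psi_{\beta\gamma}]$ by independence. When $\beta$ does share a vertex with $\alpha'$, it notes that a connected $\beta\ni 1$ then cannot be a subgraph of $\alpha$ or $\alpha_\star$, so both entries of $M$ vanish. You instead check the factorization term by term in the closed forms of Lemma~\ref{lem:sparse:PCA:c:M}.

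Your route is fully explicit and makes the value of $\mu$ transparent, but it depends on that specific closed form. The paper's independence argument never uses the formula for $M$, which is why it transfers verbatim to the planted dense subhypergraph model (Lemma~\ref{lem:good:reduction:hypergraph}). There, $M_{\beta\gamma,\alpha}$ carries the $\theta$-dependent factor $\bigl(q_1(1-q_1)/(q_0(1-q_0))\bigr)^{|\beta\cap K_S|/2}$ inside the expectation, so a formula-based check would need some independence anyway.

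One small point to state explicitly: when $1\notin V(\alpha)$ there is no component $\alpha_1$. In that case your indicator step should simply note that $\beta\le\alpha$ is impossible for nonzero good $\beta$, since $1\in V(\beta)$, so both sides vanish.
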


\begin{proof}
If $1 \not \in V(\alpha)$ take $\alpha_{\star} = 0$. Otherwise, let $\alpha_{\star}$ be the connected component of $\alpha$ containing vertex $1$. In either case, set $\alpha_{0} = \alpha - \alpha_{\star}$ and take $\mu = \E[\phi_{\alpha_0}]$. By independence of components, 
\[
	c_{\alpha} = \E[\phi_{\alpha_0} \cdot \phi_{\alpha_{\star}} \theta_{1}] = \E[\phi_{\alpha_0}] \cdot \E[\phi_{\alpha_{\star}} \theta_{1}] = \mu c_{\alpha_{\star}}\,.
\]
Now let $(\beta , \gamma) \in \PPP_\star$. If $\beta$ has no vertex in common with $\alpha_0$, then we can again factor using independence, 
\[
	M_{\beta \gamma, \alpha} = \E[\phi_{\alpha_0} \cdot \phi_{\alpha_{\star}} \psi_{\beta \gamma}] = \E[\phi_{\alpha_0}] \cdot \E[\phi_{\alpha_{\star}} \psi_{\beta \gamma}] = \mu M_{\beta \gamma, \alpha_{\star}}\,.
\]
In the case that $\beta$ shares a vertex with $\alpha_0$, since $1\in V(\be)$ and $\be$ is connected, $\beta$ cannot be a subgraph of $\alpha$ nor $\alpha_\star$. By Lemma~\ref{lem:sparse:PCA:c:M}, it follows that $M_{\be\ga,\al}=M_{\be\ga,\alpha_*}=0$.
\end{proof}
Having Lemma~\ref{lem:good:reduction:sparse:PCA} in hand, we construct the dual certificate $u$ as
\begin{equation}
	u_{\beta \gamma} = \left(-\sqrt{\frac{\rho}{1 - \rho}} \right)^{|\gamma|}c_{\beta}\,,\quad \textnormal{for}\quad (\be,\ga)\in \PPP_\star\,,
\label{eq:sparse:PCA:solution}
\end{equation}
and set $u_{\be\ga}=0$ if $(\be,\ga)\notin \PPP_\star$.

\begin{lemma}\label{lem:sparse:PCA:u}
For sparse tensor PCA, the choice of $u$ in \eqref{eq:sparse:PCA:solution} solves $M^{\top}u=c$, i.e.~\eqref{eq:u:condition}.
\end{lemma}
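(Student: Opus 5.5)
By Lemma~\ref{lem:reduce} together with Lemma~\ref{lem:good:reduction:sparse:PCA}, and since $u$ vanishes off $\PPP_\star$, it suffices to verify~\eqref{eq:u:condition} for good $\alpha\in\GGG_\star$. The plan is to substitute the formulas of Lemma~\ref{lem:sparse:PCA:c:M}, carry out the sum over $\gamma$ first for each fixed $\beta$, and then observe that only the term $\beta=\alpha$ survives.

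First, fix a good $\alpha$ and a good $\beta\le\alpha$. The admissible $\gamma$ satisfy both $\gamma\subseteq V(\beta)\cup\{1\}$ (goodness of the pair) and $\gamma\subseteq V(\alpha-\beta)$ (the indicator in $M$). Set $S:=V(\alpha-\beta)\cap(V(\beta)\cup\{1\})$. In the product $u_{\beta\gamma}M_{\beta\gamma,\alpha}$, the weight factors $(-\sqrt{\rho/(1-\rho)})^{|\gamma|}$ and $(\sqrt{(1-\rho)/\rho})^{|\gamma|}$ multiply to $(-1)^{|\gamma|}$. All other factors are independent of $\gamma$. Hence
\[
\sum_{\gamma}u_{\beta\gamma}M_{\beta\gamma,\alpha}=c_\beta\sqrt{\tfrac{\beta!}{\alpha!}}\binom{\alpha}{\beta}\lambda^{|\alpha-\beta|}\rho^{|V(\alpha-\beta)|}\sum_{\gamma\subseteq S}(-1)^{|\gamma|}.
\]
The last sum equals $\In_{S=\emptyset}$.

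Next I identify when $S=\emptyset$. Suppose $\beta\ne\alpha$. Since $\alpha$ is connected and contains vertex $1$, there are two cases:
\begin{itemize}
\item If $\beta=0$, then $V(\beta)\cup\{1\}=\{1\}$, and $1\in V(\alpha)=V(\alpha-\beta)$, so $S\ni 1$.
\item If $\beta\neq 0$ is a proper sub-multigraph, connectivity of $\alpha$ forces some edge of $\alpha-\beta$ to touch $V(\beta)$, so $S\neq\emptyset$.
\end{itemize}
(Multi-edges matter here: if an edge of $\beta$ appears in $\alpha$ with higher multiplicity, the leftover copy lies in $\alpha-\beta$ and shares vertices with $\beta$.) In both cases the contribution vanishes.

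When $\beta=\alpha$, we have $V(0)=\emptyset$, so only $\gamma=\emptyset$ contributes, and the term equals $u_{\alpha\emptyset}M_{\alpha\emptyset,\alpha}=c_\alpha\cdot 1$. Summing over $\beta$ therefore gives exactly $c_\alpha$, which is~\eqref{eq:u:condition}.

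The only delicate step is the case analysis showing that $S$ is nonempty whenever $\beta\ne\alpha$, in particular the edge cases $\beta=0$ and the multi-edge situation. The remaining steps are direct substitution.
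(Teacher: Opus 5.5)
Your proposal is correct and follows essentially the same route as the paper: reduce to good $\alpha$ via Lemma~\ref{lem:reduce} and Lemma~\ref{lem:good:reduction:sparse:PCA}, and note that for each good $\beta\le\alpha$ the $\gamma$-sum becomes the alternating sum $\sum_{\gamma\subseteq V(\alpha-\beta)\cap(V(\beta)\cup\{1\})}(-1)^{|\gamma|}$. This sum vanishes for $\beta\lneq\alpha$ by connectivity of $\alpha$, leaving only the $\beta=\alpha$ term $M_{\alpha0,\alpha}u_{\alpha0}=c_\alpha$. Your explicit treatment of the $\beta=0$ case and of multi-edges merely spells out what the paper asserts in one line.
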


\begin{proof}
By Lemma~\ref{lem:good:reduction:sparse:PCA}, the assumptions of Lemma~\ref{lem:reduce} are satisfied, thus it suffices to prove that $\sum_{(\be,\ga)\in \PPP_\star}M_{\be\ga,\al}u_{\be\ga}=c_{\al}$ for all $\al\in \GGG_\star$. Using that $M_{\be\ga,\al}=0$ unless $\be\leq \al$ and $\ga\subseteq V(\al-\be)$ hold by Lemma~\ref{lem:sparse:PCA:c:M}, and the definition of $\PPP_\star$, this reduces to checking that 
\[
M_{\al 0,\al}u_{\al 0}=c_{\al}-\sum_{\substack{\be\in \GGG_\star\\ \be\lneq \al}}\sum_{\ga\subseteq V(\al-\be)\cap(V(\be)\cup\{1\})}M_{\be\ga,\al}u_{\be\ga}\,.
\]
Since the proposed candidate $u$~\eqref{eq:sparse:PCA:solution} satisfies $u_{\al0}=c_{\al}$ and $M_{\al0,\al}=1$, so it suffices to check the last summation equals $0$. The crucial property of $u$~\eqref{eq:sparse:PCA:solution} is that
$ M_{\beta \gamma, \alpha}  u_{\beta \gamma}= (-1)^{|\gamma|} M_{\beta 0, \alpha } c_{\beta}$ since $M_{\beta \gamma, \alpha}=(\sqrt{\frac{1-\rho}{\rho}})^{|\gamma|} M_{\beta 0, \alpha}$ holds by Lemma~\ref{lem:sparse:PCA:c:M}. Thus, for any fixed $\al, \beta\in \GGG_\star$, such that $\be\lneq \al$, we have
\[
\sum_{\ga\subseteq V(\al-\be)\cap (V(\be)\cup \{1\})}M_{\be\ga,\al} u_{\be\ga}=M_{\be 0,\al}c_{\be}\sum_{\ga\subseteq V(\al-\be)\cap (V(\be)\cup \{1\})}(-1)^{|\ga|}=0\,,
\]
where the last equality holds since $V(\al-\be)\cap (V(\be)\cup \{1\})\neq \emptyset$ for any $\al,\be\in \GGG_\star$ with $\be\lneq \al$. Combining the two displays above concludes the proof.
\end{proof}

\subsection{Counting connected hypergraphs}
We next derive the necessary combinatorial estimates to bound $\|u\|$. To motivate these estimates, since $u$ in \eqref{eq:sparse:PCA:solution} satisfies $M^{\top}u=c$ by Lemma~\ref{lem:sparse:PCA:u}, we have by Proposition~\ref{prop:duality} that
\[
\begin{split}
\rho \cdot \Corr_{\leq D}^2\leq \|u\|^2
&=\left(c_{0}^{2} \sum_{\gamma \subseteq \{1\}} \lPa \frac{\rho}{1 - \rho} \rPa^{\gamma} + \sum_{\alpha \in \GGG_\star\setminus\{0\}} c_{\alpha}^{2} \sum_{\gamma \subseteq V(\alpha)} \lPa \frac{\rho}{1 - \rho} \rPa^{\gamma}\right)\\
&=\frac{\rho^2}{1-\rho}+\sum_{\al\in \GGG_\star\setminus\{0\}}\frac{\la^{2|\al|}\rho^{2|V(\al)|}}{\al!}\left(1+\frac{\rho}{1-\rho}\right)^{|V(\al)|}\,.
\end{split}
\]
Let $\hat\rho = \frac{\rho}{\sqrt{1-\rho}}$. Then, with the crude bound $\al!\geq 1$, we have the following. Every good $\alpha\in\sG_\star\setminus\{0\}$ is a connected multi-hypergraph on some $k\geq r$ vertices (one of which is vertex~$1$) with some $\ell\geq 1$ edges. Summing over vertex count, edge count, and vertex labels, 
\begin{equation}\label{eq:bound:corr:sparse:PCA}
\rho \cdot \Corr^2_{\leq D}\leq \hat{\rho}^2\bigg(1+\sum_{\al\in \GGG_\star\setminus\{0\}}\hat{\rho}^{2|V(\alpha)| - 2} \lambda^{2|\alpha|}\bigg)\leq \hat{\rho}^2\bigg(1+\sum_{k = r}^{rD} \sum_{\ell = \ell_k}^{D} \binom{n - 1}{k - 1}N_{k, \ell}^{\mathrm{mult}} \hat{\rho}^{2k - 2} \lambda^{2\ell}\bigg)\,,
\end{equation}
where 
\[
    \ell_k = \lceil(k-1)/(r-1)\rceil
\]
is the minimum number of edges in a connected $r$-uniform hypergraph on $k$ vertices, and $N_{k,\ell}^{\mathrm{mult}}$ denotes the number of connected $r$-uniform multi-hypergraphs on $[k]$ with $\ell$ edges, allowing self-loops.

To bound the sum in~\eqref{eq:bound:corr:sparse:PCA} below the sharp threshold, we need sufficiently tight estimates on $N_{k,\ell}^{\mathrm{mult}}$ as a function of $k$ and $\ell$. The strategy is to compare multi-hypergraph counts first to simple hypergraph counts, and then to the number of \emph{hypertrees} (see Definition~\ref{def:hypertree}). Recall that a hypertree is any connected and acyclic $r$-uniform hypergraph having the property that any two of its edges share at-most a single vertex in common.

We will need a count of labeled \emph{hyperforests}---disjoint unions of hypertrees---in which each tree has a distinguished vertex designated as its \textit{root}. An $r$-uniform hyperforest on $[k]$ with $t$ connected components (each a hypertree) has $\ell = (k-t)/(r-1)$ edges.
\begin{lemma}\label{lem:rooted:forests}
Let $k\geq 1$ and $t\geq 1$ be integers such that $\ell = (k-t)/(r-1)\in\N$. The number of labeled $r$-uniform forests on $[k]$ consisting of $t$ rooted hypertrees is
\[
	R_{k,t} = \frac{k!}{(t-1)!}\,\frac{k^{\ell-1}}{\ell!\,(r-1)!^{\ell}}\,.
\]
In particular, if we let $N_k$ be defined by
\[
	N_k = \frac{(k-1)!\,k^{\ell_k-1}}{\ell_k!\,(r-1)!^{\ell_k}}\,,
\]
and setting $t=1$, we recover $R_{k,1} = kN_k$. That is, when $(k-1)/(r-1)\in \N$, $N_k$ counts the number of labeled $r$-uniform hypertrees on $[k]$. For $r=2$, this reduces to the classical Cayley's formula.
\end{lemma}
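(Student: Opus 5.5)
The plan is to use exponential generating functions and Lagrange inversion, which is the standard route to Cayley-type formulas. Let $T(x)=\sum_{k\ge1} a_k x^k/k!$, where $a_k$ is the number of rooted labeled $r$-uniform hypertrees on $[k]$. A labeled forest of $t$ rooted hypertrees is an unordered set of $t$ rooted hypertrees on disjoint label sets whose union is $[k]$. By the exponential formula its EGF is $T(x)^t/t!$, so $R_{k,t}=k!\,[x^k]\,T(x)^t/t!$.

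\textbf{Step 1: the functional equation $T=x\exp\!\big(T^{r-1}/(r-1)!\big)$.} Take a rooted hypertree with root $v$. By Definition~\ref{def:hypertree}, the hyperedges containing $v$ pairwise meet only at $v$. Deleting $v$ and these edges leaves, for each root-incident edge $e$, its $r-1$ other vertices. Each of these is the root of a rooted hypertree, and all these subtrees are vertex-disjoint. Disjointness holds because the successive-appending construction, in which each new edge meets the current hypertree in exactly one vertex, makes the hypertree acyclic in the bipartite incidence sense. The decomposition is reversible, so a rooted hypertree is equivalent to the following data:
\begin{itemize}
\item[(i)] the root, contributing a factor $x$;
\item[(ii)] an unordered set of root-incident edges, contributing $\exp(\cdot)$;
\item[(iii)] for each such edge, an unordered set of $r-1$ rooted hypertrees, contributing $T^{r-1}/(r-1)!$.
\end{itemize}
The one point to check is that the correspondence is a bijection. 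I would verify this with the bipartite incidence tree $I$ introduced before Algorithm~\ref{alg:color:coded:score}, which is a tree precisely because $H$ is a hypertree. This structural bijection is the step I expect to need the most care, though it is routine.

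\textbf{Step 2: Lagrange inversion.} Write $T=x\,\Phi(T)$ with $\Phi(w)=\exp\!\big(w^{r-1}/(r-1)!\big)$. Lagrange inversion gives
\[
[x^k]\,T^t=\frac{t}{k}\,[w^{k-t}]\,\Phi(w)^k=\frac{t}{k}\,[w^{k-t}]\exp\!\Big(\frac{k\,w^{r-1}}{(r-1)!}\Big).
\]
The right-hand side is nonzero only when $k-t=(r-1)\ell$ with $\ell\in\N$, and then it equals $\frac{t}{k}\cdot\frac{k^{\ell}}{\ell!\,(r-1)!^{\ell}}$. Therefore
\[
R_{k,t}=\frac{k!}{t!}\cdot\frac{t\,k^{\ell-1}}{\ell!\,(r-1)!^{\ell}}=\frac{k!}{(t-1)!}\,\frac{k^{\ell-1}}{\ell!\,(r-1)!^{\ell}}.
\]
This formula also confirms the edge count $\ell=(k-t)/(r-1)$ for such forests.

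\textbf{Step 3: specializations.} Setting $t=1$ gives $R_{k,1}=k!\,k^{\ell_k-1}/(\ell_k!(r-1)!^{\ell_k})=kN_k$. Each unrooted hypertree on $[k]$ has exactly $k$ choices of root, so $N_k=R_{k,1}/k$ counts the labeled hypertrees whenever $(k-1)/(r-1)\in\N$. For $r=2$ we have $\ell=k-1$ and $(r-1)!=1$, so $N_k=(k-1)!\,k^{k-2}/(k-1)!=k^{k-2}$, which is Cayley's formula.
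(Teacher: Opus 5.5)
Your proof is correct, and it takes a different route from the paper. The paper does not prove the lemma itself. It quotes \cite[Theorem 1]{lavault2011note}, where the count comes from a Pr\"ufer-type bijective coding of forests of rooted uniform hypertrees, and it also points to the Grassmann-integral computation of \cite{Bedini2008}.

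You instead give the standard generating-function derivation:
\begin{itemize}
\item the root decomposition of the rooted incidence tree gives $T=x\exp\bigl(T^{r-1}/(r-1)!\bigr)$;
\item the exponential formula turns forests of $t$ rooted trees into $T^t/t!$;
\item Lagrange inversion applied to $T^t$ yields the full $t$-dependence in one line.
\end{itemize}
This makes the lemma self-contained and extends immediately to weighted counts. The cited bijective route instead gives an explicit encoding of the forests, a combinatorial explanation of the formula. The paper never uses that extra structure, so nothing is lost by your approach.

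Two details are worth stating explicitly. First, the single-vertex tree must count as a rooted hypertree, even though Definition~\ref{def:hypertree} starts from one hyperedge. Your $a_1=1$, coming from the empty set of root-incident edges in the exponential, handles this, and it is exactly what makes, e.g., $R_{k,k}=1$ come out right. Second, the bijection in Step 1 is cleanest once you note two facts: a hypergraph is a hypertree in the appending sense if and only if its incidence bipartite graph is a tree, and distinct hyperedges of a hypertree share at most one vertex, so each edge-node is determined by its vertex set. The decomposition is then just the usual root decomposition of a rooted tree.

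As a consistency check with the paper's own machinery, for $t=1$ the degree-sequence count in the proof of Lemma~\ref{lem:simple:to:tree} is exact when the cyclomatic number is zero, and it returns precisely $N_k$.
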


\begin{proof}
This is~\cite[Theorem 1]{lavault2011note}. See also~\cite[Equation 6.35]{Bedini2008}.
\end{proof}
\begin{remark}\label{rem:hypertree:vs:graph}
A key difference between the graph case $r=2$ and the hypergraph case $r\geq 3$ is that, for $r=2$, every connected graph on $k$ vertices contains a spanning tree, so it is straightforward to bound $N_{k,\ell}^{\mathrm{mult}}$ using Cayley's formula and the stars-and-bars formula (see~\cite[Lemma 3.3]{arxiv-version}). For $r\geq 3$, a connected hypergraph on $k$ vertices need not contain a spanning hypertree, thus a different argument is needed in the hypergraph setting.
\end{remark}
We next compare $N_{k,\ell}^{\mathrm{mult}}$ to $N_{k,\ell}$, and then $N_{k,\ell}$ to $N_k$, which we treat as the baseline.
\begin{lemma}\label{lem:multi:to:simple}
For all $k,\ell\in\N$,
\[
	N_{k,\ell}^{\mathrm{mult}} \;\leq\; \sum_{m=\ell_k}^{\ell} N_{k,m}\,\binom{\ell+k-1}{\ell-m}\,,
\]
where $N_{k,m}$ denotes the number of connected $r$-uniform simple hypergraphs on $[k]$ with $m$ edges.
\end{lemma}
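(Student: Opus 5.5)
The plan is a decomposition-and-count argument. Every multi-hypergraph counted by $N^{\mathrm{mult}}_{k,\ell}$ should split as a connected simple ``skeleton'' carrying all of the connectivity, together with $\ell-m$ additional edges that are either repeated copies of skeleton edges or self-loops. Each of these extra edges is drawn from a set of only $m+k$ types, and stars-and-bars then produces exactly the factor $\binom{\ell+k-1}{\ell-m}$.

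\textbf{Step 1 (decomposition).} Fix a connected $r$-uniform multi-hypergraph $\alpha$ on $[k]$ with $|\alpha|=\ell$. Separate its multiset of edges into two parts. The first part consists of the self-loops $(i,i,\ldots,i)$, each of which is incident to a single vertex. The second part consists of the remaining edges; let $\beta$ be their support, i.e.\ the set of distinct such edges, and put $m:=|\beta|$.

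A self-loop meets only one vertex, so it never joins two distinct vertices. Hence deleting all self-loops preserves connectivity: $\beta$ is still a connected hypergraph on all of $[k]$. (If $k\ge 2$, every vertex must lie on some non-loop edge. The case $k=1$ forces $r\le k$ to fail, and it is degenerate and handled separately.) Consequently $\beta$ is one of the $N_{k,m}$ connected hypergraphs on $[k]$ with $m$ edges, and in particular $m\ge \ell_k$. The edges of $\alpha$ not used by $\beta$ as a first copy number $\ell-m$. Each of them is either an additional copy of one of the $m$ edges of $\beta$, or a self-loop at one of the $k$ vertices.

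\textbf{Step 2 (injectivity of the encoding).} The map $\alpha\mapsto(\beta,\mu)$ is injective. Here $\mu$ is the multiset of size $\ell-m$ over the $m+k$ types: the edges of $\beta$, used as ``extra copy'' types, plus the $k$ loop types. Indeed, $\alpha=\beta+\mu$ as multisets of edges, so $\alpha$ is recovered from the pair. For a fixed $\beta$, the number of possible $\mu$ is the number of multisets of size $\ell-m$ drawn from $m+k$ types, namely
\[
\binom{(\ell-m)+(m+k)-1}{\ell-m}=\binom{\ell+k-1}{\ell-m}.
\]
Summing over $m$ from $\ell_k$ to $\ell$ and over the $N_{k,m}$ choices of $\beta$ gives the claimed bound.

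\textbf{Main obstacle.} The delicate point is matching the support $\beta$ with the class counted by $N_{k,m}$, namely simple $r$-subsets of $[k]$. The multichoose index set $\mc$ also contains edges with only partially repeated indices, such as $(1,1,2)$ when $r=3$. I would first check, using the definition of $\mc$ together with the context of~\eqref{eq:bound:corr:sparse:PCA}, that in the intended reading ``self-loops'' covers every non-simple edge.

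If partially repeated edges must be admitted into $\alpha$ but not into $N_{k,m}$, my fallback is to modify Step 1. A non-loop, non-simple edge $e$ with vertex set $V(e)$, where $2\le|V(e)|<r$, would be assigned to the skeleton by canonically completing $V(e)$ to an $r$-set. That completed $r$-set still connects $V(e)$, so connectivity of $\beta$ is preserved. I would then need to verify that this completion can be chosen injectively relative to the remaining multiplicity data. I expect this verification to be the only real work; everything else in the proof is the stars-and-bars count above.
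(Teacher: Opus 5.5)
Your Steps 1--2 are the paper's argument exactly. You delete the loops to get a connected simple skeleton $\beta$ with $m\ge \ell_k$ edges, then distribute the remaining $\ell-m$ edges over $m+k$ types (extra copies of skeleton edges, plus the $k$ loops $(i,\dots,i)$) by stars-and-bars. Read so that the only non-simple edges are these $k$ full loops, your proof is complete and correct. The paper's one-line proof tacitly assumes the same reading.

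Your fallback for partially repeated edges cannot succeed, because once such edges are admitted the inequality itself is false for $r\ge 3$. Take $r=3$, $k=3$, $\ell=2$. Then $\ell_3=1$, $N_{3,1}=1$ and $N_{3,2}=0$, so the right-hand side is $\binom{4}{1}=4$. On the left-hand side, pairing $\{1,2,3\}$ with any of the ten size-$3$ multisets on $[3]$ gives ten connected multi-hypergraphs on $[3]$. The twelve pairs like $(1,1,2)+(2,2,3)$ are also connected on $[3]$, and they contain no simple edge at all. So $N^{\mathrm{mult}}_{3,2}=22>4$, and no injective completion or other encoding can exist.

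The bound itself has to change. All $\binom{k+r-1}{r}-\binom{k}{r}$ non-simple types must enter the alphabet, and edges with $2\le |V(e)|<r$ can carry connectivity, acting like edges of lower uniformity. Relabeling every non-simple edge as a ``self-loop'' does not rescue the stated factor $\binom{\ell+k-1}{\ell-m}$, for the same reason.

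So the answer to your check is negative. The index set behind \eqref{eq:bound:corr:sparse:PCA} does contain such edges, and also good $\alpha$ with fewer than $r$ vertices. The lemma as stated, and the paper's proof of it, cover only the full-loop reading.
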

 
\begin{proof}
Any connected multi-hypergraph on $[k]$ with $\ell$ edges can be generated by first choosing a connected simple hypergraph with $m$ distinct edges for some $\ell_k\leq m\leq \ell$, then distributing the remaining $\ell - m$ edges among the $m$ distinct edges and $k$ self loops. The number of ways to perform the second step is $\binom{\ell+k-1}{\ell-m}$ by the stars-and-bars formula.
\end{proof}

\begin{lemma}[Simple hypergraph count relative to trees]\label{lem:simple:to:tree}
Fix the number of vertices $k\geq r$ and the number of edges $\ell\geq \ell_k\equiv \lceil (k-1)/(r-1)\rceil$. Let $\Delta=\ell-\ell_k$ denote the excess number of edges and denote $q=(r-1)\ell_k-(k-1)\in [0,r-2]$. For a constant $C_r>0$ depending only on $r\geq 2$, we have
\[
	\frac{N_{k,\ell}}{N_k}\leq (C_r\,k\ell_k)^{q}\cdot \Big(C_r\,k\,\Big(1+\frac{\ell_k}{\Delta}\Big)\Big)^{(r-1)\Delta}\,,
\]
where for $\Delta=0$, the RHS is understood as $(C_r k\ell_k)^q$.
\end{lemma}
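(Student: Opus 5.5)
We will bound $N_{k,\ell}$ with an injective-encoding (or surjection-counting) argument. Every connected $r$-uniform hypergraph on $[k]$ with $\ell$ edges is described by a small ``skeleton'' that is close to a hypertree, together with a bounded amount of extra data.

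\textbf{Step 1: extract a spanning subforest.} Given a connected simple $r$-uniform hypergraph $G$ on $[k]$ with $\ell$ edges, process its edges greedily along a breadth-first exploration from vertex $1$. Call an edge \emph{tree-like} if, when added, it meets the previously covered vertex set in exactly one vertex. Each non-tree-like edge that still covers new vertices meets the covered set in $s\ge 2$ vertices and brings at most $r-s\le r-2$ new ones. At the end all $k$ vertices are covered. Let $m_1$ be the number of tree-like edges and $m_2$ the number of remaining edges that contribute at least one new vertex. Then
\[
k-1=(r-1)m_1+\sum_{\text{such }e}(\#\text{new}_e),
\]
and the total deficiency $\sum_e\big((r-1)-\#\text{new}_e\big)$ over covering edges equals $(r-1)(m_1+m_2)-(k-1)$. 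Since $m_1+m_2\le \ell$, this deficiency is at most $(r-1)\ell-(k-1)=q+(r-1)\Delta$.

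Cutting each deficient covering edge down to one old vertex plus its new vertices would not give an $r$-uniform object. Instead we proceed as follows. The tree-like edges, together with the vertices they hang from, form a rooted hyperforest. The key point is that the deficient edges and the ``useless'' edges (those adding no new vertex, at most $\ell-m_1-m_2$ of them) number at most $q+(r-1)\Delta$ plus $\Delta$ in a controlled way.

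\textbf{Step 2: count via rooted forests.} Remove all non-tree-like edges. What remains is a forest of rooted hypertrees covering $[k]$ with $t$ components, where $t-1\le$ total deficiency $\le q+(r-1)\Delta$. By Lemma~\ref{lem:rooted:forests} there are $R_{k,t}=\frac{k!}{(t-1)!}\frac{k^{\ell'-1}}{\ell'!(r-1)!^{\ell'}}$ such forests, with $\ell'=(k-t)/(r-1)$. Comparing with $N_k$, we get
\[
R_{k,t}/N_k\lesssim C_r^{\,t}\,k\,\ell_k^{\,\ell_k-\ell'}k^{-(\ell_k-\ell')}\cdots .
\]
The $\ell'!$ in the denominator gains a factor of about $\ell_k^{\ell_k-\ell'}$ relative to $\ell_k!$, and losing $\ell_k-\ell'\approx (t-1)/(r-1)$ powers of $k$ is what ultimately produces the structure $(k\ell_k)^{q}$ and $(k(1+\ell_k/\Delta))^{(r-1)\Delta}$. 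Next we specify the $\ell-\ell'$ extra edges. Each is an $r$-subset of $[k]$, so a crude bound would be $\binom{\binom kr}{\ell-\ell'}$, which is too lossy. The refinement is that each extra edge only needs to be specified through its at most $r$ vertices relative to the forest structure, costing at most $k^{r}$ per edge but saving $k^{r-1}$ through the reduced component count. Overall the count is
\[
\binom{\text{choices}}{\ell-\ell'}\le \Big(\frac{C_r k^{r-1}\cdot\ell_k}{\ell-\ell'}\Big)^{\ell-\ell'}\cdot(\ldots).
\]
After optimizing over $t$, the denominator $(\ell-\ell')!$, which is about $\Delta^{\Delta}$, produces the $(1+\ell_k/\Delta)$ factor.

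\textbf{Step 3: assemble.} Summing over $t$ and over which extra edges are used gives a geometric-type sum dominated by its extreme terms. This yields $(C_rk\ell_k)^q$ from the forced deficiency $q$ (present even when $\Delta=0$) and $(C_rk(1+\ell_k/\Delta))^{(r-1)\Delta}$ from the excess, with constants absorbed into $C_r$.

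The main obstacle is Step 2: the bookkeeping must ensure that the overcount from specifying extra edges (about $k^{r-1}$ each, modulo the $1/\Delta!$ symmetry) is balanced exactly against the gain from having fewer forest edges. In particular, the $\ell_k/\Delta$ factor should come out of comparing $\ell'!$ with $\ell_k!$ combined with $1/(\ell-\ell')!$. I would first carry out the bound for $r=2$ as a sanity check against~\cite[Lemma 3.3]{arxiv-version}, and then handle general $r$ by charging each unit of deficiency to one component root.
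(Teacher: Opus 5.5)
Your Step~2, which is where all the content of the lemma lies, is not carried out: it contains the placeholders ``$\cdots$'' and ``$(\ldots)$''. Moreover, the mechanism you describe for it does not exist. By Lemma~\ref{lem:rooted:forests}, with $j=\ell_k-\ell'=(t-1+q)/(r-1)$,
\[
\frac{R_{k,t}}{N_k}=\frac{k}{(t-1)!}\cdot\frac{\ell_k!}{\ell'!\,k^{j}}\cdot(r-1)!^{\,j}\le\frac{k\,(r-1)!^{\,j}}{(t-1)!}\,.
\]
So the factor $k^{-j}$ is cancelled by $\ell_k!/\ell'!$. Removing edges from the skeleton saves no power of $k$ (certainly not $k^{r-1}$ per edge); the only gain is $1/(t-1)!$. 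Paying about $k^{r}$ per extra edge therefore overshoots.

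Concretely, take $r=3$, $k$ even and $\ell=\ell_k=k/2$. Then $\Delta=0$, $q=1$, and the lemma asks for $O(k^2)$. Your exploration yields exactly one deficient edge (two old vertices, one new), $t=2$ and $R_{k,2}/N_k=2\ell_k=k$. Choosing the extra edge among $\binom k3$ triples then gives order $k^4$, and still order $k^3$ if you pin vertex $1$ as a root. Even for $q=\Delta=0$ the unpinned count gives $R_{k,1}/N_k=k$, while the lemma asserts $N_{k,\ell_k}/N_k\le 1$.

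Two further bookkeeping claims are also wrong:
\begin{itemize}
\item $t-1\le$ (total deficiency) fails for $r\ge4$. A deficient edge with $s$ old vertices has deficiency $s-1$ but creates $r-s$ new component roots; the correct relation is $t-1=\sum_e(r-s_e)$.
\item The $(1+\ell_k/\Delta)$ factor cannot come from a $(\ell-\ell')!\approx\Delta^{\Delta}$ saving when $r\ge3$. For $q=0$ and $1\le\Delta\ll\ell_k$ the target is of order $(Ck^2/\Delta)^{(r-1)\Delta}$. Yet $(r-1)\Delta$ deficient edges with two old vertices each already offer $k^{2(r-1)\Delta}$ choices, so a saving of order $((r-1)\Delta)!$ is needed.
\end{itemize}

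What is missing is the right encoding of the non-tree-like edges. With vertex $1$ pinned as a root, the remaining roots of your forest are exactly the new vertices of the deficient edges. A deficient edge with $s_e$ old vertices should therefore cost only $\binom{k}{s_e}$. On top of that comes a set partition of the $t-1$ non-$1$ roots among the $m$ deficient edges (at most $m^{t-1}/m!$ ways), and this must be played off against the $1/(t-1)!$ in $R_{k,t}$. Useless edges cost $\binom kr$ each but consume $r-1$ units of the budget $\sum_e(s_e-1)+(r-1)u=q+(r-1)\Delta=:T$. Done carefully, this gives a bound of order $C_r^{T}k^{2T}/T!$ when $T\le k$ (larger $T$ is easier), which implies the lemma. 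So your route can be completed, but none of this is in the proposal.

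The paper avoids the exploration bookkeeping entirely:
\begin{itemize}
\item It takes a spanning tree of the bipartite incidence graph with degree sequences $(d_v)$, $(d_{e_j})$, and counts such trees by $\frac{(k-1)!\,(\ell-1)!}{\prod_v(d_v-1)!\prod_j(d_{e_j}-1)!}$.
\item It completes each edge-node with $r-d_{e_j}$ further vertices, at cost at most $k$ each.
\item Summing with the multinomial theorem and Vandermonde's identity gives $N_{k,\ell}\le\frac{(k-1)!\,k^{\ell-1+T}}{\ell!\,(r-1)!^{\ell}}\binom{(r-1)\ell}{T}$, with $T$ the cyclomatic number.
\end{itemize}
There the factor $\binom{(r-1)\ell}{T}\le\big(e(r-1)\ell/T\big)^{T}$ is exactly the $T!$-type saving, and the lemma follows after dividing by $N_k$.
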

 
\begin{proof}
To count connected simple hypergraphs, we use the incidence bipartite graph representation: the vertex classes are $[k]$ (vertices) and $\{e_1,\ldots,e_\ell\}$ (edge-labels), with $v\in[k]$ adjacent to $e_j$ whenever $v\in e_j$. In this representation, a connected hypergraph corresponds to a connected bipartite graph. We first choose a spanning tree of this bipartite graph with degree sequences $(d_v)_{v\in[k]}$ and $(d_{e_j})_{j\in[\ell]}$.  The number of labeled bipartite trees with a given degree sequence is given by $\frac{(k-1)!\,(\ell-1)!}{\prod_{v}(d_v-1)!\prod_{j}(d_{e_j}-1)!}$ (see e.g.~\cite[Section 2.4]{moon1970counting}). We then complete the connected hypergraph by connecting each $e_j$ to $r - d_{e_j}$ additional vertices. Note that since the edges of the original connected hypergraph are not labeled, the labeled incident bipartite graph counts every connected hypergraph exactly $\ell!$ times. Altogether, this yields
\[
	N_{k,\ell} \;\leq\; \frac{1}{\ell!}\sum_{(d_v),\,(d_{e_j})}\frac{(k-1)!\,(\ell-1)!}{\prod_{v}(d_v-1)!\prod_{j}(d_{e_j}-1)!}\prod_{j=1}^{\ell}\binom{k - d_{e_j}}{r - d_{e_j}}\,,
\]
where the sum is over valid degree sequences satisfying $\sum_v(d_v-1)=\ell-1$ and $\sum_j(d_{e_j}-1)=k-1$. Applying the multinomial theorem to compute the sum over vertex degrees, the RHS equals
\[
\frac{(k-1)!k^{\ell-1}}{\ell!}\sum_{(d_{e_j})}\prod_{j=1}^{\ell}\frac{(k-d_{e_j})!}{(k-r)!(r-d_{e_j})!}\leq \frac{(k-1)!k^{\ell-1}}{\ell!}\sum_{(d_{e_j})}\prod_{j=1}^{\ell}\frac{k^{r-d_{e_j}}}{(r-d_{e_j})!}\,.
\]
Let $t=(r-1)\ell-k+1$ denote the cyclomatic number of the incident bipartite graph. Since $\sum_{j=1}^{\ell}(r-d_{e_j})=t$, applying Vandermonde's identity to simplify the sum over edge degrees in the RHS above yields
\[
	N_{k,\ell} \leq  \frac{(k-1)!\,k^{\ell-1+t}}{\ell!\,(r-1)!^\ell}\,\binom{\ell(r-1)}{t}\,,
\]
It remains to divide by $N_k$ and simplify. Recalling $N_k = (k-1)!\,k^{\ell_k-1}/(\ell_k!\,(r-1)!^{\ell_k})$, we obtain
\begin{equation*}
	\frac{N_{k,\ell}}{N_k} \;\leq\; \frac{\ell_k!}{\ell!}\cdot\frac{k^{\Delta+t}}{(r-1)!^{\Delta}}\cdot\binom{\ell(r-1)}{t}\,.
\end{equation*}
Note that since $\ell = \ell_k + \Delta$, we have $\frac{\ell_k!}{\ell!}\leq \frac{1}{\ell_k^{\Delta}}$. Also, by the standard bound $\binom{n}{m}\leq (en/m)^m$, we have $\binom{\ell(r-1)}{t} \leq (\frac{e(r-1)\ell}{t})^{t}$. Substituting these bounds into the above inequality gives
\[
	\frac{N_{k,\ell}}{N_k} \;\leq\; \frac{k^{\Delta+t}}{\ell_k^{\Delta}\,(r-1)!^{\Delta}}\bigg(\frac{e(r-1)\ell}{t}\bigg)^{\!t}=\underbrace{k^{q}\bigg(\frac{e(r-1)\ell}{t}\bigg)^{\!q}}_{\text{prefactor}} \;\times\; \underbrace{\bigg(\frac{k^r}{\ell_k\,(r-1)!}\bigg)^{\!\Delta}\bigg(\frac{e(r-1)\ell}{t}\bigg)^{\!(r-1)\Delta}}_{\text{$\Delta$-dependent terms}}\,,
\]
where the last equality holds since $t=(r-1)\Delta+q$. The prefactor term is clearly bounded above by $(C_r k\ell)^q$. For the $\Delta$-dependent terms, we can check directly the lemma holds when $\Delta=0$, so assume that $\Delta \geq 1$. Note that $k\leq (r-1)\ell_k+1\leq r\ell_k$, so $\frac{k^r}{\ell_k (r-1)!}\leq \frac{r}{(r-1)!}k^{r-1}$ holds. Moreover, we have $(r-1)\ell/t \leq 1 + \ell_k/\Delta$ since $t\geq (r-1)\Delta$. Thus
\[
	\bigg(\frac{k^r}{\ell_k\,(r-1)!}\bigg)^{\!\Delta}\bigg(\frac{e(r-1)\ell}{t}\bigg)^{\!(r-1)\Delta} \;\leq\; \bigg(C_r\,k\,\Big(1+\frac{\ell_k}{\Delta}\Big)\bigg)^{\!(r-1)\Delta}\,,
\]
Combining the two estimates yields the stated bound.
\end{proof}

\subsection{Proof of Theorem~\ref{thm:sparse:PCA}-(a)}

\begin{proof}[Proof of Theorem~\ref{thm:sparse:PCA}-(a)]
Throughout, we let $C_r, C_r'$ denote constants that only depend on $r$, which may change from line to line. We have from~\eqref{eq:bound:corr:sparse:PCA} that $\rho\cdot\Corr^2_{\leq D}\leq \hat{\rho}^2(1+\Xi)$, where
\begin{equation}\label{eq:def:Xi}
    \Xi:=\sum_{k = r}^{rD} \sum_{\ell = \ell_k}^{D} \binom{n - 1}{k - 1}N_{k, \ell}^{\mathrm{mult}} \hat{\rho}^{2k - 2} \lambda^{2\ell} = \sum_{k = r}^{rD} \binom{n - 1}{k - 1} N_{k} \hat{\rho}^{2k - 2} \lambda^{2\ell_{k}} \underbrace{\sum_{\ell = \ell_{k}}^{D} \frac{N_{k, \ell}^{\mathrm{mult}}}{N_{k}} \lambda^{2\ell - 2\ell_{k}}}_{A_{k}}\,.
\end{equation}
By the upper bound of $N_{k,\ell}^{\mathrm{mult}}$ from Lemma~\ref{lem:multi:to:simple}, we can bound $A_k$ as
\[
\begin{aligned}
    A_{k} \leq \sum_{\ell = \ell_{k}}^{D} \sum_{m = \ell_{k}}^{\ell} \frac{N_{k, m}}{N_{k}} \binom{\ell+k+1}{\ell-m} \lambda^{2\ell - 2\ell_{k}}= \sum_{m = \ell_{k}}^{D} \frac{N_{k, m}}{N_{k}} \lambda^{2m - 2\ell_{k}} \sum_{\ell = m}^{D} \binom{\ell+k-1}{\ell-m} \lambda^{2\ell-2m}\,.
\end{aligned}
\]
For $k\leq rD$ and $\ell \leq D$, we can crudely bound $\binom{\ell+k-1}{\ell-m} \leq ((r+1)D)^{\ell-m}$. Moreover, taking $C$ large enough in the assumption $D^{r-1}\lambda^{2} \leq 1/C$, guarantees that $(r+1)D\la^2\leq 1/2$. As a result, the final summation is at most $2$. Combining this with Lemma~\ref{lem:simple:to:tree} and making the substitutions $\Delta = m - \ell_{k}$,
\[
    A_{k} \leq 2(C_{r}k\ell_k)^{q} \cdot \sum_{\Delta \geq 0} \lPa C_{r}k \lPa 1 + \frac{\ell_{k}}{\Delta} \rPa  \rPa^{(r - 1)\Delta} \la^{2\Delta}=2(C_{r}k\ell_k)^{q} \cdot \sum_{\Delta \geq 0} \lPa C_{r}k \lPa 1 + \frac{\ell_{k}}{\Delta} \rPa\la_0^2  \rPa^{(r - 1)\Delta}\,,
\]
where we defined $\lambda_0=\la^{1/(r-1)}$ and for $\Delta=0$, the summand is understood as $1$.  Recall that $q\leq r-2$ (cf. Lemma~\ref{lem:simple:to:tree}) and $k\leq C_r \ell_k$, so we have $(C_r k\ell_k)^{q}\leq (C_r'\ell_k)^{2r-4}$. Note that the final summation equals
\[
\begin{aligned}
    &1 + \sum_{1 \leq \Delta \leq \ell_{k}} \lPa C_{r}k \lPa 1 + \frac{\ell_{k}}{\Delta} \rPa \lambda_{0}^{2} \rPa^{(r - 1)\Delta} + \sum_{\Delta \geq \ell_{k}} \lPa C_{r}k \lPa 1 + \frac{\ell_{k}}{\Delta} \rPa \lambda_{0}^{2} \rPa^{(r - 1)\Delta} \\
    &\leq 1 + \ell_{k} \sup_{\Delta > 0} \lPa \frac{C_{r} k \ell_{k} \lambda_{0}^{2}}{\Delta} \rPa^{(r - 1)\Delta} + \sum_{\Delta \geq 0} \lPa C_{r}k \lambda_{0}^{2} \rPa^{(r - 1)\Delta}\,.
\end{aligned}
\]
Since $k \leq rD$ and $D\lambda^{2/(r-1)}=D\lambda_0^2 \leq 1/C$, the last summation on the RHS is uniformly bounded provided the constant $C$ (depending on $r$) is chosen large enough. The second term can be computed explicitly as
\[
	\sup_{\Delta > 0} \lPa \frac{C_{r} k \ell_{k} \lambda_{0}^{2}}{\Delta} \rPa^{(r - 1)\Delta} = \exp \lPa C_{r}' k \ell_k \lambda_{0}^{2} \rPa \leq \exp \lPa C_{r}'k D\lambda_0^2 \rPa \leq \exp(C_r' k/C)\,.
\]
Let $\delta_0\equiv C_r/C$, which can be made arbitrarily small by taking $C$ sufficiently large. Collecting these estimates altogether, we deduce that
\[
    A_{k} \leq C_{r}\ell_k^{2r-4}e^{\delta_0k}\,.
\]
Inserting this bound into \eqref{eq:def:Xi} yields
\begin{equation*}
    \Xi \leq C_{r} \sum_{k = r}^{rD} \ell_{k}^{2r - 4}e^{\delta_{0}k} \binom{n - 1}{k - 1}N_{k} \hat{\rho}^{2k - 2} \lambda^{2\ell_{k}}\,.
\end{equation*}
Using the definition of $N_{k}$ and the inequality $\binom{n - 1}{k - 1} \leq \frac{n^{k - 1}}{(k - 1)!}$, we can bound
\[
	\binom{n - 1}{k - 1}N_{k} \hat{\rho}^{2k - 2} \lambda^{2\ell_{k}} \leq \frac{k^{\ell_{k} - 1}}{\ell_{k}!(r - 1)!^{\ell_{k}}}(n \hat{\rho}^{2})^{k - 1} \lambda^{2\ell_{k}} \leq \lPa \frac{ek}{\ell_{k}(r - 1)!} \rPa^{\ell_{k}}(n^{r - 1} \hat{\rho}^{2r - 2} \lambda^{2})^{\frac{k - 1}{r - 1}}\,,
\]
where the last inequality used Stirling's approximation $\ell_{k}! \geq (\ell_{k}/e)^{\ell_{k}}$. Since $k \leq 1 + (r - 1)\ell_{k}$ and $0 \leq \ell_{k} - \frac{k - 1}{r - 1} \leq 1$, the first term on the RHS is at most, 
\[
	\lPa \frac{k}{\ell_{k}(r - 1)} \rPa^{\ell_{k}} \lPa \frac{e}{(r - 2)!} \rPa^{\ell_k} \leq C_{r} \lPa 1 + \frac{1}{\ell_{k}(r - 1)} \rPa^{\ell_{k}} \lPa \frac{e}{(r - 2)!} \rPa^{\frac{k - 1}{r - 1}} \leq C_{r}' \lPa \frac{e}{(r - 2)!} \rPa^{\frac{k - 1}{r - 1}}\,.
\]
Hence, putting everything together, we have
\[
    \Xi\leq C_r \sum_{k \geq r} \ell_{k}^{2r - 4}e^{\delta_{0}k} \lPa \frac{en^{r - 1} \hat{\rho}^{2r - 2} \lambda^{2}}{(r - 2)!} \rPa^{\frac{k - 1}{r - 1}} \leq C_r \sum_{k \geq r} \ell_{k}^{2r - 2}e^{\delta_{0}k}(1 - \eps)^{\frac{k - 1}{r - 1}}\,.
\]
Since $\ell_k\leq k$, given $\eps>0$, we may choose $C=C(\eps,r)>0$ sufficiently large, i.e. $\delta_{0}\equiv C_r/C$ sufficiently small, so that the last sum is finite, depending only on $\eps, r$. Recalling that $\rho\cdot \Corr_{\leq D}^2\leq \hat{\rho}^2(1+\Xi)$ and $\hat{\rho}\equiv \rho/\sqrt{1-\rho}$, this completes the proof.
\end{proof}

\section{Planted dense subhypergraph}\label{sec:planted:hypergraph}
This section is devoted to the proof of Theorem~\ref{thm:planted:hypergraph}-(a)
for $\rho \geq n^{-1/2}$ and Theorem~\ref{thm:small:planted:hypergraph}-(a) for $\rho \leq n^{-1/2}$. Theorem~\ref{thm:intro:small-hypergraph} is a
restatement of Theorem~\ref{thm:small:planted:hypergraph} in terms of $\MMSE$ (cf.\ Fact~\ref{fact:corr:mmse}). Theorem~\ref{thm:intro:large-hypergraph}
in turn follows from Theorem~\ref{thm:planted:hypergraph}: for the lower bound it suffices to take $\SNR = 1-\eps$, raising $q_1$ if
necessary, since $\MMSE_{\leq D}$ is monotonically decreasing in $q_1$
(see~\cite[Claim~A.2]{SW-22}). Setting $\SNR = 1-\eps$ corresponds to
$\la^2 = C(\eps,r)/(n^{r-1}\rho^{2r-2})$, so that the degree condition
$D^{r-1} \leq 1/(C\la^2)$ in Theorem~\ref{thm:planted:hypergraph}-(a) becomes
$D \lesssim n\rho^2 = n^{2\xi-1}$, which is the degree range in
Theorem~\ref{thm:intro:large-hypergraph}.

\subsection{Setup and notation} 
We introduce some notation specific to the planted subhypergraph model. Let
\[
	S = \{i\in[n]:\theta_i=1\} \qquad \textnormal{and} \qquad K_S = \{e\in[N]:\theta_i=1\;\forall\,i\in V(e)\}
\]
denote the planted vertex set and the complete $r$-uniform hypergraph on $S$, respectively. We write $K_S^c = [N]\setminus K_S$ for its complement. The \emph{planted hypergraph} $X$ is the random subhypergraph of $Y$ induced by the planted vertices, i.e., $V(X)=S$ and $E(X)=\{e\in K_S:Y_e=1\}$. We use $\E\nolimits_\theta[\cdot]$ to denote expectation conditional on $\theta$.
 
The index sets for the basis and orthonormal family are
\[
	\sG \;=\; \{\alpha\in\{0,1\}^{N}:|\alpha|\leq D\}\,, \qquad \sP \;=\; \{(\beta,\gamma)\in\sG\times\{0,1\}^n:\gamma\subseteq V(\beta)\cup\{1\}\}.
\]
Note that $\sG$ now consists only of simple hypergraphs with at most $D$ edges and $\sP$ has the extra constraint that $\gamma\subseteq V(\be)\cup\{1\}$, in contrast to the multi-hypergraphs used in Section~\ref{sec:sparse:PCA}.

\subsection{Large planted dense subhypergraph}\label{subsec:planted:hypergraph:lower:bound}
This section is devoted to the proof of Theorem~\ref{thm:planted:hypergraph}-(a). As in the sparse tensor PCA model, we follow the orthogonal expansion approach of Section~\ref{subsec:orthogonal:expansion}.  Define the polynomials in $Y$
\[
    \phi_{\alpha} = \phi_{\alpha}(Y) = (Y - q_{0})^{\alpha}\,.
\]
Note that $(\phi_{\al})_{\al\in \GGG}$ forms a basis of $\R_D[Y]$. For $(\be,\ga)\in \PPP$, define the polynomials in $(Y,\theta)$
\[
    \psi_{\beta \gamma} = \psi_{\beta \gamma}(Y, \theta) = \lPa \frac{\theta - \rho}{\sqrt{\rho(1 - \rho)}} \rPa^{\gamma} \prod_{e \in K_{S}} \lPa \frac{Y_{e} - q_{1}}{\sqrt{q_{1}(1 - q_{1})}} \rPa^{\beta_{e}} \prod_{e \not \in K_{S}} \lPa \frac{Y_{e} - q_{0}}{\sqrt{q_{0}(1 - q_{0})}} \rPa^{\beta_{e}}\,.
\]
Using the conditional independence of the entries of $Y$ given $\theta$, it is straightforward to see that $(\psi_{\beta\gamma})_{(\beta,\gamma)\in \PPP}$ forms an orthonormal family (see e.g.~\cite[Lemma 4.2]{arxiv-version}). 
\begin{remark}
When $q_1 = 1$, the normalization $\sqrt{q_1(1-q_1)}$ above incurs a division by
zero, and the orthonormal family $(\psi_{\beta\gamma})$ is no longer defined. Our
final result nonetheless remains valid by a continuity argument: for any
$f \in \R_D[Y]$, the mean squared error $\E[(f(Y) - \theta_1)^2]$ is a polynomial
in the entries of $(q_0, q_1, \rho)$ and hence a continuous function of $q_1$ on
$[q_0, 1]$. Consequently $\MMSE_{\leq D}$ is continuous in $q_1$ (thus $\Corr_{\leq D}$ as well), and the bound
at $q_1 = 1$ follows by letting $q_1 \nearrow 1$ in the bound established for
$q_1 < 1$. We thus assume $q_1 < 1$ for the rest of this section.
\end{remark}
\begin{lemma}\label{lem:planted:hypergraph:c:M}
Let $\alpha \in \sG$ and $(\beta, \gamma) \in \sP$. For the planted dense subhypergraph model, $c_{\al}\equiv \E[\phi_{\al}\theta_1]$ and $M_{\be\ga,\al}\equiv \E[\phi_{\al}\psi_{\be\ga}]$ can be computed as
\begin{equation*}
    c_{\alpha}=\rho^{|V(\alpha) \cup \{1\}|}(q_{1} - q_{0})^{|\alpha|}
\end{equation*}
and
\begin{equation*}
    M_{\beta \gamma, \alpha} = \In_{\beta \leq \alpha} \cdot (q_{1} - q_{0})^{|\alpha - \beta|}(q_{0}(1 - q_{0}))^{\frac{|\beta|}{2}} \cdot \E \lBr \theta^{V(\alpha - \beta)} \lPa \frac{\theta - \rho}{\sqrt{\rho(1 - \rho)}} \rPa^{\gamma} \lPa \frac{q_{1}(1 - q_{1})}{q_{0}(1 - q_{0})} \rPa^{\frac{|\beta \cap K_{S}|}{2}} \rBr,
\end{equation*}
where we recall $\theta^{V(\al-\be)}\equiv \prod_{i\in V(\al-\be)}\theta_i$.
\end{lemma}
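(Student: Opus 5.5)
The plan is to compute both quantities by conditioning on $\theta$ and using the conditional independence of the entries $(Y_e)_{e\in[N]}$ given $\theta$.

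\textbf{The vector $c$.} Given $\theta$, each $Y_e$ is $\Ber(q_0+(q_1-q_0)\theta^{V(e)})$. Hence
\[
\E_\theta[Y_e-q_0]=(q_1-q_0)\theta^{V(e)}.
\]
Since $\alpha$ is simple, $\phi_\alpha=\prod_{e:\alpha_e=1}(Y_e-q_0)$ is a product of distinct, conditionally independent factors. Therefore
\[
\E_\theta[\phi_\alpha]=(q_1-q_0)^{|\alpha|}\prod_{e\in\alpha}\theta^{V(e)}=(q_1-q_0)^{|\alpha|}\theta^{V(\alpha)},
\]
using $\theta_i^2=\theta_i$. Multiplying by $\theta_1$ and averaging over $\theta$ gives $\E[\theta^{V(\alpha)\cup\{1\}}]=\rho^{|V(\alpha)\cup\{1\}|}$, which is the claimed formula for $c_\alpha$.

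\textbf{The matrix $M$.} The factor $\big(\frac{\theta-\rho}{\sqrt{\rho(1-\rho)}}\big)^\gamma$ is $\theta$-measurable, so we first compute $\E_\theta[\phi_\alpha\,\prod_e(\cdot)^{\beta_e}]$, which again factors over edges $e$. There are three cases.
\begin{itemize}
\item If $\beta_e=1$ and $\alpha_e=0$, the factor is the normalized centered variable $(Y_e-\mu_e)/\sigma_e$, where $\mu_e,\sigma_e^2$ are its conditional mean and variance given $\theta$ ($q_1$ on $K_S$, $q_0$ off $K_S$). Its conditional mean is zero, so the whole product vanishes. This produces the indicator $\In_{\beta\le\alpha}$.
\item If $\alpha_e=1$ and $\beta_e=0$, the factor is $\E_\theta[Y_e-q_0]=(q_1-q_0)\theta^{V(e)}$. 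Over all such edges these multiply to $(q_1-q_0)^{|\alpha-\beta|}\theta^{V(\alpha-\beta)}$.
\item If $\alpha_e=\beta_e=1$, write $Y_e-q_0=(Y_e-\mu_e)+(\mu_e-q_0)$. The cross term has zero conditional mean, so
\[
\E_\theta\big[(Y_e-q_0)(Y_e-\mu_e)\big]/\sigma_e=\sigma_e .
\]
This equals $\sqrt{q_1(1-q_1)}$ if $e\in K_S$ and $\sqrt{q_0(1-q_0)}$ otherwise. Factoring out $\sqrt{q_0(1-q_0)}$ from each such edge leaves $(q_0(1-q_0))^{|\beta|/2}\big(\frac{q_1(1-q_1)}{q_0(1-q_0)}\big)^{|\beta\cap K_S|/2}$.
\end{itemize}

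Multiplying the three cases and taking the outer expectation over $\theta$ against the $\gamma$-factor yields exactly the stated expression for $M_{\beta\gamma,\alpha}$.

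The only point needing care is this bookkeeping: edges of $\beta$ are normalized differently depending on whether they lie in $K_S$, which is a random set determined by $\theta$. The ratio $\big(\frac{q_1(1-q_1)}{q_0(1-q_0)}\big)^{|\beta\cap K_S|/2}$ therefore depends on $\theta$ and must stay inside the expectation, rather than being simplified using $\theta^{V(\alpha-\beta)}$. Beyond this, the argument is a direct computation with no real obstacle, assuming $q_1<1$ as in the preceding remark.
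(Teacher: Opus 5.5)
Your proposal is correct and follows essentially the same route as the paper: condition on $\theta$, factor the conditional expectation over the conditionally independent edges, then average over $\theta$, keeping the $\theta$-dependent ratio $\big(\frac{q_1(1-q_1)}{q_0(1-q_0)}\big)^{|\beta\cap K_S|/2}$ inside the expectation. The only difference is organizational: the paper splits edges by membership in $K_S$ versus $K_S^c$ and then combines the resulting indicators into $\mathbf{1}_{\beta\le\alpha}\,\theta^{V(\alpha-\beta)}$, whereas your split by the pattern $(\alpha_e,\beta_e)$, using the unified conditional mean and standard deviation $(\mu_e,\sigma_e)$, produces these factors directly.
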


\begin{proof}
First observe that $\E_{\theta}[(Y-q_0)^{\alpha}]=\In_{\theta_{i}=1, \forall i \in V(\alpha)} \cdot (q_1-q_0)^{|\alpha|}$. Thus, 
\[
    c_{\alpha}=\E[\theta_1 \cdot \E\nolimits_{\theta}[(Y - q_{0})^{\alpha}]] = (q_1-q_0)^{|\alpha|} \cdot \E[\In_{\theta_{i}=1, \forall i \in V(\alpha) \cup \{1\}}]=\rho^{|V(\alpha) \cup \{1\}|}(q_{1} - q_{0})^{|\alpha|}\,.
\]
To compute $M_{\be\ga,\al}$, condition on $\theta$ and compute the expectation over edges in $K_S$ and $K_S^c$ separately: 
\[
    \E\nolimits_{\theta} \lBr \prod_{e \not \in K_{S}} (Y_{e} - q_{0})^{\alpha_{e}} \lPa \frac{Y_{e} - q_{0}}{\sqrt{q_{0}(1 - q_{0})}} \rPa^{\beta_{e}} \rBr = \In_{(\alpha \triangle \beta) \cap K_{S}^{c} = \emptyset} \cdot (q_{0}(1 - q_{0}))^{\frac{|\beta \cap K_{S}^{c}|}{2}}\,,
\]
and
\[
    \E\nolimits_{\theta} \lBr \prod_{e \in K_{S}} (Y_{e} - q_{0})^{\alpha_{e}} \lPa \frac{Y_{e} - q_{1}}{\sqrt{q_{1}(1 - q_{1})}} \rPa^{\beta_{e}} \rBr = \In_{(\beta \cap K_{S}) \leq (\alpha \cap K_{S})} \cdot (q_{1} - q_{0})^{|(\alpha \setminus \beta) \cap K_{S}|}(q_{1}(1 - q_{1}))^{\frac{|\beta \cap K_{S}|}{2}}\,.
\]
Noting that $\In_{(\alpha \triangle \beta) \cap K_{S}^{c} = 0} \In_{(\beta \cap K_{S}) \leq (\alpha \cap K_{S})}=\In_{\beta \leq \alpha} \In_{\al-\be\leq K_S}$, and $\al-\be\leq K_S$ iff $\theta_i=1$ for all $i\in V(\al-\be)$, the product of these two conditional expectations is
\[
\begin{aligned}
    & \phantom{=} \In_{\beta \leq \alpha} \In_{\theta_i=1, \forall i \in V(\alpha-\beta)} (q_{1} - q_{0})^{|\alpha - \beta|}(q_{0}(1 - q_{0}))^{\frac{|\beta \cap K_{S}^{c}|}{2}}(q_{1}(1 - q_{1}))^{\frac{|\beta \cap K_{S}|}{2}} \\
    &= \In_{\beta \leq \alpha}\,\theta^{V(\alpha - \beta)} \,(q_{1} - q_{0})^{|\alpha - \beta|}(q_{0}(1 - q_{0}))^{\frac{|\beta|}{2}}\, \lPa \frac{q_{1}(1 - q_{1})}{q_{0}(1 - q_{0})} \rPa^{\frac{|\beta \cap K_{S}|}{2}}\,.
\end{aligned}
\]
Taking outer expectation over $\theta$, the stated formula for $M_{\beta \gamma, \alpha}=\E[\phi_{\al}\psi_{\be\ga}]$ follows.
\end{proof}

The notion of good graph carries over from the sparse tensor PCA model (Definition~\ref{def:good:sparse:PCA}), with the only change being that $\sG$ now consists of simple hypergraphs rather than multi-hypergraphs. We restate the definition for convenience.
\begin{definition}\label{def:good:hypergraph}
A non-empty hypergraph $\alpha \in \sG$ is \textit{good} if $1 \in V(\alpha)$ and $\alpha$ is connected. The empty graph $\alpha = 0$ is considered good by convention. A pairing $(\beta, \gamma) \in \sP$ is \textit{good} whenever $\beta$ is good and $\gamma \subseteq V(\beta) \cup \{1\}$. The set of all good graphs resp. pairings is denoted $\GGG_\star$ resp. $\PPP_\star$.
\end{definition}

\begin{lemma}\label{lem:good:reduction:hypergraph}
If $\alpha \in \sG$ is not good, then there exists $\mu \in \R$ and a good graph $\hat{\alpha} \in \GGG_\star$ such that $c_{\alpha} = \mu c_{\hat{\alpha}}$ and $M_{\beta \gamma, \alpha} = \mu M_{\beta \gamma, \hat{\alpha}}$ for all good pairings $(\beta, \gamma) \in \PPP_\star$.
\end{lemma}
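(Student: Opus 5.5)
We mirror the proof of Lemma~\ref{lem:good:reduction:sparse:PCA}. If $1\notin V(\alpha)$ we set $\hat\alpha=0$; otherwise $\hat\alpha$ is the connected component of $\alpha$ containing vertex $1$. In both cases we write $\alpha_0=\alpha-\hat\alpha$, so $V(\alpha_0)\cap V(\hat\alpha)=\emptyset$ and $V(\alpha_0)\not\ni 1$ whenever $\hat\alpha\neq 0$. We take
\[
\mu=\E[\phi_{\alpha_0}]=\rho^{|V(\alpha_0)|}(q_1-q_0)^{|\alpha_0|},
\]
using the conditional computation $\E_\theta[(Y-q_0)^{\alpha_0}]=\In_{V(\alpha_0)\subseteq S}(q_1-q_0)^{|\alpha_0|}$.

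For the $c$ identity, we condition on $\theta$. The edges of $\alpha_0$ and $\hat\alpha$ are distinct, so the factors $\phi_{\alpha_0}$ and $\phi_{\hat\alpha}$ are conditionally independent. Their conditional expectations are $\theta^{V(\alpha_0)}$ and $\theta^{V(\hat\alpha)}$ times the corresponding powers of $(q_1-q_0)$. Since the vertex sets $V(\alpha_0)$ and $V(\hat\alpha)\cup\{1\}$ are disjoint and the $\theta_i$ are independent, the outer expectation factors. Alternatively, this follows directly from the formula $c_\alpha=\rho^{|V(\alpha)\cup\{1\}|}(q_1-q_0)^{|\alpha|}$ in Lemma~\ref{lem:planted:hypergraph:c:M}, because $|V(\alpha)\cup\{1\}|=|V(\alpha_0)|+|V(\hat\alpha)\cup\{1\}|$.

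For $M$, we fix a good pairing $(\beta,\gamma)\in\PPP_\star$ and use the explicit formula of Lemma~\ref{lem:planted:hypergraph:c:M}. There are three cases.
\begin{itemize}
\item[(i)] If $\beta=0$, then $\gamma\subseteq\{1\}$. The formula becomes $(q_1-q_0)^{|\alpha|}\E[\theta^{V(\alpha)}((\theta-\rho)/\sqrt{\rho(1-\rho)})^{\gamma}]$, and it factors over the disjoint vertex sets $V(\alpha_0)$ and $V(\hat\alpha)\cup\{1\}$, giving $\mu M_{0\gamma,\hat\alpha}$.
\item[(ii)] If $\beta\neq0$ and $\beta\le\alpha$, then $\beta$ is connected and contains vertex $1$. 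Hence $\beta\le\hat\alpha$ (this forces $\hat\alpha\ne0$), and $\alpha-\beta=\alpha_0+(\hat\alpha-\beta)$ with $V(\alpha-\beta)=V(\alpha_0)\sqcup V(\hat\alpha-\beta)$. The random quantity inside the expectation is $\theta^{V(\alpha-\beta)}\cdot((\theta-\rho)/\sqrt{\rho(1-\rho)})^{\gamma}\cdot(q_1(1-q_1)/(q_0(1-q_0)))^{|\beta\cap K_S|/2}$. Its last two factors depend only on $\theta$ restricted to $V(\beta)\cup\{1\}\subseteq V(\hat\alpha)$, since $\gamma\subseteq V(\beta)\cup\{1\}$ and whether $e\in K_S$ for $e\in\beta$ depends only on $\theta_{V(e)}$. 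So the quantity splits as $\theta^{V(\alpha_0)}$ times a function of $\theta_{V(\hat\alpha)}$, and independence gives the factor $\rho^{|V(\alpha_0)|}$. The remaining prefactors split as $(q_1-q_0)^{|\alpha-\beta|}=(q_1-q_0)^{|\alpha_0|}(q_1-q_0)^{|\hat\alpha-\beta|}$, and $(q_0(1-q_0))^{|\beta|/2}$ is common to both sides. This yields $M_{\beta\gamma,\alpha}=\mu M_{\beta\gamma,\hat\alpha}$.
\item[(iii)] If $\beta\not\le\alpha$, then also $\beta\not\le\hat\alpha$, so both sides vanish by the indicator $\In_{\beta\le\alpha}$.
\end{itemize}

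The only real care point is case (ii): the $K_S$-dependent weight $(q_1(1-q_1)/(q_0(1-q_0)))^{|\beta\cap K_S|/2}$, which is absent in the Gaussian setting, must be checked to be measurable with respect to $\theta_{V(\beta)}$. This is exactly what makes the factorization across the disjoint vertex sets go through.
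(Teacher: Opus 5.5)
Your proof is correct and is essentially the paper's argument: the paper defers to the sparse tensor PCA reduction, which takes $\hat\alpha$ to be the component containing vertex $1$, sets $\mu=\E[\phi_{\alpha_0}]$, factors by independence across the disjoint vertex sets, and notes that both sides vanish when $\beta\not\le\alpha$. The only difference is that you check the factorization through the closed-form expressions of Lemma~\ref{lem:planted:hypergraph:c:M} and explicitly verify that the $K_S$-dependent weight depends only on $\theta_{V(\beta)}$, a point the paper's phrase ``identical to the proof'' leaves implicit.
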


\begin{proof}
The proof is identical to the proof of Lemma~\ref{lem:good:reduction:sparse:PCA}.
\end{proof}
By Lemmas~\ref{lem:reduce} and~\ref{lem:good:reduction:hypergraph}, it suffices to find $u$ supported on $\sP_\star$ solving $M^{\top}u=c$.
The proposed dual certificate is
\begin{equation}
\label{eq:hypergraph:solution}
	u_{\beta \gamma} = (-1)^{|\gamma|} \lPa \frac{\rho}{1 - \rho} \rPa^{\frac{1}{2}|\gamma|}(q_{0}(1 - q_{0}))^{-\frac{1}{2}|\beta|}c_{\beta}\,.
\end{equation}
This dual certificate has the same structure as in the sparse PCA model; see~\eqref{eq:sparse:PCA:solution}.

\begin{proposition}
The vector $u$ in \eqref{eq:hypergraph:solution} solves $M^{\top}u=c$.
\end{proposition}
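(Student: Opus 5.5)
The plan follows the proof of Lemma~\ref{lem:sparse:PCA:u}. By Lemmas~\ref{lem:reduce} and~\ref{lem:good:reduction:hypergraph}, and since $u$ is supported on $\PPP_\star$, it suffices to verify $\sum_{(\be,\ga)\in\PPP_\star}M_{\be\ga,\al}u_{\be\ga}=c_\al$ for every good $\al\in\GGG_\star$.

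\textbf{Restricting the sum.} Lemma~\ref{lem:planted:hypergraph:c:M} forces $\be\leq\al$. Moreover, if $\ga\not\subseteq V(\al-\be)$, then some $i\in\ga$ has $\theta_i$ independent of every other factor inside the expectation. Here I use that the factor $\big(\tfrac{q_1(1-q_1)}{q_0(1-q_0)}\big)^{|\be\cap K_S|/2}$ depends only on $\theta$ restricted to $V(\be)$. I should double-check this case, because $i\in V(\be)$ is allowed, and then $\theta_i$ does enter $|\be\cap K_S|$. So the vanishing argument must be made precise: if $i\in\ga\cap V(\be)\setminus V(\al-\be)$, the expectation need not vanish. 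In that case I will keep such $\ga$ in the sum and handle them within the alternating-sign argument below.

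\textbf{Diagonal term.} It equals $M_{\al0,\al}u_{\al0}=(q_0(1-q_0))^{|\al|/2}\cdot(q_0(1-q_0))^{-|\al|/2}c_\al=c_\al$. This uses that on $\be=\al$ the expectation reduces to $\E[R^{|\al\cap K_S|/2}]$ with $R:=\tfrac{q_1(1-q_1)}{q_0(1-q_0)}$, and this does not directly equal $1$. So the diagonal term is not simply $c_\al$, and the normalization needs more care than in the sparse PCA case.

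\textbf{Off-diagonal cancellation.} The cleaner route, and the one I would commit to, is to reorganize by conditioning on $\theta$. Write
\[
\sum_{\ga}(-1)^{|\ga|}\Big(\tfrac{\rho}{1-\rho}\Big)^{|\ga|/2}\Big(\tfrac{\theta-\rho}{\sqrt{\rho(1-\rho)}}\Big)^{\ga}=\prod_{i\in V(\be)\cup\{1\}}\Big(1-\tfrac{\theta_i-\rho}{1-\rho}\Big)=\prod_{i}\frac{1-\theta_i}{1-\rho},
\]
where the sum runs over all $\ga\subseteq V(\be)\cup\{1\}$. Hence for each good $\be$, the weighted $\ga$-sum of $M_{\be\ga,\al}u_{\be\ga}$ equals
\[
(q_0(1-q_0))^{-|\be|/2}c_\be\cdot\E\Big[\phi_\al\,\psi_{\be0}\prod_{i\in V(\be)\cup\{1\}}\tfrac{1-\theta_i}{1-\rho}\Big].
\]
On the event $\prod_i(1-\theta_i)\neq0$, no vertex of $\be$ lies in $S$, so $\be\cap K_S=\emptyset$ and the troublesome factor $R^{|\be\cap K_S|/2}$ disappears. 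The expectation then becomes $\In_{\be\leq\al}(q_1-q_0)^{|\al-\be|}(q_0(1-q_0))^{|\be|/2}\,\E\big[\theta^{V(\al-\be)}\prod_{i\in V(\be)\cup\{1\}}\tfrac{1-\theta_i}{1-\rho}\big]$.

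This is nonzero only if $V(\al-\be)\cap(V(\be)\cup\{1\})=\emptyset$. For good $\be\lneq\al$ that intersection is nonempty: if $\be=0$, then $1\in V(\al)=V(\al-\be)$; otherwise connectivity of $\al$ yields a shared vertex. So every proper term vanishes. For $\be=\al$ we get $c_\al\,\E\big[\prod_{i\in V(\al)\cup\{1\}}\tfrac{1-\theta_i}{1-\rho}\big]=c_\al$, since each factor $\tfrac{1-\theta_i}{1-\rho}$ has mean one. This resolves the diagonal issue raised above.

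\textbf{Main obstacle.} The main obstacle is exactly the $\theta$-dependence of $R^{|\be\cap K_S|/2}$, which breaks the factorization used in the sparse PCA proof. The generating-function identity that produces $\prod(1-\theta_i)$ is the step that kills it, and it is the step I would verify most carefully, including the index range of $\ga$ allowed in $\PPP_\star$.
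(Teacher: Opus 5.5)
Your committed argument is correct and is essentially the paper's proof. The paper also sums over all $\gamma\subseteq V(\beta)\cup\{1\}$ via the identity $\sum_{\gamma}(-1)^{|\gamma|}(\rho/(1-\rho))^{|\gamma|/2}((\theta-\rho)/\sqrt{\rho(1-\rho)})^{\gamma}=\prod_{i\in V(\beta)\cup\{1\}}(1-\theta_i)/(1-\rho)$. This kills the $|\beta\cap K_S|$-dependent factor, connectivity of $\alpha$ then makes every proper good $\beta\lneq\alpha$ vanish, and $\beta=\alpha$ gives $c_\alpha$. Your earlier ``Restricting the sum'' and ``Diagonal term'' paragraphs are superseded by this; as you noticed, $M_{\alpha0,\alpha}u_{\alpha0}=c_\alpha$ fails on its own. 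Drop those paragraphs from a write-up.
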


\begin{proof}
By Lemmas~\ref{lem:reduce} and~\ref{lem:good:reduction:hypergraph}, it suffices to verify $\sum_{(\beta,\gamma)\in\sP_\star}M_{\beta\gamma,\alpha}\,u_{\beta\gamma}=c_\alpha$ for all $\alpha\in\sG_\star$. Since $M_{\be\ga,\al}=0$ unless $\be\leq \al$, we consider the contribution from $\be\le \al$. Observe the crucial identity
\begin{equation}\label{eq:large:hypergraph:identity}
	\sum_{\gamma\subseteq V(\beta)\cup\{1\}}(-1)^{|\gamma|}\bigg(\frac{\rho}{1-\rho}\bigg)^{\!|\gamma|/2}\bigg(\frac{\theta-\rho}{\sqrt{\rho(1-\rho)}}\bigg)^{\!\gamma} = \bigg(\frac{1-\theta}{1-\rho}\bigg)^{\!V(\beta)\cup\{1\}} = \frac{\In_{\theta_{V(\beta)\cup\{1\}}=0}}{(1-\rho)^{|V(\beta)\cup\{1\}|}}\,,
\end{equation}
where $\theta_{V(\be)\cup\{1\}}=0$ means $\theta_i=0, \forall i\in V(\be)\cup\{1\}$. Combining with the computation of $M_{\be\ga,\al}$ from Lemma~\ref{lem:planted:hypergraph:c:M}, the contribution from a given $\beta\leq\alpha$  in $\sum_{\ga\subseteq V(\be)\cup\{1\}}M_{\be\ga,\al}u_{\be\ga}$ involves the product $\theta_{V(\alpha-\beta)}\In_{\theta_{V(\beta)\cup\{1\}}=0}$. For any proper good subgraph $\beta\lneq\alpha$, connectivity of $\alpha$ forces $V(\alpha-\beta)\cap(V(\beta)\cup\{1\})\neq\emptyset$, so this product vanishes. The only surviving term is $\beta=\alpha$, for which $\theta_{V(\be)\cup\{1\}}=0$ forces $|\be\cap K_S|=0$. Thus, combining Lemma~\ref{lem:planted:hypergraph:c:M} and \eqref{eq:large:hypergraph:identity}, we have
\[
	\sum_{(\beta,\gamma)\in\sP_\star}M_{\beta\gamma,\alpha}\,u_{\beta\gamma} \;=\; c_\alpha\,(1-\rho)^{-|V(\alpha)\cup\{1\}|}\cdot\P(\theta_{V(\alpha)\cup\{1\}}=0) \;=\; c_\alpha\,.\qedhere
\]
\end{proof}

\begin{proof}[Proof of Theorem~\ref{thm:planted:hypergraph}-(a)]
Write $\lambda = (q_1-q_0)/\sqrt{q_0(1-q_0)}$. From~\eqref{eq:hypergraph:solution} and Lemma~\ref{lem:planted:hypergraph:c:M}, the dual certificate takes the form $u_{\beta\gamma}=(-1)^{|\gamma|}(\rho/(1-\rho))^{|\gamma|/2}\lambda^{|\beta|}\rho^{|V(\beta)\cup\{1\}|}$, which is identical to the sparse tensor PCA certificate. Since the good simple hypergraphs are a subset of the good multi-hypergraphs, the norm bound from the proof of Theorem~\ref{thm:sparse:PCA}-(a) applies directly, yielding the stated bound on $\Corr_{\leq D}$.
\end{proof}

\subsection{Small planted dense hypergraph}\label{subsec:small:hypergraph:lower:bound}
In this section, we prove Theorem~\ref{thm:small:planted:hypergraph}-(a). In particular, we fix $\xi\in (0,1/2]$ and $0<a<b$, and set
\[
    \rho = n^{\xi - 1}\,\,, \quad q_{0} = n^{-b}\,\,, \quad q_{1} = n^{-a}\,.
\]
Note that by Theorem~\ref{thm:planted:hypergraph}-(a) established in the previous section, there exists $C=C_r>0$ such that if $n^{r-1}\rho^{2r-2}\frac{(q_1-q_0)^2}{q_0(1-q_0)}\leq 1/C_r$ and $D^{r-1}\frac{(q_1-q_0)^2}{q_0(1-q_0)}\leq 1/C_r$, then $\Corr_{\leq D}\leq Cn^{-\frac{\xi}{2}}$. It follows that if $2a-b>0$, then there exists $\delta=\delta(a,b)$ such that $\Corr_{\leq D}\lesssim n^{-\frac{\xi}{2}}$. Thus, throughout this section, we focus on the other regime, where
\begin{equation}\label{eq:assume:small:PDS}
    b \geq 2a \qquad \text{and} \qquad a>b\xi\,.
\end{equation}

\begin{remark}\label{rem:small:hypergraph:quantitative}
The arguments in this section can be carried out for parameters $\rho, q_{0}, q_{1}$ which do not fall into this scaling regime. In the general ‘small planted hypergraph’ setting where $\rho \leq n^{-\frac{1}{2}}$, it is possible to obtain a low-degree hardness statement which tracks the dependence on the parameters and $D$, similar to Theorem~\ref{thm:sparse:PCA}-(a), under the assumption that there exists an $\eps$ for which Lemma~\ref{lem:event:bound} holds. A sufficient condition for this is the following: there is a positive constant $\tau$ such that, 
\[
    n\rho (D^{r}q_{1})^{\tau} = o(1)\,.
\]
Here, $\tau$ should be interpreted as the smallest density of a ‘dense’ subgraph, i.e. those graphs which we want to remove using conditioning. Assuming such a condition, it is possible to prove a low-degree hardness statement for $D \leq n^{\eps}$ under the hypothesis
\[
    n\rho^2 \lPa \frac{D^Cq_1^2}{q_0(1-q_0)} \rPa^{\tau} \leq 1/C\,,
\]
where $C$ is a constant which may depend on $\tau$ and $r$. In the scaling regime considered in this section, we may take $\tau=\xi/a+\delta$, for some small positive slack $\delta$, in which case the above constraint recovers the threshold $a>b\xi$.
\end{remark}

\subsubsection{High probability event}
We fix a constant $\delta=\delta(r,a,b,\xi) \in (0, 1)$ whose value will be determined later in Eq.~\eqref{eq:constraint:delta} below. We recall the notion of ‘dense’ graph, which appeared in~\cite{DMW-23} for proving detection hardness.

\begin{definition}\label{def:sparse:dense:graph}
A graph $\alpha$ is called sparse if $|\alpha| \leq (\frac{\xi}{a} + \delta) |V(\alpha)|$.
Any graph which is not sparse is called dense. For $k \in \N$, define 
\[
m_{k} := \bigg\lfloor \bigg(\frac{\xi}{a} + \delta\bigg)k \bigg\rfloor\,,
\]
and write $m_{\alpha} := m_{|V(\alpha)|}$ so that $\alpha$ is sparse if and only if $|\alpha| \leq m_{\alpha}$.
\end{definition}

Recall that $X$ denotes the planted hypergraph. Consider the event, 
\begin{equation}
    \cE \equiv \cE_D = \{\text{every subgraph of $X$ with at most $D$ edges is sparse.}\}
\end{equation}
The next lemma, an estimation analog of~\cite[Lemma 3.4]{DMW-23}, shows that the event $\cE$ holds with high probability, even after fixing a small subset of $\theta$. For a subset $A \subseteq [n]$, recall $\theta_A\equiv (\theta_{i})_{i \in A}$.

\begin{lemma}\label{lem:event:bound}
There exists a constant $\eps=\eps(r,a,\xi,\delta)>0$ such that for all $n$ sufficiently large and $D \leq n^{\eps}$, 
\[
    \min_{\substack{A \subseteq [n], |A| \leq n^{\xi} \\ \sigma \in \{0, 1\}^{A}}} \P(\cE \mid \theta_{A} = \sigma) \geq 1 - \tfrac{1}{2}n^{-a\delta/2}\,.
\]
\end{lemma}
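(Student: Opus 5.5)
Conditioning on $\theta_A=\sigma$, the planted set $S$ is $(S\cap A)\cup S'$, where $S'\subseteq [n]\setminus A$ has i.i.d.\ $\Ber(\rho)$ entries, and $|S\cap A|\le |A|\le n^{\xi}$ deterministically. Given $\theta$, each hyperedge inside $K_S$ is present independently with probability $q_1=n^{-a}$. The plan is a first-moment (union bound) argument over minimal dense subgraphs, in the spirit of~\cite[Lemma~3.4]{DMW-23}.

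\textbf{Reduction to minimal dense graphs.} If $\cE$ fails, $X$ contains a dense subgraph $\alpha$ with $|\alpha|\le D$. Deleting edges one at a time keeps $|V(\alpha)|$ unchanged until an isolated vertex appears, so we can pass to a dense $\alpha$ that is minimal: removing any edge makes it sparse. Then $|\alpha|=m_\alpha+1$, $k:=|V(\alpha)|\le r|\alpha|\le rD$, and $k\ge c_0:=$ a constant (dense requires $\binom{k}{r}\ge (\xi/a+\delta)k$). It therefore suffices to bound
\[
\P(\cE^c\mid\theta_A=\sigma)\le \sum_{k=c_0}^{rD}\ \sum_{V\subseteq[n],\,|V|=k}\P(V\subseteq S\mid\theta_A=\sigma)\cdot \#\{\alpha \text{ on } V,\ |\alpha|=m_k+1\}\cdot q_1^{m_k+1}.
\]

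\textbf{Splitting by the part of $V$ inside $A$.} Write $j=|V\setminus A|$, so $\P(V\subseteq S\mid\theta_A=\sigma)\le \rho^{j}$, and $V\cap A$ must lie in $S\cap A$, a set of size at most $n^{\xi}$. The number of choices of $V$ is then at most $\binom{n^{\xi}}{k-j}\binom{n}{j}$, and the weighted count is at most $n^{\xi(k-j)}(n\rho)^{j}=n^{\xi k}$, using $n\rho=n^{\xi}$; summing over $j$ costs a factor $k+1$. This is the key point: conditioning on at most $n^{\xi}$ coordinates never costs more than the unconditional expected number of planted vertices, so all choices of $(A,\sigma)$ are handled simultaneously. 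The number of hypergraphs on $k$ labeled vertices with $m=m_k+1$ edges is at most $\binom{k^r}{m}\le (ek^{r})^{m}\le (erD)^{rm}$. Hence the $k$-term is at most
\[
(k+1)\,n^{\xi k}\,(erD)^{rm}\,n^{-am},\qquad m>(\xi/a+\delta)k,
\]
so $am>\xi k+a\delta k$ and the term is at most $(k+1)\,n^{-a\delta k}(erD)^{rm}$.

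\textbf{Choosing $\eps$ and summing.} With $D\le n^{\eps}$ we have $(erD)^{rm}\le n^{2r\eps m}$ for large $n$, and $m\le(\xi/a+\delta)k+1\le C_1 k$. Choosing $\eps$ small enough that $2r\eps C_1\le a\delta/4$ gives a $k$-term of at most $(k+1)n^{-3a\delta k/4}$. Summing over $k\ge c_0\ge1$ yields $O(n^{-3a\delta/4})\le \tfrac12 n^{-a\delta/2}$ for large $n$. The main point requiring care is the uniform bound over $(A,\sigma)$, namely the estimate $n^{\xi(k-j)}(n\rho)^j$, which needs $|A|\le n^{\xi}$. Minimality is used only to pin $|\alpha|$ to $m_k+1$, so that the edge count and the vertex count are tied together; without it, the sum over edge counts would also have to be controlled.
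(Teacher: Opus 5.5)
Your proof is correct, but it is organized differently from the paper's. The paper first intersects with the Chernoff event $\cA=\{\sum_{i\notin A}\theta_i\le \tfrac32(n-|A|)\rho\}$, which caps the planted set at $3n^{\xi}$ vertices. It then bounds $\P(\cE^c\mid\theta)$ uniformly over every such $\theta$, via a union bound over all dense $(k,\ell)$-subgraphs of a set of size $3n^{\xi}$, summing a geometric series over $\ell\ge m_k+1$. The price is the extra additive term $e^{-C(n-1)\rho}$.

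You instead average over $\theta$ directly. You split the candidate vertex set by $j=|V\setminus A|$ and use $n\rho=n^{\xi}$. Then vertices inside $A$ (at most $n^{\xi}$ admissible choices, probability $1$) and vertices outside $A$ ($n$ choices, probability $\rho$ each) cost the same factor $n^{\xi}$. So no concentration step is needed, and uniformity over $(A,\sigma)$ is immediate.

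The comparison: the paper's route gives the stronger pointwise statement, a bound on $\P(\cE^c\mid\theta)$ for every typical $\theta$. Yours is a cleaner pure first-moment computation.

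Your reduction to minimal dense subgraphs is valid but not needed; the geometric sum over $\ell$ is just as easy. Its justification should also be stated differently. If $\alpha-e$ is sparse for some edge $e$, then
\[
|\alpha|-1\le(\xi/a+\delta)|V(\alpha-e)|\le(\xi/a+\delta)|V(\alpha)|,
\]
so $|\alpha|\le m_\alpha+1$. This uses only $|V(\alpha-e)|\le|V(\alpha)|$ and has nothing to do with whether isolated vertices appear during the deletion process.
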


\begin{proof}
Fix $A \subseteq [n]$ with $|A| \leq n^{\xi}$. Define the $(\theta_{i})_{i \not \in A}$-measurable event $\cA = \{\sum_{i \not \in A} \theta_i \leq \tfrac{3}{2}(n - |A|) \rho\}$. Since $\sum_{i \not \in A} \theta_i$ is a sum of i.i.d.\ $\Ber(\rho)$ variables, a Chernoff bound yields $\Pb(\cA^{c}) \leq e^{-C(n - 1) \rho}$ for a universal constant $C>0$. As $\cA$ is independent of $\theta_A$, we have for any $\sigma \in \{0,1\}^A$ that
\[
    \Pb(\cE^{c} \mid \theta_{A} = \sigma) \leq \Pb(\cE^{c} \cap \cA \mid \theta_{A} = \sigma) + \Pb(\cA^{c}) \leq \Pb(\cE^{c} \mid \cA, \theta_{A} = \sigma) + e^{-C(n - 1) \rho}\,.
\]
It thus suffices to bound $\Pb(\cE^{c} \mid \cA, \theta_{A} = \sigma)$. Fix any $\theta$ with $\cA$ holding and $\theta_A=\sigma$. Conditional on this realization of $\theta$, there are at most $n^{\xi} + \tfrac{3}{2}(n - 1) \rho \leq 3n^{\xi}$ vertices in the planted graph. For integers $r\leq k\leq rD$ and $m_k+1\leq \ell \leq D$, the number of candidate dense subgraphs with $k$ vertices and $\ell$ edges inside the planted graph is at most
\[
    \binom{3n^{\xi}}{k} \binom{\binom{k}{r}}{\ell} \leq (3n^{\xi})^{k} \lPa \frac{ek}{r} \rPa^{r\ell} \leq (3n^{\xi})^{k}(eD)^{r\ell}\,,
\]
where the first inequality uses the bound $\binom{k}{r}\leq (ek/r)^{r}$. Each such subgraph is contained in the planted hypergraph with probability at most $q_1^\ell = n^{-a\ell}$. Thus, a union bound over $k$ and $\ell$ gives
\[
	\P(\cE^c \mid \theta) \;\leq\; \sum_{k\geq r}\;\sum_{\ell\geq m_k+1}(3n^\xi)^k\,\big((eD)^r n^{-a}\big)^\ell\,.
\]
Choose $\eps<a/r$ small enough so that $(eD)^r n^{-a}\leq  1/2$ whenever $D\leq n^\eps$ and $n$ is sufficiently large. The inner geometric sum is then dominated by its first term, giving
\[
	\P(\cE^c\mid\theta) \leq 2\sum_{k\geq r}(3n^\xi)^k\,\big((eD)^r n^{-a}\big)^{(\xi/a+\delta)k} \;=\; 2\sum_{k\geq r}\big(3(eD)^{r(\xi/a+\delta)}\,n^{-a\delta}\big)^k\,,
\]
where we used $m_k+1\geq (\xi/a+\delta)k$. Choosing $\eps > 0$ small enough that $3(eD)^{r(\xi/a+\delta)}n^{-a\delta}\leq n^{-a\delta/2}$ for all $D\leq n^{\eps}$ and $n$ sufficiently large, the sum in the RHS is at most $4n^{-ra\delta/2}$. Because this holds uniformly over all $\theta$ with $\cA$ holding and $\theta_A=\sigma$, we have for all sufficiently large $n$
\[
	\P(\cE^{c}\mid\theta_A=\sigma)\leq 4n^{-ra\delta/2}+e^{-C(n-1)\rho} \leq \tfrac{1}{2}n^{-a\delta/2}\,.
\]
As this bound is uniform over all $|A|\leq n^{\xi}$ and $\sigma\in \{0,1\}^A$, this concludes the proof.
\end{proof}
For the rest of this section, we assume $D\leq n^{\eps}$ for $\eps>0$ appearing in Lemma~\ref{lem:event:bound}. We further assume $\eps < \xi$ so that $|V(\alpha) \cup \{1\}| \leq rn^{\eps}+1 \leq n^{\xi}$ for $\al \in \sG$ and $n$ sufficiently large. In particular, Lemma~\ref{lem:event:bound} implies that
\begin{equation}\label{eq:event:probability:below:by:1:2}
 \min_{\substack{\alpha \in \sG \\ \sigma \in \{0, 1\}^{V(\al)\cup\{1\}}}} \P(\cE \mid \theta_{V(\al)\cup\{1\}} = \sigma)\geq \frac{1}{2}\,.
\end{equation}

\subsubsection{Linear system with conditioning} 
We now implement the orthogonal expansion approach with conditioning $\cE$ described in Section~\ref{subsec:proof:conditioning}. The index sets $\sG$ and $\sP$, the basis $(\phi_\alpha)_{\alpha\in\sG}$, and the orthonormal family $(\psi_{\beta\gamma})_{(\beta,\gamma)\in\sP}$ remain as defined in Section~\ref{sec:planted:hypergraph}. By Proposition~\ref{prop:orthogonal:expansion:conditioning}, it suffices to determine a solution $u$ to the modified system $\widetilde{M}^\top u = \widetilde{c}$, where
\[
    \wc_{\alpha} = \E[\phi_{\alpha} \theta_1 \In_{\cE}]\,, \qquad \wM_{\beta \gamma, \alpha} = \E[\phi_{\alpha} \psi_{\beta \gamma} \In_{\cE}]\,,
\]
with $\|u\|$ sufficiently small. The conditioning controls the contribution from dense subgraphs, but introduces a complication: disconnected components of $Y$ may no longer be independent conditional on the event $\cE$, so the reduction to good graphs (Lemma~\ref{lem:good:reduction:hypergraph}) no longer applies.  The key challenge is to show that the coefficients $u_{\beta\gamma}$ for disconnected $\beta$ are sufficiently small, decaying fast enough in the number of connected components of $\be$.

\paragraph{Candidate dual certificate}
In the unconditional setting of Section~\ref{sec:planted:hypergraph}, the dual certificate was constructed so that the summation over $\gamma$ vanishes for all proper good subgraphs $\beta\lneq\alpha$, via the identity~\eqref{eq:large:hypergraph:identity}. That cancellation relied on $\alpha$ being good (connected, containing vertex~$1$). The reduction to good graphs is no longer available in this conditioned setting.  The identity~\eqref{eq:large:hypergraph:identity} nonetheless remains useful: it shows that the summation over $\gamma$ localizes to the set $\sC_\alpha$ of unions of connected components of $\alpha$, enabling a recursive construction of a candidate dual certificate.
\begin{lemma}\label{lem:small:hypergraph:key:identity}
For $\alpha, \beta \in \sG$, 
\begin{equation}
    \sum_{\gamma \subseteq V(\beta) \cup \{1\}} \lPa -\sqrt{\frac{\rho}{1 - \rho}} \rPa^{|\gamma|}\wM_{\beta \gamma, \alpha} = \In_{\beta \in \sC_{\alpha}} \cdot (q_{0}(1 - q_{0}))^{|\be|/2} H_{\beta \alpha}\,,
\label{eq:small:hypergraph:identity}
\end{equation}
where
\[
    \sC_{\alpha} = \{\beta \leq \alpha: \text{$\beta$ is a union of connected components of $\alpha$ and $1 \in V(\beta)$ if and only if $1 \in V(\alpha)$}\}\,,
\]
and for $\be\in \sC_{\al}$,
\[
    H_{\beta \alpha} := \E[\In_{\cE}\,\In_{\theta_{V(\alpha - \beta)} = 1} \cdot (Y - q_{0})^{\alpha - \beta} \mid \theta_{V(\beta) \cup \{1\}} = 0]\,.
\]
Here, for $\al=\be$, the indicator $\In_{\theta_{V(\al-\be)}=1}$ is understood as $1$ deterministically.
\end{lemma}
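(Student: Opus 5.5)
We would prove the identity by computing the left-hand side directly, summing over $\gamma$ inside the expectation before evaluating anything. By linearity,
\[
\sum_{\gamma\subseteq V(\beta)\cup\{1\}}\Big(-\sqrt{\tfrac{\rho}{1-\rho}}\Big)^{|\gamma|}\wM_{\beta\gamma,\alpha}
=\E\Big[\In_{\cE}\,\phi_\alpha\,\Psi_\beta\,\sum_{\gamma}\Big(-\sqrt{\tfrac{\rho}{1-\rho}}\Big)^{|\gamma|}\Big(\tfrac{\theta-\rho}{\sqrt{\rho(1-\rho)}}\Big)^{\gamma}\Big],
\]
where $\Psi_\beta$ denotes the $Y$-part of $\psi_{\beta\gamma}$. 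The identity~\eqref{eq:large:hypergraph:identity} collapses the inner sum to $\In_{\theta_{V(\beta)\cup\{1\}}=0}/(1-\rho)^{|V(\beta)\cup\{1\}|}$. Since $\P(\theta_{V(\beta)\cup\{1\}}=0)=(1-\rho)^{|V(\beta)\cup\{1\}|}$, the left-hand side becomes
\[
\E\big[\In_{\cE}\,\phi_\alpha\,\Psi_\beta \,\big|\, \theta_{V(\beta)\cup\{1\}}=0\big].
\]
On this conditioning event, no edge of $\beta$ lies in $K_S$, so $\Psi_\beta=\prod_{e}((Y_e-q_0)/\sqrt{q_0(1-q_0)})^{\beta_e}$.

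Next we split $\phi_\alpha\Psi_\beta$ according to edges. We would also split the edges of $\beta$ into those in $\alpha$ and those not in $\alpha$. The key observation is that every edge $e$ with $V(e)\cap(V(\beta)\cup\{1\})\neq\emptyset$ is not in $K_S$ on the conditioning event. Such edges are therefore independent $\Ber(q_0)$ variables given $\theta$. They are also independent of the planted hypergraph $X$, and hence of $\cE$, because $\cE$ depends only on $\theta$ and on the edges inside $K_S$.

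For such an edge, the factor $(Y_e-q_0)^{\alpha_e+\beta_e}$ has conditional mean $0$ when exactly one of $\alpha_e,\beta_e$ equals $1$, and mean $q_0(1-q_0)$ when both do. This leads to two cases:
\begin{itemize}
\item If $\beta\not\le\alpha$, some edge of $\beta$ is unmatched and touches $V(\beta)$, so the expectation vanishes.
\item If $\beta\le\alpha$ but some edge of $\alpha-\beta$ meets $V(\beta)\cup\{1\}$, that edge also has zero conditional mean. The same happens if $1\in V(\alpha)\setminus V(\beta)$, since the edge of $\alpha$ containing vertex $1$ then meets $\{1\}$.
\end{itemize}
The surviving configurations are exactly those where $\beta\le\alpha$, $\alpha-\beta$ is vertex-disjoint from $V(\beta)\cup\{1\}$, and $1\in V(\beta)$ iff $1\in V(\alpha)$. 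That is precisely $\beta\in\sC_\alpha$.

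For $\beta\in\sC_\alpha$, the matched edges of $\beta$ contribute $(q_0(1-q_0))^{|\beta|}$ against the normalization $(q_0(1-q_0))^{-|\beta|/2}$, leaving $(q_0(1-q_0))^{|\beta|/2}$. We must justify factoring these edges out while keeping $\In_\cE$ inside. To do this, condition on $\theta$ and on all edges other than those of $\beta$. Since $\cE$ does not involve these edges, their product has conditional expectation $(q_0(1-q_0))^{|\beta|}$ by independence.

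It remains to identify the leftover factor with $H_{\beta\alpha}$, which is
\[
\E\big[\In_\cE (Y-q_0)^{\alpha-\beta}\,\big|\,\theta_{V(\beta)\cup\{1\}}=0\big].
\]
The definition of $H_{\beta\alpha}$ carries an extra indicator $\In_{\theta_{V(\alpha-\beta)}=1}$, which we must justify inserting. This is the main obstacle, because $\In_\cE$ couples the edges of $\alpha-\beta$ to the rest. If some vertex of $\alpha-\beta$ has $\theta=0$, then some edge $e$ of $\alpha-\beta$ lies outside $K_S$. That edge is a fresh $\Ber(q_0)$ variable independent of $\cE$ and of the other edges given $\theta$. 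Conditioning on everything except $Y_e$ makes its contribution vanish.

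Hence only $\theta_{V(\alpha-\beta)}=1$ survives, which gives the stated $H_{\beta\alpha}$. When $\alpha=\beta$ no such factor arises, and the indicator is just the constant $1$, consistent with the stated convention. Throughout, we would carefully track that $\cE$ depends only on $(\theta,(Y_e)_{e\in K_S})$; this is what makes each vanishing argument legitimate.
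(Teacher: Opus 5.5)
Your proposal is correct and follows essentially the paper's argument: both use the identity \eqref{eq:large:hypergraph:identity} to collapse the $\gamma$-sum into conditioning on $\theta_{V(\beta)\cup\{1\}}=0$. Both also exploit that $\cE$ depends only on $\theta$ and the edges in $K_S$, so edges outside $K_S$ are conditionally independent centered $\Ber(q_0)$ variables that force vanishing unless $\beta\in\sC_\alpha$ and $\theta_{V(\alpha-\beta)}=1$. The only difference is the order of operations: the paper conditions on $\theta$ first and packages all the vanishing into the single indicator $\In_{(\alpha\triangle\beta)\cap K_S^c=\emptyset}$ before summing over $\gamma$, whereas you sum over $\gamma$ first and then kill unmatched edges one at a time.
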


\begin{proof}
We first condition on $\theta$.
Since $\cE$ is measurable with respect to the edges in $K_S$, the edges outside $K_S$ remain conditionally independent of $\cE$. Thus, the conditional expectation $\E\nolimits_\theta[\phi_\alpha\,\psi_{\beta\gamma}\,\In_\cE]$ factors as
\begin{equation}\label{eq:cond:factor}
\bigg(\frac{\theta-\rho}{\sqrt{\rho(1-\rho)}}\bigg)^{\!\gamma}\cdot\E\nolimits_\theta\bigg[\In_\cE\prod_{e\in K_S}(Y_e-q_0)^{\alpha_e}\bigg(\frac{Y_e-q_1}{\sqrt{q_1(1-q_1)}}\bigg)^{\!\beta_e}\bigg]\cdot\E\nolimits_\theta\bigg[\prod_{e\notin K_S}(Y_e-q_0)^{\alpha_e}\bigg(\frac{Y_e-q_0}{\sqrt{q_0(1-q_0)}}\bigg)^{\!\beta_e}\bigg].
\end{equation}
The expectation over $K_S^c$ is computed exactly as in Lemma~\ref{lem:planted:hypergraph:c:M}:
\begin{equation}\label{eq:KSc:factor}
	\E\nolimits_\theta\bigg[\prod_{e\notin K_S}(Y_e-q_0)^{\alpha_e}\bigg(\frac{Y_e-q_0}{\sqrt{q_0(1-q_0)}}\bigg)^{\!\beta_e}\bigg] =\In_{(\alpha\triangle\beta)\cap K_S^c=\emptyset}\cdot(q_0(1-q_0))^{|\beta\cap K_S^c|/2}\,.
\end{equation}
Here, we used that if $(\alpha\triangle\beta)\cap K_S^c=\emptyset$, then $\be\cap K_S^{c}=\al\cap K_S^{c}$.
We now sum over $\gamma\subseteq V(\beta)\cup\{1\}$. Applying the identity~\eqref{eq:large:hypergraph:identity} to the $((\theta-\rho)/\sqrt{\rho(1-\rho)})^{\gamma}$ terms in~\eqref{eq:cond:factor},
\begin{equation}\label{eq:gamma:summed}
\begin{aligned}
	&\sum_{\gamma\subseteq V(\beta)\cup\{1\}}\bigg(-\sqrt{\frac{\rho}{1-\rho}}\bigg)^{\!|\gamma|}\E\nolimits_\theta[\phi_\alpha\,\psi_{\beta\gamma}\,\In_\cE]\\
	&=\In_{\theta_{V(\beta)\cup\{1\}}=0}\,\In_{(\alpha\triangle\beta)\cap K_S^c=\emptyset}\cdot(1-\rho)^{-|V(\beta)\cup\{1\}|}(q_0(1-q_0))^{|\beta|/2}\cdot\E\nolimits_\theta\big[\In_\cE\,(Y-q_0)^{\alpha\cap K_S}\big]\,,
\end{aligned}
\end{equation}
where we used that if $\theta_{V(\be)\cup\{1\}}=0$, then we have $\be\cap K_S=\emptyset$. Note that the $(1-\rho)^{-|V(\be)\cup\{1\}|}$ term is precisely $\big(\P(\theta_{V(\be)\cup\{1\}}=0)\big)^{-1}$. Thus, taking outer expectation w.r.t. $\theta$ yields
\begin{equation}\label{eq:after:tower}
	\sum_{\gamma\subseteq V(\beta)\cup\{1\}}\left(-\sqrt{\frac{\rho}{1-\rho}}\right)^{|\gamma|}\widetilde{M}_{\beta\gamma,\alpha}=(q_0(1-q_0))^{|\beta|/2}\cdot\E\big[\In_{(\alpha\triangle\beta)\cap K_S^c=\emptyset}\,\In_\cE\,(Y-q_0)^{\al\cap K_S}\;\big|\;\theta_{V(\beta)\cup\{1\}}=0\big]\,.
\end{equation}
Now observe that $\theta_{V(\beta)\cup\{1\}}=0$ implies $\beta\subseteq K_S^c$. Under this condition, we have $(\alpha\triangle\beta)\cap K_S^c=\emptyset$ if and only if the following two conditions hold: (i) $\beta\leq\alpha$, and (ii) every edge of $\alpha-\beta$ lies in $K_S$. Recalling that $K_S=\{e\in [N]:\theta_i=1, \forall i \in V(e)\}$, the condition (ii) is equivalent to $\theta_{V(\al-\be)}=1$. Thus, conditions (i) and (ii) together force $V(\alpha-\beta)\cap(V(\beta)\cup\{1\})=\emptyset$, which means $\beta$ must be a union of connected components of $\alpha$ and $1\in V(\beta)$ whenever $1\in V(\alpha)$. Therefore, the right-hand side of~\eqref{eq:after:tower} vanishes unless $\beta\in\sC_\alpha$. When $\be\in \sC_{\al}$ and $\be\subseteq K_S^{c}$, the constraint $\theta_{V(\al-\be)}=1$ implies that $\al\cap K_S=\al-\be$, so the conditional expectation in~\eqref{eq:after:tower} reduces to $H_{\be\al}$, concluding the proof.
\end{proof}

Having Lemma~\ref{lem:small:hypergraph:key:identity} in hand, we search for a dual certificate $u$ of the form, 
\begin{equation}
    u_{\beta \gamma} = \lPa -\sqrt{\frac{\rho}{1 - \rho}} \rPa^{|\gamma|}(q_{0}(1 - q_{0}))^{-|\be|/2} \cF(\beta)\,.
\label{eq:small:hypergraph:ansatz}
\end{equation}
for a function $\cF: \sG \rightarrow \R$ to be determined. Substituting~\eqref{eq:small:hypergraph:ansatz} into the modified system $\widetilde{M}^\top u = \widetilde{c}$ and applying Lemma~\ref{lem:small:hypergraph:key:identity} gives the following corollary.
\begin{corollary}\label{cor:small:hypergraph:recursion}
Define a function $\cF:\sG\to\R$ recursively as follows. If $\alpha=0$ or $\alpha$ is connected and contains vertex~$1$, let $\cF(\al)=\wc_{\al}/\P(\cE \mid \theta_{V(\alpha) \cup \{1\}} = 0)$. Otherwise, for general $\alpha$, define
\begin{equation}
    \cF(\alpha) = \frac{1}{\Pb(\cE \mid \theta_{V(\alpha) \cup \{1\}} = 0)} \lPa \wc_{\alpha} - \sum_{\beta \in \sC_{\alpha} \setminus \{\alpha\}} H_{\beta \alpha} \cF(\beta) \rPa\,,
\label{eq:recursion:F}
\end{equation}
where the recursion is well-defined since every $\beta\in\sC_\alpha\setminus\{\alpha\}$ has strictly fewer connected components than $\alpha$, and the denominator is positive by \eqref{eq:event:probability:below:by:1:2}. Then the vector $u = (u_{\beta\gamma})_{(\beta,\gamma)\in\sP}$ defined by~\eqref{eq:small:hypergraph:ansatz} solves $\widetilde{M}^\top u = \widetilde{c}$.
\end{corollary}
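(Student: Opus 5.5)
The plan is to verify the system $\widetilde{M}^\top u=\widetilde{c}$ column by column. Fix $\alpha\in\sG$. Since the ansatz~\eqref{eq:small:hypergraph:ansatz} depends on $\gamma$ only through the factor $(-\sqrt{\rho/(1-\rho)})^{|\gamma|}$, we group the terms by $\beta$:
\[
\sum_{(\beta,\gamma)\in\sP}\widetilde{M}_{\beta\gamma,\alpha}u_{\beta\gamma}
=\sum_{\beta\in\sG}(q_0(1-q_0))^{-|\beta|/2}\cF(\beta)\sum_{\gamma\subseteq V(\beta)\cup\{1\}}\Big(-\sqrt{\tfrac{\rho}{1-\rho}}\Big)^{|\gamma|}\widetilde{M}_{\beta\gamma,\alpha}\,.
\]
By Lemma~\ref{lem:small:hypergraph:key:identity}, the inner sum equals $\In_{\beta\in\sC_\alpha}(q_0(1-q_0))^{|\beta|/2}H_{\beta\alpha}$. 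The normalizing powers of $q_0(1-q_0)$ cancel exactly, so the left-hand side becomes $\sum_{\beta\in\sC_\alpha}H_{\beta\alpha}\cF(\beta)$. All sums are finite because $\sG$ is finite.

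Next we isolate the $\beta=\alpha$ term; note that $\alpha\in\sC_\alpha$ always. By the stated convention,
\[
H_{\alpha\alpha}=\E[\In_\cE\mid\theta_{V(\alpha)\cup\{1\}}=0]=\P(\cE\mid\theta_{V(\alpha)\cup\{1\}}=0)\,,
\]
which is positive by~\eqref{eq:event:probability:below:by:1:2}. The required identity $\sum_{\beta\in\sC_\alpha}H_{\beta\alpha}\cF(\beta)=\widetilde{c}_\alpha$ is therefore equivalent to
\[
\P(\cE\mid\theta_{V(\alpha)\cup\{1\}}=0)\,\cF(\alpha)=\widetilde{c}_\alpha-\sum_{\beta\in\sC_\alpha\setminus\{\alpha\}}H_{\beta\alpha}\cF(\beta)\,.
\]
This is exactly~\eqref{eq:recursion:F}.

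It remains to treat the base cases and to check well-definedness.
\begin{itemize}
\item If $\alpha=0$, then $\sC_\alpha=\{0\}$, so there are no other terms.
\item If $\alpha$ is connected and $1\in V(\alpha)$, the only union of components that contains vertex $1$ is $\alpha$ itself. The empty graph $0$ is excluded from $\sC_\alpha$ because $1\notin V(0)$. So again $\sC_\alpha=\{\alpha\}$, and the stated definition $\cF(\alpha)=\widetilde{c}_\alpha/\P(\cE\mid\cdot)$ agrees with the recursion.
\item For connected $\alpha$ with $1\notin V(\alpha)$, we have $\sC_\alpha=\{0,\alpha\}$, and the recursion refers only to $\cF(0)$.
\end{itemize}

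For well-definedness in general, each $\beta\in\sC_\alpha\setminus\{\alpha\}$ is a proper union of connected components of $\alpha$. It therefore has strictly fewer components, with the empty graph having zero components. Thus $\cF$ can be defined by induction on the number of connected components, and every $\beta$ that appears lies in $\sG$ since $|\beta|\le|\alpha|$.

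The only step needing care is the bookkeeping on the index set. We must check that $\sP$ restricts $\gamma$ to exactly $\gamma\subseteq V(\beta)\cup\{1\}$, which matches the range of summation in Lemma~\ref{lem:small:hypergraph:key:identity}. We must also confirm the convention $\In_{\theta_{V(\alpha-\beta)}=1}\equiv1$ at $\beta=\alpha$, which is what makes $H_{\alpha\alpha}$ the conditional probability. Beyond these checks, the statement is a direct algebraic consequence of the key identity.
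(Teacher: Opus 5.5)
Your proof is correct and follows the paper's argument. You substitute the ansatz, collapse the $\gamma$-sum via Lemma~\ref{lem:small:hypergraph:key:identity} to get $\sum_{\beta\in\sC_\alpha}H_{\beta\alpha}\cF(\beta)=\widetilde{c}_\alpha$, and isolate the $\beta=\alpha$ term using $H_{\alpha\alpha}=\P(\cE\mid\theta_{V(\alpha)\cup\{1\}}=0)$, exactly as the paper does; your explicit checks of the base cases and of well-definedness by induction on the number of components are accurate.
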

\begin{proof}
  By Lemma~\ref{lem:small:hypergraph:key:identity}, the system $\widetilde{M}^\top u = \widetilde{c}$ with $u$ of the form~\eqref{eq:small:hypergraph:ansatz} is equivalent to
\[
\sum_{\beta\in\sC_\alpha}H_{\beta\alpha}\cF(\beta) = \widetilde{c}_\alpha\,,\qquad\forall\alpha\in\sG\,.
\]
Since $H_{\alpha\alpha} = \P(\cE\mid\theta_{V(\alpha)\cup\{1\}}=0)$, isolating the $\beta=\alpha$ term and rearranging gives exactly~\eqref{eq:recursion:F}.
\end{proof}
The squared norm of the dual certificate $u$ given in Corollary~\ref{cor:small:hypergraph:recursion} is
\[
    \|u\|^{2} = \sum_{\alpha \in \sG} \frac{\cF(\alpha)^{2}}{(q_{0}(1 - q_{0}))^{|\alpha|}} \sum_{\gamma \subseteq V(\alpha) \cup \{1\}} \lPa \frac{\rho}{1 - \rho} \rPa^{|\gamma|} = \sum_{\alpha \in \sG} \frac{\cF(\alpha)^{2}}{(q_{0}(1 - q_{0}))^{|\alpha|}(1 - \rho)^{|V(\alpha) \cup \{1\}|}} =: \sum_{\alpha \in \sG} \cK(\alpha)\,,
\]
where the second equality uses the identity $\sum_{\gamma\subseteq A}(\rho/(1-\rho))^{|\gamma|}=(1-\rho)^{-|A|}$. We separate the last summation on the RHS as
\begin{equation}\label{eq:bound:u:in:terms:of:Xi}
    \|u\|^2=\sum_{\text{$\alpha$ is sparse}} \cK(\al) + \sum_{\text{$\alpha$ is dense}} \cK(\al)=:\Xi_{\textsf{sparse}}+\Xi_{\textsf{dense}}\,.
\end{equation}
where $\Xi_{\mathsf{sparse}}$ and $\Xi_{\mathsf{dense}}$ denote the contribution from sparse and dense $\al$, respectively.

To bound $\|u\|^2$, we require pointwise estimates on $|\cF(\alpha)|$. The appropriate bound depends on whether $\alpha$ is sparse or dense (see Definition~\ref{def:sparse:dense:graph}). We introduce two reference quantities:
\begin{equation}\label{eq:def:s:d}
    s_{\alpha} := \rho^{|V(\alpha)|}(2q_{1})^{|\alpha|}\,,\qquad
    d_{\alpha} := \rho^{|V(\alpha)|}D^{m_{\alpha}}q_{0}^{|\alpha|} \lPa \frac{2q_{1}}{q_{0}} \rPa^{m_{\alpha}}\,.
\end{equation}
We will show that, up to an additional complexity factor, $\cF(\alpha)$ is comparable to $s_{\alpha}$ when $\alpha$ is sparse and $\cF(\alpha)$ is comparable to $d_{\alpha}$ when $\alpha$ is dense. An observation that we will repeatedly use is
\begin{equation}\label{eq:observation:alpha:sparse}
    \textnormal{$\al$ is sparse}\implies s_{\al}\le d_{\al}\,.
\end{equation}
Another crucial observation is that if $\al$ is dense with $|\al| \gg m_{\al}$, then $d_{\al} \ll s_{\al}$ for $D \leq n^{\eps}$ and small enough $\eps$. The improvement from $s_{\al}$ to $d_{\al}$ comes from conditioning on $\cE$, which ensures that at most $m_\alpha$ edges of $\alpha$ lie inside the planted hypergraph $X$; the remaining $|\alpha|-m_\alpha$ edges lie outside $X$ and contribute $q_0$ each, giving the scale $d_\alpha$. We begin by bounding $\widetilde{c}_\alpha$ and $H_{\beta \alpha}$ in terms of $s_\alpha$ and $d_{\alpha}$, depending on whether or not $\alpha$ is dense. Afterwards, we use the recursion~\eqref{eq:recursion:F} to bound $\cF(\alpha)$.
We remark that when $1\notin V(\al)$, the extra factor of $\rho$ in the bound of $|\wc_{\al}|$ in the next lemma plays an important role for showing that graphs not containing vertex $1$ contribute negligibly to $\|u\|^2$.
\begin{lemma}\label{lem:small:hypergraph:bound:c:A}
For any $\alpha \in \sG$ and $\beta \in \sC_{\alpha}$, 
\[
    |\widetilde{c}_{\alpha}| \leq \rho^{\In_{1 \notin V(\alpha)}} s_{\alpha} \,\quad\textnormal{and}\quad |H_{\beta \alpha}| \leq s_{\alpha-\beta}\,.
\]
Moreover, if $\alpha$ is dense per Definition~\ref{def:sparse:dense:graph} and $\be\in \sC_{\al}$, 
\[
    |\widetilde{c}_{\alpha}| \leq \rho^{\In_{1 \notin V(\alpha)}} d_{\alpha} \,\quad\textnormal{and}\quad |H_{\beta \alpha}| \leq d_{\alpha-\beta}\,.
\]
\end{lemma}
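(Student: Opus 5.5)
Condition on $\theta$ first. The only randomness in $\In_{\cE}$ comes from edges in $K_S$, and off $K_S$ the centered monomial $(Y-q_0)^{\alpha\cap K_S^c}$ has conditional mean zero unless $\alpha\cap K_S^c=\emptyset$. We therefore get
\[
\wc_\alpha=\E\big[\theta_1\,\In_{\theta_{V(\alpha)}=1}\,\E\nolimits_\theta[\In_\cE (Y-q_0)^{\alpha}]\big],
\]
and the analogous formula for $H_{\beta\alpha}$, with $\alpha-\beta$ in place of $\alpha$ and the additional conditioning $\theta_{V(\beta)\cup\{1\}}=0$. Here $V(\alpha-\beta)$ is disjoint from $V(\beta)\cup\{1\}$ because $\beta\in\sC_\alpha$, so by independence of the coordinates of $\theta$ the event $\theta_{V(\alpha-\beta)}=1$ still has probability $\rho^{|V(\alpha-\beta)|}$.

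It remains to show, on the event that all vertices of the relevant graph $\alpha'$ (either $\alpha$ or $\alpha-\beta$) are planted, that
\[
\big|\E\nolimits_\theta[\In_\cE (Y-q_0)^{\alpha'}]\big|\leq (2q_1)^{|\alpha'|},
\]
and that this improves to $D^{m_{\alpha'}}q_0^{|\alpha'|}(2q_1/q_0)^{m_{\alpha'}}$ when $\alpha'$ is dense. Multiplying by $\rho^{|V(\alpha')|}$, together with the extra factor $\rho$ from $\theta_1$ when $1\notin V(\alpha)$, then gives both pairs of bounds. For $H_{\beta\alpha}$ we apply the dense bound to $\alpha'=\alpha-\beta$. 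This is legitimate because the statement is read with $d_{\alpha-\beta}$ defined through $m_{\alpha-\beta}$. If $\alpha-\beta$ is sparse, $s\le d$ by \eqref{eq:observation:alpha:sparse} and we are done. I will also check whether the dense case for $H$ should be stated in terms of $\alpha-\beta$; I will argue it for whichever graph is dense, using the same pointwise bound.

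\emph{Sparse (crude) bound.} Expand $(Y_e-q_0)=Y_e-q_0$ over subsets:
\[
(Y-q_0)^{\alpha'}=\sum_{T\subseteq E(\alpha')}(-q_0)^{|\alpha'|-|T|}\prod_{e\in T}Y_e .
\]
Since $\In_\cE\in[0,1]$ and $Y_e\geq0$, we have $|\E_\theta[\In_\cE\prod_{e\in T}Y_e]|\le q_1^{|T|}$. Summing over $T$ gives at most $(q_1+q_0)^{|\alpha'|}\le(2q_1)^{|\alpha'|}$.

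\emph{Dense bound, the main step.} Use the same expansion. On $\cE$, every subgraph of $X$ with at most $D$ edges is sparse. Now $T$ has at most $D$ edges and its vertices satisfy $V(T)\subseteq V(\alpha')$, so monotonicity of $m_k$ in $k$ gives $|T|\le m_{|V(T)|}\le m_{\alpha'}$ whenever $\prod_{e\in T}Y_e=1$. Hence all terms with $|T|>m_{\alpha'}$ vanish on $\cE$. The surviving terms are bounded by
\[
\sum_{j\le m_{\alpha'}}\binom{|\alpha'|}{j}q_1^j q_0^{|\alpha'|-j}\le q_0^{|\alpha'|}\sum_{j\le m}D^j(q_1/q_0)^j\le q_0^{|\alpha'|}D^{m}(2q_1/q_0)^{m},
\]
using $|\alpha'|\le D$ and $q_1/q_0\ge1$ (the geometric sum is dominated by its top term up to a factor $2^m$).

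The only subtle point is the need for $\In_\cE$ to kill the large-$|T|$ terms exactly. This works because $\cE$ is a deterministic property of $X$ and each monomial $\prod_{e\in T}Y_e$ is an indicator that $T\subseteq X$, so no sign cancellation between terms is required.
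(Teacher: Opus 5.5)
Your proposal is correct and follows the paper's proof. You condition on $\theta$ and kill every $\alpha$ with an edge off $K_S$, since such an edge is mean zero and independent of $\cE$. You then prove the generic pointwise bound $(2q_1)^{|\alpha'|}$ and the improved dense bound on the event that all vertices of $\alpha'$ are planted, and finally multiply by $\rho^{|V(\alpha')|}$, with one extra factor $\rho$ for $\theta_1$ when $1\notin V(\alpha)$.

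The differences are cosmetic. You expand $(Y-q_0)^{\alpha'}$ into indicator monomials $\prod_{e\in T}Y_e$ instead of bounding $|Y-q_0|^{\alpha'}$ directly, which gives the same sum $\sum_{j\le m_{\alpha'}}\binom{|\alpha'|}{j}q_1^jq_0^{|\alpha'|-j}$. You also spell out, via $s_{\alpha-\beta}\le d_{\alpha-\beta}$, the case where $\alpha-\beta$ is sparse, which the paper's proof leaves implicit.
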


\begin{proof}
We first establish two pointwise bounds on the conditional expectation $\E\nolimits_\theta[\In_\cE\,|Y-q_0|^\alpha]$, and then apply them to $\widetilde{c}_\alpha$ and $H_{\beta\alpha}$ separately.

Fix $\theta$ and suppose $\theta_{V(\alpha)}=1$, so that $\alpha\subset K_S$. Each edge $e\in E(\alpha)$ then has $Y_e\sim\Ber(q_1)$, and $\E[|Y_e-q_0|] = q_1(1-q_0)+q_0(1-q_1)\leq 2q_1$. We claim
\begin{equation}\label{eq:pointwise:generic}
	\E\nolimits_\theta[\In_\cE\,|Y-q_0|^\alpha] \leq (2q_1)^{|\alpha|}\,,
\end{equation}

\begin{equation}\label{eq:pointwise:dense}
	\text{if $\alpha$ is dense:}\qquad \E\nolimits_\theta[\In_\cE\,|Y-q_0|^\alpha] \leq D^{m_\alpha}\,q_0^{|\alpha|}\bigg(\frac{2q_1}{q_0}\bigg)^{\!m_\alpha}\,.
\end{equation}
The bound~\eqref{eq:pointwise:generic} follows immediately from $\In_\cE\leq 1$ and the product structure of $|Y-q_0|^\alpha$. For~\eqref{eq:pointwise:dense}, observe that on the event $\cE$, the subgraph $\{e\in E(\alpha):Y_e=1\}$ has at most $m_\alpha$ edges since $\alpha\subset K_S$ and the event $\cE$ forces every subgraph of the planted graph $X$ with at most $D$ edges to be sparse. Thus at most $m_\alpha$ edges have $Y_e=1$, contributing $|1-q_0|\leq 1$ each, and the remaining $|\alpha|-m_\alpha$ edges have $Y_e=0$, contributing $q_0$ each. Summing over the $\binom{|\alpha|}{\ell}\leq D^\ell$ choices for the $\ell\leq m_\alpha$ edges with $Y_e=1$,
\[
	\E\nolimits_\theta[\In_\cE\,|Y-q_0|^\alpha] \leq \sum_{\ell=0}^{m_\alpha}\binom{|\alpha|}{\ell}\,q_1^\ell\,q_0^{|\alpha|-\ell} \leq (m_\alpha+1)\,D^{m_\alpha}\,q_0^{|\alpha|}\bigg(\frac{q_1}{q_0}\bigg)^{\!m_\alpha} \leq D^{m_\alpha}\,q_0^{|\alpha|}\bigg(\frac{2q_1}{q_0}\bigg)^{\!m_\alpha},
\]
where the second inequality uses $q_1\geq q_0$, and the last uses $m_\alpha+1\leq 2^{m_\alpha}$.
 
On the other hand, suppose $\theta_{V(\alpha)}\neq 1$. That is, some vertex $i\in V(\alpha)$ has $\theta_i=0$. Then $i\notin S$, so every edge $e\in E(\alpha)$ containing $i$ lies outside $K_S$ and satisfies $Y_e\sim\Ber(q_0)$ conditionally independently of $\cE$ given $\theta$. Since $\E\nolimits_{\theta}[Y_e-q_0]=0$ for such edges, we have
\begin{equation}\label{eq:pointwise:vanish}
	\theta_{V(\alpha)}\neq 1 \implies \E\nolimits_\theta[\In_{\mathcal{E}}(Y-q_0)^\alpha] = 0\,.
\end{equation}
With \eqref{eq:pointwise:generic}, \eqref{eq:pointwise:dense}, and \eqref{eq:pointwise:vanish}, we prove the desired bound on $|\wc_{\al}|$. By the tower property and~\eqref{eq:pointwise:vanish},
\[
	|\widetilde{c}_\alpha| = |\E[\theta_1\,\In_\cE\,(Y-q_0)^\alpha]| \leq \E[\In_{\theta_{V(\alpha)\cup\{1\}}=1}\,\E\nolimits_\theta[\In_\cE\,|Y-q_0|^\alpha]]\,.
\]
Applying~\eqref{eq:pointwise:generic} and using $\P(\theta_{V(\alpha)\cup\{1\}}=1) = \rho^{|V(\alpha)\cup\{1\}|} = \rho^{\In_{1\notin V(\alpha)}}\rho^{|V(\alpha)|}$,
\[
	|\widetilde{c}_\alpha| \leq \rho^{|V(\alpha)\cup\{1\}|}(2q_1)^{|\alpha|} = \rho^{\In_{1\notin V(\alpha)}}\,s_\alpha\,.
\]
If $\alpha$ is dense, applying~\eqref{eq:pointwise:dense} instead gives $|\widetilde{c}_\alpha|\leq \rho^{\In_{1\notin V(\alpha)}}\,d_\alpha$.
 
Next, we bound $|H_{\be\al}|$. Recalling that $H_{\beta\alpha} = \E[\In_\cE\,\In_{\theta_{V(\alpha-\beta)}=1}\,(Y-q_0)^{\alpha-\beta}\mid\theta_{V(\beta)\cup\{1\}}=0]$, we have by tower property
\[
	|H_{\beta\alpha}| \leq \P(\theta_{V(\alpha-\beta)}=1\mid\theta_{V(\beta)\cup\{1\}}=0)\cdot\sup_{\theta:\theta_{V(\alpha-\beta)}=1}\E\nolimits_\theta[\In_\cE\,|Y-q_0|^{\alpha-\beta}]\,.
\]
Since $\beta\in\sC_\alpha$, the sets $V(\alpha-\beta)$ and $V(\beta)\cup\{1\}$ are disjoint, so the conditioning does not affect $(\theta_i)_{i\in V(\alpha-\beta)}$. For a given $\theta$ such that $\theta_{V(\alpha-\beta)}=1$, bounding $\E_\theta[\one_\cE\,|Y-q_0|^{\alpha-\beta}]$ by~\eqref{eq:pointwise:generic} and using $\P(\theta_{V(\alpha-\beta)}=1)=\rho^{|V(\alpha-\beta)|}$, we obtain $|H_{\beta\alpha}|\leq s_{\alpha-\beta}$. If $\alpha-\beta$ is dense, applying~\eqref{eq:pointwise:dense} instead gives $|H_{\beta\alpha}|\leq d_{\alpha-\beta}$. This concludes the proof.
\end{proof}

We next bound $\mathcal{F}(\al)$ by the complexity function $\cH: \sG \rightarrow \R_{\geq 0}$ defined recursively as  
\[
    \cH(\alpha) = 2 + 2\sum_{\beta \in \sC_{\alpha} \setminus \{\alpha\}} \cH(\beta)\,,\qquad \cH(\al)=2~~\textnormal{if $\al$ is connected.}
\]
\begin{lemma}\label{lem:small:hypergraph:bound:F}
Let $\eps>0$ be small enough and $n$ be large enough such that \eqref{eq:event:probability:below:by:1:2} holds. Then for all $\alpha \in \sG$, 
\[
    |\cF(\alpha)| \leq \cH(\alpha) \rho^{\In_{1 \notin V(\alpha)}} s_{\alpha}\,.
\]
Moreover, if $\alpha$ is dense per Definition~\ref{def:sparse:dense:graph}, 
\[
    |\cF(\alpha)| \leq \cH(\alpha) \rho^{\In_{1 \notin V(\alpha)}} d_{\alpha}\,.
\]
\end{lemma}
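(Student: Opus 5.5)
Our plan is to argue by strong induction on the number of connected components of $\alpha$, following the recursion~\eqref{eq:recursion:F}. In the base case, $\alpha=0$ or $\alpha$ is connected and contains vertex~$1$. Then $\cF(\alpha)=\wc_\alpha/\P(\cE\mid\theta_{V(\alpha)\cup\{1\}}=0)$. The denominator is at least $1/2$ by~\eqref{eq:event:probability:below:by:1:2}, so Lemma~\ref{lem:small:hypergraph:bound:c:A} gives $|\cF(\alpha)|\le 2\rho^{\In_{1\notin V(\alpha)}}s_\alpha$. This is $\cH(\alpha)\rho^{\In_{1\notin V(\alpha)}}s_\alpha$, and with $d_\alpha$ in place of $s_\alpha$ when $\alpha$ is dense. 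A connected $\alpha$ with $1\notin V(\alpha)$ has $\sC_\alpha=\{\alpha\}$ (since $\beta\in\sC_\alpha$ must then also avoid $1$, and $\beta=0$ is excluded only if we require nonemptiness; if $\beta=0\in\sC_\alpha$ it is handled as in the inductive step). So it is covered by the same computation or by the general step.

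\textbf{Inductive step.} For general $\alpha$, the recursion and the triangle inequality give
\[
|\cF(\alpha)|\le 2|\wc_\alpha|+2\sum_{\beta\in\sC_\alpha\setminus\{\alpha\}}|H_{\beta\alpha}|\,|\cF(\beta)|.
\]
By the induction hypothesis, $|\cF(\beta)|\le\cH(\beta)\rho^{\In_{1\notin V(\beta)}}s_\beta$. By Lemma~\ref{lem:small:hypergraph:bound:c:A}, $|H_{\beta\alpha}|\le s_{\alpha-\beta}$. Since $\beta$ is a union of components of $\alpha$, the sets $V(\beta)$ and $V(\alpha-\beta)$ are disjoint and the edges split. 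Hence $s_\beta s_{\alpha-\beta}=s_\alpha$ exactly. Moreover, $1\in V(\beta)$ if and only if $1\in V(\alpha)$ by the definition of $\sC_\alpha$, so the $\rho$ prefactor matches. Summing gives $|\cF(\alpha)|\le(2+2\sum_\beta\cH(\beta))\rho^{\In_{1\notin V(\alpha)}}s_\alpha=\cH(\alpha)\rho^{\In_{1\notin V(\alpha)}}s_\alpha$.

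\textbf{Dense case (main obstacle).} Here we want $d_\alpha$ in place of $s_\alpha$, and we need a product inequality $\widehat{x}_\beta\,\widehat{y}_{\alpha-\beta}\le d_\alpha$. In this product, $\widehat{x}_\beta$ is $d_\beta$ if $\beta$ is dense and $s_\beta\le d_\beta$ otherwise (by~\eqref{eq:observation:alpha:sparse}); similarly $\widehat{y}_{\alpha-\beta}$ is $d_{\alpha-\beta}$ if $\alpha-\beta$ is dense and $s_{\alpha-\beta}\le d_{\alpha-\beta}$ otherwise. So it suffices to show $d_\beta d_{\alpha-\beta}\le d_\alpha$. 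Writing $d_\gamma=\rho^{|V(\gamma)|}q_0^{|\gamma|}(2Dq_1/q_0)^{m_\gamma}$, the factors $\rho$ and $q_0^{|\cdot|}$ are multiplicative over the disjoint split. The claim therefore reduces to $m_\beta+m_{\alpha-\beta}\le m_\alpha$, using $2Dq_1/q_0\ge1$ since $q_1\ge q_0$. This is superadditivity of the floor: $\lfloor ck_1\rfloor+\lfloor ck_2\rfloor\le\lfloor c(k_1+k_2)\rfloor$, where $c=\xi/a+\delta$ and $k_1=|V(\beta)|$, $k_2=|V(\alpha-\beta)|$ with $k_1+k_2=|V(\alpha)|$.

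The delicate point is that $\wc_\alpha$ and all $H$ terms must use the dense bounds. For $\wc_\alpha$, Lemma~\ref{lem:small:hypergraph:bound:c:A} applies directly since $\alpha$ is dense. For $H_{\beta\alpha}$, we only get the $d$ bound when $\alpha-\beta$ is dense, which is why we pass through the sparse/dense comparison above. Including the degenerate $\beta=0$ term (with $\cF(0)$ bounded by the base case and $s_0=d_0=1$), the same bookkeeping yields $|\cF(\alpha)|\le\cH(\alpha)\rho^{\In_{1\notin V(\alpha)}}d_\alpha$.
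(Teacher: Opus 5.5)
Your proposal is correct and follows the paper's proof essentially step for step. The shared ingredients are: induction through the recursion~\eqref{eq:recursion:F} with denominators at least $1/2$; the exact identity $s_{\alpha-\beta}\,\rho^{\In_{1\notin V(\beta)}}s_\beta=\rho^{\In_{1\notin V(\alpha)}}s_\alpha$; and, in the dense case, upgrading every $s$-bound to a $d$-bound (via $s\le d$ for sparse pieces), together with $m_\beta+m_{\alpha-\beta}\le m_\alpha$ and $2Dq_1/q_0\ge 1$.

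One small correction concerns connected $\alpha$ with $1\notin V(\alpha)$. There $\sC_\alpha=\{0,\alpha\}$, not $\{\alpha\}$: the empty graph must lie in $\sC_\alpha$ for Lemma~\ref{lem:small:hypergraph:key:identity} to hold, since $H_{0\alpha}\neq 0$ in general. So your fallback through the inductive step is the right route, and it yields $\cH(\alpha)=2+2\cH(0)=6$ rather than the convention $\cH(\alpha)=2$. The paper's base case glosses over the same point, and it is harmless because Lemma~\ref{lem:bound:H} already allows $h_1=6$.
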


\begin{proof}
Write
\[
    S_\alpha:=\rho^{\In_{1\notin V(\alpha)}}s_\alpha\,,
    \qquad
    R_\alpha:=\rho^{\In_{1\notin V(\alpha)}}d_\alpha\,.
\]
By \eqref{eq:event:probability:below:by:1:2}, every denominator appearing in
the recursion for $\cF$ is at least $1/2$. 

We first prove the bound with $S_\alpha$. The proof is by induction on
$|\sC_\alpha|$. If $\alpha=0$ or $\alpha$ is connected, then
$\cH(\alpha)=2$, and Lemma~\ref{lem:small:hypergraph:bound:c:A} gives
\[
    |\cF(\alpha)|
    \le
    2|\widetilde c_\alpha|
    \le
    2S_\alpha
    =
    \cH(\alpha)S_\alpha\,.
\]
Now assume the claim holds for all
$\beta\in\sC_\alpha\setminus\{\alpha\}$. For such $\beta$, the definition of
$\sC_\alpha$ implies $1\in V(\be)$ iff $1\in V(\al)$, and the vertex sets of $\beta$ and $\alpha-\beta$ are disjoint. Hence $  s_{\alpha-\beta}S_\beta=S_\alpha $. Using the recursion for $\cF$, Lemma~\ref{lem:small:hypergraph:bound:c:A},
and the induction hypothesis,
\[
\begin{aligned}
    |\cF(\alpha)|
    &\le
    2\bigg(
        |\widetilde c_\alpha|
        +
        \sum_{\beta\in\sC_\alpha\setminus\{\alpha\}}
        |H_{\beta\alpha}|\,|\cF(\beta)|
    \bigg)  \\
    &\le
    2\bigg(
        S_\alpha
        +
        \sum_{\beta\in\sC_\alpha\setminus\{\alpha\}}
        s_{\alpha-\beta}\cH(\beta)S_\beta
    \bigg)  =
    \bigg(
        2+
        2\sum_{\beta\in\sC_\alpha\setminus\{\alpha\}}\cH(\beta)
    \bigg)S_\alpha
    =
    \cH(\alpha)S_\alpha\,.
\end{aligned}
\]
This proves the generic bound.

Now suppose $\alpha$ is dense. We prove the bound with $R_\alpha$ by the
same induction. The base case follows from the dense estimate on
$\widetilde c_\alpha$ from Lemma~\ref{lem:small:hypergraph:bound:c:A}: $ |\cF(\alpha)|
    \le
    2|\widetilde c_\alpha|
    \le
    2R_\alpha
    =
    \cH(\alpha)R_\alpha $.  For the induction step, first note that the previously established generic bound
implies
\[
    |\cF(\beta)|\le \cH(\beta)R_\beta
\]
for every proper $\beta\in\sC_\alpha$: if $\beta$ is dense this is the
induction hypothesis, while if $\beta$ is sparse it follows from
$s_\beta\le d_\beta$ (cf. \eqref{eq:observation:alpha:sparse}). Similarly, Lemma~\ref{lem:small:hypergraph:bound:c:A} implies that $|H_{\beta\alpha}|\le d_{\alpha-\beta}$ for all $\beta\in\sC_\alpha\setminus\{\alpha\}$. Finally, since $\beta$ and $\alpha-\beta$ are disjoint unions of connected
components,
\[
    d_{\alpha-\beta}R_\beta\le R_\alpha\,.
\]
Indeed, recalling the definition of $d_{\al}$ in \eqref{eq:def:s:d}, the vertex and edge counts add, and $ m_{\alpha-\beta}+m_\beta\le m_\alpha$ by subadditivity of the floor function. Also, the definition of $\sC_{\al}$ implies that $1\in V(\be)$ if and only if $1\in V(\al)$. Thus,
\[
\begin{aligned}
    |\cF(\alpha)|
    &\le
    2\bigg(
        |\widetilde c_\alpha|
        +
        \sum_{\beta\in\sC_\alpha\setminus\{\alpha\}}
        |H_{\beta\alpha}|\,|\cF(\beta)|
    \bigg) \\
    &\le
    2\bigg(
        R_\alpha
        +
        \sum_{\beta\in\sC_\alpha\setminus\{\alpha\}}
        d_{\alpha-\beta}\cH(\beta)R_\beta
    \bigg)\le
    \bigg(
        2+
        2\sum_{\beta\in\sC_\alpha\setminus\{\alpha\}}\cH(\beta)
    \bigg)R_\alpha
    =
    \cH(\alpha)R_\alpha\,.
\end{aligned}
\]
This proves the desired bound for dense $\al$.
\end{proof}

\begin{lemma}\label{lem:bound:H}
For any $\alpha \in \sG$ with $w(\alpha)$ connected components, 
\[
    \cH(\alpha) \leq 2\cdot w(\alpha)! \cdot 3^{w(\alpha)}\,.
\]
\end{lemma}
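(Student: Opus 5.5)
Since $\cH(\alpha)$ depends only on the combinatorial structure of $\sC_\alpha$, the plan is to reduce to a recursion in the single integer $w=w(\alpha)$ and then induct on $w$.

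\textbf{Reduction to a recursion in $w$.} Elements of $\sC_\alpha$ are unions of connected components of $\alpha$, subject to one constraint. If $1\in V(\alpha)$, every $\beta\in\sC_\alpha$ must contain the component through vertex $1$. If $1\notin V(\alpha)$, there is no constraint, and $\beta=0$ is allowed (with $\cH(0)=2$). A union $\beta$ of $j$ components of $\alpha$ has exactly $j$ components, and its own $\sC_\beta$ has the same structure with $w$ replaced by $j$. So by induction $\cH(\alpha)$ depends only on $w(\alpha)$ and on whether $1\in V(\alpha)$. Let $h(w)$ be the larger of the two possible values. Counting the proper subsets, and noting that in the unconstrained case there are at most $\binom{w}{j}$ unions with $j$ components ($0\le j\le w-1$), we get
\[
h(w)\le 2+2\sum_{j=0}^{w-1}\binom{w}{j}h(j),\qquad h(0)=h(1)=2 .
\]
The constrained case involves fewer subsets, so it only makes the right-hand side smaller, and monotonicity of the recursion justifies using the larger count.

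\textbf{Induction.} The claim is $h(w)\le 2\cdot w!\,3^{w}$, which holds for $w\in\{0,1\}$. Assuming it for all $j<w$,
\[
h(w)\le 2+4\sum_{j=0}^{w-1}\frac{w!}{(w-j)!}\,3^{j}
=2+4\,w!\,3^{w}\sum_{i=1}^{w}\frac{3^{-i}}{i!},
\]
where $i=w-j$. Since $\sum_{i\ge1}3^{-i}/i!=e^{1/3}-1\approx0.3956<0.4$, this gives $h(w)\le 2+1.59\,w!\,3^{w}$. It remains to check $2\le 0.41\,w!\,3^{w}$, which holds for every $w\ge1$ because $w!\,3^w\ge3$ and $0.41\cdot 3>1.2$.

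That last margin is too thin as stated: $1.23<2$. The fix is to keep the $i=w$ term separate: it equals $4\cdot h(0)/2=4$ rather than the crude bound, and the other terms are treated directly for small $w$. Concretely:
\begin{itemize}
\item For $w=2$: $h(2)\le 2+2(h(0)+2h(1))=14\le 36$.
\item For $w\ge2$: $w!\,3^w\ge18$, so $0.41\cdot18>2$ and the slack absorbs the additive $2$.
\end{itemize}
Only $w=1$ needs direct verification. There the set $\sC_\alpha\setminus\{\alpha\}$ is empty for connected $\alpha$, so $\cH=2$ by definition.

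\textbf{Main obstacle.} The step needing care is the reduction: the number of elements of $\sC_\alpha$ with $j$ components must be bounded correctly in both cases ($1\in V(\alpha)$ or not), including the empty graph $\beta=0$. The arithmetic of the induction is routine once the constant $e^{1/3}-1<1/2$ is in place.
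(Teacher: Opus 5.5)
Your proof is correct and follows the paper's route: bound $\cH(\alpha)$ by the integer recursion $h_w=2+2\sum_{j<w}\binom{w}{j}h_j$, then induct using $4(e^{1/3}-1)<5/3$, so that the additive $2$ is absorbed for $w\ge 2$. Three small points do not affect validity: the detour about separating the $i=w$ term is vacuous, since that term already equals $4$ under the crude bound; the closing claim that $e^{1/3}-1<1/2$ suffices is wrong, since you need a margin such as your $0.4$ or the paper's $5/12$; and for connected $\alpha$ with $1\notin V(\alpha)$ the set $\sC_\alpha\setminus\{\alpha\}=\{0\}$ is not empty, but $2+2h_0=6=2\cdot 1!\cdot 3$ (which is why the paper takes $h_1=6$), so the $w=1$ case holds either way.
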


\begin{proof}
Let $(h_w)_{w\ge0}$ be defined by
\[
    h_0=2,
    \qquad
    h_w=2+2\sum_{j=0}^{w-1}\binom{w}{j}h_j
    \quad\text{for }w\ge1.
\]
We first claim that
\[
    \cH(\alpha)\le h_{w(\alpha)}\,.
\]
Indeed, if $\beta$ is a union of connected components of $\alpha$ and
$w(\beta)=j$, then there are at most $\binom{w(\alpha)}{j}$ possible choices
for such $\beta$. The definition of $\cH$ therefore gives, by induction on
$w(\alpha)$,
\[
    \cH(\alpha)
    \le
    2+2\sum_{j=0}^{w(\alpha)-1}\binom{w(\alpha)}{j}h_j
    =
    h_{w(\alpha)}\,.
\]
It remains to show that
\[
    h_w\le 2\cdot w!\cdot 3^w
    \qquad\text{for all }w\ge0\,.
\]
The case $w=0$ is immediate, and $h_1=2+2h_0=6=2\cdot1!\cdot3$.
Assume the claim holds for all $j<w$. Then
\[
\begin{aligned}
    h_w
    \le
    2+2\sum_{j=0}^{w-1}\binom{w}{j}2\cdot j!\cdot 3^j  =
    2+4w!\sum_{j=0}^{w-1}\frac{3^j}{(w-j)!} =
    2+4w!\,3^w\sum_{s=1}^{w}\frac{3^{-s}}{s!}\,.
\end{aligned}
\]
Using $\sum_{s=1}^{\infty}\frac{3^{-s}}{s!}
    =
    e^{1/3}-1
    <
    \frac{5}{12}$, we obtain, for $w\ge2$,
\[
    h_w
    \le
    2+\frac{5}{3}\cdot w!\,3^w
    \le
    2\cdot w!\,3^w\,.
\]
This completes the induction and hence the proof.

\end{proof}
\subsubsection{Bounding the norm $\|u\|$}
Recall that $\|u\|^2\leq \Xi_{\mathsf{sparse}}+\Xi_{\mathsf{dense}}$ from~\eqref{eq:bound:u:in:terms:of:Xi}. To finish the proof of Theorem~\ref{thm:small:planted:hypergraph}-(a), we bound $\Xi_{\mathsf{sparse}}$ and $\Xi_{\mathsf{dense}}$ separately. 

\begin{lemma}\label{lem:small:hypergraph:sparse:case}
There exists constants $\eps=\eps(r,a,b,\xi)>0$ and $\delta=\delta(a,b,\xi)$ such that for all $n$ sufficiently large and $D \leq n^{\eps}$, it holds that $\Xi_{\mathsf{sparse}} \leq C/n$.
\end{lemma}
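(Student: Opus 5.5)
The plan is to plug the pointwise bound of Lemma~\ref{lem:small:hypergraph:bound:F} (generic version) into $\cK(\alpha)$ and sum over sparse $\alpha$, organized by connected components. For sparse $\alpha$,
\[
\cK(\alpha)\le \frac{\cH(\alpha)^2\rho^{2\In_{1\notin V(\alpha)}}\rho^{2|V(\alpha)|}(2q_1)^{2|\alpha|}}{(q_0(1-q_0))^{|\alpha|}(1-\rho)^{|V(\alpha)\cup\{1\}|}}\le 2\,\cH(\alpha)^2\rho^{2\In_{1\notin V(\alpha)}}\rho^{2|V(\alpha)|}\Big(\frac{8q_1^2}{q_0}\Big)^{|\alpha|},
\]
using $|V(\alpha)|\le rD\le rn^{\eps}$ and $\rho=n^{\xi-1}$ so that $(1-\rho)^{-rD-1}\le 2$.

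The key step is the per-vertex weight. Write $\kappa=\xi/a+\delta$. Sparsity gives $|\alpha|\le \kappa|V(\alpha)|$, and since $b\ge 2a$ we have $8q_1^2/q_0=8n^{b-2a}\ge 1$. Hence the edge factor is at most $(8n^{b-2a})^{\kappa|V(\alpha)|}$. Each vertex then carries weight
\[
n\rho^2\,(8n^{b-2a})^{\kappa}=8^{\kappa}n^{2\xi-1+(b-2a)(\xi/a+\delta)}.
\]
The exponent equals $2\xi-1+b\xi/a-2\xi+\delta(b-2a)=(b\xi/a-1)+\delta(b-2a)$, which is negative because $a>b\xi$, provided $\delta<(1-b\xi/a)/(2(b-2a))$. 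Call it $-\eta<0$; this is the constraint \eqref{eq:constraint:delta} on $\delta$. Counting labeled sparse hypergraphs on a given $k$-vertex set with $\ell\le \kappa k$ edges gives at most $\binom{\binom{k}{r}}{\ell}\le (eD)^{r\ell}\le n^{r\eps\kappa k}$ choices. Choosing $\eps$ small relative to $\eta$, each vertex therefore contributes at most $n^{-\eta/2}$ after including the $n$ label choices; for vertex $1$, which is fixed, the extra $n^{-1}$ label saving is absent.

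The remaining obstacle is the complexity factor $\cH(\alpha)^2\le 4(w!)^2 9^w$ from Lemma~\ref{lem:bound:H}, which grows superexponentially in the number $w$ of components. Each component has at least $r\ge 2$ vertices, so $w\le |V(\alpha)|/r$. I would bound $(w!)^2\le w^{2w}\le (rD)^{2w}\le n^{2\eps |V(\alpha)|}$, which is absorbed into the per-vertex gain $n^{-\eta/2}$ for $\eps$ small. The sum over $k\ge r$ of $\big(n^{-\eta/4}\big)^{k}$ times polynomial counting factors is then geometric.

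The $1/n$ rate comes from the two cases. If $1\in V(\alpha)$, the vertex-$1$ factor is $\rho^2$ with no free label, and the other $k-1\ge r-1$ vertices each contribute $n^{-\eta/4}$, giving a total of $O(\rho^2)$. If $1\notin V(\alpha)$, the extra $\rho^2$ combines with $k\ge r$ decaying vertices to give $O(\rho^2)$. Since $\rho^2=n^{2\xi-2}\le n^{-1}$ for $\xi\le 1/2$, we get $\Xi_{\mathsf{sparse}}\le C/n$. The $\alpha=0$ term gives $\cF(0)^2/(1-\rho)$ with $|\cF(0)|\le 2\rho$, which is also $O(\rho^2)\le C/n$. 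The main thing to check is that the choices of $\delta$ and then $\eps$ (with $\eps<\xi$ and the $\eps$ from Lemma~\ref{lem:event:bound}) are consistent, which holds since $\eta$ depends only on $a,b,\xi$ once $\delta$ is fixed.
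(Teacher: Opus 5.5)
Your proof is correct and follows essentially the same route as the paper: plug the generic bound $|\cF(\alpha)|\le \cH(\alpha)\rho^{\In_{1\notin V(\alpha)}}s_\alpha$ into $\cK(\alpha)$, use $b\ge 2a$ to raise the edge count to $(\xi/a+\delta)|V(\alpha)|$, get the per-vertex exponent $-1+b\xi/a+\delta(b-2a)<0$, and absorb the counting and $\cH(\alpha)$ factors (each $n^{O(\eps)}$ per vertex) by taking $D\le n^{\eps}$ with $\eps$ small. One bookkeeping slip is harmless: when $1\in V(\alpha)$, vertex~$1$ still carries its share $(8n^{b-2a})^{\kappa}$ of the edge factor, so the rooted contribution is $n^{-1}\cdot 8^{\kappa}n^{-\eta}\cdot n^{O(\eps)}\cdots=O(n^{-1-\eta/2})$ rather than $O(\rho^2)$ (the latter does not follow in general when $\xi<1/2$), but this is still $\le C/n$, matching the paper's $\frac{C}{n}\sum_k(\cdots)^k$ bound.
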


\begin{proof}
Throughout the proof, $C,C'$ denote constants depending only on
$r,a,b,\xi$, whose value may change from line to line. Let $\alpha$ be
sparse with $k=|V(\alpha)|$ vertices and $\ell=|\alpha|$ edges. Then $k/r\leq \ell \leq m_k$, where we recall $m_k=\lfloor (\xi/a+\delta)k\rfloor$. By Lemma~\ref{lem:bound:H} and Stirling's formula,
\begin{equation}\label{eq:bound:H:stirling}
    \cH(\alpha)
    \le
    2\cdot w(\alpha)!\,3^{w(\alpha)}
    \le
    Ck^{1/2}(Ck)^{k/r}\,,
\end{equation}
where we used $w(\alpha)\le k/r$. Combining with Lemma~\ref{lem:small:hypergraph:bound:F} (recall $s_{\al}=\rho^k (2q_1)^{\ell}$),
\[
    |\cF(\alpha)|
    \le
    Ck^{1/2}(Ck)^{k/r}
    \rho^{\In_{1\notin V(\alpha)}}\rho^k(2q_1)^\ell\,.
\]
Hence,
\begin{equation}\label{eq:bound:K:sparse}
    \cK(\alpha)\equiv \frac{\cF(\alpha)^{2}}{(q_{0}(1 - q_{0}))^{|\alpha|}(1 - \rho)^{|V(\alpha) \cup \{1\}|}}
    \le
    Ck(Ck)^{2k/r}
    \rho^{2k+2\In_{1\notin V(\alpha)}}
    \frac{q_1^{2\ell}}
    {(q_0(1-q_0))^\ell(1-\rho)^{k}}\,,
\end{equation}
where we bounded $(1-\rho)^{|V(\al)\cup\{1\}|}\geq (1-\rho)^k$. Note that the number of sparse $\alpha$ with $1\in V(\alpha)$,
$k$~vertices, and $\ell$ edges is at most 
\begin{equation}\label{eq:number:alpha:planted:subhypergraph}
\binom{n-1}{k-1}\binom{\binom{k}{r}}{\ell}
    \le
    \left(\frac{Cn}{k}\right)^{k-1}
    \left(\frac{Ck^r}{\ell}\right)^\ell\,.
    \end{equation}
Removing the constraint $1\in V(\alpha)$ increases the count by at most a
factor $n$, while the summand gains an extra factor $\rho^2$. Therefore the
total contribution of graphs with these fixed values of $(k,\ell)$ is at
most the rooted contribution multiplied by $ 1+n\rho^2\le 2$ since $\rho=n^{\xi-1}$ and $\xi\le 1/2$. Substituting $q_1=n^{-a}$, $q_0=n^{-b}$, and using also $(1-\rho)^{-k}\le 2^{k}$ and
$(q_0(1-q_0))^{-\ell}\le 2^\ell n^{b\ell}$ for large $n$, we get
\[
\begin{aligned}
    \sum_{\substack{\alpha\textnormal{ sparse}\\ |V(\alpha)|=k,\ |\alpha|=\ell}}
    \cK(\alpha)
    &\le
    \left(\frac{Cn}{k}\right)^{k-1}
    (Ck)^{1+2k/r}
    n^{2(\xi-1)k}
    \left(\frac{Ck^r}{\ell}\right)^\ell
    n^{(b-2a)\ell}\,.
\end{aligned}
\]
Since $\ell\ge k/r$, we have $ (Ck^r/\ell)^\ell
    \le
    (C'k^{r-1})^\ell$. Thus, 
\[
\begin{aligned}
    \sum_{\alpha\textnormal{ sparse}}\cK(\alpha)
    &\le
    \frac{C}{n}\sum_{k=r}^{rD}
    k^2\left(\frac{Cn}{k}\right)^{k}
    (Ck)^{2k/r}
    n^{2(\xi-1)k}
    \sum_{\ell=k/r}^{m_k}
    \left(Ck^{r-1}n^{b-2a}\right)^\ell\,.
\end{aligned}
\]
Under the assumption $b\ge 2a$ (cf. \eqref{eq:assume:small:PDS}), the inner sum is bounded by
\[
    \sum_{\ell=k/r}^{m_k}
    \left(Ck^{r-1}n^{b-2a}\right)^\ell
    \le
    m_k\left(Ck^{r-1}n^{b-2a}\right)^{m_k}\,.
\]
Since $m_k\le (\xi/a+\delta)k$, we obtain for some constant $C_{\delta}=C(\delta, r,a,b,\xi)>0$
\[
\begin{aligned}
    \sum_{\alpha\textnormal{ sparse}}\cK(\alpha)
    &\le
    \frac{C}{n}
    \sum_{k=r}^{rD}
    k^3
    \left(
        Cn^{-1+\frac{b\xi}{a}+\delta(b-2a)}
        k^{C_{\delta}}
    \right)^k\,.
\end{aligned}
\]
Because $a>b\xi$, we can choose $\delta=\delta(a,b,\xi)>0$ small enough that
\[
    -1+\frac{b\xi}{a}+\delta(b-2a)
    \le
    -\frac12\left(1-\frac{b\xi}{a}\right).
\]
Then choose $\eps>0$ small enough so that, whenever $D\le n^\eps$ and
$k\le rD$, $Cn^{-\frac12(1-b\xi/a)}k^{C}\le \frac12$. It follows that
\[
    \sum_{\alpha\textnormal{ sparse}}\cK(\alpha)
    \le
    \frac{C}{n}
    \sum_{k=r}^{\infty}\frac{k^3}{2^k}
    \le
    \frac{C}{n}\,,
\]
which concludes the proof.
\end{proof}

\begin{lemma}\label{lem:small:hypergraph:dense:case}
There exists constants $\eps=\eps(r,a,b,\xi)>0$ and $\delta=\delta(a,b,\xi)$ such that for all $n$ sufficiently large and $D \leq n^{\eps}$, it holds that $\Xi_{\mathsf{dense}} \leq C/n$.
\end{lemma}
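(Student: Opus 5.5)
The plan mirrors the proof of Lemma~\ref{lem:small:hypergraph:sparse:case}, with $s_\alpha$ replaced by $d_\alpha$ via the dense branch of Lemma~\ref{lem:small:hypergraph:bound:F}.

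\textbf{Pointwise bound.} Fix a dense $\alpha$ with $k=|V(\alpha)|$ vertices and $\ell=|\alpha|$ edges, so $m_k<\ell\le D$. Lemma~\ref{lem:bound:H} together with \eqref{eq:bound:H:stirling} gives $\cH(\alpha)\le Ck^{1/2}(Ck)^{k/r}$. Hence
\[
\cK(\alpha)\le Ck(Ck)^{2k/r}\rho^{2k+2\In_{1\notin V(\alpha)}}2^k\,\frac{D^{2m_k}q_0^{2\ell}(2q_1/q_0)^{2m_k}}{(q_0(1-q_0))^{\ell}}
\le Ck(Ck)^{2k/r}\rho^{2k+2\In_{1\notin V(\alpha)}}2^{k+\ell}\,D^{2m_k}q_0^{\ell}\Big(\frac{2q_1}{q_0}\Big)^{2m_k}.
\]
The crucial feature is that the edge-dependence is now $q_0^{\ell}=n^{-b\ell}$ for the edges beyond $m_k$, so it decays in $\ell$. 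In the sparse case it was the growing factor $n^{(b-2a)\ell}$.

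\textbf{Counting.} The number of $\alpha$ with $1\in V(\alpha)$ and parameters $(k,\ell)$ is bounded by \eqref{eq:number:alpha:planted:subhypergraph}, at most $(Cn/k)^{k-1}(Ck^r)^{\ell}$. Dropping the rooting costs a factor $n$ but gains $\rho^2$, so it contributes at most a factor $1+n\rho^2\le 2$, exactly as before. The sum over $\ell>m_k$ is then a geometric series in $Ck^r n^{-b}\le Cn^{r\eps-b}\le 1/2$ once $\eps<b/(2r)$. It is therefore dominated by its first term, $\ell=m_k+1$, which is at most $2(Ck^rn^{-b})^{m_k+1}$.

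\textbf{Exponent bookkeeping.} After the geometric summation, the $(k,\ell)$ contribution is at most
\[
\frac{C}{n}\,k^{C}\,(Cn)^{k}\,n^{2(\xi-1)k}\,(Ck)^{C k}\,D^{2m_k}\,n^{-b(m_k+1)}\,n^{2(b-a)m_k}.
\]
The $n$-exponent per unit of $k$ is roughly $-1+2\xi-b\,m_k/k+2(b-a)\,m_k/k$. Substituting $m_k\approx(\xi/a+\delta)k$ gives
\[
-1+2\xi+(b-2a)\Big(\frac{\xi}{a}+\delta\Big)=-1+\frac{b\xi}{a}+\delta(b-2a),
\]
which is the same exponent as in the sparse case. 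Choosing $\delta$ as in Lemma~\ref{lem:small:hypergraph:sparse:case} makes this at most $-\tfrac12(1-b\xi/a)$.

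The floor in $m_k$ only changes $m_k$ by $O(1)$, costing at most $n^{O(b)}$. This is absorbed by the surplus factor $n^{-b}$ coming from the $m_k+1$ term together with the overall $C/n$; if needed, the $n^{-b}$ can be kept separately. The factors $D^{2m_k}(Ck)^{Ck}\le n^{C\eps k}$ are killed by taking $\eps$ small. Summing $k^C2^{-k}$ over $k$ then gives $\Xi_{\mathsf{dense}}\le C/n$.

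\textbf{Main obstacle.} The delicate step is the exponent bookkeeping: verifying that the total $n$-exponent per unit of $k$ is strictly negative uniformly in $k$, including small $k\ge r$ where the floor effects are relatively large. If the $O(1)$ slack from $\lfloor\cdot\rfloor$ fails at small $k$, I would use $m_k\ge(\xi/a+\delta)k-1$ and keep the global factor $n^{-1}$ from the rooted count. Together with $q_0^{\ell-m_k}\le n^{-b}$, this should cover the $n^{2(b-a)}$ loss, possibly after slightly shrinking $\delta$.
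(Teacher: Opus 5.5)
Your proposal is correct and follows essentially the same route as the paper's proof. Both bound $\mathcal{F}$ by the dense branch of Lemma~\ref{lem:small:hypergraph:bound:F} via $d_\alpha$, use the same rooted vertex/edge count with the factor $1+n\rho^2\le 2$, dominate the geometric sum over $\ell>m_k$ by its first term, and arrive at the same exponent $-1+b\xi/a+\delta(b-2a)$.

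The floor issue you flag as the main obstacle is vacuous. Under the standing assumption $b\ge 2a$, the net power is $n^{(b-2a)m_k-b}$, which is nondecreasing in $m_k$. So only the exact upper bound $m_k\le(\xi/a+\delta)k$ is needed, for every $k\ge r$; the paper even discards the extra $n^{-b}$ by bounding the tail by its $m_k$-th power.
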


\begin{proof}
Throughout, $C,C'>0$ denote constants depending only on $r,a,b,\xi$, whose
value may change from line to line. Let $\alpha$ be dense with
$k=|V(\alpha)|$ vertices and $\ell=|\alpha|$ edges. Then
\[
    \ell\ge m_k+1,
    \qquad
    m_k:=\left\lfloor \left(\frac{\xi}{a}+\delta\right)k\right\rfloor\,.
\]
Recall the bound $\HH(\al)\leq Ck^{1/2}(Ck)^{k/r}$; see~\eqref{eq:bound:H:stirling}, which is a consequence of Lemma~\ref{lem:bound:H}. Combining with Lemma~\ref{lem:small:hypergraph:bound:F} (recall that $d_{\al}=\rho^k D^{m_k}q_0^{\ell}(2q_1/q_0)^{m_k}$),
\[
    |\cF(\alpha)|
    \le
    Ck^{1/2}(Ck)^{k/r}
    \rho^{\In_{1\notin V(\alpha)}}
    \rho^kD^{m_k}q_0^\ell
    \left(\frac{2q_1}{q_0}\right)^{m_k}\,.
\]
Thus, applying this bound to $\cK(\al)$ analogously to \eqref{eq:bound:K:sparse} yields
\[
    \cK(\alpha)\le
    Ck(Ck)^{2k/r}
    \rho^{2k+2\In_{1\notin V(\alpha)}}
    D^{2m_k}
    \left(\frac{q_1}{q_0}\right)^{2m_k}
    \frac{q_0^{2\ell}}
    {(q_0(1-q_0))^\ell(1-\rho)^{k}}\,.
\]
Recall from \eqref{eq:number:alpha:planted:subhypergraph} that the number of dense $\alpha$ with $1\in V(\alpha)$,
$k$ vertices, and $\ell$ edges is at most $(Cn/k)^{k-1} (Ck^r/\ell)^\ell$. Removing the constraint $1\in V(\alpha)$ increases this count by at most a
factor $n$, while the summand gains an extra factor $\rho^2$. Since
$n\rho^2=n^{2\xi-1}\le1$, the total contribution for fixed $(k,\ell)$ is at
most a constant times the rooted contribution. Hence
\[
\begin{aligned}
    \sum_{\substack{\alpha\textnormal{ dense}\\ |V(\alpha)|=k,\ |\alpha|=\ell}}
    \cK(\alpha)
    \le
    \left(\frac{Cn}{k}\right)^{k-1}
    \left(\frac{Ck^r}{\ell}\right)^\ell
    k(Ck)^{2k/r}
    \rho^{2k}D^{2m_k} 
    \left(\frac{q_1}{q_0}\right)^{2m_k}
    \frac{q_0^{2\ell}}
    {(q_0(1-q_0))^\ell(1-\rho)^{k+1}}\,.
\end{aligned}
\]
Substituting $q_0=n^{-b}$, $q_1=n^{-a}$, and $\rho=n^{\xi-1}$, and using
$\ell\geq k/r$, $(1-\rho)^{-k}\le 2^{k}$ and $(1-q_0)^{-\ell}\le 2^\ell$ for large enough $n$, gives
\[
\begin{aligned}
    \sum_{\alpha\textnormal{ dense}}\cK(\alpha)
    \le
    C\sum_{k=r}^{rD}
    \left(\frac{Cn}{k}\right)^{k-1}
    (Ck)^{1+2k/r}
    n^{2(\xi-1)k}
    D^{2m_k}n^{2(b-a)m_k} \sum_{\ell=m_k+1}^{D}
    \left(Ck^{r-1}\right)^\ell n^{-b\ell}\,.
\end{aligned}
\]
Choose $\eps>0$ small enough that $Ck^{r-1}n^{-b}\le \frac12$ whenever $D\leq n^\eps$ and $k\leq r D$. Then, the inner sum is crudely bounded by $2\left(C k^{r-1}n^{-b}\right)^{m_k}$. Hence,
\[
\begin{aligned}
    \sum_{\alpha\textnormal{ dense}}\cK(\alpha)
    &\le
    \frac{C}{n}
    \sum_{k=r}^{rD}
    k^2
    \left(Cn^{2\xi-1}k^{-1+2/r}\right)^k
    \left(CD^2k^{r-1}n^{b-2a}\right)^{m_k}\,.
\end{aligned}
\]
Using $b\geq 2a$ from \eqref{eq:assume:small:PDS}, $m_k\le(\xi/a+\delta)k$ and absorbing powers of $k$ and $D$ into
$k^{C_{\delta}}D^{C_{\delta}}$ for some $C_{\delta}=C(\delta,r,a,b,\xi)>0$ we obtain
\[
    \sum_{\alpha\textnormal{ dense}}\cK(\alpha)
    \le
    \frac{C}{n}
    \sum_{k=r}^{rD}
    k^2
    \left(
        Cn^{-1+\frac{b\xi}{a}+\delta(b-2a)}
        k^{C_\delta}D^{C_\delta}
    \right)^k\,.
\]
Choose $\delta=\delta(a,b,\xi)>0$ so that
\begin{equation}\label{eq:constraint:delta}
    -1+\frac{b\xi}{a}+\delta(b-2a)
    \le
    -\frac12\left(1-\frac{b\xi}{a}\right)\,.
\end{equation}
Then choose $\eps=\eps(r,a,b,\xi)>0$ small enough so that,
whenever $D\le n^\eps$ and $k\le rD$,
\[
    Cn^{-\frac12(1-b\xi/a)}k^{C_\delta}D^{C_\delta}
    \le
    \frac12\,.
\]
It follows that
\[
    \sum_{\alpha\textnormal{ dense}}\cK(\alpha)
    \le
    \frac{C}{n}
    \sum_{k=r}^{\infty}\frac{k^2}{2^k}
    \le
    \frac{C}{n}\,,
\]
which concludes the proof.
\end{proof}

\begin{proof}[Proof of Theorem~\ref{thm:small:planted:hypergraph}-(a)]
If $2a>b$, then Theorem~\ref{thm:planted:hypergraph}-(a) applies to yield $\Corr_{\leq D} \leq Cn^{(\xi-1)/2}$ whenever $D \leq Cn^{(2a-b)/(r-1)}$. Otherwise, assume that $2a \leq b$. Then, choose $\eps, \delta > 0$ such that the conclusions of Lemmas~\ref{lem:small:hypergraph:sparse:case} and~\ref{lem:small:hypergraph:dense:case} hold, and set $\delta_1=\eps$. Our construction of a dual certificate $u$ in~\eqref{eq:small:hypergraph:ansatz} satisfies $\wM^{\top}u=\wc$ by Corollary~\ref{cor:small:hypergraph:recursion} with $\|u\|^2=\Xi_{\mathsf{sparse}}+\Xi_{\mathsf{dense}}$. Thus, by Lemmas~\ref{lem:small:hypergraph:sparse:case} and~\ref{lem:small:hypergraph:dense:case}, with Corollary~\ref{cor:orthogonal:expansion:conditioning}, we have
\[
    \Corr_{\leq D} \leq \frac{\|u\|}{\sqrt{\E[x^{2}]}} + \sqrt{\Pb(\cE^{c} \mid \theta_1=1)} \leq \frac{C}{\sqrt{n\rho}} + n^{-a\delta/4} = Cn^{-\xi/2} + n^{-a\delta/4}\,,
\]
where the bound $\P(\mathcal{E}^c\mid \theta_1=1)\leq n^{-a\delta/2}$ follows from Lemma~\ref{lem:event:bound}. Setting $\delta_2=\min(\xi/2, a\delta/4)$ completes the proof.
\end{proof}

\section{Tensor PCA with a general prior}\label{sec:general:PCA}
In this section we prove Theorem~\ref{thm:general:PCA}. Throughout, we fix a prior distribution $\pi$ satisfying $\E[\pi]=0$, $\E[\pi^2]=1$, and $\E[|\pi|^t]\leq (K_0t)^{\nu t}$ for all $t\geq 1$, where $K_0>0$ and $\nu\geq 0$ are constants depending only on $\pi$. The estimand is $x = \prod_{i=1}^m\theta_i$ for a fixed integer $m\geq 2$. As noted in the case of sparse tensor PCA model, for technical convenience, we shall work with the symmetrized tensor $Y\equiv\Ysy$ and prove statements for the symmetric model. All of our results transfer to the non-symmetric model; see Lemma~\ref{lem:equivalence:of:noise:models} for the correspondence.

However, unlike the sparse tensor PCA and planted dense subhypergraph models, where the orthogonal expansion approach is used, here the \emph{cumulant expansion} by Schramm and Wein~\cite{SW-22} suffices. This approach bounds the low-degree correlation directly in terms of joint cumulants of the signal and observation (this can be viewed as a special case of the orthogonal expansion approach; see Remark~\ref{rem:cumulant}): 
\begin{equation}\label{eq:correlation:cumulant:bound}
	\Corr_{\leq D}^2 \leq \sum_{|\alpha|\leq D}\frac{\kappa_\alpha^2}{\alpha!}\,,
\end{equation}
where the sum ranges over all multi-hypergraphs $\alpha\in\N^{\mc}$ with at most $D$ edges and $\kappa_\alpha$ is the joint cumulant
\begin{equation}\label{eq:cumulant:def}
	\kappa_\alpha = \E[x\,X^\alpha] - \sum_{0\leq\beta\lneq\alpha}\binom{\alpha}{\beta}\E[X^{\alpha-\beta}]\,\kappa_\beta\,.
\end{equation}
\subsection{Reduction to good graphs}
For $\al\in \N^{\mc}$, define
\[
\bar{\alpha} := \alpha + \In_{[m]}
\]
as the graph obtained by adding the $m$-hyperedge $\{1, \hdots, m\}$ to $\alpha$. Note that, unlike $\al$, $\bar{\al}$ is not $r$-uniform unless $m=r$.

\begin{definition}\label{def:general:PCA:good}
A graph $\alpha$ is good if $\alpha = 0$ or $\bar{\alpha}$ is connected and every vertex in $\bar{\alpha}$ has degree at least $2$.
\end{definition}

\begin{lemma}\label{lem:cumulant:vanishing}
If $\alpha$ is not good, then $\kappa_{\alpha} = 0$.
\end{lemma}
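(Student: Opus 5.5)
We use the standard characterization of the recursively defined $\kappa_\alpha$ as a genuine joint cumulant. Index the random variables $x$ and the entries $X_e$ (with multiplicity $\alpha_e$) by the multiset $\{x\}\cup E(\alpha)$. The recursion \eqref{eq:cumulant:def} is exactly the moment–cumulant inversion identifying $\kappa_\alpha$ with the joint cumulant $\kappa(x, X_{e_1},\dots,X_{e_{|\alpha|}})$ of the listed variables; this is \cite[Section 2]{SW-22}. So we may use two properties of joint cumulants. First, a joint cumulant vanishes if the variables split into two nonempty groups that are mutually independent. Second, it is multilinear. We now show $\kappa_\alpha=0$ whenever $\alpha\neq 0$ fails either condition of Definition~\ref{def:general:PCA:good}.

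\emph{Disconnected case.} Here $x=\prod_{i\le m}\theta_i$ and $X_e=\lambda\prod_{i\in e}\theta_i$, with $\theta$ having i.i.d.\ entries. Suppose $\bar\alpha$ is disconnected. Take a connected component $\Gamma$ of $\bar\alpha$ that does not contain the $m$-hyperedge $[m]$; one exists since $\bar\alpha$ has at least two components. The variables $\{X_e:e\in\Gamma\}$ depend only on $(\theta_i)_{i\in V(\Gamma)}$. The remaining variables, namely $x$ and the other edges, depend only on coordinates outside $V(\Gamma)$. These two families are independent and both nonempty, so the cumulant vanishes.

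\emph{Low-degree vertex case.} Suppose $\bar\alpha$ is connected but has a vertex $v$ of degree exactly $1$ in $\bar\alpha$. A vertex appearing in $\bar\alpha$ has degree at least $1$, and degree is counted with multiplicity: a multi-edge or loop $(i_1\le\dots\le i_r)$ contributes multiplicity of $v$. So $\theta_v$ appears to the first power in exactly one of the variables, say $Z$, which is either $x$ or some $X_e$. Write $Z=\theta_v\cdot Z'$, where $Z'$ is independent of $\theta_v$. Then use the formula expressing a joint cumulant as $\sum_\pi (|\pi|-1)!(-1)^{|\pi|-1}\prod_{B\in\pi}\E[\prod_{j\in B}\cdot]$. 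In every block moment, $\theta_v$ appears exactly once, namely in the block containing $Z$. That block's expectation factors as $\E[\theta_v]\cdot\E[\cdots]=0$, because $\theta_v$ is independent of all other variables in the block. Hence every term vanishes and $\kappa_\alpha=0$.

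The main subtlety is bookkeeping with multi-hyperedges and repeated indices. One must check that ``degree'' counts each occurrence of $v$, so that $\theta_v$ appears with exponent exactly $1$ in the product over the block containing it. One must also justify identifying the recursion \eqref{eq:cumulant:def} with the joint cumulant. Both are routine, and we would cite \cite{SW-22} for the latter.
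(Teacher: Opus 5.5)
Your proof is correct and follows essentially the same route as the paper. You identify $\kappa_\alpha$ with the joint cumulant of $x$ and the $X_e$'s (counted with multiplicity), dispose of the disconnected case by the vanishing of cumulants across independent blocks, and dispose of the degree-one case by observing that, in the partition expansion, the block containing the unique occurrence of $\theta_v$ has zero expectation; your remark that degrees must count repeated indices within a multi-hyperedge makes explicit a point the paper leaves implicit.
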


\begin{proof}
The case $\alpha = 0$ can be verified directly since $\E[x] = 0$. So assume
$\alpha$ is non-empty. Recall that $\kappa_\alpha$ is the joint cumulant of
the collection $\{x\}\cup \{X_e:\alpha_e\ge 1\}$, where each $X_e$ appears
with multiplicity $\alpha_e$ and $ X_e=\lambda \prod_{i\in e}\theta_i$. If $\bar{\alpha}$ is disconnected, then the collection
$\{x\}\cup \{X_e:\alpha_e\ge 1\}$ splits into two independent nonempty
subcollections. As the joint cumulant of independent blocks is
zero~\cite[Proposition 2.11]{SW-22}, it follows that $\kappa_{\alpha}=0$. Finally, suppose that $\bar{\alpha}$ has a vertex $i$ of degree-$1$. In the
combinatorial formula for cumulants~\cite[Definition 2.10]{SW-22}, fix any partition $\pi$ of the cumulant variables and let $B\in b(\pi)$ be the unique
block containing the variable in which $\theta_i$ appears. The expectation of this block $\E[\prod_{j\in B}X_j]$ equals to zero, because
$\theta_i$ has mean zero and is independent of all other factors in that
block. Hence each term in the cumulant expansion vanishes, and
$\kappa_\alpha=0$.
\end{proof}
\subsection{Bounding the cumulants}
We introduce three quantities necessary to bound $|\kappa_\alpha|$ for good graphs. For $\al\in \N^{\mc}$, define the number of excess edges
\[
    \delta(\alpha) = \sum_{\substack{i \in V(\alpha) \\ \deg_{\alpha}(i) \geq 3}} \deg_{\alpha} (i)\,,
\]
and define the complexity function $\HH(\al)$ via the recursion
\begin{equation}\label{eq:def:H}
    \HH(\alpha) = \sum_{\substack{\beta \lneq \alpha \\ \text{$\beta$ is good}}} \binom{\alpha}{\beta} \cH(\beta)\,,\qquad \cH(0)=1\,.
\end{equation}
Finally, for an integer $t\geq 1$, define
\[
    M(t) = \max_{0 \leq s \leq t} \E[|\pi|^{s}]\,.
\]
Observe that $M(\cdot)$ satisfies the sub-multiplicative property $M(s)M(t) \leq M(s + t)$ by H\"{o}lder's inequality. Furthermore, the moment condition on the prior $\pi$ implies that $M(t) \leq (Kt)^{\nu t}$ for some constant $K > 0$.
The following lemma is a generalization of $r=2$ case in~\cite[Lemma 5.3]{arxiv-version}.

\begin{lemma}\label{lem:PCA:cumulant:bound}
For any good $\alpha$, 
\begin{equation}
    |\kappa_{\alpha}| \leq \lambda^{|\alpha|}M(\delta(\bar{\alpha}))\HH(\alpha)\,.
\label{eq:cumulant:recursion:bound}
\end{equation}
\end{lemma}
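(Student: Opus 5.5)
We argue by strong induction on $|\alpha|$, using the recursion \eqref{eq:cumulant:def} directly. For the base case $\alpha=0$ the claim is trivial: $\kappa_0=\E[x]=0$. For a good $\alpha\neq 0$, we start from
\[
|\kappa_\alpha|\le |\E[xX^\alpha]| + \sum_{0\le\beta\lneq\alpha}\binom{\alpha}{\beta}|\E[X^{\alpha-\beta}]|\,|\kappa_\beta|.
\]
By Lemma~\ref{lem:cumulant:vanishing}, only good $\beta$ contribute. For good $\beta\lneq\alpha$, the induction hypothesis gives $|\kappa_\beta|\le \lambda^{|\beta|}M(\delta(\bar\beta))\HH(\beta)$. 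Since $X^{\alpha-\beta}=\lambda^{|\alpha-\beta|}\prod_i\theta_i^{\deg_{\alpha-\beta}(i)}$, independence gives $|\E[X^{\alpha-\beta}]|\le \lambda^{|\alpha-\beta|}\prod_i \E|\pi|^{\deg_{\alpha-\beta}(i)}$. Similarly, $|\E[xX^\alpha]|\le\lambda^{|\alpha|}\prod_i\E|\pi|^{\deg_{\bar\alpha}(i)}$. The powers of $\lambda$ therefore combine to $\lambda^{|\alpha|}$ in every term. The term $\E[xX^\alpha]$ is matched by the $\beta=0$ summand, using $\HH(0)=1$, so it fits the target once we absorb it (or, equivalently, treat it together with the $\beta=0$ contribution). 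It therefore remains to control the moment factors.

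The key step is to show that, for good $\beta\le\alpha$,
\[
\prod_i \E|\pi|^{\deg_{\alpha-\beta}(i)}\cdot M(\delta(\bar\beta))\le M(\delta(\bar\alpha)),
\]
and likewise $\prod_i\E|\pi|^{\deg_{\bar\alpha}(i)}\le M(\delta(\bar\alpha))$. For each vertex $i$, the factor $\E|\pi|^{d}$ is at most $1$ if $d\le 2$, since $\E\pi^2=1$ implies $\E|\pi|\le1$. If $d\ge3$, the factor is at most $M(d)$, with $d\le\deg_{\bar\alpha}(i)$, which is counted in $\delta(\bar\alpha)$. Sub-multiplicativity $M(s)M(t)\le M(s+t)$ then combines the vertex factors.

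The delicate point, which we expect to be the main obstacle, is the bookkeeping between $\delta(\bar\beta)$ and the degrees in $\alpha-\beta$. The issue arises at a vertex where $\deg_{\bar\beta}(i)\ge3$ and $\deg_{\alpha-\beta}(i)\ge3$ simultaneously; we need
\[
\deg_{\bar\beta}(i)+\deg_{\alpha-\beta}(i)=\deg_{\bar\alpha}(i)\le \delta\text{-contribution of } i \text{ in } \bar\alpha,
\]
which holds because $\deg_{\bar\alpha}(i)\ge3$ there. We also need to rule out the case where a vertex with small degree in each piece nonetheless contributes a nontrivial factor. When $\deg_{\alpha-\beta}(i)\le 2$ the factor is at most $1$, and when $\deg_{\bar\beta}(i)\le2$ the vertex is not counted in $\delta(\bar\beta)$. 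Hence the total exponent fed into $M$ is at most $\sum_{i:\deg_{\bar\alpha}(i)\ge3}\deg_{\bar\alpha}(i)=\delta(\bar\alpha)$. We also use that $M$ is nondecreasing, which follows from its definition as a running maximum.

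Summing the resulting bound over good $\beta\lneq\alpha$ with weights $\binom{\alpha}{\beta}$ yields
\[
|\kappa_\alpha|\le\lambda^{|\alpha|}M(\delta(\bar\alpha))\sum_{\beta\lneq\alpha\ \text{good}}\binom{\alpha}{\beta}\HH(\beta)=\lambda^{|\alpha|}M(\delta(\bar\alpha))\HH(\alpha).
\]
Here the $\E[xX^\alpha]$ term needs care. If a spare unit is not available, we will note that $\E[xX^\alpha]$ is exactly the $\beta=0$ moment term with $\kappa_0$ replaced by $x$, and we will slightly adjust by writing the recursion as in \cite[Lemma 5.3]{arxiv-version} for $r=2$, whose argument we follow verbatim.
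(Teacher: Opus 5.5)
Your proposal is correct and is essentially the paper's proof. It uses the triangle inequality on the recursion \eqref{eq:cumulant:def}, discards non-good $\beta$ via Lemma~\ref{lem:cumulant:vanishing}, and inducts, with the per-vertex accounting $\delta(\alpha-\beta)+\delta(\bar\beta)\le\delta(\bar\alpha)$ combined with sub-multiplicativity and monotonicity of $M$. The hedge in your last paragraph is unnecessary: since $\kappa_0=\E[x]=0$ (which you already noted), the $\beta=0$ term of the cumulant recursion vanishes. Hence the $\beta=0$ summand $\binom{\alpha}{0}\HH(0)=1$ of $\HH(\alpha)$ is exactly the spare unit absorbing $|\E[xX^\alpha]|\le\lambda^{|\alpha|}M(\delta(\bar\alpha))$, with no double counting and no appeal to the $r=2$ argument.
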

\begin{proof}
We proceed by induction on the number of good subgraphs of $\alpha$. If $\alpha=0$, then $\kappa_0=0$ and the bound holds trivially; if $\alpha$ is non-empty and has no proper good subgraph, then $\cH(\alpha)=\cH(0)=1$. Thus it suffices to show $|\kappa_\alpha|\leq\lambda^{|\alpha|}M(\delta(\bar\alpha))$. Since $\alpha$ has no proper good subgraph, the cumulant recursion~\eqref{eq:cumulant:def} gives $|\kappa_\alpha|\leq\E[|x\,X^\alpha|]$. Using $X_e = \lambda\prod_{i\in e}\theta_i$ and the independence of the $\theta_i$'s, we have $\E[|x\,X^\alpha|] = \lambda^{|\alpha|}\prod_{i\in V(\bar\alpha)}\E[|\pi|^{\deg_{\bar\alpha}i}]$. Since $\alpha$ is good, every vertex of $\bar\alpha$ has degree at least~$2$. The vertices with $\deg_{\bar\alpha}i=2$ contribute $\E[\pi^2]=1$ each. The remaining vertices contribute
\[
	\prod_{\substack{i\in V(\bar\alpha)\\\deg_{\bar\alpha}i\geq 3}}\E[|\pi|^{\deg_{\bar\alpha}i}] \leq M\bigg(\sum_{\substack{i\in V(\bar\alpha)\\\deg_{\bar\alpha}i\geq 3}}\deg_{\bar\alpha}i\bigg) = M(\delta(\bar\alpha))\,,
\]
where the inequality uses the sub-multiplicativity of $M$. This finishes the proof of the base-case.
 
For the inductive step, assume~\eqref{eq:cumulant:recursion:bound} holds for all good $\beta\lneq\alpha$. By a triangle inequality, we have
\begin{equation}\label{eq:cumulant:induction}
	|\kappa_\alpha| \leq \E[|x\,X^\alpha|] + \sum_{\substack{0\lneq\beta\lneq\alpha\\\text{$\beta$ good}}}\binom{\alpha}{\beta}\,\E[|X^{\alpha-\beta}|]\,|\kappa_\beta|\,,
\end{equation}
where we used $\kappa_{\be}=0$ unless $\be$ is good (cf. Lemma~\ref{lem:cumulant:vanishing}). The first term is bounded by $\lambda^{|\alpha|}M(\delta(\bar\alpha))$ as in the base case. For each good $\beta\lneq\alpha$ in the sum, we have
\[
	\E[|X^{\alpha-\beta}|] = \lambda^{|\alpha-\beta|}\prod_{i\in V(\alpha-\beta)}\E[|\pi|^{\deg_{\alpha-\beta}i}] \leq \lambda^{|\alpha-\beta|}\,M(\delta(\alpha-\beta))\,,
\]
where vertices with degree~$1$ contribute $\E[|\pi|]\leq \sqrt{\E[\pi^2]}=1$ and those with degree~$2$ contribute $\E[\pi^2]=1$. By the induction hypothesis, $|\kappa_\beta|\leq\lambda^{|\beta|}M(\delta(\bar\beta))\,\HH(\beta)$. Combining,
\[
	\E[|X^{\alpha-\beta}|]\,|\kappa_\beta| \leq \lambda^{|\alpha|}\,M(\delta(\alpha-\beta))\,M(\delta(\bar\beta))\HH(\beta)\,.
\]
Note that $\delta(\alpha-\beta)+\delta(\bar\beta)\leq\delta(\bar\alpha)$ since for every $i\in V(\bar\alpha)$, we have $\deg_{\bar\alpha}(i) = \deg_{\alpha-\beta}(i) + \deg_{\bar\beta}(i)$. Combining with the sub-multiplicativity of $M(\cdot)$, it follows that $M(\delta(\alpha-\beta))M(\delta(\bar\beta))\leq M(\delta(\bar\alpha))$. Substituting into~\eqref{eq:cumulant:induction} and factoring out $\lambda^{|\alpha|}M(\delta(\bar\alpha))$,
\[
	|\kappa_\alpha| \leq \lambda^{|\alpha|}\,M(\delta(\bar\alpha))\bigg(1+\sum_{\substack{0\lneq\beta\lneq\alpha\\\text{$\beta$ good}}}\binom{\alpha}{\beta}\HH(\beta)\bigg) = \lambda^{|\alpha|}M(\delta(\bar\alpha))\HH(\alpha)\,,
\]
which concludes the proof.
\end{proof}

\begin{proposition}\label{prop:general:PCA:bound:H}
For any good $\alpha$ such that $\alpha\neq 0$, 
\begin{equation}
    \HH(\alpha) \leq (2|\alpha|)^{r|\alpha| - 2|V(\alpha)| + m}\,.
\label{eq:general:PCA:bound:H}
\end{equation}
\end{proposition}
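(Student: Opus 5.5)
We prove \eqref{eq:general:PCA:bound:H} by strong induction on $|\alpha|$, unrolling the recursion \eqref{eq:def:H}. Write $E(\alpha):=r|\alpha|-2|V(\alpha)|+m$ for the exponent. The quantity $E(\alpha)$ is the total degree of $\bar\alpha$ minus twice its number of vertices, i.e.\ $\sum_{i\in V(\bar\alpha)}(\deg_{\bar\alpha}(i)-2)$, using $V(\bar\alpha)=V(\alpha)\cup[m]$. Here we take $V(\alpha)\supseteq[m]$ for good $\alpha\neq0$, since every vertex of $\bar\alpha$ has degree $\ge2$, so each $i\in[m]$ lies in some edge of $\alpha$. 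Hence $E(\alpha)\ge0$ for good $\alpha$.

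The key structural fact we need is monotonicity: if $0\neq\beta\lneq\alpha$ are both good, then $E(\beta)<E(\alpha)$, and in fact $E(\alpha)-E(\beta)\ge1$. Writing $\gamma=\alpha-\beta\neq0$, we have $E(\alpha)-E(\beta)=r|\gamma|-2|V(\alpha)\setminus V(\beta)|$. Every vertex in $V(\alpha)\setminus V(\beta)$ lies only in edges of $\gamma$ and (being outside $[m]\subseteq V(\beta)$) has $\bar\alpha$-degree $\ge2$ coming entirely from $\gamma$. Hence $2|V(\alpha)\setminus V(\beta)|\le r|\gamma|$. Equality would force every edge of $\gamma$ to consist solely of new vertices, and then $\gamma$ would be disconnected from $\bar\beta$, contradicting connectivity of $\bar\alpha$. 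So the gain is at least $1$. We likewise check that the base case $\beta=0$ is consistent, with $E(0)$ treated as $0$ and $\HH(0)=1$.

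With monotonicity in hand, we unroll the recursion. Iterating \eqref{eq:def:H}, $\HH(\alpha)$ equals a sum over chains $0=\beta_0\lneq\beta_1\lneq\cdots\lneq\beta_j=\alpha$ of good graphs of $\prod_t\binom{\beta_{t}}{\beta_{t-1}}$. By monotonicity every chain has length $j\le E(\alpha)+1$, or at most $E(\alpha)$ if we use $E(\beta_1)\ge1$ for nonzero good $\beta_1$, which holds since a cycle-free structure cannot have all degrees $\ge2$. The sum over chains with multinomial weights counts ordered set-partition–like decompositions of the multiset of $|\alpha|$ edges into $j$ nonempty labelled blocks. This count is at most $j^{|\alpha|}$, but that is too weak.

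The main obstacle is to get the bound with exponent $E(\alpha)$ rather than $|\alpha|$. We plan to organize the induction directly instead: assume $\HH(\beta)\le(2|\beta|)^{E(\beta)}\le(2|\alpha|)^{E(\alpha)-s}$ whenever $E(\alpha)-E(\beta)\ge s$. We then bound the weighted number of good $\beta$ with $E(\alpha)-E(\beta)=s$ by roughly $(2|\alpha|)^{s}/2$. The heuristic is that removing $\gamma$ while keeping $\beta$ good means $\gamma$ is built from "ears" attached to $\bar\beta$. Each unit of exponent deficit corresponds to an attachment point, and choosing which edges of $\alpha$ go into $\gamma$ is determined by choosing at most $s$ attachment edges among $|\alpha|$, with the binomial $\binom{\alpha}{\beta}$ absorbed into a factor $|\alpha|^{s}$. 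Summing $\sum_{s\ge1}2^{-s}\le1$ then closes the induction, with the extra factor $2$ in the base $2|\alpha|$ providing the geometric slack. Making this ear-counting precise, namely showing that good $\beta$ with deficit $s$ are determined by $O(s)$ edge choices, is where I expect the real work, and I would first try it for $r=2$ following \cite[Lemma 5.3]{arxiv-version} and then adapt it to hyperedges.
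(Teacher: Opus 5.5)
Your monotonicity step is correct. It is exactly the paper's Lemma~\ref{lem:good:peeling}, which is stated there for $\beta=\alpha_\star(e)$, but whose proof, like yours, works for any nonzero good $\beta\lneq\alpha$.

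The gap is that the bound on $\HH(\alpha)$ is never actually proved. Write $p(\alpha)=r|\alpha|-2|V(\alpha)|+m$ for your exponent, so that $E(\alpha)$ can denote the edge multiset. Let $N_s$ be the weighted count $\sum\binom{\alpha}{\beta}$ over good $\beta\neq0$ with $p(\alpha)-p(\beta)=s$.

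\begin{itemize}
\item Your induction closes only if $N_s\le|\alpha|^s$. The figure $(2|\alpha|)^s/2$ you state would contribute $(2|\alpha|)^{p(\alpha)}/2$ at every level $s$, leaving an extra factor $p(\alpha)/2$.
\item No argument for $N_s\le|\alpha|^s$ is given; the ``ear'' heuristic is precisely the step you flag as the real work.
\item The natural rigorous version does not reach it. Encode $\gamma=\alpha-\beta$ by its edge-slots lying in $V(\beta)$. These determine $\gamma$: explore from them through $U=V(\alpha)\setminus V(\beta)$, using connectivity of $\bar\alpha$ and $U\cap[m]=\emptyset$. There are at most $s$ such slots, since $s=b+\sum_{u\in U}(\deg_\gamma u-2)$ with $b$ their number. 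This gives only $N_s\le\sum_{b\le s}\binom{r|\alpha|}{b}$.
\item That count already exceeds $|\alpha|^s$ at $s=1$. It closes the induction only with a base of the form $C_r|\alpha|$. This would suffice for Theorem~\ref{thm:general:PCA}, but it is not the stated bound.
\end{itemize}

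Separately, your aside that $p(\beta_1)\ge1$ for every nonzero good $\beta_1$ is false. For $r=m=2$ and $\beta_1=\mathbf 1_{\{1,2\}}$, $\bar\beta_1$ is a double edge and $p(\beta_1)=0$; the added hyperedge $\mathbf 1_{[m]}$ always supplies a cycle.

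The missing idea is to peel a single edge rather than count subgraphs by deficit.
\begin{itemize}
\item Any proper $\beta\lneq\alpha$ misses some copy of an edge, so $\binom{\alpha}{\beta}\le\sum_{e\in E(\alpha)}\binom{\alpha-\mathbf 1_e}{\beta}$, with the sum over the multiset.
\item Every good $\beta\le\alpha-\mathbf 1_e$ lies below the maximal good subgraph $\alpha_\star(e)$ of $\alpha-\mathbf 1_e$, since a union of good graphs is good. Maximality also gives $\binom{\alpha-\mathbf 1_e}{\beta}=\binom{\alpha_\star(e)}{\beta}$.
\item Applying the recursion \eqref{eq:def:H} to $\alpha_\star(e)$ gives $\sum_{0\lneq\beta\le\alpha_\star(e),\ \beta\text{ good}}\binom{\alpha_\star(e)}{\beta}\HH(\beta)=2\HH(\alpha_\star(e))-1$.
\item Hence $\HH(\alpha)\le 2\sum_{e\in E(\alpha)}\HH(\alpha_\star(e))$ whenever some $\alpha_\star(e)\neq0$.
\item Your monotonicity, applied to $\beta=\alpha_\star(e)$, gives $p(\alpha_\star(e))\le p(\alpha)-1$. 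Induction then yields $\HH(\alpha)\le2|\alpha|\,(2|\alpha|)^{p(\alpha)-1}$.
\end{itemize}
So the base $2|\alpha|$ is $|\alpha|$ choices of one removed edge times the factor $2$ from that identity. All the remaining combinatorics is absorbed into $\HH(\alpha_\star(e))$, so no count of subgraphs by deficit is ever needed.
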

The proof of Proposition~\ref{prop:general:PCA:bound:H} parallels the $r=2$ case established in~\cite[Lemma~5.4]{arxiv-version}. The main new ingredient for $r\geq 3$ is the following lemma, which controls how the quantity $r|\al|-2|V(\al)|+m$ decreases when an edge is removed.
\begin{lemma}\label{lem:good:peeling}
Let $\alpha$ be non-zero and good. Fix an edge $e\in E(\al)$, and let
$\alpha_\star(e)$ be the maximal good subgraph of $\alpha-\mathbf 1_e$,
obtained as the union of all good subgraphs of $\alpha-\mathbf 1_e$.
If $\alpha_\star(e)\neq 0$, 
\[
    r|\alpha_\star(e)|-2|V(\alpha_\star(e))|+m
    \le
    r|\alpha|-2|V(\alpha)|+m-1\,.
\]
\end{lemma}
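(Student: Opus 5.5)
The plan is to rewrite the quantity $r|\beta|-2|V(\beta)|+m$ as a sum of vertex excess degrees, and then use connectivity of $\bar\alpha$ to force a strict drop when passing from $\alpha$ to $\alpha_\star(e)$.

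\textbf{Step 1: vertex-sum form.} For a good nonzero $\beta$, every vertex of $\bar\beta$ has degree at least $2$. Each $i\in[m]$ receives degree $1$ from the added $m$-hyperedge, so it must also have degree at least $1$ in $\beta$. Hence $[m]\subseteq V(\beta)$ and $V(\bar\beta)=V(\beta)$. Since $\sum_{i}\deg_{\bar\beta}(i)=r|\beta|+m$, we get
\[
\Phi(\beta):=r|\beta|-2|V(\beta)|+m=\sum_{i\in V(\beta)}\big(\deg_{\bar\beta}(i)-2\big),
\]
and every summand is nonnegative. This applies to both $\alpha$ and $\alpha_\star:=\alpha_\star(e)\neq 0$. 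The maximal good subgraph is well defined: a union (coordinatewise max) of good subgraphs is again good, because each $\bar\beta$ contains the $m$-edge, so the union stays connected and degrees only increase.

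\textbf{Step 2: decompose the difference.} Since $\alpha_\star\le\alpha-\mathbf 1_e\le\alpha$, we have $V(\alpha_\star)\subseteq V(\alpha)$ and $\deg_{\bar\alpha_\star}(i)\le\deg_{\bar\alpha}(i)$ for all $i$. Therefore
\[
\Phi(\alpha)-\Phi(\alpha_\star)=\sum_{i\in V(\alpha_\star)}\big(\deg_{\bar\alpha}(i)-\deg_{\bar\alpha_\star}(i)\big)+\sum_{i\in V(\alpha)\setminus V(\alpha_\star)}\big(\deg_{\bar\alpha}(i)-2\big).
\]
Both sums are nonnegative, the second because $\alpha$ is good. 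It suffices to show the first sum is at least $1$.

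\textbf{Step 3: strictness via connectivity (the main point).} Suppose the first sum vanishes. Then every vertex of $V(\alpha_\star)$ keeps its full $\bar\alpha$-degree in $\bar\alpha_\star$. Consequently, every hyperedge of $\bar\alpha$ meeting $V(\alpha_\star)$ appears in $\bar\alpha_\star$ with its full multiplicity, and so all its vertices lie in $V(\alpha_\star)$. Now $\alpha_\star$ has strictly fewer copies of $e$ than $\alpha$, so $e$ does not meet $V(\alpha_\star)$. Thus the vertices of $e$ lie in the nonempty set $V(\alpha)\setminus V(\alpha_\star)$, while $V(\alpha_\star)\supseteq[m]$ is also nonempty.

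Since $\bar\alpha$ is connected, some hyperedge $f$ of $\bar\alpha$ meets both $V(\alpha_\star)$ and its complement. This contradicts the previous observation. The case $f=[m]$ is harmless, since $[m]\subseteq V(\alpha_\star)$ and so it never crosses. Hence the first sum is at least $1$, giving $\Phi(\alpha_\star)\le\Phi(\alpha)-1$. The only delicate point is bookkeeping edge multiplicities, specifically that the full-degree condition forces all copies of each incident edge into $\alpha_\star$, and this is what the degree accounting gives directly.
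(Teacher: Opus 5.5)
Your proof is correct and is essentially the paper's argument. Write $\beta=\alpha_\star(e)$. Your first sum $\sum_{i\in V(\beta)}\deg_{\alpha-\beta}(i)$ is what the paper lower-bounds by the number $b$ of edges of $\alpha-\beta$ meeting $V(\beta)$, and your second sum is the paper's bound $\deg_{\alpha-\beta}(u)=\deg_{\bar\alpha}(u)\ge 2$ for $u\in V(\alpha)\setminus V(\beta)$. In both proofs the strict drop comes from connectivity of $\bar\alpha$, which rules out the nonzero $\alpha-\beta$ being vertex-disjoint from $V(\beta)\supseteq[m]$.
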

\begin{proof}
Write $\beta=\alpha_\star(e)$. Note that since $\be\leq \al-\In_e$, we have $\al-\be\neq 0$. Also, since $\beta$ is non-zero
and good, every vertex in $[m]$ belongs to $V(\beta)$: otherwise that
vertex would have degree exactly one in $\bar\beta$, coming from
the added hyperedge $\mathbf 1_{[m]}$, contradicting goodness. Similarly,
$[m]\subseteq V(\alpha)$. Let
\[
    U:=V(\alpha)\setminus V(\beta)=V(\al-\be)\setminus V(\be)\,.
\]
Since $U\cap [m]=\emptyset$, the degree of any $u\in U$ in
$\bar\alpha$ equals its degree in $\alpha$, and this degree is at least
$2$ because $\alpha$ is good. Also since $u\notin V(\beta)$, all edges of
$\alpha$ incident to $u$ belong to $\al-\be$. Hence $\deg_{\al-\be}u\geq 2$ for all $u\in U$. Let $b$ denote the number of edges $e\in E(\al-\be)$ such that one of its vertices $v\in V(e)$ is contained in $V(\be)$. Then
\[
    r|\al-\be|
    \geq
    b+\sum_{u\in U}\deg_{\al-\be}u
    \ge
    b+2|U|=2(|V(\al)-V(\be)|)+b\,.
\]
It remains to show that $b\ge 1$. If $b=0$, then $V(\al-\be)\cap V(\be)=\emptyset$. Since $[m]\subseteq V(\be)$ and $\al-\be\neq0$, this implies that the graph $\bar\alpha$ has at least one nonempty
component supported outside $V(\beta)$ and another component containing
$\bar\beta$, contradicting the connectedness of $\bar\alpha$. Therefore $b\ge 1$, concluding the proof.
\end{proof}
\begin{proof}[Proof of Proposition~\ref{prop:general:PCA:bound:H}]
For a good graph $\be$, write
\[
    p(\be):=r|\be|-2|V(\be)|+m\,.
\]
If $\be$ is non-zero and good, then $p(\be)\ge 0$, since every vertex
of $\bar\be$ has degree at least $2$:
\begin{equation}\label{eq:good:edges:vertices}
    r|\be|
    =
    \sum_{i\in V(\be)}\deg_\be i
    =
    -m+\sum_{i\in V(\be)}\deg_{\bar\be} i
    \ge
    -m+2|V(\be)|\,.
\end{equation}
We prove the claim by induction on the number of good subgraphs of
$\alpha$. If $\alpha$ is non-zero and has
no non-zero proper good subgraph, then $\cH(\al)=\cH(0)=1$, thus the base-case holds. For the induction step, suppose $\alpha$ has at least one non-zero proper good subgraph, and
assume the bound holds for all proper good subgraphs of $\alpha$. Note that using the recursive definition of $\cH$ (cf.~\eqref{eq:def:H}), we have
\[
\begin{split}
    \HH(\alpha)=
    1+ \sum_{\substack{0\lneq \be\lneq \al\\\textnormal{$\be$ is good}}}\binom{\al}{\be}\HH(\be)\leq 1+\sum_{e\in E(\al)}\,\sum_{\substack{0\lneq \be\leq \al-\In_e\\\textnormal{$\be$ is good}}}\binom{\al-\In_e}{\be}\HH(\be)\,,
\end{split}
\]
where the final inequality holds by viewing $\sum_{\be}\binom{\al}{\be}$ as sum over subsets of the edge (multi-)set $E(\al)$. For each $e\in E(\al)$, write $\al_\star(e)$ to be the maximal good subgraph of $\al-\In_e$. If $\al_\star(e)=0$, the corresponding inner sum is zero. If $\al_\star(e)\neq 0$, then every good
$\beta\le \alpha-\mathbf 1_e$ satisfies $\beta\le \al_\star(e)$. Moreover, $\binom{\al-\In_e}{\be}=\binom{\al_\star(e)}{\be}$ holds since $\al-\In_e$ and $\al_\star(e)$ can only differ on hyperedges $e'$ where $(\al_\star(e))_{e'}=0$ by the maximality of $\al_\star(e)$. As a result,
\begin{equation}\label{eq:bound:HH:intermediate}
\HH(\al)\leq 1+\sum_{\substack{e\in E(\al)\\\al_\star(e)\neq 0}}\sum_{0\lneq \be \leq \al_\star(e)} \binom{\al_\star(e)}{\be}\HH(\be)=1+\sum_{\substack{e\in E(\al)\\\al_\star(e)\neq 0}}(2\HH(\al_\star(e))-1)\,,
\end{equation}
where the last identity uses $2\HH(\al_\star(e))=\sum_{\be\leq \al_\star(e)}\binom{\al_\star(e)}{\be}\HH(\be)$ from the definition of $\HH(\cdot)$ (cf.~\eqref{eq:def:H}). By Lemma~\ref{lem:good:peeling}, for all $e\in E(\al)$ with $\al_\star(e)\neq 0$, we have $p(\al_\star(e))\leq p(\al)-1$. Thus, applying inductive hypothesis to $\al_\star(e)$ yields
\[
\HH(\al_\star(e))\leq (2|\al_\star(e)|)^{p(\al_\star(e))}\leq (2|\al|)^{p(\al)-1}\,.
\]
Applying this bound to the RHS of \eqref{eq:bound:HH:intermediate} gives
\[
\HH(\al)\leq 2\sum_{\substack{e\in E(\al)\\\al_\star(e)\neq 0}} \HH(\al_\star(e))\leq 2|\al|\cdot (2|\al|)^{p(\al)-1}=(2|\al|)^{p(\al)}\,,
\]
which concludes the proof.
\end{proof}
\begin{lemma}\label{lem:count:good:general:PCA}
For any $k,\ell\geq 1$, such that $m\leq k\leq r\ell$, the number of good graphs $\alpha\in\N^{\mc}$ with
$|V(\alpha)|=k$ and $|\alpha|=\ell$ is at most
\[
    \left(\frac{Cn}{k}\right)^{k-m}(Ck^{r-1})^\ell\,,
\]
where $C \equiv C(r,m)>0$.
\end{lemma}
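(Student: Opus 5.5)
The count splits into two factors: choose the vertex set, then choose the multiset of edges on it.

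\textbf{Vertex set.} If $\alpha$ is non-zero and good, then $[m]\subseteq V(\alpha)$. Otherwise some $i\in[m]$ would have degree exactly one in $\bar\alpha$, coming only from the added hyperedge $\In_{[m]}$, which contradicts goodness. This is the same observation used in the proof of Lemma~\ref{lem:good:peeling}. So $V(\alpha)=[m]\sqcup U$ with $|U|=k-m$ and $U\subseteq[n]\setminus[m]$. The number of choices is
\[
\binom{n-m}{k-m}\le\Big(\frac{e n}{k-m}\Big)^{k-m}\le\Big(\frac{Cn}{k}\Big)^{k-m}.
\]
The last step uses $(k/(k-m))^{k-m}\le e^{m}$, with the convention that the factor is $1$ when $k=m$. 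The resulting constant depends only on $m$.

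\textbf{Edge multiset.} Fix a labeled vertex set of size $k$. Count $r$-uniform multi-hypergraphs on it with $\ell$ edges, self-loops allowed, and drop connectivity and the degree conditions since this is only an upper bound. The number of available multi-edges is $\binom{k+r-1}{r}\le (Ck)^r$, where $C=C(r)$ and $k\ge1$. A multiset of size $\ell$ from $N'$ types numbers $\binom{N'+\ell-1}{\ell}\le (e(N'+\ell)/\ell)^{\ell}$. To get $k^{r-1}$ rather than $k^r$ per edge, use $\ell\ge k/r$, which follows from $k\le r\ell$. This gives
\[
\frac{N'+\ell}{\ell}\le \frac{(Ck)^r}{k/r}+1\le C'k^{r-1}.
\]
The edge count is therefore at most $(Ck^{r-1})^{\ell}$.

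\textbf{Conclusion.} Multiplying the two factors gives the bound $\left(\frac{Cn}{k}\right)^{k-m}(Ck^{r-1})^\ell$ with $C=C(r,m)$. The only points needing care are the $k^{r-1}$ exponent, which is exactly the step above using $\ell\ge k/r$, and the small-$k$ edge cases $k=m$ or $k-m$ small. Both are absorbed into the constant, so no step is a real obstacle.
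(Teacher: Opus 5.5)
Your proof is correct and follows the paper's argument essentially step for step: goodness forces $[m]\subseteq V(\alpha)$, the vertex choices are bounded by $\binom{n-m}{k-m}\le (Cn/k)^{k-m}$, and the edge multisets are bounded by stars and bars over $\binom{k+r-1}{r}$ edge types, with a linear lower bound $\ell\ge ck$ converting $k^r/\ell$ into $k^{r-1}$. The only cosmetic difference is where that lower bound comes from: you read $\ell\ge k/r$ directly off the hypothesis $k\le r\ell$, whereas the paper derives $\ell\ge ck$ from the goodness inequality $r\ell+m\ge 2k$, and either suffices.
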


\begin{proof}
Since $\alpha$ is good and nonzero, $[m]\subseteq V(\alpha)$. Thus the number of ways to choose the vertex set is at most 
\[
    \binom{n-m}{k-m}
    \le
    \left(\frac{e(n-m)}{k-m}\right)^{k-m}
    \le
    \left(\frac{Cn}{k}\right)^{k-m}\,,
\]
where the first bound is Stirling's approximation (when $k=m$, $((n-m)/(k-m))^{k-m}$ is understood as $1$), and in the last step we used that $m$ is fixed, so $k-m$ is comparable to
$k$ after increasing $C \equiv C(r,m)$ to handle the finitely many cases $m\le k<2m$. On a fixed set of $k$ vertices, the number of $r$-uniform multi-edge types is $ N_k=\binom{k+r-1}{r}\le C_r k^r $. Hence, by a stars and bars counting, the number of multi-hypergraphs with $\ell$ edges
on this vertex set is at most $\binom{N_k+\ell-1}{\ell}
    \le
    \left(\frac{Ck^r}{\ell}+C\right)^\ell$, increasing $C$ if needed.
Since goodness implies $r\ell+m\ge 2k$ (cf. \eqref{eq:good:edges:vertices}), and $m$ is fixed, we have
$\ell\ge c k$ for some $c \equiv c(r,m)>0$ after adjusting constants for finitely many small $k$.
Therefore, we conclude that $\left(\frac{Ck^r}{\ell}+C\right)^\ell
    \le
    (C'k^{r-1})^\ell$ for some $C' \equiv C'(r,m)$, as stated.
\end{proof}

\subsection{Proof of Theorem~\ref{thm:general:PCA}}
\begin{proof}[Proof of Theorem~\ref{thm:general:PCA}]
Throughout, we let $C,C'$ denote constants depending only on the parameters $r, m,K,\nu$ which may change from line to line. Applying the bound on $|\kappa_{\al}|$ from Lemma~\ref{lem:PCA:cumulant:bound} to the cumulant bound~\eqref{eq:correlation:cumulant:bound} yields
\begin{equation}
    \Corr_{\leq D}^{2} \leq \sum_{\substack{|\alpha| \leq D \\ \text{$\alpha$ is good}}} \frac{\kappa_{\alpha}^{2}}{\alpha!} \leq \sum_{\substack{1\leq |\alpha| \leq D \\ \text{$\alpha$ is good}}} \lambda^{2|\alpha|}M(\delta(\bar{\alpha}))^{2}\HH(\alpha)^{2}\,,
\label{eq:general:PCA:lower:bound:intermediate:step}
\end{equation}
where we used $\ka_0=\E[x]=0$. Note that for any good $\al$, we can bound $\delta(\bar\alpha)$ as
\[
    \delta(\bar{\alpha}) = \sum_{\substack{i \in V(\alpha) \\ \deg_{\bar{\alpha}} i \geq 3}} \deg_{\bar{\alpha}} i \leq \sum_{\substack{i \in V(\alpha) \\ \deg_{\bar{\alpha}} i \geq 3}} 3(\deg_{\bar{\alpha}} i - 2) = \sum_{i \in V(\alpha)} 3(\deg_{\bar{\alpha}} i - 2) = 3(r|\alpha| - 2|V(\alpha)| + m)\,,
\]
where the second equality holds since every vertex in $\bar\al$ has degree at least $2$ by Definition~\ref{def:general:PCA:good}.
Consider any good $\alpha$ with $k$ vertices and $\ell$ edges. For such $\al$, the preceding shows $r\ell \geq 2k-m$, and
\[
    M(\delta(\bar{\alpha})) \leq M(3(r\ell - 2k + m)) \leq (C(r\ell - 2k + m))^{3\nu(r\ell - 2k + m)} \leq (CD)^{3\nu(r\ell - 2k + m)}\,,
\]
where the second inequality holds by our assumption
$\E[|\pi|^{t}]\leq (Kt)^{\nu t}$. Moreover, by Proposition~\ref{prop:general:PCA:bound:H}
\[
    \HH(\alpha) \leq (CD)^{r\ell - 2k + m}\,.
\]
Collecting these estimates together and using Lemma~\ref{lem:count:good:general:PCA} to bound the number of good graphs with $k$ vertices and $\ell$ edges, we have
\[
\begin{aligned}
    \Corr_{\leq D}^2& \leq\sum_{k = m}^{rD} \sum_{\ell \geq \frac{1}{r}(2k - m)} \lPa \frac{Cn}{k} \rPa^{k - m}(Ck^{r - 1})^{\ell} \lambda^{2\ell}(CD)^{2(r\ell - 2k + m)}(CD)^{6\nu(r\ell - 2k + m)} \\
    &= \sum_{k = m}^{rD} \lPa \frac{Cn}{k} \rPa^{k - m} (Ck^{r - 1})^{\frac{2k-m}{r}}\lambda^{\frac{2(2k - m)}{r}} \sum_{\Delta \geq 0} \left(Ck^{r-1} \lambda^2(CD)^{(6\nu + 2)r}\right)^{\frac{\Delta}{r}}\,,
\end{aligned}
\]
where we made the reparameterization $\Delta = r\ell - 2k + m$ (i.e. $\ell=\frac{\Delta+2k-m}{r}$) in the second line. Since $k\leq rD$, the summation over $\Delta$ is at most $\sum_{\Delta \geq 0} \left(C\lambda^2D^{(6\nu + 3)r}\right)^{\frac{\Delta}{r}}$. By assumption, we have $\la D^{(6\nu+3)r/2}\leq 1/C'$, so taking $C' \equiv C'(r,m,K,\nu)$ sufficiently large, this sum is at most $2$. Therefore,
\[
\begin{aligned}
    \Corr_{\leq D}^2 \leq 2\sum_{k = m}^{rD} \lPa \frac{Cn}{k} \rPa^{k - m} (Ck^{r - 1})^{\frac{2k-m}{r}}\lambda^{\frac{2(2k - m)}{r}}= 2\lPa \frac{1}{Cn} \rPa^{\frac{m}{2}} \sum_{k = m}^{rD} k^{\frac{m}{2}} \lPa C^{\frac{r+2}{r}} n \lambda^{\frac{4}{r}}k^{\frac{r - 2}{r}} \rPa^{k - \frac{m}{2}}\,.
\end{aligned}
\]
By assumption, we have $\lambda n^{r/4}D^{(r-2)/4}\leq 1/C'$, so by choosing $C'$ large enough, $C^{\frac{r+2}{r}} n \lambda^{\frac{4}{r}}k^{\frac{r - 2}{r}}\leq 1/2$ for all $k\leq rD$. Thus,
\[
    \Corr^2_{\leq D}\leq  2\lPa \frac{1}{Cn} \rPa^{\frac{m}{2}}\sum_{k\geq 1} k^{\frac{m}{2}}\left(\frac{1}{2}\right)^{k-\frac{m}{2}}=C'n^{-\frac{m}{2}}\,,
\]
where $C'>0$ only depends on $r,m,K,\nu$. This concludes the proof.
\end{proof}

\section{Estimation upper bounds}\label{sec:upper:bounds}
\subsection{Sparse tensor PCA}\label{subsec:sparse:PCA:upper:bound}
This section is devoted to the proof of Theorem~\ref{thm:sparse:PCA}-(b). As in the lower bound, we work with a noise-reduced symmetric tensor $Z=(Z_e)$ whose entries are independent, up to symmetry, with distribution $Z_e \sim \cN(0, 1)$ for $e \in \mc$. Our estimator depends only on entries with distinct indices, and thus any results proved for this estimator under the noise-reduced model transfer directly to the noise-inflated model.

\subsubsection{Constructing the estimator}\label{subsec:constructing:tree:estimator}
Our candidate estimator is based on a weighted sum of the number of trees of a prescribed size and structure. Throughout, we assume
\begin{equation}\label{eq:thm:PCA:b:assumptions}
    \frac{en^{r - 1} \rho^{2r - 2} \lambda^{2}}{(r - 2)!} \geq 1 + \eps, \quad \rho = \omega(n^{-1} \log^{6 + 3/(r-1)}{n}), \quad \rho = o(\log^{-6(r-1) - 3}{n})
\end{equation}
and fix this $\eps>0$ throughout this section. We also let
\begin{equation}\label{eq:choice:k:ell}
    \ell = \left\lceil \frac{4}{\eps}\log(1/\rho)\right\rceil\,,\qquad k= (r-1)\ell+1\,,
\end{equation}
and let $\sT \equiv \sT_{\ell}$ (see Definition~\ref{def:special:trees}). Recall that any $\alpha \in \sT$ is a tree, rooted at vertex $1$, consisting of two disjoint rooted trees whose roots are attached to the distinguished
vertex $1$ by two root-incident edges. Note that every such $\alpha$ has exactly $2\ell + 2$ edges and $(r-1)(2\ell+2)+1=2k + 2r - 3$ vertices.

Write $C \equiv C(r,\eps)$ to denote a constant depending only on $r$ and $\eps$ which may vary from line to line. We prove Theorem~\ref{thm:sparse:PCA}-(b) by showing that the polynomial $f$ defined by
\[
    f(Y) = \sum_{\alpha \in \sT} Y^{\alpha}
\]
achieves $1-o(1)$ correlation. As a first step, compute $\E[Y^{\al}x]$  by first integrating out the independent Gaussian noise: 
\[
    \E[Y^{\alpha}x] = \E[(X + Z)^{\alpha} \theta_{1}] = \E[X^{\alpha} \theta_{1}] = \lambda^{|\alpha|} \E \lBr \theta_{1} \prod_{i \in V(\alpha)} \theta_{i}^{\deg_{i}} \rBr = \lambda^{|\alpha|} \prod_{i \in V(\alpha)} \E[\theta_{i}] = \lambda^{|\alpha|} \rho^{|V(\alpha)|}\,,
\]
where we used that the entries of $\theta$ take values in $\{0, 1\}$. Thus, 
\begin{equation}
    \E[xf(Y)] = \sum_{\alpha \in \sT} \E[Y^{\alpha}x] = |\sT| \rho^{2k + 2r - 3} \lambda^{2\ell + 2}\,.
\label{eq:sparse:PCA:upper:bound:numerator}
\end{equation}
It remains to show the second moment is of comparable order to~\eqref{eq:sparse:PCA:upper:bound:numerator}. Expand into three cases: 
\begin{equation}
    \E[f(Y)^{2}] = \sum_{\alpha \in \sT} \E[Y^{2\alpha}] + \sum_{\substack{\alpha, \beta \in \sT \\ \alpha \cap \beta = 0}} \E[Y^{\alpha + \beta}] + \sum_{\substack{\alpha, \beta \in \sT \\ \alpha \cap \beta \neq 0, \alpha \neq \beta}} \E[Y^{\alpha + \beta}]\,.
\label{eq:sparse:PCA:upper:bound:second:moment}
\end{equation}
We prove Theorem~\ref{thm:sparse:PCA}-(b) by showing that this second is asymptotic to $\Gamma$, where
\begin{equation}\label{eq:def:Gamma}
\Gamma := \frac{\E[xf(Y)]^{2}}{\E[x^{2}]} = |\sT|^{2} \rho^{4k + 4r - 7} \lambda^{4\ell + 4}\,.
\end{equation}
Towards this end, we first estimate the size of the set $\sT$. To form a tree $\alpha \in \sT$, we can first choose two disjoint vertex sets of size $k$ from $\{2,\hdots,n\}$ and form a rooted tree on each. The number of ways to do this is exactly (see Lemma~\ref{lem:rooted:forests})
\[
    \binom{n - 1}{k} \binom{n - k - 1}{k}R_{k, 1}^{2} = \frac{R_{k, 1}^{2}}{k!^{2}} \frac{(n - 1)!}{(n - 2k - 1)!}\,.
\]
Next, the $2(r-2)$ non-root leaf vertices in the two root-incident edges are chosen, giving
\[
    \frac{1}{2}\binom{n - 1 - 2k}{r - 2} \binom{n - 2k - r + 1}{r - 2} = \frac{1}{2(r - 2)!^{2}} \frac{(n - 2k - 1)!}{(n - 2k - 2r + 3)!}
\]
possibilities. Combining these two counts, we obtain $|\sT| = \frac{1}{2(r - 2)!^{2}} \frac{(n - 1)!}{(n - 2k - 2r + 3)!} \lPa \frac{R_{k, 1}}{k!} \rPa^{2}$. From the assumptions~\eqref{eq:thm:PCA:b:assumptions}, we have $\rho = \widetilde{\omega}(1/n)$ which implies $k=O_\eps(\log{n})$ and consequently $\frac{(n - 1)!}{(n - 2k - 2r + 3)!}=(1-o(1))n^{2k + 2r - 4}$. With $2k + 2r - 4 = (r - 1)(2\ell + 2)$, the preceding gives the asymptotic formula
\begin{equation}
    |\sT| = (1 - o(1)) \frac{n^{(r - 1)(2\ell + 2)}}{2(r - 2)!^{2}} \lPa \frac{R_{k, 1}}{k!} \rPa^{2}\,.
\label{eq:asymptotic:tree:counting}
\end{equation}
To bound the second moment, we show that
\begin{equation}
    \sum_{\substack{\alpha, \beta \in \sT \\ \alpha \cap \beta = 0}} \E[Y^{\alpha + \beta}] = (1+o(1)) \Gamma\,.
\label{eq:sparse:PCA:disjoint:trees}
\end{equation}
For any $\alpha, \beta \in \sT$ with $\alpha \cap \beta = \emptyset$ and $v = |V(\alpha) \cap V(\beta) \setminus \{1\}|$ shared non-root vertices, 
\[
    \E[Y^{\alpha + \beta}] = \E[Y^{\alpha \triangle \beta}] = \lambda^{|\alpha \triangle \beta|} \rho^{|V(\alpha \triangle \beta)|} = \lambda^{4\ell + 4} \rho^{4k + 4r - v - 7}\,.
\]
To count the total number of such pairs, we may first choose a set of $v$ shared vertices; then choose two disjoint vertex sets from the remaining vertices of $\alpha$ and $\beta$ are drawn. The number of choices is at most, 
\[
\begin{aligned}
    \binom{n - 1}{v} \binom{n - 1 - v}{2k + 2r - 4 - v} \binom{n - 2k - 2r + 3}{2k + 2r - 4 - v} &= \frac{1}{v!(2k + 2r - 4 - v)!^{2}} \frac{(n - 1)!}{(n - 4k - 4r + 7 + v)!} \\
    & \leq \frac{n^{(r - 1)(4\ell + 4) - v}}{v!(2k + 2r - 4 - v)!^{2}}\,.
\end{aligned}
\]
Once the vertex sets have been selected, we partition the vertices into two disjoint subsets, each of size $k+r-2$, corresponding to its two rooted subtrees together with its root-incident edge. Within each subset, we choose $r-2$ leaf vertices to be incident to the root, and form a tree on the remaining $k$ vertices. The number of ways to do this is
\[
    \lPa \frac{1}{2} \binom{2k + 2r - 4}{k + r - 2} \binom{k + r - 2}{k}^{2}R_{k, 1}^{2} \rPa^{2} = \lPa \frac{(2k + 2r - 4)!}{2(r - 2)!^{2}} \lPa \frac{R_{k, 1}}{k!} \rPa^{2} \rPa^{2}\,.
\]
Combining this count with the calculation for $\E[Y^{\alpha + \beta}]$ gives
\[
\begin{aligned}
    \sum_{\substack{\alpha, \beta \in \sT \\ \alpha \cap \beta = 0}} \E[Y^{\alpha + \beta}] & \leq \sum_{v \geq 0} \frac{\lambda^{4\ell + 4} \rho^{4k + 4r - v - 7}n^{(r - 1)(4\ell + 4) - v}}{v!(2k + 2r - 4 - v)!^{2}} \lPa \frac{(2k + 2r - 4)!}{2(r - 2)!^{2}} \lPa \frac{R_{k, 1}}{k!} \rPa^{2} \rPa^{2} \\
    &= \lambda^{4\ell + 4} \rho^{4k + 4r - 7} \lPa \frac{n^{(r - 1)(2\ell + 2)}}{2(r - 2)!^{2}} \lPa \frac{R_{k, 1}}{k!} \rPa^{2} \rPa^{2} \sum_{v \geq 0} \frac{(n \rho)^{-v}}{v!} \lPa \frac{(2k + 2r - 4)!}{(2k + 2r - 4 - v)!} \rPa^{2}\,.
\end{aligned}
\]
The prefactor outside the summation is $(1+o(1)) \Gamma$. Using $\frac{(2k + 2r - 4)!}{(2k + 2r - 4 - v)!} \leq (2k + 2r - 4)^{v} \leq (Ck)^{v}$, the last summation on the RHS is bounded by, 
\[
    \sum_{v \geq 0} \frac{1}{v!} \lPa \frac{Ck^{2}}{n \rho} \rPa^{v} \leq \exp \lPa \frac{Ck^{2}}{n\rho} \rPa = 1 + o(1)\,,
\]
where the last step uses $k^{2} = o(n\rho)$ when $\rho = \omega(n^{-1} \log^{6+3/(r-1)}{n})$. The preceding implies~\eqref{eq:sparse:PCA:disjoint:trees}, as claimed. Next we show that, 
\begin{equation}
    \sum_{\alpha \in \sT} \E[Y^{2\alpha}] = o(\Gamma)\,.
\label{eq:sparse:PCA:identical:copies}
\end{equation}
For the purpose of reusing calculations for the planted dense subhypergraph model, define 
\begin{equation}\label{eq:def:eta:zeta}
\eta := \lambda^{2}\,,\qquad \zeta := \rho^{r-1}\eta\,.
\end{equation}
We will require an estimate on the second moment of a single tree: 
\begin{lemma}\label{lem:single:tree:second:moment}
For any $\alpha \in \sT$, 
\begin{equation*}
    \E[Y^{2\alpha}] \leq (1 + \zeta)^{|\alpha|}
\end{equation*}
\end{lemma}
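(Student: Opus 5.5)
We need $\E[Y^{2\alpha}]\le(1+\zeta)^{|\alpha|}$, where $Y=\lambda\theta^{\otimes r}+Z$ (noise-reduced), $\alpha\in\sT$ is a simple hypertree with distinct-index edges, and $\zeta=\rho^{r-1}\lambda^2$.

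The plan is to condition on $\theta$ and integrate out the Gaussian noise edge by edge. Since $\alpha$ is a simple hypergraph, $Y^{2\alpha}=\prod_{e\in E(\alpha)}Y_e^2$. Given $\theta$, the $Y_e$ are independent Gaussians with mean $\lambda\theta^{V(e)}$ and unit variance. Hence
\[
\E\nolimits_\theta[Y^{2\alpha}]=\prod_{e\in E(\alpha)}\bigl(1+\lambda^2\theta^{V(e)}\bigr)=\sum_{\beta\le\alpha}\lambda^{2|\beta|}\theta^{V(\beta)},
\]
using $\theta_i^2=\theta_i$, so that products of the indicators $\theta^{V(e)}$ over a subgraph $\beta$ collapse to $\theta^{V(\beta)}$. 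Taking expectation over $\theta$ gives
\[
\E[Y^{2\alpha}]=\sum_{\beta\le\alpha}\lambda^{2|\beta|}\rho^{|V(\beta)|}.
\]

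It then suffices to show $\rho^{|V(\beta)|}\le\rho^{(r-1)|\beta|}$ for every subgraph $\beta$ of the hypertree $\alpha$. Since $\rho\le1$, this amounts to the vertex inequality $|V(\beta)|\ge(r-1)|\beta|$. Given that, the sum is at most
\[
\sum_{\beta\le\alpha}\zeta^{|\beta|}=(1+\zeta)^{|\alpha|}
\]
by the binomial theorem.

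The vertex inequality is the only nontrivial step. Any subgraph $\beta$ of a hypertree is a hyperforest, i.e.\ a disjoint union of $t\ge1$ hypertrees, because acyclicity and the property that two edges share at most one vertex are inherited by subgraphs. By the edge count recalled before Lemma~\ref{lem:rooted:forests}, a hyperforest with $t$ components satisfies $|V(\beta)|=(r-1)|\beta|+t\ge(r-1)|\beta|$, with $\beta=0$ trivial. One can also argue directly by ordering the edges of $\beta$ along the hypertree construction order of Definition~\ref{def:hypertree}: each new edge of $\beta$ meets the union of earlier edges of $\beta$ in at most one vertex, since it meets all earlier edges of $\alpha$ in exactly one. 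So each edge contributes at least $r-1$ new vertices. I expect this step to be routine. The only care needed is that $\alpha$ has no repeated edges or repeated indices, which holds because $\sT$ consists of simple hypertrees on distinct vertices, so the factorization over edges and $\E[Z_e^2]=1$ apply.
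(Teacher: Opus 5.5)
Your proof is correct. It rests on the same combinatorial fact as the paper's: in the hypertree construction order, each edge contributes $r-1$ fresh vertices. The only difference is where the fact is used. The paper uses it before taking expectations, bounding $\theta^{V(e_j)}\le\theta^{A_j}$ pointwise with disjoint new-vertex sets $A_j$ and then invoking independence. You first compute $\E[Y^{2\alpha}]=\sum_{\beta\le\alpha}\lambda^{2|\beta|}\rho^{|V(\beta)|}$ exactly and then apply $|V(\beta)|\ge(r-1)|\beta|$ term by term, which is the same inequality once the paper's product is expanded.
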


\begin{proof}
Recall that $X=\lambda \theta^{\otimes r}$ is the signal tensor. Take expectation with respect to the mean-zero independent Gaussian noise: 
\[
    \E[Y^{2\alpha}] = \E \lBr \prod_{e \in \alpha}(X_e + Z_e)^2 \rBr = \E \lBr \prod_{e \in \alpha}(X_e^2 + 1) \rBr = \E \lBr \prod_{e \in \alpha}(\eta\theta^{V(e)} + 1) \rBr\,.
\]
Enumerate the edges of $\alpha$ as $\{e_j\}_{j=1}^{|\alpha|}$ in rooted exploration order; in other words, choose $e_{1}$ as one of the root-incident edges, and for each $j \ge 2$, choose a new edge $e_j$ which shares exactly one vertex with the previously explored edges $\cup_{m < j} e_m$. Let $A_1$ be the set of non-root vertices in $e_1$, and for each $j \ge 2$, let $A_j$ be the set of new vertices introduced by $e_j$. Each $A_j$ has cardinality $r - 1$, the sets $A_j$ are pairwise disjoint, and $A_j \subseteq V(e_j)$. Because $\theta$ is binary-valued, we have $\theta^{V(e_j)} \le \theta^{A_j}$ for all $j = 1,\hdots,|\alpha|$. Hence, 
\[
    \E \lBr \prod_{e \in \alpha}(\eta\theta^{V(e)} + 1) \rBr = \E \lBr \prod_{j = 1}^{|\alpha|}(\eta\theta^{V(e_{j})} + 1) \rBr \leq \E \lBr \prod_{j = 1}^{|\alpha|}(\eta\theta^{A_{j}} + 1) \rBr = (1 + \zeta)^{|\alpha|}\,,
\]
where the last equality holds by independence since the sets $\{A_{j}\}_{j = 1}^{|\alpha|}$ are pairwise disjoint. This proves the claim.
\end{proof}

Using Lemma~\ref{lem:rooted:forests} and Stirling's approximation $\ell! \leq e\sqrt{2\pi \ell}(\ell/e)^{\ell}$, we have the lower bound
\[
    \frac{R_{k, 1}}{k!} = \frac{k^{\ell - 1}}{\ell!(r - 1)!^{\ell}} \geq \frac{C}{k\sqrt{\ell}} \lPa \frac{e}{(r - 2)!} \rPa^{\ell} \lPa \frac{k}{(r - 1)\ell} \rPa^{\ell} \geq \frac{C}{\ell^{\frac{3}{2}}} \lPa \frac{e}{(r - 2)!} \rPa^{\ell - 1}\,,
\]
where the last step used the bound $\frac{k}{(r - 1)\ell} \geq 1$. Combining with the asymptotic formula~\eqref{eq:asymptotic:tree:counting} gives
\begin{equation}
    |\sT| \geq C\ell^{-3} \lPa \frac{en^{r - 1}}{(r - 2)!} \rPa^{2\ell + 2}\,.
\label{eq:asymptotic:tree:lower:bound}
\end{equation}
With this estimate in hand, Lemma~\ref{lem:single:tree:second:moment} then implies
\[
    \frac{1}{\Gamma} \sum_{\alpha \in \sT} \E[Y^{2\alpha}] \leq \frac{1}{\Gamma} \sum_{\alpha \in \sT} (1 + \zeta)^{|\alpha|} = \frac{(1 + \zeta)^{2\ell + 2}}{|\sT| \rho^{4k + 4r - 7} \lambda^{4\ell + 4}} \leq C\rho^{-1} \ell^{3} \lPa \frac{(1 + \zeta)(r - 2)!}{en^{r - 1} \rho^{2r - 2} \lambda^{2}} \rPa^{2\ell + 2}\,.
\]
To bound the last quantity on the RHS, we shall assume that
\begin{equation}
    \lambda = \sqrt{\frac{(r - 2)!(1 + \eps)}{en^{r-1}\rho^{2r-2}}}\,.
\label{eq:sparse:PCA:upper:bound:assumption}
\end{equation}
This condition on $\lambda$ can be assumed without loss of generality, because increasing $\lambda$ only makes estimation easier (see e.g.~\cite[Claim A.2]{SW-22}). Note that the assumption $\rho = \widetilde{\omega}(1/n)$ guarantees that $\zeta = \rho^{r - 1} \lambda^{2} = \lPa \frac{C}{n\rho} \rPa^{r-1}=o(1)$. The assumption $\frac{en^{r - 1} \rho^{2r - 2} \lambda^{2}}{(r - 2)!} \geq 1 + \eps$ then gives
\[
    \rho^{-1} \ell^{3} \lPa \frac{(1 + \zeta)(r - 2)!}{en^{r - 1} \rho^{2r - 2} \lambda^{2}} \rPa^{2\ell + 2} \leq \rho^{-1}\ell^3 \lPa \frac{1+o(1)}{1+\eps} \rPa^{2\ell+2} \leq \rho^{-1} \ell^{3}(1-\eps/4)^{2\ell + 2} \leq \ell^3\rho\,,
\]
where the last inequality holds by our choice $\ell=\lceil \frac{4}{\eps} \log(1/\rho)\rceil$. The assumption $\rho=o(\log^{-6(r - 1) - 3}{n})$ ensures that the RHS of the above display is $o(1)$, thus proving~\eqref{eq:sparse:PCA:identical:copies}. Lastly, we show that
\begin{equation}
    \sum_{\substack{\alpha, \beta \in \sT \\ \alpha \cap \beta \neq 0, \alpha \neq \beta}} \E[Y^{\alpha + \beta}] = o(\Gamma)\,.
\label{eq:sparse:PCA:overlapping:trees}
\end{equation}
Bounding this final case is the most technical step of the upper bound. We shall require some additional notation. Given a pair of trees $\alpha, \beta \in \sT$, the \textit{core} is the forest $\gamma = \alpha \cap \beta \cup \{1\}$, where we treat $\{1\}$ as a connected component of the core even when it is an isolated vertex. Note that since $\al, \be$ are trees, $\al\cap\be$ is a forest. If $\alpha$ and $\beta$ share a root incident edge, then $\alpha \cap \beta$ contains vertex $1$, in which case $\gamma=\al\cap \be$. If not, then $\ga$ is the union of $\al\cap \be$ with isolated vertex $1$. Fix a pair of trees $\alpha, \beta \in \sT$ for now. Let
\begin{itemize}
    \item[i.] $t$ be the number of connected components of the core $\gamma$ and the components be $\pi_{1}, \hdots, \pi_{t}$.
   \item[ii.] $a := |\alpha \cap \beta|$ be the total number of edges in the core.
  \item[iii.] $w := |(V(\alpha) \cap V(\beta)) \setminus V(\gamma)|$ be the number of shared vertices not contained in the core.
   \item[iv.] $B_{j} := V(\pi_{j}) \cap V(\alpha \triangle \beta)$ be the vertices where $\pi_j$ intersects $\alpha \triangle \beta$, with $b_{j} = |B_{j}|$. The vertices in $B := \cup_{j = 1}^{t} B_{j}=V(\al\triangle \be)\cap V(\ga)$ are called \textit{branch points}. Put $b = |B|$.
\end{itemize}
We remark that these quantities are precisely the parameters used in~\cite[Section 7.1]{arxiv-version} to prove an estimation upper bound for the planted submatrix model.

\begin{lemma}\label{lem:covariance:overlapping:pairs}
Consider any $\alpha, \beta \in \sT$ such that $\al\neq \be$ and let $\gamma=\al\cap\be \cup \{1\}$ be the core as defined above. Let $t$ be the number of connected components of $\ga$, $a=|\al\cap \be|$, $b=|V(\al\tri\be)\cap V(\ga)|$, and similarly $w=|V(\al)\cap V(\be)\setminus V(\ga)|$. Then
\begin{equation}
    \E[Y^{\alpha + \beta}] \leq \rho^{4k + 4r - 6 - 2a(r - 1) - t - w} \lambda^{4\ell + 4 - 2a}(1 + \zeta)^{a - \frac{b - t}{r - 1}}(\rho + \zeta)^{\frac{b - t}{r - 1}}\,.
\end{equation}
\end{lemma}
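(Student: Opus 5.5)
First integrate out the Gaussian noise. Since $Y^{\alpha+\beta}=\prod_{e\in\alpha\triangle\beta}(X_e+Z_e)\prod_{e\in\alpha\cap\beta}(X_e+Z_e)^2$ and $Z$ has independent mean-zero entries, we have
\[
\E[Y^{\alpha+\beta}]=\E\Big[\prod_{e\in\alpha\triangle\beta}\lambda\theta^{V(e)}\prod_{e\in\alpha\cap\beta}(\eta\theta^{V(e)}+1)\Big].
\]
The first product equals $\lambda^{|\alpha\triangle\beta|}\theta^{V(\alpha\triangle\beta)}$, and $|\alpha\triangle\beta|=4\ell+4-2a$; this gives the $\lambda$ power. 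It remains to bound
\[
\E\Big[\theta^{V(\alpha\triangle\beta)}\prod_{e\in\alpha\cap\beta}(1+\eta\theta^{V(e)})\Big].
\]
Because $\theta$ is binary, we condition on $\theta_{V(\alpha\triangle\beta)}=1$. This costs $\rho^{|V(\alpha\triangle\beta)|}$ and sets to one every core vertex that is a branch point.

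Next we handle the vertex count. We have $|V(\alpha)\cup V(\beta)|=2(2k+2r-3)-|V(\alpha)\cap V(\beta)|$ and $|V(\alpha)\cap V(\beta)|=|V(\gamma)|+w$. Since $\gamma$ is a forest of $t$ hypertrees with $a$ edges, $|V(\gamma)|=a(r-1)+t$. The vertices of the union split into $V(\alpha\triangle\beta)$ and the non-branch core vertices $V(\gamma)\setminus B$. Hence $|V(\alpha\triangle\beta)|=4k+4r-6-a(r-1)-t-w-(|V(\gamma)|-b)$.

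The core product then reduces to an expectation over the non-branch core vertices. Each component $\pi_j$ is a hypertree, and we explore its edges as in Lemma~\ref{lem:single:tree:second:moment}. We root $\pi_j$ at one branch point, which exists for every component: the root component contains vertex $1$, and any other component meets $\alpha\triangle\beta$ because $\alpha\neq\beta$ are connected. Each edge then introduces $r-1$ new vertices $A_e$. Using $\theta^{V(e)}\le\theta^{A_e}$ as in that lemma, an edge whose new vertices are all free (unconditioned) contributes $1+\eta\rho^{r-1}=1+\zeta$ together with the factor $\rho^{0}$. An edge whose new vertices contain a branch point is bounded instead by keeping the $\rho$ factors for its free new vertices. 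Among its $r-1$ new vertices, if $s\geq1$ are branch points, then it contributes $\rho^{r-1-s}+\eta\rho^{r-1-s}$ after multiplying by $\rho^{r-1-s}$ for the free ones. This is the bookkeeping step.

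The main obstacle is balancing these contributions against the deficit. Combined with the prefactor, the total must be $\rho^{4k+4r-6-2a(r-1)-t-w}$ times the product of edge factors, and non-branch free vertices each carry a $\rho$ in the expectation. I would aim for a per-edge bound of $\rho^{r-1}(1+\zeta)$ for edges with no branch points among their new vertices, and $\rho^{r-1}\cdot\rho^{-(s-1)}(\rho+\zeta)$-type bounds otherwise. The number of branch points not serving as roots is $b-t$. I would then use the crude pigeonhole estimate that at least $(b-t)/(r-1)$ edges contain a non-root branch point, which accounts for the exponent $(b-t)/(r-1)$. The edges carrying branch points get the factor $\rho+\zeta$ in place of $\rho(1+\zeta)$ after absorbing one extra $\rho^{-1}$ per such edge against the prefactor. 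The inequality $\rho+\zeta\ge\rho$ absorbs the edges with multiple branch points, and the remaining $a-(b-t)/(r-1)$ edges give $(1+\zeta)$. Verifying that the exponents of $\rho$ match exactly, especially when an edge contains several branch points, needs care; I would settle it by monotonicity, replacing $\rho^{r-1-s}$ by the worst case allowed.
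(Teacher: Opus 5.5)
Your skeleton matches the paper's proof. You integrate out $Z$, condition on $\theta_{V(\alpha\triangle\beta)}=1$ at cost $\rho^{|V(\alpha\triangle\beta)|}$, and use $|V(\alpha\triangle\beta)|=4k+4r-6-2a(r-1)-2t-w+b$ (your count is right). You then explore each core component $\pi_j$ from a branch point $u_j$ and use that at least $(b_j-1)/(r-1)$ edges carry a non-root branch point. One small fix: the component containing vertex $1$ gets its branch point from the same connectivity argument as the others, since vertex $1$ is not a branch point when both root-incident edges are shared.

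The gap is the $\rho$-bookkeeping, which is the real content of the lemma. You leave it unverified, and it rests on a false premise. Non-branch core vertices do not ``carry a $\rho$'': the core factor is $\E[(X_e+Z_e)^2\mid\theta]=1+\eta\theta^{V(e)}$, and its $1$ (the noise variance) constrains nothing. An explored edge with $q_e=|A_e\cap B_j|$ conditioned new vertices contributes exactly $1+\eta\rho^{r-1-q_e}$. It does not contribute $\rho^{r-1-q_e}(1+\eta)$, which is smaller and hence not an upper bound.

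Your per-edge targets $\rho^{r-1}(1+\zeta)$ and $\rho^{r-s}(\rho+\zeta)$ also contradict your own (correct) factor $1+\zeta$ for branch-free edges. In the normalization you state, they insert a spurious $\rho^{a(r-1)}$, and that makes the target false. When $b=t$ the target exponent equals $|V(\alpha\triangle\beta)|$. Since every core factor is at least $1$, $\E[Y^{\alpha+\beta}]\ge\rho^{|V(\alpha\triangle\beta)|}\lambda^{4\ell+4-2a}$, which already exceeds $\rho^{|V(\alpha\triangle\beta)|}\lambda^{4\ell+4-2a}\,\rho^{a(r-1)}(1+\zeta)^{a}$.

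The missing step is the exact identity
\[
1+\eta\rho^{r-1-q_e}=\rho^{-q_e}\bigl(\rho^{q_e}+\zeta\bigr),
\]
together with $\sum_{e\in\pi_j}q_e=b_j-1$, which holds because rooting at $u_j$ removes exactly one branch point. Over all components the $\rho^{-q_e}$ multiply to $\rho^{t-b}$. This cancels exactly the surplus $b-t$ in $|V(\alpha\triangle\beta)|$ and yields $\rho^{4k+4r-6-2a(r-1)-t-w}$. So the compensation is one $\rho^{-1}$ per non-root branch point, not one per edge.

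What remains per edge is $\rho^{q_e}+\zeta$. This equals $1+\zeta$ if $q_e=0$ and is at most $\rho+\zeta$ if $q_e\ge1$; this is the only place monotonicity in $q_e$ is needed. Finally, write the product as $(1+\zeta)^{a}\bigl(\frac{\rho+\zeta}{1+\zeta}\bigr)^{m}$ with $m\ge(b-t)/(r-1)$. Since $\frac{\rho+\zeta}{1+\zeta}\le1$, you can replace $m$ by $(b-t)/(r-1)$, which gives the stated bound.
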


\begin{proof}
Recall from~\eqref{eq:def:eta:zeta} that $\eta\equiv \la^2, \zeta\equiv\rho^{r-1}\eta$, and $Y=X+Z$ where $X=\lambda\theta^{\otimes r}$. For a subset of vertices $A \subseteq [n]$, recall that $\theta_A=(\theta_i)_{i \in A}$ denotes the subvector of $\theta$ indexed by $A$. Integrating out the independent Gaussian noise $Z$ gives
\[
\begin{aligned}
    \E[Y^{\alpha + \beta}] 
    = \E \lBr X^{\alpha \triangle \beta} \prod_{e \in \alpha \cap \beta}(X_e^2 + 1) \rBr = \rho^{|V(\alpha \triangle \beta)|} \lambda^{|\alpha \triangle \beta|} \cdot \E \lBr \prod_{e \in \alpha \cap \beta} (\eta \theta^{V(e)} + 1) \, \Bigg | \, \theta_{V(\alpha \triangle \beta)} = 1 \rBr\,,
\end{aligned}
\]
where the last equality holds since $X_e= \lambda \theta^{V(e)}$ and $X^{\al\tri\be}= \lambda^{|\al\tri\be|} \theta^{V(\al\tri\be)}$.
We bound the conditional expectation by computing over the components of the core separately.

Each component $\pi_{j}$ contains vertices adjacent to $\alpha \triangle \beta$ and so we must modify the argument used in Lemma~\ref{lem:single:tree:second:moment} to account for these conditioned vertices. Since $\alpha$ and $\beta$ are distinct
trees, every core component has at least one branch point, so choose
$u_j\in B_j\equiv V(\pi_j)\cap V(\al\tri \be)$. Explore the hypertree $\pi_j$ from $u_j$. For each edge
$e\in\pi_j$, let $A_e$ be the set of new vertices introduced by $e$, so the
sets $(A_e)_{e\in\pi_j}$ are disjoint, each has size $r-1$, and their union
is $V(\pi_j)\setminus\{u_j\}$. Put
\[
    q_{e} = |A_{e} \cap B_{j}|\,,
\]
which equals the number of vertices in $A_{e}$ which are affected by the conditioning. Note that
\begin{equation}
    \sum_{e \in \pi_{j}} q_{e} = b_{j} - 1
\label{eq:affected:branch:points}
\end{equation}
by construction. Since $\theta^{V(e)}\le \theta^{A_e}$ and the vertices in
$A_e\cap B_j$ are conditioned to be equal to $1$, 
\[
\begin{aligned}
    \E\left[
        \prod_{e\in\pi_j}(1+\eta\theta^{V(e)})
        \,\middle|\,
        \theta_{V(\alpha\triangle\beta)}=1
    \right]
    &\le  \E \lBr \prod_{e \in \pi_{j}} (\eta \theta^{A_{e}} + 1) \, \Bigg | \, \theta_{V(\alpha \triangle \beta)} = 1 \rBr\\
    &\leq 
    \prod_{e\in\pi_j}\left(1+\eta\rho^{r-1-q_e}\right)  =
    \rho^{1-b_j}
    \prod_{e\in\pi_j}(\rho^{q_e}+\zeta)\,,
\end{aligned}
\]
where the last step used the identity \eqref{eq:affected:branch:points}. Let $m_{j} := |\{e \in \pi_{j}: q_{e} \geq 1\}|$ be the number of edges $e$ for which $A_{e}$ contains a branch point. Since each edge can contain at most $r - 1$ branch points,  we have   $m_{j} \geq \frac{b_{j} - 1}{r - 1}$. Note that for edges with $q_e=0$, the factor in the product in the RHS is $1+\zeta$, and for edges with
$q_e\ge1$, the factor is at most $\rho+\zeta$. Hence
\[
\begin{aligned}
    \prod_{e\in\pi_j}(\rho^{q_e}+\zeta)
    \le
    (1+\zeta)^{|\pi_j|}
    \left(\frac{\rho+\zeta}{1+\zeta}\right)^{m_j}  \le
    (1+\zeta)^{|\pi_j|}
    \left(\frac{\rho+\zeta}{1+\zeta}\right)^{\frac{b_j-1}{r-1}}\,.
\end{aligned}
\]
Therefore
\[
    \E\left[
        \prod_{e\in\pi_j}(1+\eta\theta^{V(e)})
        \,\middle|\,
        \theta_{V(\alpha\triangle\beta)}=1
    \right]
    \le
    \rho^{1-b_j}
    (1+\zeta)^{|\pi_j|}
    \left(\frac{\rho+\zeta}{1+\zeta}\right)^{\frac{b_j-1}{r-1}}\,.
\]
As disjoint components in the core are independent, we take the product of the above conditional expectation for $j=1,\hdots,t$ to get
\[
    \E \lBr \prod_{e \in \alpha \cap \beta} (\eta \theta^{V(e)} + 1) \, \Bigg | \,  \theta_{V(\alpha \triangle \beta)} = 1 \rBr \leq \rho^{t - b}(1 + \zeta)^{a} \lPa \frac{\rho + \zeta}{1 + \zeta} \rPa^{\frac{b - t}{r - 1}}\,,
\]
where we used $b =|B|=\sum_{j} |B_{j}|=\sum_{j} b_{j}$ and $a =|\al \cap \be|=\sum_{j} |\pi_{j}|$. It follows that
\[
    \E[Y^{\alpha + \beta}] \leq \rho^{|V(\alpha \triangle \beta)|} \lambda^{|\alpha \triangle \beta|} \rho^{t - b}(1 + \zeta)^{a - \frac{b - t}{r - 1}}(\rho + \zeta)^{\frac{b - t}{r - 1}}\,.
\]
Lastly, note that $|\alpha \triangle \beta| = 4\ell + 4 - 2a$. Since $|V(\al)|=|V(\be)|=2k+2r-3$ and $|V(\gamma)| = a(r - 1) + t$,
\[
\begin{aligned}
    |V(\alpha \triangle \beta)| &= |V(\alpha \triangle \beta) \setminus V(\gamma)| + |V(\alpha \triangle \beta) \cap V(\gamma)| \\
    &= |V(\alpha) \setminus V(\gamma)| + |V(\beta) \setminus V(\gamma)| - |(V(\alpha) \cap V(\beta)) \setminus V(\gamma)| + |B| \\
    &= 4k + 4r - 6 - 2a(r - 1) - 2t - w + b\,.
\end{aligned}
\]
Combining with the preceding inequality, we obtain the stated bound.
\end{proof}

The next step is to estimate the number of pairs $\alpha, \beta \in \sT$ giving rise to a realizable tuple of parameters.

\begin{lemma}\label{lem:counting:overlapping:pairs}
Given  $t, a, w, b\geq 1$, the number of pairs $\alpha, \beta \in \sT$ such that the number of connected components of the core $\ga=\al\cap \be \cup \{1\}$ is $t$, and satisfies $a=|\al\cap \be|$, $b=|V(\al\tri\be)\cap V(\ga)|$, and $w=|V(\al)\cap V(\be)\setminus V(\ga)|$ is at most
\[
    C_r^{t + b + w} \ell^{3b + 2w}n^{1 - w - t} \lPa \frac{en^{r - 1}}{(r - 2)!} \rPa^{4\ell + 4 - a}\,,
\]
where $C_r>0$ is a constant which only depends on $r$. 
\end{lemma}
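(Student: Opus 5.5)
We count pairs $(\alpha,\beta)$ by first building $\alpha$, then building $\beta$ relative to $\alpha$, losing only $C_r$ or $\ell$ factors per branch point, per shared non-core vertex and per component. The baseline is $|\sT|\le C\,(en^{r-1}/(r-2)!)^{2\ell+2}$, which follows from \eqref{eq:asymptotic:tree:counting} and the formula for $R_{k,1}/k!$ in Lemma~\ref{lem:rooted:forests}, up to polynomial factors in $\ell$. Those polynomial factors must be absorbed into the $\ell^{3b}$ allowance. Since $t\ge1$ forces $b\ge1$, there is at least one factor of $\ell^3$ available for this.

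\textbf{Step 1: choose the core inside $\alpha$.} Given $\alpha$, the core $\gamma=\alpha\cap\beta\cup\{1\}$ is a subforest of $\alpha$ with $t$ components and $a$ edges, one of which contains the root. We specify each component by a root vertex in $V(\alpha)$, costing at most $(Ck)^{t}$. We then specify its shape as a subtree grown from that vertex: the core is determined by the boundary edges where $\alpha$ leaves it, and these are incident to branch points. So the core can be encoded by the $b$ branch points plus $O_r(1)$ choices of boundary edge at each, costing about $(C_r\ell)^{b}$. Since $t\le b$, the $t$ component roots are absorbed into the same allowance.

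\textbf{Step 2: build $\beta\setminus\gamma$.} The rest of $\beta$ is a rooted forest attached to the core at the $b$ branch points. It has $2\ell+2-a$ edges and brings $(r-1)(2\ell+2-a)-(t-1)$ new vertices counted in $\beta$. Of these, $w$ are vertices of $\alpha$ outside the core, so only $(r-1)(2\ell+2-a)-t+1-w$ are fresh labels from $[n]$. This gives the factor $n^{(r-1)(2\ell+2-a)}\cdot n^{1-t-w}$. Each of the $w$ reused vertices is chosen from $V(\alpha)$ in at most $Ck\le C\ell$ ways, contributing $(C\ell)^{w}$; the extra $\ell^{w}$ in the statement is slack, for example for recording which of $\beta$'s vertices are the reused ones. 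For the shape we use the rooted forest count $R_{k,t}$ of Lemma~\ref{lem:rooted:forests}: $\beta$ minus the core is a hyperforest rooted at branch points, so the number of shapes is at most $k!\,k^{\ell'-1}/(\ell'!(r-1)!^{\ell'})$-type quantities with $\ell'=2\ell+2-a$ edges. Dividing by the vertex-labeling factorial, this gives $(e/(r-2)!)^{2\ell+2-a}$ up to factors $C_r^{b}\ell^{O(b)}$ coming from distributing the edges among the $b$ attachment points.

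\textbf{Step 3: combine and identify the obstacle.} Multiplying the count for $\alpha$, which is $(en^{r-1}/(r-2)!)^{2\ell+2}$, by the count for $\beta$ relative to $\alpha$, which is $(en^{r-1}/(r-2)!)^{2\ell+2-a}\,n^{1-t-w}$ times the entropy factors, yields the claimed bound $C_r^{t+b+w}\ell^{3b+2w}n^{1-w-t}(en^{r-1}/(r-2)!)^{4\ell+4-a}$.

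The main obstacle is Step 2, specifically matching the sharp constant $e/(r-2)!$ per edge. The constraint that the two halves of $\beta$ each have exactly $\ell$ edges, together with the root-edge structure of $\sT$, must only cost polynomial factors. The first approach to try is to bound the number of rooted forests with prescribed roots via $R_{k,t}$ and Stirling's formula, in the same way as the $|\sT|$ lower bound \eqref{eq:asymptotic:tree:lower:bound}. The ratio of a forest count to a tree count of comparable size should then be $\mathrm{poly}(\ell)^{b}$.
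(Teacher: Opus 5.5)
\textbf{Gap in Step 1.} Your Step 1 fails as stated: the number of admissible cores inside a \emph{fixed} $\alpha\in\sT$ is not bounded by $(Ck)^t(C_r\ell)^b$. A branch point can have degree of order $\ell$ in $\alpha$, and the core may contain an arbitrary subset of the edges of $\alpha$ at that vertex. So the branch points together with $O_r(1)$ boundary data per branch point do not determine the core.

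\textbf{Counterexample to the per-$\alpha$ bound.}
\begin{enumerate}
\item Let $y_1$ be the non-root vertex of degree at least $2$ in a root-incident edge of $\alpha$. Suppose the $\ell$-edge subtree of $\alpha$ at $y_1$ is a star of pendant edges $f_1,\dots,f_\ell$.
\item For $J\subseteq[\ell]$ with $|J|=j<\ell$, let $\beta\in\sT$ coincide with $\alpha$, except that its subtree at $y_1$ is $\{f_i:i\in J\}$ plus $\ell-j$ pendant edges at $y_1$ on fresh vertices.
\item Then $\alpha\cap\beta$ is connected and contains $1$, and its only branch point is $y_1$. Hence $(t,a,b,w)=(1,\ell+2+j,1,0)$ for every such $J$.
\item So this single $\alpha$ carries $\binom{\ell}{j}$ distinct cores with identical parameters, which is exponential in $\ell$ for $j=\lfloor\ell/2\rfloor$. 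The boundary edges of the core in $\alpha$ are the $\ell-j$ edges $f_i$, $i\notin J$, all at $y_1$, not $O_r(1)$ of them.
\end{enumerate}
Multiplying $|\sT|$ by a worst-case per-$\alpha$ count therefore overshoots by a factor $e^{\Theta(\ell)}=\rho^{-\Theta(1/\eps)}$. The allowance $C_r^{t+b+w}\ell^{3b+2w}$ cannot absorb this, and it would also destroy the $(1-\eps/4)^a$ decay used afterwards.

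\textbf{How the paper avoids this.} The lemma is still true, because such $\alpha$ are rare. But you only obtain the bound if you never fix $\alpha$ first. The paper proceeds as follows.
\begin{enumerate}
\item It enumerates the core directly in $[n]$, as a labelled forest on $v=a(r-1)+t$ vertices with $t$ rooted components. This costs at most $\binom{n-1}{v-1}R_{v,t}\le C_r^t n^{t-1}(en^{r-1}/(r-2)!)^a$.
\item It then chooses the $w$ shared vertices and the $b$ branch points, at cost at most $n^w(C_rk)^b$.
\item Finally it counts $\alpha\setminus\beta$ and $\beta\setminus\alpha$ separately. Each is a forest with $L=2\ell+2-a$ edges on $u=(r-1)L+b+1-t$ vertices, retaining all branch points, with $b+1-t$ components. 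Each is bounded by $\binom{n-w}{u-b-w}R_{u,b+1-t}$.
\end{enumerate}
High-degree branch points are then paid for by the forest formula itself. That same formula yields $e/(r-2)!$ per edge, with only a $(1+\frac{b+1-t}{(r-1)L})^L\le C_r^b$ loss. So the step you flagged as the main obstacle (your Step 2) is routine, and the real difficulty is the one you treated as easy.

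\textbf{Minor point.} For $t\ge 2$ the non-core part of $\beta$ has $b+1-t$ components, not one per branch point, since a single component can join two core components. Your ``rooted at the $b$ branch points'' description should be adjusted accordingly.
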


\begin{proof}
Throughout, we let $v=a(r-1)+t$ denote the number of vertices in the core, and $C_r>0$ denote a constant that only depends on $r$. By Lemma~\ref{lem:rooted:forests}, the number of possible cores with $v$ vertices and $a$ edges is at most
\[
    \binom{n-1}{v-1}R_{v,t}= \binom{n-1}{v-1}  \frac{v!}{(t-1)!}
    \frac{v^{a-1}}{a!(r-1)!^a}\leq n^{v-1}
    \left(\frac{ev}{a(r-1)!}\right)^a\,,
\]
where we used $\binom{n-1}{v-1}\leq \frac{n^{v-1}}{(v-1)!}$, $(t-1)!\geq 1$, $a!\geq (a/e)^a$ in the last step. Since $v=a(r-1)+t$,
\[
    \left(\frac{v}{a(r-1)}\right)^a
    =
    \left(1+\frac{t}{a(r-1)}\right)^a
    \le
    e^{t/(r-1)}\,.
\]
Thus, the number of cores is at most
\begin{equation}\label{eq:numer:core}
    C_r^t n^{-1+t}
    \left(\frac{en^{r-1}}{(r-2)!}\right)^a\,.
\end{equation}
Next, we count the number of ways to construct $\al$ and $\be$ from the core. First, choose the $w$ shared non-core vertices and the $b$ branch points.
This contributes at most
\begin{equation}\label{eq:non-core:branch}
    \binom{n}{w}\binom{v}{b}
    \le
    n^w(C_rk)^b\,.
\end{equation}
It remains to reconstruct $\alpha\setminus \be$ and
$\be\setminus \al$. Put
\[
    L:=2\ell+2-a,
    \qquad
    u:=2k+2r-3-v+b\,.
\]
Here $L$ is the number of non-core edges in one tree, say
$\alpha\setminus\beta$, since each tree in $\sT$ has $2\ell+2$
edges and the core has $a$ edges. Also, $u$ is the number of vertices
remaining when we delete the core from $\alpha$ but retain the branch
points (recall $\al$ has $2k+2r-3$ vertices). The resulting object is an $r$-uniform forest. Its number of edges is
$L$, and its number of vertices is $u$, so its number of connected
components is
\[
    u-(r-1)L=b+1-t\,,
\]
where we used $k\equiv (r-1)\ell+1$ and $v\equiv a(r-1)+t$. Thus each side can be overcounted by first choosing its non-core,
non-shared vertices and then choosing a rooted $r$-uniform forest on
$u$ vertices with $b+1-t$ components, so by Lemma~\ref{lem:rooted:forests}, the number of ways to choose $\al\setminus \be$ is at most
\[
    \binom{n-w}{u-b-w}R_{u,b+1-t}=\binom{n-w}{u-b-w}\frac{u!}{(b-t)!}\frac{u^{L-1}}{L!(r-1)!^L}\,.
\]
By the crude bounds $u!/(u-b-w)!\leq u^{b+w}$ and $u^{L-1}\le u^L$, this is at most
\[
\begin{aligned}
    \binom{n-w}{u-b-w}R_{u,b+1-t}
    \le n^{u-b-w}u^{b+w}
    \frac{u^L}{L!(r-1)!^L} \leq
    n^{1-t-w}(C_rk)^{b+w}
    \frac{n^{(r-1)L}u^L}{L!(r-1)!^L}\,, 
\end{aligned}
\]
where the last step uses $u\leq C_rk$. Since $L!\geq (L/e)^L$, we have
\[
    \frac{n^{(r-1)L}u^L}{L!(r-1)!^L}
    \le
    \left(\frac{u}{(r-1)L}\right)^L
    \left(\frac{en^{r-1}}{(r-2)!}\right)^L\,.
\]
Moreover, $b \geq t \geq 1$ because each core component has at least one branch point and the core is non-empty. It follows that
$0\le b+1-t\le b$ and using $u=(r-1)L+b+1-t$, we get
\[
    \left(\frac{u}{(r-1)L}\right)^L
    =
    \left(1+\frac{b+1-t}{(r-1)L}\right)^L
    \le
    \exp\left(\frac{b+1-t}{r-1}\right)
    \le
    C_r^b\,.
\]
Hence the number of choices for $\al\setminus \be$ is at most
\[
    n^{1-t-w}(C_rk)^{b+w}C_r^b
    \left(\frac{en^{r-1}}{(r-2)!}\right)^L\,.
\]
The same bound applies to $\be\setminus \al$. Thus, squaring and multiplying this with the number of ways to choose the core~\eqref{eq:numer:core}, the shared vertices and branch points~\eqref{eq:non-core:branch}, we obtain that the number of pairs of possible $\al,\be$ is at most
\[
\begin{aligned}
    C_r^{t+b+w}
    k^{3b+2w}
    n^{1-t-w}
    \left(\frac{en^{r-1}}{(r-2)!}\right)^{a+2L}\,.
\end{aligned}
\]
Because $L=2\ell+2-a$, we have $a+2L=4\ell+4-a$. Finally, $k^{3b+2w}\le C_r^{b+w}\ell^{3b+2w}$ holds since $k\le C_r\ell$. Absorbing constants gives the claimed bound.
\end{proof}

Lastly, we require the following structural lemma on trees in $\sT$, which will allow us to boost the estimate from this case from $O(\Gamma)$ to $o(\Gamma)$. Note that, up to this point, it was not necessary to use the special structure of the trees at all.

\begin{lemma}\label{lem:branch:points}
If $\alpha, \beta \in \sT$ are distinct, non-disjoint trees with $b = 1$, then necessarily $a \geq \ell$.
\end{lemma}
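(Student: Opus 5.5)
The plan is to show that $b=1$ forces the core to be a single subtree through the root from which all of $\alpha\setminus\beta$ hangs at one vertex, and then to use the rigid shape of trees in $\sT_\ell$ to see that one of the two $\ell$-edge branches must lie entirely in the core. Write $e_1,e_2$ for the two root-incident edges of $\alpha$. Write $x_1\in e_1$, $x_2\in e_2$ for their unique non-root vertices of degree $\geq 2$, and $T_1,T_2$ for the two $\ell$-edge hypertrees hanging at $x_1,x_2$ (Definition~\ref{def:special:trees}).

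\textbf{Step 1: the core is a single component containing a root edge.} As noted in the proof of Lemma~\ref{lem:counting:overlapping:pairs}, every core component contains at least one branch point, so $b\geq t\geq 1$. Hence $b=1$ gives $t=1$. Since $\alpha,\beta$ are non-disjoint, $a\geq 1$, so the core has an edge. If vertex $1$ were not covered by $\alpha\cap\beta$, then $\{1\}$ would be a separate component of $\gamma$, forcing $t\geq 2$. Therefore $\gamma=\alpha\cap\beta$ is a connected subhypertree of $\alpha$ that contains vertex $1$ and hence at least one of $e_1,e_2$. Let $u$ be the unique branch point.

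\textbf{Step 2: everything outside the core attaches at $u$.} Since $\alpha\neq\beta$ and $|\alpha|=|\beta|$, the set $\alpha\setminus\beta$ is nonempty. Take any edge $f\in\alpha\setminus\gamma$ and follow the unique path in the hypertree $\alpha$ from $f$ to the connected core $\gamma$. The first vertex of $\gamma$ reached lies on an edge of $\alpha\setminus\beta$, so it is in $V(\alpha\triangle\beta)\cap V(\gamma)=B=\{u\}$. In words: every edge of $\alpha$ outside the core lies in a branch of $\alpha$ that meets $\gamma$ only at $u$.

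\textbf{Step 3: case analysis on root edges.}
\begin{enumerate}
\item[(i)] Suppose both $e_1,e_2\in\gamma$. Then $u\neq 1$, because every edge at $1$ is in the core. Since $x_1,x_2\in V(\gamma)$, the branch containing $u$ lies inside $T_1$ or inside $T_2$, say $T_1$. By Step 2, no edge of $T_2$ can lie outside $\gamma$. Indeed, such an edge would reach $\gamma$ through a vertex of $T_2$, and that vertex is not $u$. So $T_2\subseteq\gamma$ and $a\geq \ell+2$.
\item[(ii)] Suppose exactly one of them, say $e_1$, is in $\gamma$. Then $e_2\notin\gamma$ meets the core at vertex $1$, so $u=1$. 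Any edge of $T_1$ outside $\gamma$ would first hit $\gamma$ at a vertex of $T_1\cup\{x_1\}$, which is not $1$. This contradicts Step 2, so $T_1\cup\{e_1\}\subseteq\gamma$ and $a\geq\ell+1$.
\end{enumerate}
In either case $a\geq\ell$.

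The main obstacle is making Step 2 airtight. One must check that the attachment vertex of each component of $\alpha\setminus\gamma$ is genuinely a branch point, i.e. lies in $V(\alpha\triangle\beta)\cap V(\gamma)$. This holds because that vertex is incident to an edge of $\alpha\setminus\beta\subseteq\alpha\triangle\beta$. One also needs the path argument, which uses acyclicity of the hypertree $\alpha$: two edges share at most one vertex, and there is a unique edge-path between any two vertices. Once that is settled, the rest is the short case check above, and only the $\alpha$ side is needed.
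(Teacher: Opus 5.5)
Your proof is correct and follows the paper's case analysis: you split on how many root-incident edges the two trees share, and show that the branch-point-free $\ell$-edge subtree must lie in the core. The differences are small: you rule out the no-shared-root-edge case by using $b\geq t$ to force $t=1$ (the paper instead exhibits two branch points, vertex $1$ and a non-root one), and you write out the path argument behind ``branching off the core creates a branch point,'' which the paper leaves implicit.
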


\begin{proof}
We distinguish three cases: 

\begin{itemize}
    \item[1.] Both trees share the same set of root-incident edges. In this case, the root vertex $1$ is not isolated in the core and in particular cannot be a branch point. Consider the two subtrees in $\alpha$. Aside from the root, these two trees have no vertices in common. Since $b = 1$, exactly one of the subtrees has a branch point.

    For the subtree of $\alpha$ that contains no branch points, we claim it must be entirely contained in the core. Otherwise, this subtree contains an edge that branches off from the core, which creates a branch point, a contradiction. Hence $a \geq \ell + 1$ in this case.

    \item[2.] The two trees share exactly one root-incident edge, call it $e$. The root vertex $1$ is then a branch point. Consider the two subtrees of $\alpha$. Since $b = 1$, the subtree containing $e$ cannot contain any non-root branch points, and the same argument as in Case 1 implies that this entire subtree lies in the core.

    \item[3.] The two trees share no root-incident edges. Then the root vertex $1$ is isolated in the core and is therefore a branch point by definition. Consider again the two subtrees of $\alpha$. Since $\alpha \cap \beta \neq 0$, one of the two subtrees must intersect the core, producing a branch point $i$. Note that $i \neq 1$, since the root-incident edges are not in the core. Thus $b \geq 2$, contradicting $b = 1$.
\end{itemize}
\end{proof}

We are now ready to prove~\eqref{eq:sparse:PCA:overlapping:trees}. From Lemmas~\ref{lem:covariance:overlapping:pairs} and~\ref{lem:counting:overlapping:pairs}, the LHS of~\eqref{eq:sparse:PCA:overlapping:trees} is at most
\[
    \sum_{a, t, b, w} \rho^{4k + 4r - 6 - 2a(r - 1) - t - w} \lambda^{4\ell + 4 - 2a}(1 + \zeta)^{a} \lPa \frac{\rho + \zeta}{1 + \zeta} \rPa^{\frac{b - t}{r - 1}}C^{t + b + w} \ell^{3b + 2w}n^{1 - t - w} \lPa \frac{en^{r - 1}}{(r - 2)!} \rPa^{4\ell + 4 - a}\,.
\]
Rearranging and grouping terms together, the above summation is equal to
\[
    \rho^{4k + 4r - 7} \lambda^{4\ell + 4} \lPa \frac{en^{r - 1}}{(r - 2)!} \rPa^{4\ell + 4} \sum_{\substack{a, t \geq 1 \\ b \geq t}} \lPa \frac{en^{r - 1} \rho^{2r - 2} \lambda^{2}}{(1 + \zeta)(r - 2)!} \rPa^{-a} \lPa \frac{C}{n\rho} \rPa^{t - 1} \lPa \frac{C \ell^{3}(\rho + \zeta)}{1 + \zeta} \rPa^{\frac{b - t}{r - 1}} \sum_{w \geq 0} \lPa \frac{C \ell^{2}}{n\rho} \rPa^{w}\,.
\]
By~\eqref{eq:asymptotic:tree:lower:bound}, the prefactor outside the summation in the above display is at most $\Gamma$. Furthermore, the assumption $\rho = \omega(n^{-1} \log^{6 + 3/(r-1)}{n})$ guarantees that $\ell^2/(n\rho) = o(1)$ and so the summation over $w$ is bounded by an absolute constant. Hence, we bound the preceding display by
\[
    C\ell^{6} \sum_{\substack{a, t \geq 1 \\ b \geq t}} \lPa \frac{en^{r - 1} \rho^{2r - 2} \lambda^{2}}{(1 + \zeta)(r - 2)!} \rPa^{-a} \lPa \frac{C}{n\rho} \rPa^{t - 1} \lPa C \ell^{3}(\rho + \zeta) \rPa^{\frac{b - t}{r - 1}}\,,
\]
where we dropped the $1 + \zeta$ factor in the last term by absorbing it into a constant factor, recalling that $\zeta = o(1)$. Using the assumption $\frac{en^{r - 1} \rho^{2r - 2} \lambda^{2}}{(r - 2)!} \geq 1 + \eps$ and taking $n$ large so that $\frac{1+o(1)}{1+\eps} \leq 1 -\eps/4$, we obtain
\begin{equation}
    \sum_{\substack{\alpha, \beta \in \sT \\ \alpha \cap \beta \neq 0, \alpha \neq \beta}} \E[Y^{\alpha + \beta}] \leq C\ell^6 \sum_{\substack{a, t \geq 1 \\ b \geq t}} (1 - \eps/4)^{a} \lPa \frac{C}{n\rho} \rPa^{t - 1} \lPa C \ell^{3}(\rho + \zeta) \rPa^{\frac{b - t}{r - 1}}\,.
\label{eq:sparse:PCA:upper:bound:final:case:estimate}
\end{equation}
Recalling~\eqref{eq:sparse:PCA:upper:bound:assumption}, the assumptions $\rho = o(\log^{-6(r - 1) - 3}{n})$ and $\rho = \omega(n^{-1} \log^{6 + 3/(r-1)}{n})$ imply that
\[
    C\ell^{3}(\rho + \zeta) \leq C\ell^{3}\rho + \lPa \frac{C\ell^{3/(r-1)}}{n\rho} \rPa^{r-1} = o(1)\,.
\]
Choose $n$ sufficiently large so that the $o(1)$ term on the RHS is at most $1/2$. Isolate the $t = 1$ case: 
\[
\begin{aligned}
    \sum_{a, b \geq 1} (1 - \eps/4)^{a} \lPa C \ell^{3}(\rho + \zeta) \rPa^{\frac{b - 1}{r - 1}} &= \sum_{a \geq \ell} (1 - \eps/4)^{a} + \sum_{\substack{a \geq 1 \\ b \geq 2}} (1 - \eps/4)^{a} \lPa C \ell^{3}(\rho + \zeta) \rPa^{\frac{b - 1}{r - 1}} \\
    & \leq (1 - \eps/4)^{\ell} + O_{r, \eps} \lPa \lPa \ell^3(\rho + \zeta) \rPa^{1/(r-1)} \rPa\,,
\end{aligned}
\]
where we use Lemma~\ref{lem:branch:points} for the $b = 1$ case on the RHS. For the $t \geq 2$ case, directly take summation over $a$ and $b \geq t$ to obtain
\[
    \sum_{a \geq 1, t \geq 2} \sum_{b \geq t} (1 - \eps/4)^{a} \lPa \frac{C}{n\rho} \rPa^{t - 1} \lPa C \ell^{3}(\rho + \zeta) \rPa^{\frac{b - t}{r - 1}} = O_{r,\eps} \lPa \frac{1}{n\rho} \rPa\,.
\]
Combining the above casework and recalling that $(1 - \eps/4)^{\ell} \leq \rho$ and $\zeta=\lPa \frac{C}{n\rho} \rPa^{r-1}$, the RHS of~\eqref{eq:sparse:PCA:upper:bound:final:case:estimate} is at most
\[
\begin{aligned}
    & C\ell^6(1-\eps/4)^{\ell} + C\ell^6(\ell^3(\rho+\zeta))^{1/(r-1)} + \frac{C\ell^6}{n\rho} \\
    &\leq C\ell^{6}\rho + C\lPa \ell^{6(r - 1) + 3}\rho + C \lPa \frac{\ell^{6 + 3/(r-1)}}{n\rho} \rPa^{r - 1} \rPa^{1/(r-1)} + \frac{C\ell^{6}}{n\rho}\,.
\end{aligned}
\]
As before, the assumptions on $\rho$ guarantee that the RHS of the preceding display is $o(1)$.

\begin{proof}[Proof of Theorem~\ref{thm:sparse:PCA}-(b)]
Combining~\eqref{eq:sparse:PCA:disjoint:trees},~\eqref{eq:sparse:PCA:identical:copies}, and~\eqref{eq:sparse:PCA:overlapping:trees}, we obtain $\E[f(Y)^2] = (1+o(1))\frac{\E[f(Y)x]^2}{\E[x^2]}$. Rearranging gives $\frac{\E[f(Y)x]^2}{\E[f(Y)^2] \E[x^2]} = 1-o(1)$ and thus $\Corr_{\leq C \log{n}} = 1-o(1)$.
\end{proof}

\subsection{Planted dense subhypergraph}\label{subsec:planted:hypergraph:upper:bound}
This section is devoted to the proofs of Theorem~\ref{thm:planted:hypergraph}-(b) and Theorem~\ref{thm:small:planted:hypergraph}-(b).

\subsubsection{Large planted dense hypergraph}
We first consider the more general case without the restriction that $\rho \leq n^{-1/2}$. In analogy to the proof of our lower bound, the calculations from the planted subtensor model can be reused here.

\begin{proof}[Proof of Theorem~\ref{thm:planted:hypergraph}-(b)]
Recall the tree family $\sT\equiv\sT_\ell$ from Definition~\ref{def:special:trees}, which was used in Section~\ref{subsec:sparse:PCA:upper:bound}. Recall from~\eqref{eq:choice:k:ell} that we have set $\ell =\lceil \frac{4}{\eps}\log(1/\rho)\rceil$ and $k= (r-1)\ell+1$ so that every tree $\alpha\in\sT$ has $|V(\al)|=2k+2r-3$ and $|\al|=2\ell+2$. Set
\[
    \widetilde Y_e:=\frac{Y_e-q_0}{\sqrt{q_0(1-q_0)}}\,,
    \qquad
    \lambda:=\frac{q_1-q_0}{\sqrt{q_0(1-q_0)}}\,,
    \qquad
    \eta:=\frac{q_1-q_0}{q_0(1-q_0)}\,.
\]
Consider the polynomial
\[
    f(Y):=\sum_{\alpha\in\sT}\widetilde Y^\alpha\,.
\]
We first compute $\E[f(Y)x]$. Since the root vertex $1$ belongs to every $\alpha\in\sT$,
\[
\begin{aligned}
    \E[f(Y)\theta_1]
    =
    \sum_{\alpha\in\sT}
    \E[\theta_1\widetilde Y^\alpha] =
    \sum_{\alpha\in\sT}
    \lambda^{|\alpha|}
    \P(\theta_{V(\alpha)}=1) =
    |\sT|\rho^{2k+2r-3}\lambda^{2\ell+2}\,.
\end{aligned}
\]
Therefore, $\Gamma\equiv \frac{\E[f(Y)\theta_1]^2}{\E[\theta_1^2]}=|\sT|^2\rho^{4k+4r-7}\lambda^{4\ell+4}$ holds as in the sparse tensor PCA model~\eqref{eq:def:Gamma}.

It remains to bound second-moment $\E[f(Y)^2]$. We claim that the second-moment estimates from the sparse tensor PCA proof apply verbatim. To see this, fix
$\alpha,\beta\in\sT$. Conditioning on $\theta$, we have $\E_{\theta}[\widetilde{Y}_e]=\lambda \theta^{V(e)}$. Also, $ \E_{\theta}[\widetilde Y_e^2]=1$ if $\theta^{V(e)}=0$, and if $\theta^{V(e)}=1$, then
\[
    \E\nolimits_{\theta}[\widetilde Y_e^2]=\frac{q_1(1-q_1)+(q_1-q_0)^2}{q_0(1-q_0)}
    =
    1+(1-2q_0)\eta \leq 1+\eta\,.
\]
Thus, $\E_{\theta}[\widetilde{Y}_e^2]\leq 1+\eta \theta^{V(e)}$ and
\[
\begin{aligned}
    \E[\widetilde Y^{\alpha+\beta}]
    =
    \E\left[
        \widetilde Y^{\alpha\triangle\beta}
        \widetilde Y^{2(\alpha\cap\beta)}
    \right]  \le
    \rho^{|V(\alpha\triangle\beta)|}
    \lambda^{|\alpha\triangle\beta|}
    \E\left[
        \prod_{e\in\alpha\cap\beta}
        \bigl(1+\eta\theta^{V(e)}\bigr)
        \,\middle|\,
        \theta_{V(\alpha\triangle\beta)}=1
    \right]\,.
\end{aligned}
\]
This is exactly the upper bound used in the sparse tensor PCA second-moment
analysis, with $\eta=\lambda^2$ there replaced by the present value $ \eta=\frac{q_1-q_0}{q_0(1-q_0)}$. Consequently, Lemmas~\ref{lem:single:tree:second:moment},
\ref{lem:covariance:overlapping:pairs}, and
\ref{lem:counting:overlapping:pairs} imply
\[
    \E[f(Y)^2]\le (1+o(1))\Gamma\,
\]
provided
\[
    \frac{en^{r-1}\rho^{2r-2}\lambda^2}{(r-2)!}\ge 1+\eps\,,
    \quad
    \rho=\omega\!\left(n^{-1}\log^{6+\frac{3}{r-1}}n\right)\,,\quad
    \rho+\zeta=o\!\left(\log^{-6(r-1)-3}n\right)\,,
\]
where $\zeta=\rho^{r-1}\eta$. Without loss of generality, we may replace the last assumption with 
\[
    \rho = o(\log{n}^{-6(r - 1) - 3}), \quad q_{0} = \omega(n^{1-r}\log^{12(r-1)+6}{n})\,.
\]
Indeed, we may assume that $q_{1}$ is as small as possible (see~\cite[Claim A.2]{SW-22}), since increasing $q_{1}$, with all other parameters held fixed, only improves estimation. Thus, for a given $q_{0}$, we may assume that the bound on $\lambda$ holds with equality, i.e. 
\[
\lambda = \sqrt{\frac{(r - 2)!(1 + \eps)}{en^{r-1}\rho^{2r-2}}}\,.
\]
Given this constraint, $\zeta = \frac{\rho^{r-1}\lambda}{\sqrt{q_{0}(1-q_{0})}}$ is $o(\log{n}^{-6(r - 1) - 3})$ provided $q_{0} = \omega(n^{1-r}\log^{12(r-1)+6}{n})$. It follows that $\frac{\E[f(Y)\theta_1]^2}{\E[f(Y)^2]\E[\theta_1^2]} \geq 1-o(1)$ under the stated assumptions, whence $\Corr_{\le C\log n}=1-o(1)$.
\end{proof}

\subsubsection{Small dense planted hypergraph}\label{subsec:small:hypergraph:upper:bound}
Now we specialize to the special scaling regime where, 
\[
    \rho = n^{\xi - 1}, \quad q_{0} = n^{-b}, \quad q_{1} = n^{-a}
\]
for fixed parameters $\xi \in (0, 1/2]$ and $a, b \in (0, r - 1)$ with $a < b$.

As pointed out in the proof of our lower bound for small dense planted hypergraph, the existence of particularly dense graphs will contribute substantially to the correlation in this regime. Thus, our estimator will count these dense graphs, and this motivates the next definition.

\begin{definition}
A hypergraph $\alpha$ is \textit{strongly balanced} if $\frac{|\beta|}{|V(\beta)| - 1} \leq \frac{|\alpha|}{|V(\alpha)| - 1}$ for all $0 \neq \beta \leq \alpha$.
\end{definition}

\begin{remark}
Every strongly balanced graph is balanced in the sense of~\cite[Definition 4.3]{DMW-23}.
\end{remark}

\begin{fact}\label{fact:strongly:balanced:hypergraph}
If $\frac{1}{r - 1} \leq \frac{\ell}{k - 1}$ and $\ell \leq \binom{k}{r}$, then there exists a strongly balanced hypergraph with $k$ vertices and $\ell$ edges.
\end{fact}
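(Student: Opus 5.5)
Throughout, $\ell$ denotes the number of edges and $k$ the number of vertices. The plan is an explicit construction, modeled on the classical construction of strictly balanced graphs in the $r=2$ case, which is then adapted to $r$-uniform hypergraphs. We may assume $k\ge r$, since otherwise $\binom{k}{r}=0$ forces $\ell=0$ and the hypothesis fails. The target density is $d:=\ell/(k-1)\ge 1/(r-1)$.

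The most useful reformulation is the following. Strong balance is equivalent to
\[
\ell(|V(\beta)|-1)-(k-1)|\beta|\ge 0 \qquad\text{for all } 0\neq\beta\le\alpha .
\]
For a fixed vertex set $U$ with $|U|=j$, the worst $\beta$ is the induced subhypergraph $\alpha[U]$. It therefore suffices to show, for every $U$ with $|U|=j\ge r$,
\[
e(\alpha[U])\le \frac{\ell(j-1)}{k-1}. \tag{$\ast$}
\]

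The construction proceeds in two regimes. In the low-density regime $\ell\le k-1$, start from a hypertree-like backbone: a loose path or loose cycle structure in which the edges are spread evenly around a cyclic order of $[k]$. Concretely, take the $\ell$ edges to be the $r$-sets of consecutive vertices (in the cyclic order on $\mathbb{Z}_k$) starting at evenly spaced positions $\lfloor i k/\ell\rfloor$. The condition $\ell\ge (k-1)/(r-1)$ guarantees that consecutive edges overlap, so every vertex is covered and the hypergraph is connected. In the general regime, including large $\ell$, use a circulant-type construction: order all $r$-subsets of $\mathbb{Z}_k$ by their ``spread'' (the length of the shortest cyclic arc containing them), take all subsets of spread at most $s$, and then add a rotation-balanced partial layer of spread $s+1$, chosen by the evenly-spaced rule. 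This is the hypergraph analogue of powers of a cycle. Rotation symmetry makes every vertex ``look the same.''

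The key step is verifying $(\ast)$. For a set $U$ of $j$ vertices, induced edges of bounded spread must lie within arcs, and $U$ decomposes into maximal cyclic runs. The plan is to bound $e(\alpha[U])$ by summing over runs. A run of length $m$ in a spread-$\le s$ circulant contains roughly the fraction $(m-1)/k$ of all the edges, by a sliding-window count, and the gaps between runs only lose edges. The partial layer is handled by the even spacing, which ensures that any arc of length $m$ contains at most $\lceil (m-1)\cdot(\text{layer size})/k\rceil$ layer edges. One must check that this ceiling is absorbed because the full-layer count is strictly below the bound.

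I expect the main obstacle to be this rounding and boundary issue: making the inequality exact (non-strict balance suffices, but floors can push it over), especially for small $j$ near $r$ and when $U$ is the full vertex set minus a few vertices. Connectedness matters when $d$ is close to $1/(r-1)$, since a disconnected $\alpha$ violates the bound: a component with $j$ vertices has fewer than $(j-1)d$ edges, while its complement then has too many. If the explicit circulant check becomes too messy, the fallback is a probabilistic/extremal alternative. Among all $\ell$-edge hypergraphs on $[k]$, choose one that minimizes a potential such as $\sum_U \max\bigl(0,\,e(\alpha[U])-\ell(|U|-1)/(k-1)\bigr)$. Then argue by an edge-swapping argument, moving an edge from a too-dense $U$ to a pair of sparse regions, which exist by averaging since the total is exactly on target, that the minimum is $0$. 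The same proof would also cover the claim cited as standard in the literature (e.g.\ via~\cite{DMW-23}-style balanced constructions).
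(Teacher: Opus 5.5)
There is a genuine gap. Your reduction to induced subhypergraphs $(\ast)$ and your remark that strong balance forces connectivity are both correct. The problem is that the construction you commit to in the low-density regime is not strongly balanced, so $(\ast)$ cannot be verified for it.

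Take $r=3$, $k=9$, $\ell=5$. This satisfies both hypotheses ($5/8\ge 1/2$ and $5\le\binom{9}{3}$) and has $\ell\le k-1$. Your starting positions $\lfloor 9i/5\rfloor$ are $0,1,3,5,7$, so $\alpha$ has edges $\{0,1,2\},\{1,2,3\},\{3,4,5\},\{5,6,7\},\{7,8,0\}$. Then $\beta=\{\{0,1,2\},\{1,2,3\}\}$ has $|\beta|/(|V(\beta)|-1)=2/3>5/8$.

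The failure is structural. When $k/\ell$ is not an integer, the short gap $g=\lfloor k/\ell\rfloor\le r-1$ between starting points gives two edges spanning $r+g$ vertices, with ratio $2/(r+g-1)$. This exceeds $\ell/(k-1)$ whenever $\ell(r+g-1)<2(k-1)$; another instance is $r=4$, $k=17$, $\ell=6$. Near the lower threshold, strong balance in fact forces near-linearity: two edges sharing $s\ge 2$ vertices already have ratio $2/(2r-s-1)>1/(r-1)$. So packing edges onto overlapping consecutive arcs is the wrong shape. For $r=3$, $k=9$, $\ell=5$, a valid example is the loose path $\{0,1,2\},\{2,3,4\},\{4,5,6\},\{6,7,8\}$ together with the edge $\{1,5,8\}$.

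The general-regime argument has a separate flaw. Once edges have spread larger than $r$, they are no longer intervals, so an edge of $\alpha[U]$ can jump over a gap between two runs of $U$. For example, with $U=[k]\setminus\{v\}$, every edge avoiding $v$ lies in $\alpha[U]$, including those whose shortest arc passes over $v$. The sliding-window count for the run misses exactly these edges. Hence ``summing over runs'' does not give an upper bound on $e(\alpha[U])$, and the claim that gaps ``only lose edges'' is false. The rounding issue you flag for the partial layer is where all the content lies, and it is left open.

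The fallback potential-minimization is not an argument either. Removing or adding one edge changes $e(\alpha[U'])$ for every $U'$ containing that edge, and nothing shows the potential can always be decreased.

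Note that the paper does not prove this fact itself; it quotes it from Ruci\'nski and Vince~\cite[Theorem~2.6]{RV-88}. A complete proof needs either that reference or an explicit construction whose check of $(\ast)$ is actually carried out.
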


\begin{proof}
See~\cite[Theorem 2.6]{RV-88}.
\end{proof}

\begin{proof}[Proof of Theorem~\ref{thm:small:planted:hypergraph}-(b)]
Since $b \leq r - 1$ and $a < b\xi$, we can choose integers $k$ and $\ell$ such that, 
\[
    \frac{1}{b} < \frac{\ell}{k} < \frac{\ell}{k - 1} < \frac{\xi}{a}\,.
\]
By Fact~\ref{fact:strongly:balanced:hypergraph}, there is a strongly balanced hypergraph $\alpha_\star$ with $k$ vertices and $\ell$ edges. Take $D = \ell$ and fix $\alpha_\star$. Let $\sB = \{\alpha \in \{0, 1\}^{\binom{[n]}{r}}: \text{$1 \in V(\alpha)$ and $\alpha$ is isomorphic to $\alpha_\star$}\}$. Note that, 
\[
    |\sB| = \binom{n - 1}{k - 1}L_\star = (1 - o(1)) \frac{L_\star n^{k - 1}}{(k - 1)!}\,,
\]
where $L_\star = \frac{k!}{|\Aut(\alpha_\star)|}$ is the number of graphs isomorphic to $\alpha_\star$ on a prescribed set of $k$ vertices, which here is a constant since $\alpha_\star$ is being held fixed. The binomial coefficient counts the number of ways to select the vertices. Our candidate estimator will be: 
\[
    f(Y) = \sum_{\alpha \in \sB} Y^{\alpha}\,.
\]
The following is useful for estimating the correlation. Let $\delta = \max \lPa \frac{q_{0}}{q_{1}}, \rho^{-1}(\frac{q_{0}}{q_{1}})^{\ell/k} \rPa$ and compute
\[
    \frac{q_0}{q_1} = n^{a - b}\,, \quad \rho^{-1} \lPa \frac{q_{0}}{q_{1}} \rPa^{\ell/k} = n^{1 - \xi + (a - b)\ell/k} \leq n^{1 - \xi + (a - b)/b} = n^{-\xi + a/b}\,.
\]
The condition $a < b\xi$ guarantees the last exponent is strictly negative. As a consequence of the above, we see that $\delta = o(1)$ since $k$ is held constant. In particular, we have $\delta \leq 1$ for all large $n$.

\begin{lemma}\label{lem:small:hypergraph:upper:bound:estimate:A}
For any $\alpha \in \sB$, 
\[
    \E[Y^{\alpha}x] \geq \rho^{k}q_{1}^{\ell}\,.
\]
\end{lemma}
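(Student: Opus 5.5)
Since $x=\theta_1$ and every $\alpha\in\sB$ has $1\in V(\alpha)$, the plan is to condition on $\theta$ and compute $\E[Y^\alpha x]$ exactly. Here $Y^{\alpha}=\prod_{e\in\alpha}Y_e$ is the uncentered monomial, and $Y_e\in\{0,1\}$. Given $\theta$, the entries $Y_e$ are independent Bernoulli variables. Their means are $q_1$ when $e\in K_S$ and $q_0$ otherwise. Hence
\[
\E\nolimits_\theta[Y^\alpha]=\prod_{e\in\alpha}\bigl(q_0+(q_1-q_0)\theta^{V(e)}\bigr)=q_1^{|\alpha\cap K_S|}\,q_0^{|\alpha\setminus K_S|}\,.
\]
This quantity is nonnegative for every realization of $\theta$, and the same holds after multiplying by $\theta_1\in\{0,1\}$.

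Next we lower bound the expectation by keeping only one event, which is legitimate because the integrand is nonnegative. We restrict to $\{\theta_{V(\alpha)}=1\}$. Since $1\in V(\alpha)$, on this event we have $\theta_1=1$ and $\alpha\subseteq K_S$, so the integrand equals $q_1^{|\alpha|}=q_1^{\ell}$. The event has probability $\rho^{|V(\alpha)|}=\rho^{k}$, because $\alpha$ is isomorphic to $\alpha_\star$ and therefore has exactly $k$ vertices and $\ell$ edges. It follows that
\[
\E[Y^\alpha x]\;\ge\;\E\bigl[\theta_1\In_{\theta_{V(\alpha)}=1}\,\E\nolimits_\theta[Y^\alpha]\bigr]=\rho^k q_1^\ell\,.
\]
All other configurations of $\theta$ contribute nonnegative amounts, for example terms of order $q_0$ raised to the edges leaving $S$, and they can simply be dropped.

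No real obstacle is expected here. The only point to check is nonnegativity, which holds because the basis is the uncentered $Y^\alpha$ rather than $(Y-q_0)^\alpha$. The discarded terms matter only for the matching upper bound and the second-moment estimate that follow in the paper; there the parameter $\delta$ above will be what controls them.
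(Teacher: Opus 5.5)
Your proof is correct and matches the paper's argument: by nonnegativity you keep only the event $\{\theta_{V(\alpha)}=1\}$, on which $\theta_1=1$ (since $1\in V(\alpha)$) and $\E\nolimits_\theta[Y^\alpha]=q_1^{\ell}$. This event has probability $\rho^{k}$, which gives the bound $\rho^k q_1^\ell$.
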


\begin{proof}
By non-negativity, 
\[
    \E[Y^{\alpha}x] \geq \E[\In_{\theta_{V(\alpha)} = 1} \E\nolimits_{\theta}[Y^{\alpha}]] = q_{1}^{\ell} \cdot \Pb(\theta_{V(\alpha)} = 1) = \rho^{k}q_{1}^{\ell}\,.
\]
\end{proof}

\begin{lemma}\label{lem:small:hypergraph:upper:bound:estimate:B}
For any $\alpha, \beta \in \sB$, 
\[
    \E[Y^{\alpha + \beta}] \leq (1 + 4^k\delta) \rho^{|V(\alpha \cup \beta)|}q_{1}^{|\alpha \cup \beta|} = (1 + 4^{k}\delta) \rho^{2k - 1}q_{1}^{2\ell} \lPa  \rho^{-|V(\alpha) \cap V(\beta)| + 1}q_{1}^{-|\alpha \cap \beta|} \rPa\,.
\]
\end{lemma}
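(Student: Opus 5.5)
Write $\gamma=\alpha\cup\beta$, so $\alpha+\beta$ has entries in $\{0,1,2\}$. Since $Y_e\in\{0,1\}$, we have $Y^{\alpha+\beta}=Y^{\gamma}$. Conditioning on $\theta$ and using conditional independence of hyperedges,
\[
\E[Y^{\gamma}]=\E\Big[\prod_{e\in\gamma}\big(q_0+(q_1-q_0)\theta^{V(e)}\big)\Big]\le \E\Big[\prod_{e\in\gamma} q_1^{\theta^{V(e)}}q_0^{1-\theta^{V(e)}}\Big].
\]
The last step holds because each factor equals $q_1$ when $\theta^{V(e)}=1$ and $q_0$ otherwise. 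We then split according to the set $T=\{i\in V(\gamma):\theta_i=1\}$. If $T=V(\gamma)$, the contribution is exactly $\rho^{|V(\gamma)|}q_1^{|\gamma|}$, which is the main term. For a proper subset $T$, let $\gamma[T]$ be the subhypergraph of edges inside $T$. That term equals $\rho^{|T|}(1-\rho)^{|V(\gamma)|-|T|}q_1^{|\gamma[T]|}q_0^{|\gamma|-|\gamma[T]|}$. Its ratio to the main term is at most
\[
\rho^{-(|V(\gamma)|-|T|)}\Big(\frac{q_0}{q_1}\Big)^{|\gamma|-|\gamma[T]|}.
\]
Since $|V(\gamma)|\le 2k$, there are at most $4^k$ subsets $T$. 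It therefore suffices to show that each ratio is at most $\delta$ whenever $T\neq V(\gamma)$.

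To bound a single ratio, set $U=V(\gamma)\setminus T\neq\emptyset$. The removed edges $\gamma\setminus\gamma[T]$ are exactly the edges touching $U$. If $U$ meets $V(\alpha)$, then $U\cap V(\alpha)$ is nonempty. Let $\alpha'=\alpha[V(\alpha)\setminus U]$ be the induced part of $\alpha$ avoiding $U$; it has $k-u$ vertices, where $u=|U\cap V(\alpha)|\ge1$. Strong balance of $\alpha\cong\alpha_\star$ gives $|\alpha'|\le \frac{\ell}{k-1}(k-u-1)$ whenever $\alpha'\ne0$. Hence the number of removed $\alpha$-edges satisfies
\[
|\alpha|-|\alpha'|\ \ge\ \ell-\tfrac{\ell}{k-1}(k-u-1)=\tfrac{\ell u}{k-1}\ \ge\ \tfrac{\ell u}{k}.
\]
This bound also holds trivially when $\alpha'=0$. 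The vertices of $U$ outside $V(\alpha)$ lie in $V(\beta)$, and we handle them in the same way using $\beta$. The two edge sets removed this way are not disjoint in general, so the cleaner route is to use $\alpha$ for $U\cap V(\alpha)$ and $\beta$ for $U\setminus V(\alpha)$.

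The main obstacle is the overlap between the $\alpha$-edges and $\beta$-edges counted in the previous step. My plan is to handle it by treating the two parts sequentially. First remove $U_1=U\cap V(\alpha)$, which kills at least $\ell|U_1|/k$ edges of $\alpha$. Then remove $U_2=U\setminus V(\alpha)$, which lies in $V(\beta)\setminus V(\alpha)$. The edges of $\beta$ touching $U_2$ are not edges of $\alpha$, since $\alpha$'s edges lie in $V(\alpha)$, so they are new. By strong balance applied to $\beta$ with the vertex set $U_2$ removed, there are at least $\ell|U_2|/k$ of them. This gives $|\gamma|-|\gamma[T]|\ge \ell|U|/k$.

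With this edge count the ratio is at most $\big(\rho^{-1}(q_0/q_1)^{\ell/k}\big)^{|U|}\le\delta^{|U|}\le\delta$, using $\delta\le1$. When only a small number of edges is forced, we instead use that at least one edge is removed, giving the crude ratio bound $\rho^{-|U|}(q_0/q_1)$; but the balance bound already suffices. Summing the at most $4^k$ terms gives $\E[Y^{\alpha+\beta}]\le(1+4^k\delta)\rho^{|V(\gamma)|}q_1^{|\gamma|}$. The stated identity then follows from $|V(\alpha\cup\beta)|=2k-|V(\alpha)\cap V(\beta)|$ and $|\alpha\cup\beta|=2\ell-|\alpha\cap\beta|$.
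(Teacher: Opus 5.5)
Your proof is correct, but it reaches the bound by a different route than the paper. The paper follows \cite[Lemma 4.8]{DMW-23}. It relaxes the sum over $\sigma$ to a maximization over arbitrary subgraphs $\gamma\subseteq\alpha\cup\beta$ and splits $\gamma$ into $\gamma_1=\gamma\cap\alpha$ and $\gamma_2=(\alpha\cap\beta)\cup(\gamma\setminus\gamma_1)$. It then imports $\Xi_1,\Xi_2\ge 1$ from DMW's equations (29a)--(29b) and shows one factor is at least $\delta^{-1}$ via DMW's Claim 4.5. Because arbitrary subgraphs can keep all of $V(\alpha)$ while dropping edges, the paper needs a separate case $V(\gamma_1)=V(\alpha)$, and this is the only place the $q_0/q_1$ term in $\delta$ is used.

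You instead use the fact that $E_\sigma$ is the induced subgraph on $T$, and you count edges meeting $U=V(\gamma)\setminus T$ directly. Strong balance implies that deleting $u\ge 1$ vertices from a copy of $\alpha_\star$ destroys at least $\ell u/k$ edges. The edges of $\beta$ meeting $U\setminus V(\alpha)$ contain a vertex outside $V(\alpha)$, so they cannot be edges of $\alpha$, and the two counts add.

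What each approach buys:
\begin{itemize}
\item Your argument is self-contained, with no appeal to DMW's lemmas.
\item It never needs the $q_0/q_1$ part of $\delta$.
\item It gives the slightly stronger per-subset bound $\delta^{|U|}$.
\item The paper's argument controls arbitrary (non-induced) subgraphs, which is more than this lemma requires.
\end{itemize}

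Two small points. Keep the weakened bound $\ell u/k$ rather than $\ell u/(k-1)$, since the latter fails when all $k$ vertices of $\alpha$ are removed. Also drop your aside about the crude bound $\rho^{-|U|}(q_0/q_1)$. It would not give $\delta$ on its own, and the balance count already does the job.
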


\begin{proof}
This a more careful book-keeping of~\cite[Lemma 4.8]{DMW-23}. Expanding over all possible realizations of $V(\alpha \cup \beta) \cap \theta$, the expectation $\E[Y^{\alpha + \beta}]$ is bounded by
\[
    \sum_{\sigma \in \{0, 1\}^{V(\alpha \cup \beta)}} \P(\theta_{V(\alpha \cup \beta)} = \sigma) \P(Y^{\alpha + \beta}=1 \mid \theta_{V(\alpha \cup \beta)} = \sigma) \leq \sum_{\sigma \in \{0, 1\}^{V(\alpha \cup \beta)}} \rho^{|\sigma|}q_0^{|\alpha \cup \beta|} \lPa \frac{q_1}{q_0} \rPa^{|E_\sigma|}\,,
\]
where $E_\sigma$ consists of those edges in $\alpha \cup \beta$ whose vertices are contained entirely in $\sigma$. Viewing $\sigma$ as a subset of $V(\alpha\cup\beta)$, we argue that the summand is maximized when $\sigma$ is precisely $V(\alpha \cup \beta)$, in which case $E_\sigma = \alpha\cup\beta$. Additionally, we claim that any $\sigma \subset V(\alpha\cup\beta)$ is strictly smaller by an extra factor of $\delta$. The claim then follows since there are at most $4^k$ possibilities for $\sigma$. Towards this end, we note that it is sufficient to instead consider maximizing
\[
    \rho^{|V(\gamma)|}q_0^{|\alpha \cup \beta|} \lPa \frac{q_1}{q_0} \rPa^{|\gamma|}
\]
over all $\gamma \subseteq \alpha\cup\beta$. Indeed, if the subgraph $(\sigma, E_\sigma)$ contains an isolated vertex, then each isolated vertex contributes an extra factor of $\rho \leq 1$. Thus, we can delete any isolated vertices without increasing the value of the objective. The goal now is to show that the maximizer is $\gamma=\alpha\cup\beta$, and the value for any proper subgraph is smaller by an additional factor of $\delta$. It suffices to show the latter. Assume that $\gamma$ is a proper subgraph of $\alpha\cup\beta$. Following the proof of~\cite[Lemma 4.8]{DMW-23}, make the definitions $\gamma_1 := \gamma \cap \alpha$ and $\gamma_2 := (\alpha\cap\beta)\cup(\gamma \setminus \gamma_1)$ so that
\[
    \rho^{|V(\alpha\cup\beta)|-|V(\gamma)|} \lPa \frac{q_1}{q_0} \rPa^{|\alpha\cup\beta|-|\gamma|} \geq \underbrace{\rho^{|V(\alpha)|-|V(\gamma_1)|} \lPa \frac{q_1}{q_0} \rPa^{|\alpha|-|\gamma_1|}}_{\Xi_1} \cdot \underbrace{\rho^{|V(\beta)|-|\gamma_2|} \lPa \frac{q_1}{q_0} \rPa^{|\beta|-|\gamma_2|}}_{\Xi_2}\,.
\]
By~\cite[Equations 29a and 29b]{DMW-23} we have $\Xi_1, \Xi_2 \geq 1$. Notice that $\gamma_1 \subseteq \alpha$ and $\gamma_2 \subseteq \beta$ always, with at least one of the containments being strict. Suppose that $\gamma_1 \subset \alpha$. If $V(\alpha) = V(\gamma_1)$, then $\Xi_1$ is at least $q_1/q_0$; otherwise
\[
    \Xi_1 = \lPa \rho \lPa \frac{q_1}{q_0} \rPa^{\frac{|\alpha|-|\gamma_1|}{|V(\alpha)| - |V(\gamma_1)|}} \rPa^{|V(\alpha)| - |V(\gamma_1)|} \geq \rho \lPa \frac{q_1}{q_0} \rPa^{\frac{|\alpha|-|\gamma_1|}{|V(\alpha)| - |V(\gamma_1)|}} \geq \rho \lPa \frac{q_1}{q_0} \rPa^{\ell/k}\,,
\]
where the last inequality holds by~\cite[Claim 4.5]{DMW-23}. Thus, we conclude that $\Xi_1 \geq \delta^{-1}$ whenever $\gamma_1 \subset \alpha$. If $\gamma_1=\alpha$, then necessarily $\gamma_2 \subset \beta$ and repeating the preceding argument with $\alpha$ replaced with $\beta$ and $\gamma_1$ replaced with $\gamma_2$ gives $\Xi_2 \geq \delta^{-1}$. In summary, we have shown that
\[
    \rho^{|V(\alpha\cup\beta)|-|V(\gamma)|} \lPa \frac{q_1}{q_0} \rPa^{|\alpha\cup\beta|-|\gamma|} \geq \delta^{-1}
\]
whenever $\gamma \subset \alpha \cup \beta$. Taking reciprocal on both sides of this inequality yields the claim.
\end{proof}

We can now estimate, using Lemma~\ref{lem:small:hypergraph:upper:bound:estimate:A}, 
\[
    \E[f(Y)x] = \sum_{\alpha \in \sB} \E[Y^{\alpha}x] \geq |\sB| \rho^{k}q_{1}^{\ell}\,.
\]

Observe that, for any pair of graphs $\alpha, \beta \in \sB$ having non-empty intersection, the property of strongly balanced hypergraphs implies $\frac{|\alpha \cap \beta|}{|V(\alpha) \cap V(\beta)| - 1} \leq \frac{|\alpha \cap \beta|}{|V(\alpha \cap \beta)| - 1} \leq \frac{\ell}{k - 1}$. Therefore
\[
    \rho^{-|V(\alpha) \cap V(\beta)| + 1}q_{1}^{-|\alpha \cap \beta|} = (\rho^{-1}q_{1}^{-\frac{|\alpha \cap \beta|}{|V(\alpha) \cap V(\beta)| - 1}})^{|V(\alpha) \cap V(\beta)| - 1} \leq (\rho^{-1}q_{1}^{-\frac{\ell}{k - 1}})^{|V(\alpha) \cap V(\beta)| - 1}\,.
\]
By Lemma~\ref{lem:small:hypergraph:upper:bound:estimate:B}, 
\[
\begin{aligned}
    \E[Y^{\alpha + \beta}] &\leq (1 + 4^k\delta) \rho^{2k - 1}q_{1}^{2\ell} \lPa \rho^{-|V(\alpha) \cap V(\beta)| + 1}q_{1}^{-|\alpha \cap \beta|} \rPa \\
    & \leq (1 + 4^k\delta) \rho^{2k - 1}q_{1}^{2\ell} \lPa \rho^{-1}q_{1}^{-\frac{\ell}{k - 1}} \rPa^{|V(\alpha) \cap V(\beta)| - 1}\,.
\end{aligned}
\]
Substituting $\rho=n^{\xi-1}$ and $q_1=n^{-a}$ gives $\rho^{-1}q_{1}^{-\ell/(k-1)} = n^{1 - \xi + a\ell/(k-1)}$. From our choice of $k, \ell$, there exists $\eps_{0} > 0$ such that $-\xi + a\ell/(k-1) + \eps_{0} \leq 0$. Inserting this into the above display gives
\[
    \E[Y^{\alpha + \beta}] \leq (1 + 4^k\delta) \rho^{2k - 1}q_{1}^{2\ell} \lPa n^{1 - \eps_{0}} \rPa^{|V(\alpha) \cap V(\beta)| - 1}\,.
\]
Note that Lemma~\ref{lem:small:hypergraph:upper:bound:estimate:B} implies that this estimate is valid even when $\alpha \cap \beta = 0$.

For a fixed $\alpha \in \sB$, let $v \in \{0, \hdots, k - 1\}$ represent the number of possible vertices not equal to $1$ that are shared between $\alpha$ and another hypergraph $\beta \in \sB$. For a given $v$, the total number of possible $\beta$ such that $|V(\alpha) \cap V(\beta)| - 1 = v$ is upper bounded by, 
\[
    \binom{k - 1}{v} \binom{n - 1}{k - 1 - v}L_\star \leq \frac{n^{k - 1 - v}}{(k - 1 - v)!} \binom{k - 1}{v} L_\star\,.
\]
The reasoning here is that the first binomial coefficient counts the number of ways to choose those shared vertices (which are not $1$) and the second binomial coefficient bounds the number of ways to choose the non-shared vertices in $\beta$. Once the vertices have been chosen, the number of ways to arrange the edges in $\beta$ is bounded by $L_\star$. Thus, 
\[
\begin{aligned}
    \E[f(Y)^{2}] = \sum_{\alpha, \beta \in \sB} \E[Y^{\alpha + \beta}] & \leq (1 + 4^k\delta) \rho^{2k - 1}q_{1}^{2\ell} \sum_{\alpha \in \sB} \sum_{\beta \in \sB} \lPa n^{1 - \eps_{0}} \rPa^{|V(\alpha) \cap V(\beta)| - 1} \\
    & \leq (1 + 4^k\delta) \rho^{2k - 1}q_{1}^{2\ell} \sum_{\alpha \in \sB} \sum_{v = 0}^{k - 1} \frac{L_\star n^{k - 1 - v}}{(k - 1 - v)!} \binom{k - 1}{v} \lPa n^{1 - \eps_{0}} \rPa^{v} \\
    &= (1 + 4^k\delta) \rho^{2k - 1}q_{1}^{2\ell} \lPa \frac{L_\star n^{k - 1}}{(k - 1)!} \rPa \sum_{\alpha \in \sB} \sum_{v = 0}^{k - 1} \frac{(k - 1)!}{(k - 1 - v)!} \binom{k - 1}{v}n^{-\eps_{0}v} \\
    & \leq (1 + 4^k\delta) \rho^{2k - 1}q_{1}^{2\ell} \lPa \frac{L_\star n^{k - 1}}{(k - 1)!} \rPa \sum_{\alpha \in \sB} \sum_{v = 0}^{k - 1} k^{2v}n^{-\eps_{0}v}\,.
\end{aligned}
\]
The innermost sum is bounded by $\sum_{v = 0}^{k - 1} k^{2v}n^{-\eps_{0}v} \leq 1 + 2k^{2}n^{-\eps_{0}}$ for all $n$ sufficiently large. Taking summation over all $\alpha$ in the preceding display and using the asymptotic formula for $|\sB|$ then gives
\[
    \E[f(Y)^{2}] \leq (1 + 4^k\delta)(1 + 2k^{2}n^{-\eps_{0}}) \rho^{2k - 1}q_{1}^{2\ell}|\sB|^{2}\,.
\]
Putting everything together, 
\[
    \frac{\E[f(Y)x]^{2}}{\E[f(Y)^{2}] \E[x^{2}]} \geq \frac{1}{(1 + 4^k\delta)(1 + 2k^{2}n^{-\eps_{0}})} = 1 - o(1)\,,
\]
and the bound on the correlation follows.
\end{proof}

\appendix
\section{Equivalence of symmetric and asymmetric models}
Here we formalize the correspondence between the asymmetric and symmetric tensor PCA models.

Let $\theta \in \R^n$ be a random vector. For a parameter $\lambda \geq 0$ and $W \in (\R^n)^{\otimes r}$ a random tensor with independent $\cN(0, 1)$ entries, we define the \emph{asymmetric tensor PCA model} as $Y =\lambda \theta^{\otimes r} + W$. Let $P$ be the orthogonal projection onto the subspace of symmetric tensors: 
\[
    (PX)_{i_1,\hdots,i_r} = \frac{1}{r!} \sum_{\pi \in S_r} X_{i_{\pi(1)},\hdots,i_{\pi(r)}}, \quad \forall X \in (\R^{n})^{\otimes r}\,,
\]
where $S_r$ is the set of all permutations of $[r]$. For a given parameter $\lsy \geq 0$, we let $\Wsy = \sqrt{r!}PW$ and define the \emph{symmetric tensor PCA model} as $\Ysy = \lsy \theta^{\otimes r} + \Wsy$.

Let $x$ be a scalar estimand, which we assume is $\theta$-measurable. For the asymmetric and symmetric model, define accordingly the degree-$D$ maximum correlation
\[
    \Corr_{\leq D}(\lambda) := \sup_{f \in \R_{D}[Y]} \frac{\E[xf(Y)]}{\sqrt{\E[x^{2}] \E[f(Y)^{2}]}}\,, \quad \Corr_{\leq D}^{\mathsf{sy}}(\lsy) := \sup_{f \in \R_{D}[\Ysy]} \frac{\E[xf(\Ysy)]}{\sqrt{\E[x^2] \E[f(\Ysy)^2]}}\,,
\]
which we view as functions of $\lambda$ and $\lsy$, respectively.

\begin{lemma}\label{lem:equivalence:of:noise:models}
Under the correspondence $\lsy = \sqrt{r!} \lambda$, the degree-$D$ maximum correlation for the symmetric and asymmetric models is equal: 
\[
    \Corr_{\leq D}(\lambda) = \Corr_{\leq D}^{\mathsf{sy}}(\sqrt{r!}\lambda)\,.
\]
In particular, if $f$ is any polynomial in $Y$ depending only on the entries indexed by a set $\Lambda$, then there is a polynomial $g$ in $\Ysy$, of degree at most that of $f$, depending only on the entries indexed by $\Lambda$, such that
\[
    \frac{\E[xf(Y)]}{\sqrt{\E[x^{2}] \E[f(Y)^{2}]}} \leq \frac{\E[xg(\Ysy)]}{\sqrt{\E[x^2] \E[g(\Ysy)^2]}}\,.
\]
\end{lemma}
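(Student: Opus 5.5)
The plan is to relate the two models by reducing both to a common sufficient statistic, the symmetric part of the asymmetric observation. Write $Y=\lambda\theta^{\otimes r}+W$ and set $\Ysy' := \sqrt{r!}\,PY = \sqrt{r!}\lambda\,\theta^{\otimes r}+\sqrt{r!}PW$, using $P\theta^{\otimes r}=\theta^{\otimes r}$. Since $\Wsy=\sqrt{r!}PW$ by definition, $\Ysy'$ has exactly the law of the symmetric model with $\lsy=\sqrt{r!}\lambda$ jointly with $\theta$. Each entry of $\Ysy'$ is a linear form in the entries of $Y$, so any degree-$D$ polynomial $g(\Ysy')$ is a degree-$D$ polynomial in $Y$. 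This gives $\Corr^{\mathsf{sy}}_{\leq D}(\sqrt{r!}\lambda)\le \Corr_{\leq D}(\lambda)$.

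For the reverse inequality, decompose $W = PW + (I-P)W$. Because $P$ is an orthogonal projection and $W$ is standard Gaussian, $PW$ and $(I-P)W$ are independent, and both are independent of $\theta$. Hence $Y = \frac{1}{\sqrt{r!}}\Ysy' + W'$, where $W' := (I-P)W$ is independent of $(\theta,\Ysy')$. Given $f\in\R_D[Y]$, define
\[
g(\Ysy') := \E\big[f(Y)\mid \Ysy',\theta\big] = \E_{W'}\big[f(\Ysy'/\sqrt{r!}+W')\big].
\]
The second expression shows that $g$ depends only on $\Ysy'$ (not on $\theta$) and is a polynomial of degree at most $\deg f$, since we are integrating Gaussian moments of $W'$.

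Conditional Jensen then gives $\E[g^2]\le \E[f^2]$. Moreover, because $x$ is $\theta$-measurable, $\E[xg]=\E[x\,\E[f\mid\Ysy',\theta]]=\E[xf]$. Therefore the correlation of $g$ is at least that of $f$ when $\E[xf]\ge 0$; if $\E[xf]<0$, replace $f$ by $-f$. Taking suprema yields equality of the two correlations.

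For the locality claim, if $f$ depends only on the entries $\{Y_i\}_{i\in\Lambda}$, the same formula yields a polynomial in the entries $\{\Ysy'_i\}_{i\in\Lambda}$ after integrating out $W'_i$ for $i\in\Lambda$. The point is that $Y_i=\Ysy'_i/\sqrt{r!}+W'_i$ holds coordinatewise. The main subtlety to check is the independence of $PW$ and $(I-P)W$; this follows from the orthogonality of the projections together with the isotropy of $W$, since jointly Gaussian uncorrelated components are independent.
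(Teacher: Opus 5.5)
Your proposal is correct and follows essentially the same argument as the paper. Both use the coupling $\sqrt{r!}\,PY$ for the easy direction. For the reverse direction, both define $g$ by integrating out the independent Gaussian component $(I-P)W$, then use $\E[xg]=\E[xf]$ and conditional Jensen to get $\E[g^2]\le\E[f^2]$. Your explicit sign step ($f\mapsto -f$ when $\E[xf]<0$) covers a case the paper's pointwise ``in particular'' inequality leaves implicit: there, $g$ should be negated rather than taken as the raw conditional expectation.
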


\begin{proof}
The constraint $\lsy = \sqrt{r!} \lambda$ implies that $\Ysy = \sqrt{r!}PY$. Since $P$ is linear, if $f$ is a polynomial of degree at most $D$, then $f \circ (\sqrt{r!}P)$ is also a polynomial of degree at most $D$. It follows that
\[
    \Corr_{\leq D}(\lambda) \geq \Corr_{\leq D}^{\mathsf{sy}}(\sqrt{r!} \lambda)\,.
\]
To prove the reverse inequality, it will suffice to prove the second claim of the lemma. Towards this end, let $f$ be a polynomial depending only on the entries indexed by $\Lambda$. Because $P$ is an orthogonal projection, the mean zero Gaussian tensor $G=(I-P)W$ is independent of $PW$. Moreover, $G$ is independent of $\theta$ whence $G$ is independent of $\Ysy$. Letting $\E_{\Ysy}$ resp. $\E_{(\Ysy, x)}$ denote the expectation conditional on $\Ysy$ resp. $(\Ysy, x)$, define
\[
    g(\Ysy) = \E\nolimits_{\Ysy}[f(\Ysy/\sqrt{r!} + G)]\,.
\]
Integrating out the independent randomness in $G$, we see that $g$ is a polynomial, whose degree is no larger than the degree of $f$, depending only on the entries indexed by $\Lambda$. Using the independence of $G$ and $(x, \Ysy)$ and also noting that $Y = \Ysy/\sqrt{r!} + G$, we obtain
\[
    \E[xg(\Ysy)] = \E[\E\nolimits_{(\Ysy, x)}[xf(\Ysy/\sqrt{r!} + G)]] = \E[\E\nolimits_{(\Ysy, x)}[xf(Y)]] = \E[xf(Y)]\,,
\]
and similarly by (conditional) Jensen's inequality, 
\[
    \E[g(\Ysy)^{2}] = \E[\E\nolimits_{\Ysy}[f(\Ysy/\sqrt{r!} + G)]^{2}] = \E[\E\nolimits_{\Ysy}[f(Y)]^2] \leq \E[f(Y)^{2}]\,.
\]
Combining the preceding two displays yields the desired bound. This concludes the proof.
\end{proof}

\section{Polynomial time algorithmic guarantee}\label{sec:polytime:algorithm}
\begin{algorithm}
\caption{Preprocessing step for symmetric sparse tensor PCA recovery.}
\label{alg:sparse:PCA:preprocessing}

\begin{algorithmic}[1]
\State \textbf{Input:} Symmetric tensor $Y$ and parameters $\rho, \lambda, \eps$.
\State Let $\lambda_\star = \sqrt{\frac{(r-2)!(1+\eps)}{en^{r-1}\rho^{2r-2}}}$ and $p=\frac{\lambda_\star}{\lambda}$. Note that $p \in [0, 1]$ under the hypothesis of Theorem~\ref{thm:sparse:PCA}-(b).
\State Independently sample a tensor $G$ with the same law as $\Wsy$. Let $Y_\star = pY + \sqrt{1-p^2}G$. Observe that $Y_\star$ has the same law as the symmetric sparse PCA model with signal parameter $\lambda_\star$.
\State \textbf{Output:} $\widetilde{Y} = Y_\star$.
\end{algorithmic}
\end{algorithm}

This section is devoted to the proof of algorithmic guarantee (Theorem~\ref{thm:algorithm}). Throughout, we shall assume that $Y$ is generated from the planted dense subhypergraph model; the same proof applies to the symmetric sparse tensor PCA model after substituting the \textbf{Preprocessing} step in Line 2 of Algorithm~\ref{alg:almost:exact:recovery} with Algorithm~\ref{alg:sparse:PCA:preprocessing}.

Recall that the estimator constructed in Section~\ref{subsec:constructing:tree:estimator} counted the number of trees of a certain structure. Let $\ell=\lceil \frac{4}{\eps}\log(1/\rho) \rceil=O_{\eps}(\log{n})$ be the same as defined in the proof of Theorem~\ref{thm:planted:hypergraph}-(b) (see~Section~\ref{subsec:constructing:tree:estimator}) and write $k=(r-1)(2\ell+2)+1$ resp. $\ell_k=(k-1)/(r-1)$ to denote the number of vertices resp. edges in any tree $\alpha \in \sT\equiv\sT_{\ell}$. Note that $k$ here differs from the notation used in Section~\ref{subsec:constructing:tree:estimator}.

We shall also assume that $\lambda = \frac{q_1-q_0}{\sqrt{q_0(1-q_0)}} = \sqrt{\frac{(r-2)!(1+\eps)}{en^{r-1}\rho^{2r-2}}}$ holds with equality, as was done in the proof of Theorems~\ref{thm:planted:hypergraph} and~\ref{thm:sparse:PCA}. We justify this assumption as follows. Let $p \in [0, 1]$ and suppose $Y$ is an instance of the planted dense subhypergraph model with parameters $\rho, q_0, q_1$. Let $Y_\star$ be the hypergraph obtained by iterating the following procedure: independently, for each $e = \{i_1,\hdots,i_r\}$, leave $Y_e$ unchanged with probability $p$ and with probability $1-p$ replace $Y_e$ with the outcome of an independent $\Ber(q_0)$ random variable. Observe that, conditional on $\theta$, 
\[
    \text{$(Y_\star)_e = 1$ with probability $p(q_0+(q_1-q_0)\theta^{V(e)}) + (1-p)q_0 = q_0 + p(q_1-q_0) \theta^{V(e)}$}
\]
In other words, $Y_\star$ has the law of an instance of the planted dense subhypergraph model with $q_1$ decreased to $q_1' = q_0+p(q_1-q_0) \in [q_0, q_1]$ and the remaining parameters unchanged. In particular, by preprocessing the edges in this way, we may assume that $q_1$, and thus $\lambda$, is as small as possible.

For each vertex $i \in [n]$, let $\sT^{(i)}$ be the set of trees obtained by relabelling $1$ as $i$. That is, letting $\pi_i$ denote the transposition exchanging vertex $1$ and $i$, we let $\sT^{(i)} = \{(\pi_i(e))_{e \in \alpha}: \alpha \in \sT\}$. Define
\[
    f^{(i)}(Y) = \sum_{\alpha \in \sT^{(i)}} \widetilde{Y}^{\alpha} \quad\textnormal{where}\quad\widetilde{Y}=\frac{Y-q_0}{\sqrt{q_0(1-q_0)}}\,,
\]
and observe that, because the distribution of the planted subhypergraph model is invariant under vertex relabeling, Theorem~\ref{thm:planted:hypergraph}-(b) implies $\Corr \lPa f^{(i)}(Y), \theta_{i} \rPa = \frac{|\E[\theta_i f^{(i)}(Y)]|}{\sqrt{\E[\theta_i^2] \E[f^{(i)}(Y)^2]}} = 1-o(1)$.

\begin{definition}
A coloring is any mapping $c: [n] \rightarrow [k]$.
\end{definition}

The idea is to replace the tree counts by a color-coded version, obtained by randomly coloring the vertices $[n]$ and counting only those trees $\alpha \in \sT^{(i)}$ whose vertices receive distinct colors. Let us fix an integer $t \geq 1$, whose exact value will be determined later (see the proof of Proposition~\ref{prop:single:vertex:recovery} below). Let $c_{1}, \hdots, c_{t}: [n] \rightarrow [k]$ be independent, uniformly random colorings, chosen independently of $(Y, \theta)$ and define $\xi_{s}(\alpha) = \In_{\text{$c_{s}$ is injective on $V(\alpha)$}}$. Compute, 
\[
	\E[\xi_{s}(\alpha)] = \Pb(\xi_{s}(\alpha) = 1) = \Pb(\text{$c_{s}$ is injective on $[k]$}) = \frac{k!}{k^{k}} =: q
\]
for all $s \in [t]$ and $\alpha \in \sT^{(i)}$. Next, for each vertex $i \in [n]$, we define the single color-coded estimator
\[
	f_{s}^{(i)}(Y) = \frac{1}{q}\sum_{\alpha \in \sT^{(i)}} \widetilde{Y}^{\alpha} \xi_{s}(\alpha)\,.
\]
Letting $\E_{Y}$ denote the expectation conditional on $Y$, we calculate
\begin{equation}
	\E\nolimits_{Y}[f_{s}^{(i)}(Y)] = \frac{1}{q}\sum_{\alpha \in \sT^{(i)}} \widetilde{Y}^{\alpha} \cdot \E\nolimits_{Y}[\xi_{s}(\alpha)] = \sum_{\alpha \in \sT^{(i)}} \widetilde{Y}^{\alpha} = f^{(i)}(Y)\,.
\label{eq:single:coloring:unbiased}
\end{equation}
Finally, we define our candidate estimator: 
\[
	g^{(i)}(Y) = \frac{1}{t} \sum_{s = 1}^{t} f_{s}^{(i)}(Y)\,.
\]
Taking summation over $s=1,\hdots,t$ in~\eqref{eq:single:coloring:unbiased} shows that $\E_{Y}[g^{(i)}(Y)] = f^{(i)}(Y)$. Furthermore, 

\begin{lemma}\label{lem:correlation:transfer}
If $tq \rightarrow \infty$, then $\Corr \lPa g^{(i)}(Y), \theta_i \rPa = \frac{|\E[\theta_i g^{(i)}(Y)]|}{\sqrt{\E[\theta_i^2] \E[g^{(i)}(Y)^2]}} = 1 - o(1)$.
\end{lemma}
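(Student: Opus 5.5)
Since $g^{(i)}$ is an unbiased color-coded version of $f^{(i)}$ given $Y$, the plan is to show that $\E[g^{(i)}(Y)^2]=(1+o(1))\E[f^{(i)}(Y)^2]$ while the numerators agree exactly. For the numerator, the colorings are independent of $(Y,\theta)$, so by~\eqref{eq:single:coloring:unbiased} and the tower property, $\E[\theta_i g^{(i)}(Y)]=\E[\theta_i \E_Y[g^{(i)}(Y)]]=\E[\theta_i f^{(i)}(Y)]$. Hence it suffices to prove
\[
\E[g^{(i)}(Y)^2]\le \Big(1+\frac{C}{tq}\Big)\E[f^{(i)}(Y)^2],
\]
or at least that the excess is $o(\E[f^{(i)}(Y)^2])$. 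Combined with $\Corr(f^{(i)}(Y),\theta_i)=1-o(1)$ from Theorem~\ref{thm:planted:hypergraph}-(b), this yields the claim.

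To control the second moment, I would decompose
\[
\E[g^{(i)2}]=\E[f^{(i)2}]+\E[\Var_Y(g^{(i)})],\qquad \Var_Y(g^{(i)})=\frac{1}{t}\Var_Y(f^{(i)}_1),
\]
using that $c_1,\dots,c_t$ are i.i.d.\ given $Y$. I would then expand
\[
\Var_Y(f^{(i)}_1)=q^{-2}\sum_{\alpha,\beta\in\sT^{(i)}}\widetilde Y^{\alpha}\widetilde Y^{\beta}\,\mathrm{Cov}(\xi(\alpha),\xi(\beta)),
\]
where $\mathrm{Cov}(\xi(\alpha),\xi(\beta))=\P(\xi(\alpha)\xi(\beta)=1)-q^2$. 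If $V(\alpha)\cap V(\beta)=\emptyset$, the two injectivity events are independent and the covariance vanishes. In every case $|\mathrm{Cov}|\le q$. Taking expectations in $Y$ requires sign control, so I would bound
\[
\E[\Var_Y(f_1^{(i)})]\le q^{-1}\sum_{\alpha,\beta:\,V(\alpha)\cap V(\beta)\ne\emptyset}\big|\E[\widetilde Y^{\alpha+\beta}]\big|.
\]
The expectations $\E[\widetilde Y^{\alpha+\beta}]$ are in fact nonnegative, since conditionally on $\theta$ they are products of $\lambda^{\cdot}\theta^{\cdot}$ terms and second moments. Every pair here shares at least the root $i$.

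The sum of $\E[\widetilde Y^{\alpha+\beta}]$ over all pairs with shared vertices is exactly what Section~\ref{subsec:sparse:PCA:upper:bound} bounds. There the total is $\sum_{\alpha,\beta}\E[\widetilde Y^{\alpha+\beta}]=\E[f^{(i)2}]\le(1+o(1))\Gamma$, with all terms nonnegative, so the shared-vertex sum is at most $\E[f^{(i)2}]$. This gives
\[
\E[g^{(i)2}]\le (1+(tq)^{-1})\E[f^{(i)2}],
\]
and $tq\to\infty$ finishes the proof.

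The main obstacle is verifying nonnegativity of $\E[\widetilde Y^{\alpha+\beta}]$ for the planted-hypergraph normalization. Conditionally on $\theta$, edges in $\alpha\triangle\beta$ contribute $\lambda\theta^{V(e)}\ge0$, and squared edges contribute $\E_\theta[\widetilde Y_e^2]>0$, so the product is nonnegative. If this step fails, the fallback is to use absolute values and reuse the upper bounds of Lemmas~\ref{lem:covariance:overlapping:pairs} and~\ref{lem:counting:overlapping:pairs}, which already dominate the pair sums by $O(\Gamma)$.
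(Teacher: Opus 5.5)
Your proposal is correct and follows essentially the same route as the paper. Both arguments use unbiasedness $\E_Y[g^{(i)}]=f^{(i)}$, independence of the colorings across $s$, the bound $\E[\xi_s(\alpha)\xi_s(\beta)]\le q$, and nonnegativity of $\E[\widetilde Y^{\alpha+\beta}]$ to get $\E[g^{(i)}(Y)^2]\le (1+O(1/(tq)))\,\E[f^{(i)}(Y)^2]$. The differences are cosmetic: the paper phrases the conclusion through $\E[(f^{(i)}-g^{(i)})^2]=o(\E[f^{(i)}(Y)^2])$ and a triangle inequality, whereas you use the exact numerator identity $\E[\theta_i g^{(i)}]=\E[\theta_i f^{(i)}]$ directly, which is slightly cleaner.
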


\begin{proof}
By exchangeability, we may assume w.l.o.g. that $i=1$. Let $f \equiv f^{(1)}$ and $g \equiv g^{(1)}$. To reduce notation, we omit the dependence on $Y$ from the polynomials $f$ and $g$. It will suffice to show that
\begin{equation}
	\E[\Delta^{2}] = o(\E[f^{2}])
\label{eq:correlation:transfer:sufficient:condition}
\end{equation}
where $\Delta=f-g$. Indeed, observe that $|\E[\theta_1f]|=(1-o(1))\sqrt{\E[f^2] \E[\theta_1^2]}$, so if~\eqref{eq:correlation:transfer:sufficient:condition} holds, then the triangle inequality and Cauchy Schwarz yield, 
\[
	\frac{|\E[\theta_1g]|}{\sqrt{\E[g^{2}] \E[\theta_1^{2}]}} \geq \frac{|\E[\theta_1f]| - |\E[\theta_1\Delta]|}{\lPa \sqrt{\E[f^{2}]} + \sqrt{\E[\Delta^{2}]} \rPa \sqrt{\E[\theta_1^{2}]}} \geq \frac{(1 - o(1)) \sqrt{\E[f^{2}]} - \sqrt{\E[\Delta^{2}]}}{(1 + o(1)) \sqrt{\E[f^{2}]}} = 1 - o(1)\,.
\]
We thus turn our attention to proving~\eqref{eq:correlation:transfer:sufficient:condition}. Observe that, 
\[
    \E[g^{2}] = \E \lBr \frac{1}{t^2q^2} \sum_{s, s' = 1}^{t} \sum_{\alpha, \beta \in \sT} \widetilde{Y}^{\alpha + \beta} \xi_{s}(\alpha) \xi_{s'}(\beta) \rBr = \frac{1}{t^2q^2} \sum_{s, s' = 1}^{t} \sum_{\alpha, \beta \in \sT} \E[\widetilde{Y}^{\alpha + \beta}] \cdot \E[\xi_{s}(\alpha) \xi_{s'}(\beta)]\,,
\]
where the last equality used the independence of the colorings and the planted dense subhypergraph model. If $s \neq s'$, then $\E[\xi_{s}(\alpha) \xi_{s'}(\beta)] = q^{2}$; otherwise if $s=s'$, then $\E[\xi_{s}(\alpha) \xi_{s'}(\beta)] \leq \E[\xi_{s}(\alpha)] = q$ by non-negativity. Using that the mixed moments $\E[\widetilde{Y}^{\alpha + \beta}]$ are non-negative (refer to the calculations in Section~\ref{subsec:planted:hypergraph:upper:bound}), 
\[
    \E[g^{2}] \leq \frac{t(t - 1)q^{2} + tq}{t^2q^2} \sum_{\alpha, \beta \in \sT} \E[\widetilde{Y}^{\alpha + \beta}] = \lPa 1 + \frac{1 - q}{tq} \rPa \E[f^{2}]\,.
\]
Rearranging and using $\E[fg] = \E[f^2]$ since $\E_Y[g] = f$, we obtain
\[
    \E[\Delta^{2}] = \E[g^{2}] - 2\E[fg] + \E[f^{2}] = \E[g^{2}] - \E[f^{2}] \leq \lPa \frac{1 - q}{tq} \rPa \E[f^{2}] = o(\E[f^{2}])\,,
\]
where the last step holds provided $tq \rightarrow \infty$.
\end{proof}

\begin{proposition}\label{prop:single:coloring:polytime:computable}
For all $s \in [t]$, the vector $(f_s^{(i)}(Y))_{i \in [n]}$ is computable in time $n^{r}e^{O_r(\ell)}$.
\end{proposition}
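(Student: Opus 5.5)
The plan is to show that, for a fixed coloring $c=c_s$, Algorithm~\ref{alg:color:coded:score} computes $qf_s^{(i)}(Y)=\sum_{\alpha\in\sT^{(i)}}\widetilde Y^\alpha\xi_s(\alpha)$ for all $i$ simultaneously. We then bound its cost. First we organize $\sT^{(i)}$ by root-preserving isomorphism class. Every $\alpha\in\sT^{(i)}$ is the image of exactly one representative $H\in\sH_\ell$ under a map sending $r_H\mapsto i$, injective on $V(H)$. The number of such maps giving the same labeled $\alpha$ is $|\Aut(H)|$. Hence
\[
\sum_{\alpha\in\sT^{(i)}}\widetilde Y^\alpha\xi_s(\alpha)=\sum_{H\in\sH_\ell}\frac{1}{|\Aut(H)|}\sum_{\substack{\phi:V(H)\hookrightarrow[n]\\ \phi(r_H)=i}}\In\{c\circ\phi\text{ injective}\}\prod_{e\in E(H)}\widetilde Y_{\phi(e)}.
\]
The colorfulness constraint already forces $\phi$ to be injective, so we may sum over all maps $\phi$ with $c\circ\phi$ injective. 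This relaxation is exactly what makes a dynamic program possible.

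Next we verify the recursion by induction up the incidence tree $I$. The claim is that $A_v(i,Q,c)$ equals the sum, over maps $\phi$ of $\vtx(H_v)$ with $\phi(v)=i$ and $c(\phi(\vtx(H_v)))=Q$ bijectively, of $\prod_{e\in H_v}\widetilde Y_{\phi(e)}$. The analogous statement for $B_e$ uses the parent vertex mapped to $i$ and the remaining vertex-nodes colored bijectively onto $Q$. The leaf case is immediate. For internal nodes, the child subtrees of a hypertree are vertex-disjoint apart from the shared parent. So a colorful map of the subtree is the same as a choice of disjoint color classes $Q_e$ (resp.\ $Q_j$), together with colorful maps on each child that use exactly those classes. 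The constraints $c(i)\in Q$ and $c(i)\notin Q$ ensure the root color is not reused. The distinctness requirement on $y_1,\dots,y_{r-1}$ in $B_e$ is automatically implied by disjoint color sets, so it is harmless. Taking $v=r_H$ and $Q=[k]$, and combining with the display above, gives $A_H$. Dividing by $|\Aut(H)|$ and by $q$ yields $f_s^{(i)}$.

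For the runtime, $|\sH_\ell|$ is at most the number of rooted unlabeled hypertrees with $O(\ell)$ edges, which is $e^{O_r(\ell)}$. Each $\Aut(H)$ can be computed in $e^{O_r(\ell)}$ time by the standard tree-canonization recursion. For each $H$, the number of table entries is $O(\ell)\cdot n\cdot 2^k$ with $k=O_r(\ell)$. Computing a vertex-node entry sums over ordered set partitions of $Q$, which costs at most $(\deg+1)^{k}=e^{O_r(\ell)}$ by processing children one at a time as a subset convolution. Computing an edge-node entry costs $n^{r-1}$ choices of $(y_j)$ times $e^{O_r(\ell)}$ partitions. The total is therefore $n^{r}e^{O_r(\ell)}$.

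The main obstacle is the bookkeeping of the isomorphism and automorphism normalization. We must check that summing over ordered child tuples in $B_e$, with an arbitrary fixed ordering of $\ch(e)$, together with ordered $(y_1,\dots,y_{r-1})$ counts each injective embedding exactly once. Each labeled tree is then counted $|\Aut(H)|$ times, with no extra factors of $(r-1)!$. We handle this by defining everything in terms of maps $\phi$ from the labeled representative, so that the symmetry factor appears only globally.
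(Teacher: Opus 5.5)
Your proposal is correct and follows the paper's proof essentially step for step. The paper does exactly what you describe: the automorphism-class expansion $\sum_{\alpha\in\sT^{(i)}}\widetilde Y^{\alpha}\xi_s(\alpha)=\sum_{H}X_H(i,Y)/|\Aut(H)|$, the vertex/edge dynamic program over color sets on the incidence tree, an $e^{O_r(\ell)}$ bound on $|\sH_\ell|$, and an $n^{r-1}(r-1)^{k}$ cost per edge-node entry. Your remarks that colorfulness forces injectivity and that $|\Aut(H)|$ is computable by tree canonization are welcome details the paper leaves implicit.

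One slip needs fixing. Vertices of $H\in\sH_\ell$ can have degree $\Theta(\ell)$, for instance when a subtree is a star. For such vertices the naive partition count $(\deg+1)^{k}$ is $\ell^{\Theta(\ell)}$, not $e^{O_r(\ell)}$. The bound you actually want comes from the one-child-at-a-time subset convolution you describe. For each root label $i$ it costs $\deg_v\cdot 3^{k}$, obtained by summing $2^{|Q|}$ over all $Q\subseteq[k]$, which is exactly the paper's estimate.
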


\begin{proof}
Let $c: [n] \rightarrow [k]$ be any coloring. We say that two graphs $\alpha_{1}, \alpha_{2} \in \sT^{(i)}$ are isomorphic if there exists a (root-preserving) hypergraph isomorphism $\varphi: V(\alpha_{1}) \rightarrow V(\alpha_{2})$ with $\varphi(i) = i$. Let $\sH$ be the set of isomorphism classes of $\sT$. For every class $H \in \sH$, fix a representative and write $r_H$ for its root. Let $\Aut(H)$ be the group of root-preserving automorphisms of $H$ and define
\[
	X_{H}(i, Y) = \sum_{\substack{\varphi: V(H) \hookrightarrow [n] \\ \varphi(r_H) = i}} \widetilde{Y}^{\varphi(H)} \In_{\text{$c$ is injective on $\varphi(V(H))$}}\,.
\]
Expanding $\sT^{(i)}$ over its automorphism classes, 
\begin{equation}\label{eq:automorphism:class:expansion}
    \sum_{\alpha \in \sT^{(i)}} \widetilde{Y}^{\alpha} \In_{\text{$c$ is injective on $V(\alpha)$}} = \sum_{H \in \sH} \frac{X_{H}(i, Y)}{|\Aut(H)|}\,.
\end{equation}
We show that we can compute the RHS in polynomial time. We do this by bounding the size of $\sH$ and showing that the vector $(X_{H}(i, Y))_{i \in [n]}$ can be computed in polynomial time independent of the chosen representative $H \in \sH$.

{\bf{Total number of isomorphism classes.}}
For each representative hypertree $H \in \sH$, map it to its rooted incidence bipartite graph, viewed up to rooted isomorphism as a rooted tree with $k + \ell_k$ vertices. Note that this mapping is injective from $\sH$ to the set of isomorphism classes of rooted trees. Hence, the number of distinct classes in $\sH$ is upper bounded by the number of unlabeled rooted trees with $k+\ell_k$ vertices. The latter is bounded by the total number of plane trees with $k + \ell_k$ vertices. Now since the number of plane trees with $m$ vertices can be bounded by the $m$-th Catalan number, which is bounded by $4^m$, we conclude that
\begin{equation}\label{eq:isomorphism:class:counting}
	|\sH| \leq 4^{k + \ell_k} = e^{O_{r}(k)}\,.
\end{equation}

{\bf{Computation of $X_{H}$.}}
Fix any representative hypertree $H \in \sH$. By identifying $H$ with its bipartite incidence graph, we view $H$ as a rooted tree (in the usual sense with $r=2$) on the vertex set $V(H) \sqcup E(H)$. For a vertex-node $v \in V(H)$, let $H_{v}$ be the subtree rooted at $v$ consisting of $v$ and its descendants in the incidence tree. Similarly, for an edge-node $e \in E(H)$, write $p_{e}$ to denote its parent vertex and let $H_{e}$ be the subgraph consisting of $p_{e}$ along with $e$ and its descendants. For any node $a$ with subtree $H_a$, write $\vtx(H_a)$ to denote the set of vertex-nodes in $H_a$. If $a$ is an edge-node, this excludes its parent vertex-node.

For vertex labels $i \in [n]$ and colors $Q \subseteq [k]$, define the dynamic programming (DP) quantities
\[
	A_{v}(i, Q) = \sum_{\substack{\varphi: V(H_{v}) \hookrightarrow [n] \\ \varphi(v) = i}} \widetilde{Y}^{\varphi(H_{v})} \In_{\text{$c$ maps $\varphi(\vtx(H_{v}))$ bijectively to $Q$}}
\]
and
\[
	B_{e}(i, Q) = \sum_{\substack{\varphi: V(H_{e}) \hookrightarrow [n] \\ \varphi(p_{e}) = i}} \widetilde{Y}^{\varphi(H_{e})} \In_{c(i) \notin Q} \In_{\text{$c$ maps $\varphi(\vtx(H_{e}) \setminus \{p_e\})$ bijectively to $Q$}}\,.
\]
The dynamic programming equations are as follows: 
\begin{itemize}
	\item[1.] If $v$ is a leaf (has no children), then $A_{v}(i, Q) = \In_{Q = \{c(i)\}}$. Otherwise, for a vertex-node $v$ with child edge-nodes $e_{1}, \hdots, e_{d}$ the vertex DP equation is
	\begin{equation}
		A_{v}(i, Q) = \In_{c(i) \in Q} \sum_{\substack{Q_{1}, \hdots, Q_{d} \subseteq [k] \\ \bigsqcup_{j = 1}^{d} Q_{j} = Q \setminus \{c(i)\}}} \prod_{j = 1}^{d} B_{e_{j}}(i, Q_{j})\,.
    \label{eq:vertex:DP}
	\end{equation}
	To simplify the calculation of the computation cost, it will be useful to consider the function, 
	\[
		h_{j}(i, Q) = \sum_{Q' \subseteq Q} h_{j - 1}(i, Q') B_{e_{j}}(i, Q \setminus Q'), \quad \forall j \in [d]
	\]
	with base-case $h_{0}(i, Q) = \In_{Q = \emptyset}$. Then
	\begin{equation}
		A_{v}(i, Q) = \begin{cases}
			h_{d}(i, Q \setminus \{c(i)\}) & \text{if $c(i) \in Q$} \\
			0					   & \text{otherwise}
		\end{cases}\,.
    \label{eq:subset:convolution}
	\end{equation}
	The point here is that we have rewritten Eq.~\eqref{eq:vertex:DP} in terms of subset convolutions.
	\item[2.] For an edge-node $e$ with child vertex-nodes $v_{1}, \hdots, v_{r - 1}$ the edge DP equation is
	\begin{equation}
		B_{e}(i, Q) = \In_{c(i) \notin Q} \sum_{\substack{y_{1}, \hdots, y_{r - 1} \in [n] \setminus \{i\} \\ \text{distinct}}} \widetilde{Y}_{\{i, y_{1}, \hdots, y_{r - 1}\}} \sum_{\substack{Q_{1}, \hdots, Q_{r - 1} \subseteq [k] \\ \bigsqcup_{j = 1}^{r - 1} Q_{j} = Q}} \prod_{j = 1}^{r - 1} A_{v_{j}}(y_{j}, Q_{j})\,.
    \label{eq:edge:DP}
	\end{equation}
\end{itemize}
Finally, we observe that $X_{H}(i, Y) = A_{r_H}(i, [k])$. It remains to bound the runtime of computing the dynamic programming (DP) equations.
\begin{itemize}
	\item[1.] For a vertex-node $v \in V(H)$ with $d_{v}$ children edge-nodes and a vertex label $i \in [n]$, the cost of computing Eq.~\eqref{eq:subset:convolution} by summing over subsets of $[k]$ is at most, 
	\[
		\sum_{j = 1}^{d_{v}} \sum_{Q \subseteq [k]} 2^{|Q|} = d_{v}3^{k}\,.
	\]
	Summing over all indices and vertex-nodes, the cost of computing the vertex DP equations is no more than, 
	\[
		\sum_{v \in V(H)} \sum_{i \in [n]} d_{v}3^{k} = n3^{k} \sum_{v \in V(H)} d_{v} = n\ell_k 3^{k}\,,
	\]
	where we used that the sum of the number of children of all vertex-nodes is simply the number of edge-nodes.
	\item[2.] For an edge-node $e$, the cost of computing Eq.~\eqref{eq:edge:DP} for a fixed pair $(i, Q) \in [n] \times 2^{[k]}$ is at most
	\[
		n^{r - 1}(r - 1)^{|Q|}\,,
	\]
	where the first term counts the number of ways to choose the $y_{j}$'s and the second factor counts the ways to partition the color set $Q$. Summing over all edge-nodes and pairings yields, 
	\[
		\sum_{e \in E(H)} \sum_{(i, Q) \in [n] \times 2^{[k]}} n^{r - 1}(r - 1)^{|Q|} \leq \sum_{e \in E(H)} n^{r}(2r)^{k} = n^{r} \ell_k (2r)^{k}
	\]
	as an upper bound for the runtime of the edge DP equations.
\end{itemize}
Combining the preceding bounds shows that $(X_{H}(i, Y))_{i \in [n]}$ can be computed in time, 
\begin{equation}\label{eq:DP:time}
	n \ell_k 3^{k} + n^{r} \ell_k (2r)^{k} = n^{r}e^{O_r(k)}\,.
\end{equation}
To conclude the proof, observe that the LHS of~\eqref{eq:automorphism:class:expansion} is precisely equal to $qf_{s}^{(i)}(Y)$ for $c = c_s$, where $q$ is a multiplicative scaling, and the estimates~\eqref{eq:isomorphism:class:counting} and~\eqref{eq:DP:time} imply that the RHS of~\eqref{eq:automorphism:class:expansion} can be computed in time $n^{r}e^{O_r(\ell)}$, using also that $k=O_r(\ell)$.
\end{proof}

From the preceding, we deduce the following result.

\begin{proposition}\label{prop:single:vertex:recovery}
Consider the planted dense subhypergraph model. Suppose the hypotheses of Theorem~\ref{thm:planted:hypergraph}-(b) holds for a fixed $\eps>0$. There exists a randomized algorithm, whose runtime is at most $n^{r + o(1)}e^{O_r(\ell)}$, such that given input $Y$ and parameters $\rho, q_0,q_1$, outputs a vector $\hat{x}=(\hat{x}_i)_{i \in [n]}$ such that $\Corr(\hat{x}_i, \theta_{i}) = 1 - o(1)$ for all $i \in [n]$.
\end{proposition}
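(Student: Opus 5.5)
The plan is to assemble the algorithm from the pieces already built: the preprocessing step, the color-coded estimators $g^{(i)}$, Lemma~\ref{lem:correlation:transfer}, and Proposition~\ref{prop:single:coloring:polytime:computable}. First I apply the preprocessing (resampling each edge from $\Ber(q_0)$ with probability $1-p$, where $p=\lambda_\star/\lambda$). As noted before the statement, this produces an instance $Y_\star$ of the planted dense subhypergraph model with the same $\theta$ and with $\lambda$ replaced by $\lambda_\star$. Every hypothesis of Theorem~\ref{thm:planted:hypergraph}-(b) continues to hold for $Y_\star$, since the condition on $q_0$ is unchanged and the SNR now equals $1+\eps$ exactly. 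This step costs $O(n^r)$ time.

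Next I fix $t=\lceil \frac{k^k}{k!}\log n\rceil$, so that $tq=\lceil q^{-1}\log n\rceil q\geq \log n\to\infty$. Lemma~\ref{lem:correlation:transfer} then gives $\Corr(g^{(i)}(Y_\star),\theta_i)=1-o(1)$ for each $i$; by exchangeability the $o(1)$ is the same for every $i$. I set $\hat{x}_i:=g^{(i)}(Y_\star)$. Computing the full vector $(g^{(i)})_{i\in[n]}$ requires $t$ independent colorings, and for each coloring Proposition~\ref{prop:single:coloring:polytime:computable} computes $(f_s^{(i)})_{i}$ in time $n^re^{O_r(\ell)}$. The total runtime is therefore $t\cdot n^re^{O_r(\ell)}$.

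The step that needs care is the bound on $t$. Here $q=k!/k^k\geq e^{-k}$, so $t\leq e^{k}\log n+1$. Since $k=(r-1)(2\ell+2)+1=O_r(\ell)$, this gives $t=e^{O_r(\ell)}\log n$, and $\log n=n^{o(1)}$. The runtime is thus $n^{r+o(1)}e^{O_r(\ell)}$, as claimed. The value of $\ell$ is the same $\lceil\frac4\eps\log(1/\rho)\rceil$ used in the proof of Theorem~\ref{thm:planted:hypergraph}-(b), so the correlation guarantee inherited from that theorem applies without change.

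I expect no real obstacle beyond bookkeeping. The only points to check are that the $o(1)$ in Lemma~\ref{lem:correlation:transfer} is uniform in $i$, which holds by vertex symmetry, and that the preprocessing randomness is independent of the colorings, so that the independence used in Lemma~\ref{lem:correlation:transfer} still holds.
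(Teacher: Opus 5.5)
Your proposal is correct and follows the paper's proof essentially verbatim: you set $\hat{x}_i=g^{(i)}$ with $t=\lceil q^{-1}\log n\rceil$ so that $tq\geq\log n$ and Lemma~\ref{lem:correlation:transfer} applies, bound $t\leq e^{k}\log n+1=n^{o(1)}e^{O_r(\ell)}$, and multiply by the per-coloring cost from Proposition~\ref{prop:single:coloring:polytime:computable}. The only difference is cosmetic: you put the edge-resampling preprocessing explicitly into the algorithm, whereas the paper treats it as a without-loss-of-generality reduction stated at the start of the appendix.
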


\begin{proof}
Let $\hat{x}_{i}=g^{(i)}(Y)$. Take $t = \lceil q^{-1} \log{n} \rceil$ and $\ell = \lceil \frac{4}{\eps} \log(1/\rho) \rceil$ so that $tq \geq \log{n} \rightarrow \infty$ and the result of Lemma~\ref{lem:correlation:transfer} applies. Then, 
\[
	t \leq q^{-1} \log{n} + 1 \leq e^{k} \log{n} + 1 = n^{o(1)}e^{O_r(\ell)}\,.
\]
We see that $(g^{(i)}(Y))_{i \in [n]}$ is a sum of $n^{o(1)}e^{O_r(\ell)}$-many vectors, each of which can be computed in time $n^{r}e^{O_r(\ell)}$ according to Proposition~\ref{prop:single:coloring:polytime:computable}. It follows that $\hat{x}$ can be computed within the stated runtime.
\end{proof}

\begin{proof}[Proof of Algorithmic Guarantee Theorem~\ref{thm:algorithm}]
As noted at the start of this section, we shall assume that $Y$ is generated from the planted dense subhypergraph model. For $i \in [n]$, take $\hat{x}_{i}$ to be the estimator of $\theta_{i}$ given by Proposition~\ref{prop:single:vertex:recovery}. Define
\[
    a_{n} := \frac{\E[\theta_{i}\hat{x}_i]}{\E[\theta_i^2]}\,.
\]
Note that this definition does not depend on the chosen vertex $i$ because the laws of the random variables $\{(\theta_{i}, \hat{x}_{i})\}_{i \in [n]}$ are exchangeable. In particular, we have $a_n>0$ since $\E[\theta_1\hat{x}_1] = \E[\theta_1f^{(1)}(Y)] = |\sT| \lambda^{\ell_k}\rho^{k} > 0$. Compute, 
\[
\begin{aligned}
    \E[(\hat{x}_i  - a_{n} \theta_{i})^{2}] &= \E[\hat{x}_i^{2}] \lPa 1 - \frac{2a_{n} \E[\theta_{i} \hat{x}_i] - a_{n}^{2} \E[\theta_{i}^{2}]}{\E[\hat{x}_i^{2}]} \rPa = \E[\hat{x}_i^{2}] \lPa 1 - \frac{\E[\theta_{i} \hat{x}_i]^{2}}{\E[\theta_{i}^{2}] \E[\hat{x}_i^{2}]} \rPa\,.
\end{aligned}
\]
Proposition~\ref{prop:single:vertex:recovery} shows that the RHS of the above display is $o(\E[\hat{x}_i^2])$. On the other hand, Proposition~\ref{prop:single:vertex:recovery} also implies, 
\[
    \E[\hat{x}_i^{2}] = (1 + o(1)) \frac{\E[\theta_{i} \hat{x}_i]^{2}}{\E[\theta_{i}^{2}]} = (1 + o(1))a_{n}^{2} \rho\,.
\]
Thus, letting $z_{i} := \hat{x}_i/a_{n}$ denote the normalized score, the preceding estimates imply that
\[
    \delta_n^2 := \E \lBr \frac{1}{n\rho} \sum_{i = 1}^{n}(z_i - \theta_{i})^{2} \rBr = \frac{\E[(z_1-\theta_1)^2]}{\rho} = \frac{\E[(\hat{x}_1 - a_n\theta_1)^2]}{a_n^2\rho} = o \lPa \frac{\E[\hat{x}_1^2]}{a_n^2\rho} \rPa = o(1)\,.
\]
Therefore, by Markov's inequality, 
\begin{equation}
    \frac{1}{n\rho} \sum_{i = 1}^{n}(z_i - \theta_{i})^{2} \leq \delta_n
\label{eq:alg:markov:bound}
\end{equation}
with probability at least $1 - \delta_n$. From here on out, we assume the high probability event on which the estimate in~\eqref{eq:alg:markov:bound} holds. For a vector $\gamma \in \R^{n}$, let $T_{s}(\gamma)$ denote the set of $s$ largest indices of $\gamma$, breaking ties by decreasing order of index. Take as our candidate estimator, 
\[
    \widehat{S} = T_{\lfloor n\rho \rfloor}(\hat{x})\,.
\]
Because $\hat{x}$ differs from the normalized score by a global scaling factor, we have $T_{s}(\hat{x}) = T_{s}(z)$ for all $s \in [n]$, where $z = (z_1, \hdots, z_n)$. We claim that $|\widehat{S} \triangle S|=o(n\rho)$ with probability $1-o(1)$. Towards this end, define $S' = T_{|S|}(z)$ and observe that $|S| = |S'|$ implies $|S \setminus S'| = |S' \setminus S|$. Take a matching between the sets $S \setminus S'$ and $S' \setminus S$. That is, pair up each false negative $i \in S \setminus S'$ with a false positive $j \in S' \setminus S$ such that every false negative (and false positive) is matched exactly once. Then
\[
    (z_{i} - \theta_{i})^{2} + (z_j - \theta_{j})^{2} = (z_{i} - 1)^{2} + z_{j}^{2} = \lPa z_{i} - 1/2 \rPa^{2} + \lPa z_{j} - 1/2 \rPa^{2} + (z_{j} - z_{i}) + 1/2 \geq 1/2\,,
\]
where the last inequality holds since $z_{j} \geq z_{i}$ by definition. Taking summation over all pairs gives
\[
    |S' \triangle S| = 2|S \setminus S'| \leq 4 \left \{\sum_{(i, j)} (z_{i} - \theta_{i})^{2} + (z_{j} - \theta_{j})^{2} \right \} \leq 4 \sum_{i = 1}^{n} (z_{i} - \theta_{i})^{2} \leq 4\delta_n n\rho\,,
\]
where the last inequality holds assuming the bound~\eqref{eq:alg:markov:bound}. Next, we argue $S'$ is close to $\widehat{S}$ with high probability. Because $|S| = \sum_{i = 1}^{n} \theta_{i}$ is a sum of independent $\Ber(\rho)$ random variables, Chernoff's inequality (cf.~\cite[Theorem 2.3.1 and Exercise 2.3.6]{vershynin-notes}) implies that
\[
    \P \lPa \frac{||S|- n\rho|}{n\rho} \leq \tau \rPa \geq 1 - 2e^{-\tau^2n\rho/C}\,,
\]
for any $\tau \in [0, 1]$ and where $C>0$ is an absolute constant. Taking $\tau\equiv\tau_n = \sqrt{\frac{C\log{n}}{n\rho}}$, which is $o(1)$ since $n\rho=\omega(\log{n}) \rightarrow \infty$ by assumption, we find that $||S|-\lfloor n\rho \rfloor| \leq \tau n\rho + 1$ with probability at least $1-2/n$. In particular, there exists a deterministic sequence $\eps_n \rightarrow 0$ such that
\begin{equation}
    \frac{||S| - \lfloor n\rho \rfloor|}{n\rho} \leq \eps_n
\label{eq:alg:chernoff:bound}
\end{equation}
with probability at least $1 - \eps_n$. Working on the intersection of the events for which~\eqref{eq:alg:chernoff:bound} and~\eqref{eq:alg:markov:bound} hold, we finally conclude that
\[
    |\widehat{S} \triangle S| \leq |\widehat{S} \triangle S'| + |S' \triangle S| = ||S| - \lfloor n\rho \rfloor| + |S' \triangle S| \leq (4\delta_n + \eps_n) n\rho
\]
with probability at least $1-\delta_n-\eps_n$, where we used the identity $|\widehat{S} \triangle S'| = ||S| - \lfloor n\rho \rfloor|$, which holds because either one of $S'$ or $\widehat{S}$ is a subset of the other by construction. This proves the almost-exact recovery guarantee.

It remains to bound the runtime. In this procedure, the runtime of computing $\hat{x}$ is $n^{r+o(1)}e^{O_{r}(\ell)}$ as given by Proposition~\ref{prop:single:vertex:recovery}. Because $r \geq 2$, the calculation of the score and sorting are negligible relative to this runtime. This completes the proof.
\end{proof}

\bibliographystyle{alpha}
\bibliography{main}

@article{chen2019phase,
  title={Phase transition in the spiked random tensor with {Rademacher} prior},
  author={Chen, Wei-Kuo},
  journal={The Annals of Statistics},
  volume={47},
  number={5},
  pages={2734--2756},
  year={2019},
  publisher={JSTOR}
}

@inproceedings{lesieur2017statistical,
  title={Statistical and computational phase transitions in spiked tensor estimation},
  author={Lesieur, Thibault and Miolane, L{\'e}o and Lelarge, Marc and Krzakala, Florent and Zdeborov{\'a}, Lenka},
  booktitle={2017 IEEE International Symposium on Information Theory (ISIT)},
  pages={511--515},
  year={2017},
  organization={IEEE}
}

@article{carpentier2025low,
  title={Low-degree lower bounds via almost orthonormal bases},
  author={Carpentier, Alexandra and Giancola, Simone Maria and Giraud, Christophe and Verzelen, Nicolas},
  journal={arXiv preprint arXiv:2509.09353},
  year={2025}
}

@article{carpentier2025phase,
  title={Phase transition for stochastic block model with more than $\sqrt{n}$ communities},
  author={Carpentier, Alexandra and Giraud, Christophe and Verzelen, Nicolas},
  journal = {arXiv preprint arXiv:2509.15822},
  year={2025}
}

@InProceedings{luo20a,
  title = 	 {Open Problem: Average-Case Hardness of Hypergraphic Planted Clique Detection},
  author =       {Luo, Yuetian and Zhang, Anru R},
  booktitle = 	 {Proceedings of Thirty Third Conference on Learning Theory},
  pages = 	 {3852--3856},
  year = 	 {2020},
  editor = 	 {Abernethy, Jacob and Agarwal, Shivani},
  volume = 	 {125},
  series = 	 {Proceedings of Machine Learning Research},
  month = 	 {09--12 Jul},
  publisher =    {PMLR}
}

@article{tang2026detection,
  title={Detection Is Harder Than Estimation in Certain Regimes: Inference for Moment and Cumulant Tensors},
  author={Tang, Runshi and Han, Yuefeng and Zhang, Anru R},
  journal={arXiv preprint arXiv:2603.26029},
  year={2026}
}

@article{choo2021complexity,
  title = {The complexity of sparse tensor {PCA}},
  author = {Choo, Davin and d'Orsi, Tommaso},
  journal = {Advances in Neural Information Processing Systems},
  volume = {34},
  pages = {7993--8005},
  year = {2021}
}

@article{mao2024testing,
  title = {Testing network correlation efficiently via counting trees},
  author = {Mao, Cheng and Wu, Yihong and Xu, Jiaming and Yu, Sophie H},
  journal = {The Annals of Statistics},
  volume = {52},
  number = {6},
  pages = {2483--2505},
  year = {2024},
  publisher = {Institute of Mathematical Statistics}
}

@article{macris2020all,
  title = {All-or-nothing statistical and computational phase transitions in sparse spiked matrix estimation},
  author = {Macris, Nicolas and Rush, Cynthia and others},
  journal = {Advances in Neural Information Processing Systems},
  volume = {33},
  pages = {14915--14926},
  year = {2020}
}

@article{niles2020all,
  title = {The all-or-nothing phenomenon in sparse tensor {PCA}},
  author = {Niles-Weed, Jonathan and Zadik, Ilias},
  journal = {Advances in Neural Information Processing Systems},
  volume = {33},
  pages = {17674--17684},
  year = {2020}
}

@inproceedings{mossel2023sharp,
  title = {Sharp thresholds in inference of planted subgraphs},
  author = {Mossel, Elchanan and Niles-Weed, Jonathan and Sohn, Youngtak and Sun, Nike and Zadik, Ilias},
  booktitle = {The Thirty Sixth Annual Conference on Learning Theory},
  pages = {5573--5577},
  year = {2023},
  organization = {PMLR}
}

@inproceedings{buhai25false,
  author = {Buhai, Rares-Darius and Hsieh, Jun-Ting and Jain, Aayush and Kothari, Pravesh K.},
  booktitle = {2025 IEEE 66th Annual Symposium on Foundations of Computer Science (FOCS)},
  title = {The quasi-polynomial low-degree conjecture is false}, 
  year = {2025},
  volume = {},
  number = {},
  pages = {2577-2590},
  doi = {10.1109/FOCS63196.2025.00134}}

@inproceedings{corinzia22statistical,
  title = {Statistical and computational thresholds for the planted $k$-densest sub-hypergraph problem },
  author = {Corinzia, Luca and Penna, Paolo and Szpankowski, Wojciech and Buhmann, Joachim},
  booktitle = {Proceedings of The 25th International Conference on Artificial Intelligence and Statistics},
  pages = {11615--11640},
  year = {2022},
  editor = {Camps-Valls, Gustau and Ruiz, Francisco J. R. and Valera, Isabel},
  volume = {151},
  series = {Proceedings of Machine Learning Research},
  month = {28--30 Mar},
  publisher = {PMLR},
  url = {https://proceedings.mlr.press/v151/corinzia22a.html}
}

@book{sze-book,
  title = {Orthogonal polynomials},
  author = {G\'{a}bor Szeg\"{o}},
  publisher = {American Mathematical Soc.},
  volume = {23},
  year = {1975}
}

@article{CMSW25,
  title = {Stochastic block models with many communities and the {Kesten--Stigum} bound},
  author = {Chin, Byron and Mossel, Elchanan and Sohn, Youngtak and Wein, Alexander S},
  journal = {arXiv preprint arXiv:2503.03047},
  year = {2025}
}

@article {KestenStigum:66,
  AUTHOR = {H. Kesten and B. P. Stigum},
  TITLE = {Additional limit theorems for indecomposable multidimensional
              {G}alton-{W}atson processes},
  JOURNAL = {Ann. Math. Statist.},
  VOLUME = {37},
  YEAR = {1966},
  PAGES = {1463--1481},
}

@inproceedings{HS-bayesian,
  title = {Efficient Bayesian estimation from few samples: community detection and related problems},
  author = {Hopkins, Samuel B and Steurer, David},
  booktitle = {58th Annual Symposium on Foundations of Computer Science (FOCS)},
  pages = {379--390},
  year = {2017},
  organization = {IEEE}
}

@inproceedings{BR-reduction,
  title = {Complexity theoretic lower bounds for sparse principal component detection},
  author = {Berthet, Quentin and Rigollet, Philippe},
  booktitle = {Conference on Learning Theory},
  pages = {1046--1066},
  year = {2013},
  organization = {PMLR}
}

@inproceedings{BBH-reduction,
  title = {Reducibility and computational lower bounds for problems with planted sparse structure},
  author = {Brennan, Matthew and Bresler, Guy and Huleihel, Wasim},
  booktitle = {Conference on Learning Theory},
  pages = {48--166},
  year = {2018},
  organization = {PMLR}
}

@article{jerrum,
  title = {Large cliques elude the Metropolis process},
  author = {Jerrum, Mark},
  journal = {Random Structures \& Algorithms},
  volume = {3},
  number = {4},
  pages = {347--359},
  year = {1992},
  publisher = {Wiley Online Library}
}

@article{decelle,
  title = {Asymptotic analysis of the stochastic block model for modular networks and its algorithmic applications},
  author = {Decelle, Aurelien and Krzakala, Florent and Moore, Cristopher and Zdeborov{\'a}, Lenka},
  journal = {Physical Review E},
  volume = {84},
  number = {6},
  pages = {066106},
  year = {2011},
  publisher = {APS}
}

@article{ogp-survey,
  title = {The overlap gap property: A topological barrier to optimizing over random structures},
  author = {Gamarnik, David},
  journal = {Proceedings of the National Academy of Sciences},
  volume = {118},
  number = {41},
  pages = {e2108492118},
  year = {2021},
  publisher = {National Acad Sciences}
}

@article{sq-clique,
  title = {Statistical algorithms and a lower bound for detecting planted cliques},
  author = {Feldman, Vitaly and Grigorescu, Elena and Reyzin, Lev and Vempala, Santosh S and Xiao, Ying},
  journal = {Journal of the ACM},
  volume = {64},
  number = {2},
  pages = {1--37},
  year = {2017},
  publisher = {ACM New York, NY, USA}
}

@article{sos-clique,
  title = {A nearly tight sum-of-squares lower bound for the planted clique problem},
  author = {Barak, Boaz and Hopkins, Samuel and Kelner, Jonathan and Kothari, Pravesh K and Moitra, Ankur and Potechin, Aaron},
  journal = {SIAM Journal on Computing},
  volume = {48},
  number = {2},
  pages = {687--735},
  year = {2019},
  publisher = {SIAM}
}

@inproceedings{sos-survey,
  title = {High dimensional estimation via sum-of-squares proofs},
  author = {Raghavendra, Prasad and Schramm, Tselil and Steurer, David},
  booktitle = {Proceedings of the International Congress of Mathematicians: Rio de Janeiro 2018},
  pages = {3389--3423},
  year = {2018},
  organization = {World Scientific}
}

@inproceedings{sos-detect,
  title = {The power of sum-of-squares for detecting hidden structures},
  author = {Hopkins, Samuel B and Kothari, Pravesh K and Potechin, Aaron and Raghavendra, Prasad and Schramm, Tselil and Steurer, David},
  booktitle = {58th Annual Symposium on Foundations of Computer Science (FOCS)},
  pages = {720--731},
  year = {2017},
  organization = {IEEE}
}

@phdthesis{hopkins-thesis,
  Author = {Hopkins, Samuel},
  School = {Cornell University},
  Title = {Statistical inference and the sum-of-squares method},
  Year = {2018}
}

@inproceedings{ld-notes,
  title = {Notes on computational hardness of hypothesis testing: Predictions using the low-degree likelihood ratio},
  author = {Kunisky, Dmitriy and Wein, Alexander S and Bandeira, Afonso S},
  booktitle = {ISAAC Congress (International Society for Analysis, its Applications and Computation)},
  pages = {1--50},
  year = {2019},
  organization = {Springer}
}

@inproceedings{sq-ld,
  title = {Statistical query algorithms and low degree tests are almost equivalent},
  author = {Brennan, Matthew S and Bresler, Guy and Hopkins, Sam and Li, Jerry and Schramm, Tselil},
  booktitle = {Conference on Learning Theory},
  pages = {774--774},
  year = {2021},
  organization = {PMLR}
}

@article{fp,
  title = {The {Franz-Parisi} criterion and computational trade-offs in high dimensional statistics},
  author = {Bandeira, Afonso S and {El Alaoui}, Ahmed and Hopkins, Samuel and Schramm, Tselil and Wein, Alexander S and Zadik, Ilias},
  journal = {Advances in Neural Information Processing Systems},
  volume = {35},
  pages = {33831--33844},
  year = {2022}
}

@article{RM-tensor-pca,
  title = {A statistical model for tensor {PCA}},
  author = {Richard, Emile and Montanari, Andrea},
  journal = {Advances in Neural Information Processing Systems},
  volume = {27},
  year = {2014}
}

@inproceedings{tensor-pca-sos,
  title = {Tensor principal component analysis via sum-of-square proofs},
  author = {Hopkins, Samuel B and Shi, Jonathan and Steurer, David},
  booktitle = {Conference on Learning Theory},
  pages = {956--1006},
  year = {2015},
  organization = {PMLR}
}

@article{subexp-sparse,
  title = {Subexponential-time algorithms for sparse {PCA}},
  author = {Ding, Yunzi and Kunisky, Dmitriy and Wein, Alexander S and Bandeira, Afonso S},
  journal = {Foundations of Computational Mathematics},
  pages = {1--50},
  year = {2023},
  publisher = {Springer}
}

@article{BGJ-tensor,
  title = {Algorithmic thresholds for tensor {PCA}},
  author = {{Ben Arous}, Gerard and Gheissari, Reza and Jagannath, Aukosh},
  journal = {The Annals of Probability},
  volume = {48},
  number = {4},
  pages = {2052--2087},
  year = {2020},
  publisher = {Institute of Mathematical Statistics}
}

@article{submatrix-ogp,
  title = {The overlap gap property in principal submatrix recovery},
  author = {Gamarnik, David and Jagannath, Aukosh and Sen, Subhabrata},
  journal = {Probability Theory and Related Fields},
  volume = {181},
  number = {4},
  pages = {757--814},
  year = {2021},
  publisher = {Springer}
}

@inproceedings{kikuchi,
  title = {The {Kikuchi} hierarchy and tensor {PCA}},
  author = {Wein, Alexander S and {El Alaoui}, Ahmed and Moore, Cristopher},
  booktitle = {2019 IEEE 60th Annual Symposium on Foundations of Computer Science (FOCS)},
  pages = {1446--1468},
  year = {2019},
  organization = {IEEE}
}

@article{GZ-clique,
  title = {The landscape of the planted clique problem: Dense subgraphs and the overlap gap property},
  author = {Gamarnik, David and Zadik, Ilias},
  journal = {arXiv preprint arXiv:1904.07174},
  year = {2019}
}

@inproceedings{grp-testing,
  title = {Statistical and computational phase transitions in group testing},
  author = {{Coja-Oghlan}, Amin and Gebhard, Oliver and {Hahn-Klimroth}, Max and Wein, Alexander S and Zadik, Ilias},
  booktitle = {Conference on Learning Theory},
  pages = {4764--4781},
  year = {2022},
  organization = {PMLR}
}

@article{LZ-tensor,
  title = {Tensor clustering with planted structures: Statistical optimality and computational limits},
  author = {Luo, Yuetian and Zhang, Anru R},
  journal = {The Annals of Statistics},
  volume = {50},
  number = {1},
  pages = {584--613},
  year = {2022},
  publisher = {Institute of Mathematical Statistics}
}

@article{BBP,
  title = {Phase transition of the largest eigenvalue for nonnull complex sample covariance matrices},
  author = {Baik, Jinho and {Ben Arous}, G{\'e}rard and P{\'e}ch{\'e}, Sandrine},
  journal = {The Annals of Probability},
  pages = {1643--1697},
  year = {2005}
}

@article{fund-limits-wigner,
  title = {Fundamental limits of detection in the spiked {Wigner} model},
  author = {{El Alaoui}, Ahmed and Krzakala, Florent and Jordan, Michael I},
  journal = {The Annals of Statistics},
  volume = {48},
  number = {2},
  pages = {863--885},
  year = {2020}
}

@article{amp,
  title = {Message-passing algorithms for compressed sensing},
  author = {Donoho, David L and Maleki, Arian and Montanari, Andrea},
  journal = {Proceedings of the National Academy of Sciences},
  volume = {106},
  number = {45},
  pages = {18914--18919},
  year = {2009},
  publisher = {National Acad Sciences}
}

@article{BM-amp,
  title = {The dynamics of message passing on dense graphs, with applications to compressed sensing},
  author = {Bayati, Mohsen and Montanari, Andrea},
  journal = {IEEE Transactions on Information Theory},
  volume = {57},
  number = {2},
  pages = {764--785},
  year = {2011},
  publisher = {IEEE}
}

@article{FR-amp,
  title = {Iterative reconstruction of rank-one matrices in noise},
  author = {Fletcher, Alyson K and Rangan, Sundeep},
  journal = {Information and Inference: A Journal of the IMA},
  volume = {7},
  number = {3},
  pages = {531--562},
  year = {2018},
  publisher = {Oxford University Press}
}

@article{MV-amp,
  title = {Estimation of low-rank matrices via approximate message passing},
  author = {Montanari, Andrea and Venkataramanan, Ramji},
  journal = {The Annals of Statistics},
  volume = {49},
  number = {1},
  pages = {321--345},
  year = {2021}
}

@article{miolane-survey,
  title = {Phase transitions in spiked matrix estimation: information-theoretic analysis},
  author = {Miolane, L{\'e}o},
  journal = {arXiv preprint arXiv:1806.04343},
  year = {2018}
}

@article{HM-tree,
  title = {Low degree hardness for broadcasting on trees},
  author = {Huang, Han and Mossel, Elchanan},
  journal = {arXiv preprint arXiv:2402.13359},
  year = {2024}
}

@article{submatrix-amp,
  title = {Submatrix localization via message passing},
  author = {Hajek, Bruce and Wu, Yihong and Xu, Jiaming},
  journal = {Journal of Machine Learning Research},
  volume = {18},
  number = {186},
  pages = {1--52},
  year = {2018}
}

@article{BI-info,
  title = {Detection of a sparse submatrix of a high-dimensional noisy matrix},
  author = {Butucea, C and Ingster, YI},
  journal = {Bernoulli: a Journal of Mathematical Statistics and Probability},
  volume = {19},
  number = {5 B},
  pages = {2652--2688},
  year = {2013},
  publisher = {International Statistical Institute}
}

@article{kolar-info,
  title = {Minimax localization of structural information in large noisy matrices},
  author = {Kolar, Mladen and Balakrishnan, Sivaraman and Rinaldo, Alessandro and Singh, Aarti},
  journal = {Advances in Neural Information Processing Systems},
  volume = {24},
  year = {2011}
}

@article{BIS-info,
  title = {Sharp variable selection of a sparse submatrix in a high-dimensional noisy matrix},
  author = {Butucea, Cristina and Ingster, Yuri I and Suslina, Irina A},
  journal = {ESAIM: Probability and Statistics},
  volume = {19},
  pages = {115--134},
  year = {2015},
  publisher = {EDP Sciences}
}

@article{clique-e,
  title = {Finding hidden cliques of size $\sqrt{N/e}$ in nearly linear time},
  author = {Deshpande, Yash and Montanari, Andrea},
  journal = {Foundations of Computational Mathematics},
  volume = {15},
  pages = {1069--1128},
  year = {2015},
  publisher = {Springer}
}

@article{alon-clique,
  title = {Finding a large hidden clique in a random graph},
  author = {Alon, Noga and Krivelevich, Michael and Sudakov, Benny},
  journal = {Random Structures \& Algorithms},
  volume = {13},
  number = {3-4},
  pages = {457--466},
  year = {1998},
  publisher = {Wiley Online Library}
}

@inproceedings{log-density,
  title = {Detecting high log-densities: an $O(n^{1/4})$ approximation for densest $k$-subgraph},
  author = {Bhaskara, Aditya and Charikar, Moses and Chlamtac, Eden and Feige, Uriel and Vijayaraghavan, Aravindan},
  booktitle = {Proceedings of the forty-second ACM Symposium on Theory of Computing},
  pages = {201--210},
  year = {2010}
}

@article{AV-info,
  title = {Community detection in dense random networks},
  author = {Arias-Castro, Ery and Verzelen, Nicolas},
  journal = {The Annals of Statistics},
  volume = {42},
  number = {3},
  pages = {940--969},
  year = {2014},
  publisher = {Institute of Mathematical Statistics},
  doi = {10.1214/14-AOS1208}
}

@article{VA-info,
  title = {Community detection in sparse random networks},
  author = {Verzelen, N and Arias-Castro, E},
  journal = {Annals of Applied Probability},
  volume = {25},
  number = {6},
  pages = {3465--3510},
  year = {2015},
  publisher = {Institute of Mathematical Statistics}
}

@article{ames-convex,
  title = {Guaranteed recovery of planted cliques and dense subgraphs by convex relaxation},
  author = {Ames, Brendan P. W.},
  journal = {Journal of Optimization Theory and Applications},
  volume = {167},
  number = {2},
  pages = {653--675},
  year = {2015},
  publisher = {Springer},
  doi = {10.1007/s10957-015-0777-x}
}

@article{CX-info,
  title = {Statistical-computational tradeoffs in planted problems and submatrix localization with a growing number of clusters and submatrices},
  author = {Chen, Yudong and Xu, Jiaming},
  journal = {Journal of Machine Learning Research},
  volume = {17},
  number = {27},
  pages = {1--57},
  year = {2016}
}

@article{LM-wigner,
  title = {Fundamental limits of symmetric low-rank matrix estimation},
  author = {Lelarge, Marc and Miolane, L{\'e}o},
  journal = {Probability Theory and Related Fields},
  volume = {173},
  pages = {859--929},
  year = {2019},
  publisher = {Springer}
}

@article{proof-replica,
  title = {Mutual information for symmetric rank-one matrix estimation: A proof of the replica formula},
  author = {Dia, Mohamad and Macris, Nicolas and Krzakala, Florent and Lesieur, Thibault and Zdeborov{\'a}, Lenka and others},
  journal = {Advances in Neural Information Processing Systems},
  volume = {29},
  year = {2016}
}

@article{abbe-survey-sbm,
  title = {Community detection and stochastic block models: recent developments},
  author = {Abbe, Emmanuel},
  journal = {Journal of Machine Learning Research},
  volume = {18},
  number = {177},
  pages = {1--86},
  year = {2018}
}

@article{AS-acyclic,
  title = {Proof of the achievability conjectures for the general stochastic block model},
  author = {Abbe, Emmanuel and Sandon, Colin},
  journal = {Communications on Pure and Applied Mathematics},
  volume = {71},
  number = {7},
  pages = {1334--1406},
  year = {2018},
  publisher = {Wiley Online Library}
}

@inproceedings{kunisky24tensor,
  author = {Kunisky, Dmitriy and Moore, Cristopher and Wein, Alexander S.},
  booktitle = {2024 IEEE 65th Annual Symposium on Foundations of Computer Science (FOCS)},
  title = {Tensor cumulants for statistical inference on invariant distributions},
  year = {2024},
  volume = {},
  number = {},
  pages = {1007-1026},
  doi = {10.1109/FOCS61266.2024.00067}, 
  note = {arXiv version available at arXiv:2404.18735.}
}

@article{opt-bot,
  title = {Optimal low degree hardness for broadcasting on trees},
  author = {Huang, Han and Mossel, Elchanan},
  journal = {arXiv preprint arXiv:2502.04861},
  year = {2025}
}

@article{one-community-sparse,
  title = {Finding one community in a sparse graph},
  author = {Montanari, Andrea},
  journal = {Journal of Statistical Physics},
  volume = {161},
  pages = {273--299},
  year = {2015},
  publisher = {Springer}
}

@article{color-coding,
  title = {Color-coding},
  author = {Alon, Noga and Yuster, Raphael and Zwick, Uri},
  journal = {Journal of the ACM (JACM)},
  volume = {42},
  number = {4},
  pages = {844--856},
  year = {1995},
  publisher = {ACM New York, NY, USA}
}

@article{ld-survey,
  title = {Computational complexity of statistics: new insights from low-degree polynomials},
  author = {Wein, Alexander S},
  journal = {arXiv preprint arXiv:2506.10748},
  year = {2025}
}

@article{BHP-19,
  author = {Mark Budden and Josh Hiller and Andrew Penland},
  title = {Minimally connected $r$-uniform hypergraphs},
  journal = {Australasian Journal of Combinatorics},
  volume = {82},
  number = {1},
  pages = {1--20},
  year = {2022}
}

@article{DMW-23,
  author = {Abhishek Dhawan and Cheng Mao and Alexander S. Wein},
  title = {Detection of dense subhypergraphs by low-degree polynomials},
  journal = {Random Structures \& Algorithms},
  volume = {66},
  number = {1},
  pages = {e21279},
  year = {2025},
  doi = {10.1002/rsa.21279}
}

@article{lavault2011note,
  title={A note on Pr\"{u}fer-like coding and counting forests of uniform hypertrees},
  author={Lavault, Christian},
  journal={arXiv preprint arXiv:1110.0204},
  year={2011}
}

@article{RV-88,
  author = {Andrzej Ruci{\'n}ski and Andrew Vince},
  title = {Balanced extensions of graphs and hypergraphs},
  journal = {Combinatorica},
  volume = {8},
  number = {3},
  pages = {279--291},
  year = {1988},
  doi = {10.1007/BF02126800}
}

@article{SW-22,
  author = {Tselil Schramm and Alexander S. Wein},
  title = {Computational barriers to estimation from low-degree polynomials},
  journal = {The Annals of Statistics},
  volume = {50},
  number = {3},
  pages = {1833--1858},
  year = {2022},
  doi = {10.1214/22-AOS2179}
}

@article{arxiv-version,
  title = {Sharp phase transitions in estimation with low-degree polynomials},
  author = {Sohn, Youngtak and Wein, Alexander S},
  journal = {arXiv preprint arXiv:2502.14407},
  year = {2025}, 
  note = {Conference version appeared in Proceedings of the 57th Annual ACM Symposium on Theory of Computing}
}

@book{moon1970counting,
  author = {J. W. Moon},
  title = {Counting labelled trees},
  series = {Canadian Mathematical Monographs},
  publisher = {Canadian Mathematical Congress},
  year = {1970}
}

@article{Bedini2008,
  doi = {10.1088/1751-8113/41/20/205003},
  url = {https://doi.org/10.1088/1751-8113/41/20/205003},
  year = {2008},
  month = {apr},
  publisher = {},
  volume = {41},
  number = {20},
  pages = {205003},
  author = {Bedini, Andrea and Caracciolo, Sergio and Sportiello, Andrea},
  title = {Hyperforests on the complete hypergraph by Grassmann integral representation},
  journal = {Journal of Physics A: Mathematical and Theoretical}}

@article{perry-2020,
  author = {Amelia Perry and Alexander S. Wein and Afonso S. Bandeira},
  title = {{Statistical limits of spiked tensor models}},
  volume = {56},
  journal = {Annales de l'Institut Henri Poincaré, Probabilités et Statistiques},
  number = {1},
  publisher = {Institut Henri Poincaré},
  pages = {230 -- 264},
  year = {2020},
  doi = {10.1214/19-AIHP960},
  URL = {https://doi.org/10.1214/19-AIHP960}
}

@article{lovig-2025,
  title = {Almost-optimal local-search methods for sparse tensor {PCA}},
  author = {Lovig, Max and Sheehan, Conor and Tsirkas, Konstantinos and Zadik, Ilias},
  journal = {arXiv preprint arXiv:2506.09959},
  year = {2025},
  archivePrefix = {arXiv},
  eprint = {2506.09959},
  primaryClass = {math.ST},
  doi = {10.48550/arXiv.2506.09959},
  note = {Version 1}
}

@article{chen-2024,
  title = {On the low-temperature MCMC threshold: the cases of sparse tensor {PCA}, sparse regression, and a geometric rule},
  author = {Chen, Zongchen and Sheehan, Conor and Zadik, Ilias},
  journal = {arXiv preprint arXiv:2408.00746},
  year = {2024},
  archivePrefix = {arXiv},
  eprint = {2408.00746},
  primaryClass = {math.ST},
  doi = {10.48550/arXiv.2408.00746}
}

@article{arash-2008,
  title = {High-dimensional analysis of semidefinite relaxations for sparse principal components},
  author = {Amini, Arash A. and Wainwright, Martin J.},
  journal = {The Annals of Statistics},
  volume = {37},
  number = {5B},
  pages = {2877--2921},
  year = {2009},
  publisher = {Institute of Mathematical Statistics},
  doi = {10.1214/08-AOS664}
}

@article{jagannath-2020,
  title   = {Statistical thresholds for tensor {PCA}},
  author  = {Jagannath, Aukosh and Lopatto, Patrick and Miolane, L{\'e}o},
  journal = {Annals of Applied Probability},
  volume  = {30},
  number  = {4},
  pages   = {1910--1933},
  year    = {2020},
  doi     = {10.1214/19-AAP1547}
}

@inproceedings{feldman-2023,
  title     = {Sharp recovery thresholds of tensor {PCA} spectral algorithms},
  author    = {Feldman, Michael Jacob and Donoho, David},
  booktitle = {Advances in Neural Information Processing Systems},
  year      = {2023},
  url       = {https://openreview.net/forum?id=b1BhHjBxsx}
}

@article{li-2025,
  title     = {A smooth computational transition in tensor PCA},
  author    = {Li, Zhangsong},
  journal   = {arXiv preprint arXiv:2509.09904},
  year      = {2025}
}

@article{yuan-2021,
  author  = {Yuan, Mingao and Shang, Zuofeng},
  title   = {Information limits for detecting a subhypergraph},
  journal = {Stat},
  volume  = {10},
  number  = {1},
  pages   = {e407},
  year    = {2021},
  doi     = {10.1002/sta4.407}
}

@book{vershynin-notes,
  author    = {Vershynin, Roman},
  title     = {High-dimensional probability: An introduction with applications in data science},
  edition   = {2},
  publisher = {Cambridge University Press},
  year      = {2026},
}

@article{tsirkas-2026,
  title         = {The monotonicity of the Franz-Parisi potential is equivalent with low-degree {MMSE} lower bounds},
  author        = {Tsirkas, Konstantinos and Wang, Leda and Zadik, Ilias},
  journal       = {arXiv preprint arXiv:2603.20070}, 
  year          = {2026},
  primaryClass  = {math.ST}
}

@article{hajek2017recovery,
  author  = {Hajek, Bruce and Wu, Yihong and Xu, Jiaming},
  title   = {Information limits for recovering a hidden community},
  journal = {IEEE Transactions on Information Theory},
  volume  = {63},
  number  = {8},
  pages   = {4729--4745},
  year    = {2017},
  doi     = {10.1109/TIT.2017.2653804}
}

@inproceedings{bresler-2023,
  author    = {Bresler, Guy and Jiang, Tianze},
  title     = {Detection-recovery and detection-refutation gaps via reductions from planted clique},
  booktitle = {Proceedings of the Thirty Sixth Conference on Learning Theory},
  series    = {Proceedings of Machine Learning Research},
  volume    = {195},
  pages     = {5850--5889},
  year      = {2023},
  publisher = {PMLR}
}

@inproceedings{bresler-2020,
  author    = {Brennan, Matthew and Bresler, Guy},
  title     = {Reducibility and statistical-computational gaps from secret leakage},
  booktitle = {Proceedings of the Thirty Third Conference on Learning Theory},
  series    = {Proceedings of Machine Learning Research},
  volume    = {125},
  pages     = {648--847},
  year      = {2020},
  publisher = {PMLR}
}

@article{FranzParisi95,
  author  = {Franz, Silvio and Parisi, Giorgio},
  title   = {Recipes for metastable states in spin glasses},
  journal = {Journal de Physique I},
  volume  = {5},
  number  = {11},
  pages   = {1401--1415},
  year    = {1995},
  doi     = {10.1051/jp1:1995201}
}

@article{FranzParisi97,
  author  = {Franz, Silvio and Parisi, Giorgio},
  title   = {Phase diagram of coupled glassy systems: A mean-field study},
  journal = {Physical Review Letters},
  volume  = {79},
  number  = {13},
  pages   = {2486--2489},
  year    = {1997},
  doi     = {10.1103/PhysRevLett.79.2486}
}

\end{document}